\def\1{{1\mkern-7mu1}}
\newcommand{\tableoc}{\tableofcontents}
\newcommand\bquote{\begin{quote}}
\newcommand\equote{\end{quote}}
\newcommand\bsmall{\begin{small}}
\newcommand\esmall{\end{small}}
\newcommand\bfootnotesize{\begin{footnotesize}}
\newcommand\efootnotesize{\end{footnotesize}}
\let\cite=\citealt
\let\nocite=\citet
\newcommand{\eb}[1]{{\itshape\bfseries#1}{\index{#1}}}
\renewcommand{\emph}{\eb}
\renewcommand{\Gamma}{\varGamma}
\renewcommand{\Pi}{\varPi}
\renewcommand{\Sigma}{\varSigma}
\DeclareMathOperator{\Aut}{Aut}
\DeclareMathOperator{\End}{End}
\DeclareMathOperator{\Ext}{Ext}
\DeclareMathOperator{\Gal}{Gal}
\DeclareMathOperator{\Hom}{Hom}
\DeclareMathOperator{\id}{id}
\DeclareMathOperator{\Ker}{Ker}
\DeclareMathOperator{\Nm}{Nm}
\DeclareMathOperator{\ord}{ord}
\DeclareMathOperator{\plim}{\varprojlim}
\DeclareMathOperator{\rec}{rec}
\DeclareMathOperator{\Spec}{Spec}
\DeclareMathOperator{\Tgt}{Tgt}
\DeclareMathOperator{\Tr}{Tr}
\newcommand{\ab}{\mathrm{ab}}
\newcommand{\ad}{\mathrm{ad}}
\newcommand{\al}{\mathrm{al}}
\newtheorem{X}{X}[section]
\newtheorem{corollary}[X]{Corollary}
\newtheorem{lemma}[X]{Lemma}
\newtheorem{proposition}[X]{Proposition}
\newtheorem{theorem}[X]{Theorem}
\newtheorem{aside}[X]{Aside}
\newtheorem{definition}[X]{Definition}
\newtheorem{example}[X]{Example}
\newtheorem{remark}[X]{Remark}
\newtheorem{plain}[X]{}
\newtheorem{application}[X]{Application}
\newtheorem*{nt}{Notes}
\theoremstyle{nonumberplain}
\newtheorem{proof}{Proof.}
\newtheorem{pf}{Proof}
\begin{document}

\title{The Fundamental Theorem of Complex Multiplication \thanks{\noindent Copyright
\copyright \thinspace\ 2006, 2007. J.S. Milne. }}
\author{J.S. Milne}
\date{May 23, 2007}
\maketitle

\begin{abstract}
The goal of this expository article is to present a proof that is as direct
and elementary as possible of the fundamental theorem of complex
multiplication (Shimura, Taniyama, Langlands, Tate, Deligne et al.).

The article is a revision of part of my manuscript Milne 2006.

\end{abstract}

\pagestyle{headings}

\tableoc

\subsection*{Introduction}

A simple abelian variety $A$ over $\mathbb{C}{}$ is said to have complex
multiplication if its endomorphism algebra is a field of degree $2\dim A$ over
$\mathbb{Q}{}$, and a general abelian variety over $\mathbb{C}{}$ is said to
have complex multiplication if each of its simple isogeny factors does.
Abelian varieties with complex multiplication correspond to special points in
the moduli space of abelian varieties, and their arithmetic is intimately
related to that of the values of modular functions and modular forms at those
points. The fundamental theorem of complex multiplication describes how an
automorphism of $\mathbb{C}{}$ (as an abstract field) acts on the abelian
varieties with complex multiplication and their torsion points.

The basic theory of complex multiplication was extended from elliptic curves
to abelian varieties in the 1950s by Shimura, Taniyama, and Weil.\footnote{See
the articles by Shimura, Taniyama, and Weil in: Proceedings of the
International Symposium on Algebraic Number Theory, Tokyo \& Nikko, September,
1955. Science Council of Japan, Tokyo, 1956.} The first result in the
direction of the fundamental theorem is the formula of Taniyama for the
prime-ideal decomposition of an endomorphism of an abelian variety that
becomes the Frobenius map modulo $p$.\footnote{Ibid. p21 (article of Weil).}
In their book \cite{shimuraT1961}, and in various other works, Shimura and
Taniyama proved the fundamental theorem for automorphisms of $\mathbb{C}{}$
fixing the reflex field of the abelian variety. Except for the result of
\cite{shih1976}, no progress was made on the problem of extending the theorem
to all automorphisms of $\mathbb{C}$ until the article \cite{langlands1979c}.
In that work, Langlands attempted to understand how the automorphisms of
$\mathbb{C}{}$ act on Shimura varieties and their special points, and in doing
so he was led to define a cocycle that conjecturally describes how the
automorphisms of $\mathbb{C}{}$ act on abelian varieties with complex
multiplication and their torsion points. Langlands's cocycle enables one to
give a precise conjectural statement of the fundamental theorem \textit{over}
$\mathbb{Q}{}$. \nocite{tate1981} gave a more elementary construction of
Langlands's cocycle and he proved that it did indeed describe the action of
$\Aut(\mathbb{C})$ on abelian varieties of CM-type and their torsion points up
to a sequence of signs indexed by the primes of $\mathbb{Q}$. Finally,
\cite{deligne1982m} showed that there exists at most one cocycle describing
this action of $\Aut(\mathbb{C}{})$ that is consistent with the results of
Shimura and Taniyama, and so completed the proof of the fundamental theorem
over $\mathbb{Q}{}$.

The goal of this article is to present a proof of the fundamental theorem of
complex multiplication that is as direct and elementary as possible.

I assume that the reader is familiar with some of the more elementary parts of
the theory of complex multiplication. See \cite{milneCM} for more background.

\subsubsection{Notations.}

\textquotedblleft Field\textquotedblright\ means \textquotedblleft commutative
field\textquotedblright, and \textquotedblleft number field\textquotedblright%
\ means \textquotedblleft field of finite degree over $\mathbb{Q}{}%
$\textquotedblright\ (not necessarily contained in $\mathbb{C}{}$). The ring
of integers in a number field $k$ is denoted by $\mathcal{O}{}_{k}$, and
$k^{\mathrm{al}}$ denotes an algebraic closure of a field $k$. By
$\mathbb{C}{}$, I mean an algebraic closure of $\mathbb{R}{}$ , and
$\mathbb{Q}{}^{\mathrm{al}}$ is the algebraic closure of $\mathbb{Q}{}$ in
$\mathbb{C}{}$. Complex conjugation on $\mathbb{C}{}$ (or a subfield) is
denoted by $\iota$.

For an abelian group $X$ and integer $m$, $X_{m}=\{x\in X\mid mx=0\}$.

An \'{e}tale algebra over a field is a finite product of finite separable
field extensions of the field. When $E$ is an \'{e}tale $\mathbb{Q}{}$-algebra
and $k$ is a field of characteristic zero, I say that $k$ \emph{contains all
conjugates of} $E$ when every $\mathbb{Q}{}$-algebra homomorphism
$E\rightarrow k^{\mathrm{al}}$ maps into $k$. This means that there are
exactly $[E\colon\mathbb{Q}{}]$ distinct $\mathbb{Q}{}$-algebra homomorphisms
$E\rightarrow k$.

Rings are required to have a $1$, homomorphisms of rings are required to map
$1$ to $1$, and $1$ is required to act as the identity map on any module. By a
$k$-algebra ($k$ a field) I mean a ring $B$ containing $k$ in its centre.

Following \cite{bourbakiTG}, I \S 9.1, I require compact topological spaces to
be separated.

\section{Preliminaries}

\subsection{CM-algebras; CM-types; reflex norms}

A number field $E$ is said to be a \emph{CM-field} if there exists an
automorphism $\iota_{E}\neq1$ of $E$ such that $\rho\circ\iota_{E}=\iota
\circ\rho$ for every embedding $\rho{}$ of $E$ into $\mathbb{C}{}$.
Equivalently, $E=F[\sqrt{a}]$ with $F$ a totally real number field and $a$ a
totally negative element of $F$. A \emph{CM-algebra} is a finite product of CM-fields.

For a CM-algebra $E$ the homomorphisms $E\rightarrow\mathbb{C}{}$ occur in
complex conjugate pairs $\{\varphi,\iota\circ\varphi\}$. A \emph{CM-type} on
$E$ is a choice of one element from each pair. More formally, it is a subset
$\Phi$ of $\Hom(E,\mathbb{C}{})$ such that%
\[
\Hom(E,\mathbb{C}{})=\Phi\sqcup\iota\Phi\quad\quad\text{(disjoint union).}%
\]
A \emph{CM-pair} is a CM-algebra together with a CM-type.

Let $(E,\Phi)$ be a CM-pair, and for $a\in E$, let $\Tr_{\Phi}(a)=\sum
_{\varphi\in\Phi}\varphi(a)\in\mathbb{C}{}$. The \emph{reflex field} $E^{\ast
}$ of $(E,\Phi)$ is the subfield of $\mathbb{C}{}$ generated by the elements
$\Tr_{\Phi}(a)$, $a\in E$. It can also be described as the fixed field of
$\{\sigma\in\Gal(\mathbb{Q}{}^{\mathrm{al}}/\mathbb{Q}{})\mid\sigma\Phi
=\Phi\}$.

Let $(E,\Phi)$ be a CM-pair, and let $k$ be a subfield of $\mathbb{C}{}$.
There exists a finitely generated $E\otimes_{\mathbb{Q}{}}k$-module $V$ such
that\footnote{To give an $E\otimes_{\mathbb{Q}}k$-module structure on a
$\mathbb{Q}$-vector space $V$ is the same as to give commuting actions of $E$
and $k$. An element $a$ of $E$ defines a $k$-linear map $v\mapsto av\colon
V\rightarrow V$ whose trace we denote by $\Tr_{k}(a|V)$.}%
\begin{equation}
\Tr_{k}(a|V)=\Tr_{\Phi}(a)\quad\text{for all }a\in E \label{e01}%
\end{equation}
if and only if $k\supset E^{\ast}$, in which case $V$ is uniquely determined
up to an $E\otimes_{\mathbb{Q}{}}k$-isomorphism. For example, if $k$ contains
all conjugates of $E$, then $V$ must be $\bigoplus\nolimits_{\varphi\in\Phi
}k_{\varphi}$ where $k_{\varphi}$ is a one-dimensional $k$-space on which $E$
acts through $\varphi$.

Now assume that $k$ has finite degree over $\mathbb{Q}{}$, and let $V_{\Phi}$
be an $E\otimes_{\mathbb{Q}{}}k$-module satisfying (\ref{e01}). An element $a$
of $k$ defines an $E$-linear map $v\mapsto av\colon V\rightarrow V$ whose
determinant we denote by $\det_{E}(a|V_{\Phi})$. If $a\in k^{\times}$, then
$\det_{E}(a|V_{\Phi})\in E^{\times}$, and so in this way we get a homomorphism%
\[
N_{k,\Phi}\colon k^{\times}\rightarrow E^{\times}.
\]
More generally, for any $\mathbb{Q}{}$-algebra $R$ and $a\in(k\otimes
_{\mathbb{Q}{}}R)^{\times}$, we obtain an element
\[
\det\nolimits_{E\otimes_{\mathbb{Q}{}}R}(a|V_{\Phi}\otimes_{\mathbb{Q}{}}%
R)\in(E\otimes_{\mathbb{Q}{}}R)^{\times},
\]
and hence a homomorphism%
\[
N_{k,\Phi}(R)\colon(k\otimes_{\mathbb{Q}{}}R)^{\times}\rightarrow
(E\otimes_{\mathbb{Q}{}}R)^{\times}%
\]
natural in $R$ and independent of the choice of $V_{\Phi}$. It is called the
\emph{reflex norm}. When $k=E^{\ast}$, we drop it from the notation. The
following formulas are easy to check (\cite{milneCM}, \S 1):%

\begin{equation}
N_{k,\Phi}=N_{\Phi}\circ\Nm_{k/E^{\ast}}, \label{e53}%
\end{equation}
($k\subset\mathbb{C}{}$ is a finite extension of $E^{\ast}$);%
\begin{equation}
N_{\Phi}(a)\cdot\iota_{E}N_{\Phi}(a)=\Nm_{k\otimes_{\mathbb{Q}{}}%
R/R}(a)\text{, all }a\in(k\otimes_{\mathbb{Q}{}}R)^{\times}, \label{e76}%
\end{equation}
($R$ is a $\mathbb{Q}{}$-algebra);%
\begin{equation}
N_{k,\Phi}(a{}{})=\prod\nolimits_{\varphi\in\Phi}\varphi^{-1}(\Nm_{k/\varphi
E}a{}{}),\quad a\in k^{\times}, \label{e55}%
\end{equation}
($k\subset\mathbb{\mathbb{C}{}}$ is a finite extension of $E^{\ast}$
containing all conjugates of $E$).

From (\ref{e55}), we see that $N_{k,\Phi}$ maps units in $\mathcal{O}{}_{k}$
to units in $\mathcal{O}{}_{E}$ when $k$ contains all conjugates of $E$. Now
(\ref{e53}) shows that this remains true without the condition on $k$.
Therefore, $N_{k,\Phi}$ is well-defined on principal ideals, and one sees
easily that it has a unique extension to all fractional ideals: if
$\mathfrak{a}{}^{h}=(a)$, then $N_{k,\Phi}(\mathfrak{a}{})=N_{k,\Phi}%
(a)^{1/h}$. The formulas (\ref{e53},\ref{e76},\ref{e55}) hold for ideals. If
$\mathfrak{a}{}$ is a fractional ideal of $E^{\ast}$ and $k$ is a number field
containing all conjugates of $E$, then (\ref{e55}) applied to the extension
$\mathfrak{a}{}^{\prime}$ of $\mathfrak{a}{}$ to a fractional ideal of $k$
gives
\begin{equation}
N_{\Phi}(\mathfrak{a}{})^{[k\colon E^{\ast}]}=\prod\nolimits_{\varphi\in\Phi
}\varphi^{-1}(\Nm_{k/\varphi E}\mathfrak{\mathfrak{a}{}}{}^{\prime}).
\label{e54}%
\end{equation}

\subsection{Riemann pairs; Riemann forms}

A \emph{Riemann pair} $(\Lambda,J)$ is a free $\mathbb{Z}{}$-module $\Lambda$
of finite rank together with a complex structure $J$ on $\mathbb{R}%
\otimes\Lambda$ (i.e., $J$ is an $\mathbb{R}{}$-linear endomorphism of
$\Lambda$ with square $-1$). A \emph{rational Riemann form} for a Riemann pair
is an alternating $\mathbb{Q}$-bilinear form $\psi\colon\Lambda_{\mathbb{Q}{}%
}\times\Lambda_{\mathbb{Q}{}}\rightarrow\mathbb{Q}$ such that%
\[
(x,y)\mapsto\psi_{\mathbb{R}{}}(x,Jy)\colon\Lambda_{\mathbb{R}{}}\times
\Lambda_{\mathbb{R}{}}\rightarrow\mathbb{R}{}%
\]
is symmetric and positive definite.

Let $(E,\Phi)$ be a CM-pair, and let $\Lambda$ be a lattice in $E$. Then
$\Phi$ defines an isomorphism
\[
e\otimes r\mapsto(\varphi(e)r)_{\varphi\in\Phi}\colon E\otimes_{\mathbb{Q}{}%
}\mathbb{R}{}\rightarrow\mathbb{C}{}^{\Phi}\text{,}%
\]
and so%
\[
\Lambda\otimes_{\mathbb{Z}{}}\mathbb{R}{}\simeq\Lambda\otimes_{\mathbb{Z}{}%
}\mathbb{Q}{}\otimes_{\mathbb{Q}{}}\mathbb{R}{}\simeq E\otimes_{\mathbb{Q}{}%
}\mathbb{R}{}\simeq\mathbb{C}^{\Phi},
\]
from which $\Lambda\otimes_{\mathbb{Z}{}}\mathbb{R}{}$ acquires a complex
structure $J_{\Phi}$. An $\alpha\in E^{\times}$ defines a $\mathbb{Q}{}%
$-bilinear form%
\[
(x,y)\mapsto\Tr_{E/\mathbb{Q}{}}(\alpha x\cdot\iota_{E}y)\colon E\times
E\rightarrow\mathbb{Q}{},
\]
which is a rational Riemann form if and only if
\begin{equation}
\iota_{E}\alpha=-\alpha\text{ and }\Im(\varphi(\alpha))>0\text{ for all
}\varphi\in\Phi; \label{e03}%
\end{equation}
every rational Riemann form is of this form for a unique $\alpha$.

Let $F$ be the product of the largest totally real subfields of the factors of
$E$. Then $E=F[\alpha]$ with $\alpha^{2}\in F$, which implies that $\iota
_{E}\alpha=-\alpha$. The weak approximation theorem shows that $\alpha$ can be
chosen so that $\Im(\varphi\alpha)>0$ for all $\varphi\in\Phi$. Thus, there
certainly exist $\alpha$s satisfying (\ref{e03}), and so $(\Lambda,J_{\Phi})$
admits a Riemann form.

Let $\alpha$ be one element of $E^{\times}$ satisfying (\ref{e03}). Then the
other such elements are exactly those of the form $a\alpha$ with $a$ a totally
positive element of $F^{\times}$. In other words, if $\psi$ is one rational
Riemann form, then the other rational Riemann forms are exactly those of the
form $a\psi$ with $a$ a totally positive element of $F^{\times}$%
.\label{riemann}

\subsection{Abelian varieties with complex multiplication}

Let $A$ be an abelian variety over a field $k$, and let $E$ be an \'{e}tale
$\mathbb{Q}{}$-subalgebra of $\End^{0}(A)\overset{\text{{\tiny def}}}%
{=}\End(A)\otimes\mathbb{Q}{}$. If $k$ can be embedded in $\mathbb{C}{}$, then
$\End^{0}(A)$ acts faithfully on $H_{1}(A(\mathbb{C}{}),\mathbb{Q}{})$, which
has dimension $2\dim A$, and so%
\begin{equation}
\lbrack E\colon\mathbb{Q}{}]\leq2\dim A. \label{e35}%
\end{equation}
In general, for $\ell\neq\mathrm{char\,}k$, $\End^{0}(A)\otimes_{\mathbb{Q}{}%
}\mathbb{Q}_{\ell}$ acts faithfully on $V_{\ell}A$, which again implies
(\ref{e35}). When equality holds we say that $A$ has \emph{complex
multiplication by} $E$ \emph{over} $k$. More generally, we say that $(A,i)$ is
an \emph{abelian variety with complex multiplication by} $E$ \emph{over} $k$
if $i$ is an injective homomorphism from an \'{e}tale $\mathbb{Q}{}$-algebra
$E$ of degree $2\dim A$ into $\End^{0}(A)$ (recall that this requires that
$i(1)$ acts as $\id_{A}$; see Notations).

\subsubsection{Classification up to isogeny}

\begin{plain}
\label{a30p}Let $A$ be an abelian variety with complex multiplication, so that
$\End^{0}(A)$ contains a CM-algebra $E$ for which $H_{1}(A,\mathbb{Q}{})$ is
free $E$-module of rank $1$, and let $\Phi$ be the set of homomorphisms
$E\rightarrow\mathbb{C}{}$ occurring in the representation of $E$ on
$\mathrm{Tgt}_{0}(A)$, i.e., $\mathrm{Tgt}_{0}(A)\simeq\bigoplus
\nolimits_{\varphi\in\Phi}\mathbb{C}{}_{\varphi}$ where $\mathbb{C}{}%
_{\varphi}$ is a one-dimensional $\mathbb{C}{}$-vector space on which $a\in E$
acts as $\varphi(a)$. Then, because%
\begin{equation}
H_{1}(A,\mathbb{R}{})\simeq\mathrm{Tgt}_{0}(A)\oplus\overline{\mathrm{Tgt}%
_{0}(A)} \label{e18}%
\end{equation}
$\Phi_{A}$ is a CM-type on $E$, and we say that $A$ together with the
injective homomorphism $i\colon E\rightarrow\End^{0}(A)$ is of \emph{type}
$(E,\Phi)$.

Let $e$ be a basis vector for $H_{1}(A,\mathbb{Q}{})$ as an $E$-module, and
let $\mathfrak{a}{}$ be the lattice in $E$ such that $\mathfrak{a}{}%
e=H_{1}(A,\mathbb{Z}{})$. Under the isomorphism (cf. (\ref{e18}))%
\[
H_{1}(A,\mathbb{R}{})\simeq\bigoplus\nolimits_{\varphi\in\Phi}\mathbb{C}%
{}_{\varphi}\oplus\bigoplus\nolimits_{\varphi\in\iota\Phi}\mathbb{C}%
{}_{\varphi},
\]%
\[
e\otimes1\longleftrightarrow(\ldots,e_{\varphi},\ldots;\ldots,e_{\iota
\circ\varphi},\ldots)
\]
where each $e_{\varphi}$ is a $\mathbb{C}{}$-basis for $\mathbb{C}{}_{\varphi
}$. The $e_{\varphi}$ determine an isomorphism%
\[
\mathrm{Tgt}_{0}(A)\simeq\bigoplus\nolimits_{\varphi\in\Phi}\mathbb{C}%
{}_{\varphi}\overset{e_{\varphi}}{\simeq}\mathbb{C}{}^{\Phi},
\]
and hence a commutative square of isomorphisms in which the top arrow is the
canonical uniformization:%
\begin{equation}
\begin{CD} \Tgt_{0}(A)/\Lambda @>>> A\\ @VVV@VVV\\ \mathbb{C}^{\Phi}/\Phi(\mathfrak{a}) @= A_{\Phi}.\end{CD} \label{e19}%
\end{equation}

\end{plain}

\begin{proposition}
\label{a31a}The map $(A,i)\mapsto(E,\Phi)$ gives a bijection from the set of
isogeny classes of pairs $(A,i)$ to the set of isomorphism classes of CM-pairs.
\end{proposition}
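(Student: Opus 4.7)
My plan is to prove both surjectivity and injectivity of the proposed map, after first verifying it is well-defined on isogeny classes.

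\textbf{Well-definedness.} An $E$-equivariant isogeny $f\colon (A,i)\to(A',i')$ has finite kernel, hence (in characteristic zero) induces an $E\otimes_{\mathbb{Q}}\mathbb{C}$-linear isomorphism $df\colon\Tgt_{0}(A)\xrightarrow{\sim}\Tgt_{0}(A')$. The set of characters of $E$ appearing in the representation on the tangent space is therefore the same, so $\Phi_{A}=\Phi_{A'}$. Since an isogeny also carries $H_{1}(A,\mathbb{Q})$ isomorphically onto $H_{1}(A',\mathbb{Q})$ as $E$-modules, the CM-pair attached to the isogeny class is well-defined up to $E$-isomorphism.

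\textbf{Surjectivity.} Given a CM-pair $(E,\Phi)$, I take the lattice $\mathfrak{a}=\mathcal{O}_{E}$ inside $E$ and form the complex torus
\[
A_{\Phi}=\mathbb{C}^{\Phi}/\Phi(\mathcal{O}_{E})
\]
via the complex structure $J_{\Phi}$ constructed in Section~\ref{riemann}. By the Riemann form construction recalled there, an $\alpha\in E^{\times}$ with $\iota_{E}\alpha=-\alpha$ and $\Im(\varphi\alpha)>0$ for all $\varphi\in\Phi$ (which exists by weak approximation) yields a rational Riemann form $(x,y)\mapsto\Tr_{E/\mathbb{Q}}(\alpha x\cdot\iota_{E}y)$, so $A_{\Phi}$ is an abelian variety. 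The action of $E$ on itself induces an injection $i\colon E\hookrightarrow\End^{0}(A_{\Phi})$, and by construction $\Tgt_{0}(A_{\Phi})\simeq\bigoplus_{\varphi\in\Phi}\mathbb{C}_{\varphi}$, so the type of $(A_{\Phi},i)$ is $(E,\Phi)$.

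\textbf{Injectivity.} Suppose $(A,i)$ and $(A',i')$ both have type $(E,\Phi)$. Choose $E$-module generators $e,e'$ of $H_{1}(A,\mathbb{Q})$ and $H_{1}(A',\mathbb{Q})$ and the corresponding lattices $\mathfrak{a},\mathfrak{a}'\subset E$. The uniformization diagram (\ref{e19}) identifies $A$ with $\mathbb{C}^{\Phi}/\Phi(\mathfrak{a})$ and $A'$ with $\mathbb{C}^{\Phi}/\Phi(\mathfrak{a}')$ as complex tori with $E$-action, and by the discussion in Section~\ref{riemann} these complex tori are already abelian varieties (admitting a rational Riemann form). Since $\mathfrak{a}$ and $\mathfrak{a}'$ are two lattices in the $\mathbb{Q}$-vector space $E$, there exists a nonzero integer $n$ with $n\mathfrak{a}\subset\mathfrak{a}'$; multiplication by $n$ on $\mathbb{C}^{\Phi}$ then descends to an $E$-equivariant holomorphic surjection $A\to A'$ with finite kernel, that is, an $E$-linear isogeny. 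Hence $(A,i)$ and $(A',i')$ lie in the same isogeny class.

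\textbf{Main obstacle.} The substantive content is the surjectivity step, and in particular the construction of a polarization: without the Riemann form produced in Section~\ref{riemann}, one only obtains a complex torus, not an abelian variety. Injectivity and well-definedness, by contrast, amount to bookkeeping with the uniformization~(\ref{e19}) and the fact that any two $\mathbb{Z}$-lattices in $E$ are commensurable up to scaling.
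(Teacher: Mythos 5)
The paper states Proposition~\ref{a31a} without a proof, leaving it to follow from the analytic classification set up in~(\ref{a30p}). Your argument is precisely the expected fleshing‑out of that framework: well‑definedness via the induced isomorphism on tangent spaces, surjectivity via the torus $\mathbb{C}^{\Phi}/\Phi(\mathcal{O}_{E})$ made algebraic by a Riemann form as on p.~\pageref{riemann}, and injectivity via the uniformization~(\ref{e19}) together with the commensurability of any two lattices in $E$. All three steps are correct, and the approach is the same as the paper's.

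One small remark: in the injectivity step, since $A$ and $A'$ are abelian varieties to begin with, the uniformizations are already isomorphisms of complex tori with abelian variety targets; the appeal to a Riemann form there is harmless but not needed. The Riemann form is genuinely needed only in the surjectivity step, as you correctly emphasize in your closing remark.
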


\subsubsection{Classification up to isomorphism}

Let $(A,i)$ be of CM-type $(E,\Phi)$. Let $e$ be an $E$-basis element of
$H_{1}(A,\mathbb{Q}{})$, and set $H_{1}(A,\mathbb{Z}{})=\mathfrak{a}{}e$ with
$\mathfrak{a}{}$ a lattice in $E$. We saw in (\ref{a30p}) that $e$ determines
an isomorphism%
\[
\theta\colon(A_{\Phi},i_{\Phi})\rightarrow(A,i),\quad A_{\Phi}\overset
{\text{{\tiny def}}}{=}\mathbb{C}{}^{\Phi}/\Phi(\mathfrak{a}{}).
\]
Conversely, every isomorphism $\mathbb{C}{}^{\Phi}/\Phi(\mathfrak{a}%
{})\rightarrow A$ commuting with the actions of $E$ arises in this way from an
$E$-basis element of $H_{1}(A,\mathbb{Q}{})$, because%
\[
E\simeq H_{1}(A_{\Phi},\mathbb{Q}{})\overset{\theta}{\simeq}H_{1}%
(A,\mathbb{Q}{}).
\]
If $e$ is replaced by $ae$, $a\in E^{\times}$, then $\theta$ is replaced by
$\theta\circ a^{-1}$.

We use this observation to classify triples $(A,i,\mathcal{\psi})$ where $A$
is an abelian variety, $i\colon E\rightarrow\End^{0}(A)$ is a homomorphism
making $H_{1}(A,\mathbb{Q}{})$ into a free module of rank $1$ over the
CM-algebra $E$, and $\psi$ is a rational Riemann form whose Rosati involution
stabilizes $i(E)$ and induces $\iota_{E}$ on it.

Let $\theta\colon\mathbb{C}{}^{\Phi}/\Phi(\mathfrak{a}{})\rightarrow A$ be the
isomorphism defined by some basis element $e$ of $H_{1}(A,\mathbb{Q}{})$. Then
(see p\pageref{riemann}), there exists a unique element $t\in E^{\times}$ such
that $\psi(xe,ye)=\Tr_{E/\mathbb{Q}{}}(tx\bar{y})$. The triple
$(A,i,\mathcal{\psi})$ is said to be of \emph{type} $(E,\Phi;\mathfrak{a}%
{},t)$ relative to $\theta$ (cf. \cite{shimura1971}, Section 5.5 B).

\begin{proposition}
\label{a34}The type $(E,\Phi;\mathfrak{a}{},t)$ determines $(A,i,\psi)$ up to
isomorphism. Conversely, $(A,i,\psi)$ determines the type up to a change of
the following form: if $\theta$ is replaced by $\theta\circ a^{-1}$, $a\in
E^{\times}$, then the type becomes $(E,\Phi;a\mathfrak{a}{},t/a\bar{a})$. The
quadruples $(E,\Phi;\mathfrak{a}{},t)$ that arise as the type of some triple
are exactly those in which $(E,\Phi)$ is a CM-pair, $\mathfrak{a}{}$ is a
lattice in $E$, and $t$ is an element of $E^{\times}$ such that $\iota
_{E}t=-t$ and $\Im(\varphi(t))>0$ for all $\varphi\in\Phi$.
\end{proposition}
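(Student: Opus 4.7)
The plan is to reduce everything to the explicit model built from the data $(E,\Phi;\mathfrak{a},t)$. Set $A_{\Phi}=\mathbb{C}^{\Phi}/\Phi(\mathfrak{a})$ with the natural $E$-action $i_{\Phi}$, and on $H_{1}(A_{\Phi},\mathbb{Q})=E$ (canonical basis vector $e=1$) define the alternating form
\begin{equation*}
\psi_{t}(xe,ye)=\Tr_{E/\mathbb{Q}}\bigl(t\,x\,\iota_{E}(y)\bigr).
\end{equation*}
The analysis of Riemann forms for $(\mathfrak{a},J_{\Phi})$ on p.\pageref{riemann} shows that $\psi_{t}$ is a rational Riemann form iff $\iota_{E}t=-t$ and $\Im(\varphi(t))>0$ for all $\varphi\in\Phi$; this already pins down the quadruples that can arise and simultaneously guarantees that the complex torus $A_{\Phi}$ is an abelian variety. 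A direct computation using $\Tr_{E/\mathbb{Q}}\circ\iota_{E}=\Tr_{E/\mathbb{Q}}$ gives $\psi_{t}(i_{\Phi}(b)xe,ye)=\psi_{t}(xe,i_{\Phi}(\iota_{E}b)ye)$, so the Rosati involution of $\psi_{t}$ stabilises $i_{\Phi}(E)$ and restricts to $\iota_{E}$. Thus $(A_{\Phi},i_{\Phi},\psi_{t})$ is of the kind classified by the proposition and, tautologically, has type $(E,\Phi;\mathfrak{a},t)$ relative to the identity.

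For uniqueness up to isomorphism and for the transformation law, start with any triple $(A,i,\psi)$ of type $(E,\Phi;\mathfrak{a},t)$ relative to some $\theta$. By construction $\theta\colon(A_{\Phi},i_{\Phi})\to(A,i)$ is an isomorphism, and by the uniqueness of the element $t$ noted on p.\pageref{riemann} the pullback $\theta^{*}\psi$ equals $\psi_{t}$. Hence $\theta$ is an isomorphism of triples $(A_{\Phi},i_{\Phi},\psi_{t})\to(A,i,\psi)$, which shows simultaneously that every quadruple satisfying the listed conditions is realised and that the type determines the triple up to isomorphism. For the change-of-$\theta$ formula, replacing $\theta$ by $\theta\circ a^{-1}$ amounts to replacing the basis vector $e$ by $a^{-1}e$; the lattice $\mathfrak{a}'$ satisfying $\mathfrak{a}'\cdot(a^{-1}e)=\mathfrak{a}e$ is $\mathfrak{a}'=a\mathfrak{a}$, and substitution gives
\begin{equation*}
\psi\bigl(x(a^{-1}e),\,y(a^{-1}e)\bigr)=\Tr_{E/\mathbb{Q}}\bigl(t(xa^{-1})\iota_{E}(ya^{-1})\bigr)=\Tr_{E/\mathbb{Q}}\Bigl(\tfrac{t}{a\,\iota_{E}a}\,x\,\iota_{E}y\Bigr),
\end{equation*}
so $t$ is replaced by $t/(a\,\iota_{E}a)$, as asserted.

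All the substantive content has already been done in the Riemann pair subsection (existence of $\alpha$ satisfying (\ref{e03}), its uniqueness up to totally positive scaling, and the positivity criterion); the proof above is essentially bookkeeping. The main obstacle is simply keeping the conventions for $\iota_{E}$, the $E$-action, and the substitution $e\mapsto a^{-1}e$ straight so that the lattice transforms as $\mathfrak{a}\mapsto a\mathfrak{a}$ rather than $a^{-1}\mathfrak{a}$ and the parameter transforms as $t\mapsto t/(a\,\iota_{E}a)$, matching the statement.
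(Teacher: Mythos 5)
Your proposal is correct and is exactly the ``routine verification'' the paper leaves to the reader: build the explicit model $(A_{\Phi},i_{\Phi},\psi_{t})$, invoke the Riemann-form characterization from the Riemann-pairs subsection to identify the admissible quadruples, check the Rosati condition via $\Tr_{E/\mathbb{Q}}\circ\iota_{E}=\Tr_{E/\mathbb{Q}}$, and push through the $e\mapsto a^{-1}e$ substitution to get $\mathfrak{a}\mapsto a\mathfrak{a}$ and $t\mapsto t/(a\,\iota_{E}a)$. The only nitpick is the phrase attributing $\theta^{*}\psi=\psi_{t}$ to ``the uniqueness of $t$'': that identity is immediate from the definition of $t$ as the type parameter relative to $\theta$; the uniqueness of $t$ is what makes the type well-defined, not what gives the equality. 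This is a phrasing quibble, not a gap.
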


\begin{proof}
Routine verification.
\end{proof}

\subsubsection{Commutants}

Let $A$ have complex multiplication by $E$ over $k$, and let
\[
R=E\cap\End(A).
\]
Then $R$ is an \emph{order} in $E$, i.e., it is simultaneously a subring and a
lattice in $E$.

Let $g=\dim A$, and let $\ell$ be a prime not equal to $\mathrm{char\,}k$.
Then $T_{\ell}A$ is a $\mathbb{Z}{}_{\ell}$-module of rank $2g$ and $V_{\ell
}A$ is a $\mathbb{Q}{}_{\ell}$-vector space of dimension $2g$. The action of
$R$ on $T_{\ell}A$ extends to actions of $R_{\ell}\overset{\text{{\tiny def}}%
}{=}R\otimes_{\mathbb{Z}{}}\mathbb{Z}{}_{\ell}$ on $T_{\ell}A$ and of
$E_{\ell}\overset{\text{{\tiny def}}}{=}\mathbb{Q}{}_{\ell}\otimes
_{\mathbb{Q}{}}E$ on $V_{\ell}A$.

\begin{proposition}
\label{b03a}(a) The $E_{\ell}$-module $V_{\ell}A$ is free of rank $1$.

(b) We have%
\[
R_{\ell}=E_{\ell}\cap\End(T_{\ell}A).
\]

\end{proposition}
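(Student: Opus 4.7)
The plan for (a) is a dimension count after decomposing $E_\ell$ into local fields. Since $E$ is étale, $E_\ell = E \otimes_{\mathbb{Q}} \mathbb{Q}_\ell$ is a finite product of local fields $F_j$, and correspondingly $V_\ell A = \bigoplus_j V_j$ with each $V_j$ a vector space over $F_j$. The faithfulness of the $E_\ell$-action on $V_\ell A$ (already recorded just before (\ref{e35})) forces $V_j \neq 0$, so $\dim_{\mathbb{Q}_\ell} V_j \geq [F_j : \mathbb{Q}_\ell]$; summing over $j$ and using $\dim_{\mathbb{Q}_\ell} V_\ell A = 2g = [E : \mathbb{Q}] = \dim_{\mathbb{Q}_\ell} E_\ell$ forces equality in every term, hence $\dim_{F_j} V_j = 1$ for each $j$, which is exactly freeness of rank one.

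For (b), set $\mathcal{O} = E_\ell \cap \End(T_\ell A)$; the inclusion $R_\ell \subseteq \mathcal{O}$ is immediate, and the content is the reverse. The single geometric input I intend to use is the elementary divisibility lemma: if $\phi \in \End(A)$ kills $A[\ell^n]$ then $\phi = \ell^n \psi$ for some $\psi \in \End(A)$. This holds because $A[\ell^n] = \ker(\ell^n\colon A\to A)$ is genuinely an isogeny kernel (here $\ell \neq \mathrm{char}\, k$ is essential), so $\phi$ factors through the isogeny $\ell^n$.

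Given $\alpha \in \mathcal{O}$, I would reduce to the rational case by approximation. $R_\ell$ is a full $\mathbb{Z}_\ell$-lattice in $E_\ell$, hence open, and $E$ is dense in $E_\ell$, so there exists $\beta \in E$ with $\alpha - \beta \in R_\ell \subseteq \mathcal{O}$. Then $\beta = \alpha - (\alpha-\beta) \in \mathcal{O}$ as well, so $\beta \in E \cap \End(T_\ell A)$. Writing $\beta = \phi/n$ with $\phi \in \End(A)$ and $n = \ell^a m$, $\gcd(m,\ell)=1$, the fact that $m$ is an $\ell$-adic unit turns $\beta(T_\ell A) \subseteq T_\ell A$ into $\phi(T_\ell A) \subseteq \ell^a T_\ell A$, i.e.\ $\phi$ kills $A[\ell^a]$. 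The divisibility lemma then writes $\phi = \ell^a \psi$ with $\psi \in \End(A)$; since $\psi = m\beta \in E$, in fact $\psi \in E \cap \End(A) = R$, so $\beta = \psi/m \in R \otimes_{\mathbb{Z}} \mathbb{Z}_{(\ell)} \subseteq R_\ell$, and consequently $\alpha = \beta + (\alpha - \beta) \in R_\ell$.

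The substantive point is the divisibility lemma, which is where the abelian variety structure enters. The main delicacy is the approximation step: an element of $E_\ell$ has no a priori interpretation as an endomorphism of $A$, and the density of $E$ in $E_\ell$ is precisely what lets us replace $\alpha$ with a rational $\beta$ to which the lemma can be applied.
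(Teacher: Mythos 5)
Your argument is correct and follows essentially the same route as the paper: part (a) is the same dimension count (the paper compresses it to "faithfulness implies free of rank 1"), and for part (b) both proofs hinge on the same divisibility lemma (an endomorphism killing $A_{\ell^{n}}$ factors through $\ell^{n}$) combined with an $\ell$-adic approximation step. The only difference is bookkeeping: the paper scales $\alpha$ into $R_{\ell}$ and approximates by an element of $R$, whereas you approximate $\alpha$ by an element of $E$ and then clear denominators — same idea, reorganized.
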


\begin{proof}
(a) We have already noted that $E_{\ell}$ acts faithfully on $V_{\ell}A$, and
this implies that $V_{\ell}A$ is free of rank $1$.

(b) Let $\alpha$ be an element of $E_{\ell}$ such that $\alpha(T_{\ell
}A)\subset T_{\ell}A$. For some $m$, $\ell^{m}\alpha\in R_{\ell}$, and if
$\beta\in R$ is chosen to be very close $\ell$-adically to $\ell^{m}\alpha$,
then $\beta T_{\ell}A\subset\ell^{m}T_{\ell}A$, which means that $\beta$
vanishes on $A_{\ell^{m}}$. Hence $\beta=\ell^{m}\alpha_{0}$ for some
$\alpha_{0}\in\End(A)\cap E=R$. Now $\alpha$ and $\alpha_{0}$ are close in
$E_{\ell}$; in particular, we may suppose $\alpha-\alpha_{0}\in R_{\ell}$, and
so $\alpha\in R_{\ell}$.
\end{proof}

\begin{corollary}
\label{b03b}The commutants of $R$ in $\End_{\mathbb{Q}{}_{\ell}}(V_{\ell}A)$,
$\End_{\mathbb{Z}{}_{\ell}}(T_{\ell}A)$, $\End^{0}(A)$, and $\End(A)$ are,
respectively, $E_{\ell}$, $R_{\ell}$, $F$, and $R$.
\end{corollary}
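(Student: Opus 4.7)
The plan is to deduce all four commutant computations from Proposition~\ref{b03a}, building upward from the $\mathbb{Q}_\ell$-linear case and then pulling the statements back through the faithful embeddings $\End^0(A)\hookrightarrow\End_{\mathbb{Q}_\ell}(V_\ell A)$ and $\End(A)\hookrightarrow\End_{\mathbb{Z}_\ell}(T_\ell A)$. The pattern $E_\ell,R_\ell,E,R$ suggests that the ``$F$'' in the statement should read ``$E$''; in any case, the commutant in $\End^0(A)$ visibly contains $E$, so I prove the $E$-version.

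First I would handle the commutant in $\End_{\mathbb{Q}_\ell}(V_\ell A)$. Since $R\otimes_{\mathbb{Z}}\mathbb{Q}=E$, the $\mathbb{Q}_\ell$-subalgebra of $\End_{\mathbb{Q}_\ell}(V_\ell A)$ generated by $R$ is $E_\ell$, so the commutants of $R$ and of $E_\ell$ agree. By part~(a) of the proposition, $V_\ell A$ is free of rank one over the commutative ring $E_\ell$, hence $\End_{E_\ell}(V_\ell A)=E_\ell$, which is therefore the commutant of $R$.

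Next I would treat the commutant in $\End_{\mathbb{Z}_\ell}(T_\ell A)$. An element $\alpha$ of this commutant sits inside $\End_{\mathbb{Q}_\ell}(V_\ell A)$ and still centralizes $R$ there, so by the previous step $\alpha\in E_\ell$. Since $\alpha$ also stabilizes $T_\ell A$, part~(b) of the proposition forces $\alpha\in R_\ell$; the reverse containment is automatic because $R_\ell$ is commutative.

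Finally, for $\End^0(A)$ and $\End(A)$, I would use that these algebras embed faithfully into $\End_{\mathbb{Q}_\ell}(V_\ell A)$ and $\End_{\mathbb{Z}_\ell}(T_\ell A)$ respectively (after tensoring with $\mathbb{Q}_\ell$ on the rational side). Anything in $\End^0(A)$ centralizing $R$ lies in $E_\ell$ by the first step, hence in $\End^0(A)\cap E_\ell=E$; the analogous argument with $T_\ell A$ places any element of $\End(A)$ centralizing $R$ inside $R_\ell\cap\End(A)=R$. The reverse containments are immediate from the commutativity of $E$ and $R$. No step presents a real obstacle: all the content of the corollary is already packaged in Proposition~\ref{b03a}, and what remains is bookkeeping across the inclusions between the integral/rational and $\ell$-adic sides.
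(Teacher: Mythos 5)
Your proof is correct and follows essentially the same route as the paper: reduce all four computations to the two parts of Proposition~\ref{b03a}, use freeness of $V_\ell A$ over $E_\ell$ for the first commutant, extend/restrict to pass between $T_\ell A$ and $V_\ell A$, and then descend to $\End^0(A)$ and $\End(A)$ via the $\ell$-adic embeddings. You are also right that ``$F$'' in the statement is a typo for ``$E$''; the paper's own proof establishes exactly that the commutant of $E$ in $\End^0(A)$ is $E$, via a dimension count $[E:\mathbb{Q}]\le[C:\mathbb{Q}]\le[E:\mathbb{Q}]$ rather than your equivalent intersection identity $\End^0(A)\cap E_\ell=E$.
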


\begin{proof}
Any endomorphism of $V_{\ell}A$ commuting with $R$ commutes with $E_{\ell}$,
and therefore lies in $E_{\ell}$, because of (\ref{b03a}a).

Any endomorphism of $T_{\ell}A$ commuting with $R$ extends to an endomorphism
of $V_{\ell}A$ preserving $T_{\ell}A$ and commuting with $R$, and so lies in
$E_{\ell}\cap\End(T_{\ell}A)=R_{\ell}$.

Let $C$ be the commutant of $E$ in $\End^{0}(A)$. Then $E$ is a subalgebra of
$C$, so $[E\colon\mathbb{Q}{}]\leq\lbrack C\colon\mathbb{Q}{}]$, and
$C\otimes_{\mathbb{Q}{}}\mathbb{Q}_{\ell}$ is contained in the commutant
$E_{\ell}$ of $E$ in $\End(V_{\ell}A)$, so $[E\colon\mathbb{Q}{}]\geq\lbrack
C\colon\mathbb{Q}{}]$. Thus $E=C$.

Finally, the commutant $R$ in $\End(A)$ contains $R$ and is contained in
$C\cap\End(A)=E\cap\End(A)=R$.
\end{proof}

\begin{corollary}
\label{b03d}Let $(A,i)$ have complex multiplication by $E$, and let
$R=i^{-1}(\End(A))$. Then any endomorphism of $A$ commuting with $i(a)$ for
all $a\in R$ is of the form $i(b)$ for some $b\in R$.
\end{corollary}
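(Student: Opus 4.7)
The plan is to reduce Corollary \ref{b03d} directly to the final assertion of Corollary \ref{b03b}, namely that the commutant of $R$ in $\End(A)$ equals $R$.

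First I would unwind the notational difference. In the setup for \ref{b03a}--\ref{b03b}, the CM-algebra $E$ is regarded as a subalgebra of $\End^{0}(A)$, and $R$ is defined as $E\cap\End(A)$. In the statement of \ref{b03d}, the embedding $i\colon E\hookrightarrow\End^{0}(A)$ is kept explicit, and $R=i^{-1}(\End(A))$. Since $i$ is injective, identifying $E$ with its image $i(E)\subset\End^{0}(A)$ converts the second description of $R$ into the first: $i(R)=i(E)\cap\End(A)=E\cap\End(A)$.

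With this identification in place, the hypothesis of \ref{b03d}---that an endomorphism $\varphi\in\End(A)$ satisfies $\varphi\circ i(a)=i(a)\circ\varphi$ for every $a\in R$---says exactly that $\varphi$ lies in the commutant of $i(R)$ inside $\End(A)$. By Corollary \ref{b03b}, that commutant equals $i(R)$ itself, so $\varphi=i(b)$ for a (necessarily unique) $b\in R$, which is the desired conclusion.

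There is essentially no obstacle; the only thing to verify carefully is that the $R$ appearing in \ref{b03d} agrees, under the identification via $i$, with the $R$ of the earlier discussion, and this is immediate from the definitions. The substantive input---that the bicommutant of $R$ in $\End(A)$ collapses to $R$---has already been established in \ref{b03b} via the local freeness statement of \ref{b03a}(a) and the integrality statement of \ref{b03a}(b).
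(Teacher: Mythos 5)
Your proposal is correct and is essentially the paper's own argument: the paper simply says to apply Corollary \ref{b03b} to $i(E)\subset\End^{0}(A)$, and your write-up makes explicit the identification $i(R)=i(E)\cap\End(A)$ needed to transfer that result to the $i$-notation of \ref{b03d}.
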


\begin{proof}
Apply the preceding corollary to $i(E)\subset\End^{0}(A).$
\end{proof}

\begin{remark}
\label{b03c}If $\ell$ does not divide $(\mathcal{O}{}_{E}\colon R)$, then
$R_{\ell}$ is a product of discrete valuation rings, and $T_{\ell}A$ is a free
$R_{\ell}$-module of rank $1$, but in general this need not be true
(\cite{serreT1968}, p502). Similarly, $T_{m}A\overset{\text{{\tiny def}}}%
{=}\prod_{\ell|m}T_{\ell}A$ is a free $R{}_{m}\overset{\text{{\tiny def}}}%
{=}\prod_{\ell|m}R_{\ell}$-module of rank $1$ if $m$ is relatively prime to
$(\mathcal{O}{}_{E}\colon R)$.
\end{remark}

Let $(A,i)$ be an abelian variety with complex multiplication by a CM-algebra
$E$ over a field $k$ of characteristic zero. If $k$ contains all conjugates of
$E$, then $\mathrm{Tgt}_{0}(A)\simeq\prod\nolimits_{\varphi\in\Phi}k_{\varphi
}$ as an $E\otimes_{\mathbb{Q}{}}k$-module where $\Phi$ is a set of
$\mathbb{Q}{}$-algebra homomorphisms $E\hookrightarrow k$ and $k_{\varphi}$ is
a one-dimensional $k$-vector space on which $a\in E$ acts as $\varphi(a)$. For
any complex conjugation\footnote{A \emph{complex conjugation} on a field $k$
is the involution induced by complex conjugation on $\mathbb{C}{}$ through
some embedding of $k$ into $\mathbb{C}{}$.} $\iota$ on $k$,%
\[
\Phi\sqcup\iota\Phi=\Hom(E,k)\text{.}%
\]
A subset $\Phi$ of $\Hom(E,k)$ with this property will be called a CM-type on
$E$ with values in $k$. If $k\subset\mathbb{C}{}$, then it can also be
regarded as a CM-type on $E$ with values in $\mathbb{C}{}$.

\subsection{Extension of the base field}

Let $k$ be an algebraically closed subfield of $\mathbb{C}{}$. For abelian
varieties $A,B$ over $k$, $\Hom(A,B)\simeq\Hom(A_{\mathbb{C}{}},B_{\mathbb{C}%
{}})$, i.e., the functor from abelian varieties over $k$ to abelian varieties
over $\mathbb{C}{}$ is fully faithful. It is even essentially surjective
(hence an equivalence) on abelian varieties with complex multiplication. See,
for example, \cite{milneCM}, Proposition 7.8.\label{extension}

\subsection{Good reduction}

Let $R$ be a discrete valuation ring with field of fractions $K$ and residue
field $k$. An abelian variety $A$ over $K$ is said to have \emph{good
reduction} if it is the generic fibre of an abelian scheme $\mathcal{A}{}$
over $R$. Then the special fibre $A_{0}$ of $\mathcal{A}{}$ is an abelian
variety, and $\Tgt_{0}(\mathcal{A}{})$ is a free $R$-module such that%
\begin{align*}
\Tgt_{0}(\mathcal{A}{})\otimes_{R}K &  \simeq\Tgt_{0}(A)\\
\Tgt_{0}(\mathcal{A}{})\otimes_{R}k &  \simeq\Tgt_{0}(A_{0}).
\end{align*}
The map%
\[
\End(\mathcal{A}{})\rightarrow\End(A)
\]
is an isomorphism, and there is a reduction map%
\begin{equation}
\End(A)\simeq\End(\mathcal{A}{})\rightarrow\End(A_{0})\text{.}\label{e3}%
\end{equation}
This is an injective homomorphism. See, for example, \cite{milneCM}, II, \S 6.

It is a fairly immediate consequence of N\'{e}ron's theorem on the existence
of minimal models that an abelian variety with complex multiplication over a
number field $k$ acquires good reduction at all finite primes after finite
extension of $k$ (\cite{serreT1968}, Theorem 6; \cite{milneCM},
7.12).\footnote{N\'{e}ron's theorem was, of course, not available to Shimura
and Taniyama, who proved their results \textquotedblleft for almost all
$\mathfrak{p}{}$\textquotedblright. N\'{e}ron's theorem allowed later
mathematicians to claim to have sharpened the results of Shimura and Taniyama
without actually having done anything.}

\subsection{The degrees of isogenies}

An isogeny $\alpha\colon A\rightarrow B$ defines a homomorphism $\alpha^{\ast
}\colon k(B)\rightarrow k(A)$ of the fields of rational functions, and the
\emph{degree} of $\alpha$ is defined to be $[k(A)\colon\alpha^{\ast}k(B)]$.

\begin{proposition}
\label{b17}Let $A$ be an abelian variety with complex multiplication by $E$,
and let $R=\End(A)\cap E$. An element $\alpha$ of $R$ that is not a
zero-divisor is an isogeny of degree $(R\colon\alpha R)$.
\end{proposition}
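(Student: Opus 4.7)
My plan is to compute both $\deg(\alpha)$ and $(R\colon\alpha R)$ and show that each equals $|\Nm_{E/\mathbb{Q}}(\alpha)|$, with the absolute-value signs being redundant because $E$ is a CM-algebra.

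First I would check that $\alpha$ is actually an isogeny. Since $R\otimes_{\mathbb{Z}}\mathbb{Q}=E$, a non-zero-divisor in $R$ is a unit in $E$, and hence in $E_{\ell}$ for every prime $\ell$. Choose $\ell\neq\mathrm{char}\,k$. By Proposition \ref{b03a}(a), $V_{\ell}A$ is free of rank $1$ over $E_{\ell}$, so multiplication by $\alpha$ is an automorphism of $V_{\ell}A$; therefore $\ker(\alpha)$ is finite and $\alpha$ is an isogeny $A\to A$.

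Next I would compute the degree via the Tate module. By the standard formula relating the degree of an endomorphism of an abelian variety to its $\ell$-adic representation, $\deg(\alpha)=\det_{\mathbb{Q}_{\ell}}(\alpha\mid V_{\ell}A)$ for any $\ell\neq\mathrm{char}\,k$. Choosing an $E_{\ell}$-basis of $V_{\ell}A$ (possible by \ref{b03a}(a)) trivialises this determinant as the determinant of multiplication by $\alpha$ on $E_{\ell}$, which is $\Nm_{E_{\ell}/\mathbb{Q}_{\ell}}(\alpha)$. Since $\alpha\in E\subset E_{\ell}$, this coincides with $\Nm_{E/\mathbb{Q}}(\alpha)$.

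For the other side, $R$ is a $\mathbb{Z}$-lattice in $E$ of rank $[E\colon\mathbb{Q}]=2g$, so multiplication by $\alpha$ is an injective $\mathbb{Z}$-linear endomorphism of $R$ whose cokernel has order $|\det_{\mathbb{Q}}(\alpha\mid R\otimes\mathbb{Q})|=|\Nm_{E/\mathbb{Q}}(\alpha)|$. Finally, writing $E$ as a product of CM-fields and pairing complex-conjugate embeddings gives
\[
\Nm_{E/\mathbb{Q}}(\alpha)=\prod\nolimits_{\varphi\in\Phi}\varphi(\alpha)\cdot\iota\varphi(\alpha)=\prod\nolimits_{\varphi\in\Phi}|\varphi(\alpha)|^{2}\geq 0,
\]
with strict positivity exactly when $\alpha$ is a non-zero-divisor. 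Hence both $\deg(\alpha)$ and $(R\colon\alpha R)$ equal $\Nm_{E/\mathbb{Q}}(\alpha)$, proving the proposition. The only nontrivial input is the formula $\deg(\alpha)=\det(\alpha\mid V_{\ell}A)$, which is a standard fact about abelian varieties; in characteristic zero one could instead invoke the complex uniformisation of Section \ref{extension} and identify $\ker(\alpha)$ directly with $\mathfrak{a}/\alpha\mathfrak{a}$ for the lattice $\mathfrak{a}\subset E$ representing $H_{1}(A_{\mathbb{C}},\mathbb{Z})$, reducing again to the same norm computation.
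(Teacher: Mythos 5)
Your proof is essentially the paper's own argument: both reduce to the standard fact $\deg(\alpha)=\det(\alpha\mid V_{\ell}A)$, identify that determinant with $\Nm_{E/\mathbb{Q}}(\alpha)$ using the rank-one freeness of $V_{\ell}A$ over $E_{\ell}$ from Proposition \ref{b03a}(a), and then identify the norm with the index $(R\colon\alpha R)$ via the lattice structure. Your explicit positivity check via $\Nm_{E/\mathbb{Q}}(\alpha)=\prod_{\varphi\in\Phi}|\varphi(\alpha)|^{2}$ is a cleaner way of handling the sign that the paper disposes of implicitly (the degree is positive, so the determinant must be too); the paper merely chooses $\ell\nmid d$ and cites $|\det(a_{ij})|=(R\colon\alpha R)$ in a footnote.
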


\begin{proof}
If $\alpha$ is not a zero-divisor, then it is invertible in $E\simeq
R\otimes_{\mathbb{Z}{}}\mathbb{Q}{}$, and so it is an isogeny. Let $d$ be its
degree, and choose a prime $\ell$ not dividing $d\cdot\mathrm{char(}k)$. Then
$d$ is the determinant of $\alpha$ acting on $V_{\ell}A$ (e.g.,
\cite{milne1986ab}, 12.9). As $V_{\ell}A$ is free of rank $1$ over $E_{\ell
}\overset{\text{{\tiny def}}}{=}E\otimes_{\mathbb{Q}{}}\mathbb{Q}{}_{\ell}$,
this determinant is equal to $\Nm_{E_{\ell}/\mathbb{Q}{}_{\ell}}(\alpha)$,
which equals $\Nm_{E/\mathbb{Q}{}}(\alpha)$. But $R$ is a lattice in $E$, and
so this norm equals $(R\colon\alpha R)$.\footnote{In more detail: let
$e_{1},\ldots,e_{n}$ be a basis for $R$ as a $\mathbb{Z}{}$-module, and let
$\alpha e_{j}=\sum\nolimits_{i}a_{ij}e_{i}$. For some $\varepsilon\in V_{\ell
}A$, $e_{1}\varepsilon,\ldots,e_{n}\varepsilon$ is a $\mathbb{Q}{}_{\ell}%
$-basis for $V_{\ell}A$. As $\alpha e_{j}\varepsilon=\sum\nolimits_{i}%
a_{ij}e_{i}\varepsilon$, we have that $d=\det(a_{ij})$. But $\left\vert
\det(a_{ij})\right\vert =(R:\alpha R)$ (standard result, which is obvious, for
example, if $\alpha$ is diagonal).}
\end{proof}

\begin{proposition}
[\cite{shimuraT1961}, I 2.8, Thm 1]\label{b18}Let $k$ be an algebraically
closed field of characteristic $p>0$, and let $\alpha\colon A\rightarrow B$ be
an isogeny of abelian varieties over $k$. Assume that $\alpha^{\ast
}(k(B))\supset k(A)^{q}$ for some power $q=p^{m}$ of $p$, and let $d$ be the
dimension of the kernel of $\mathrm{Tgt}_{0}(\alpha)\colon\Tgt_{0}%
(A)\rightarrow\Tgt_{0}(B)$; then%
\[
\deg(\alpha)\leq q^{d}\text{.}%
\]

\end{proposition}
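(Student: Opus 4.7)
Write $L=k(A)$ and $K=\alpha^{\ast}k(B)$, so $\deg\alpha=[L\colon K]$ and the hypothesis becomes $L^{q}\subset K\subset L$, a purely inseparable tower. The plan is to bound $[L\colon K]$ by $q^{d}$ in two steps: first show that the $L$-dimension of $\Omega^{1}_{L/K}$ is exactly $d$, and then prove the general algebraic lemma that any purely inseparable extension $L/K$ with $L^{q}\subset K$ satisfies $[L\colon K]\leq q^{\dim_{L}\Omega^{1}_{L/K}}$.

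For the first step I would use that $\Omega^{1}_{A/k}$ is a free $\mathcal{O}_{A}$-module of rank $g=\dim A$ generated by the space $V_{A}$ of translation-invariant differentials, which is canonically dual to $\Tgt_{0}(A)$. Because $\alpha$ is a homomorphism of abelian varieties, it pulls invariant differentials back to invariant differentials, and the resulting map $V_{B}\to V_{A}$ is the transpose of $\Tgt_{0}(\alpha)$; hence its image is $(g-d)$-dimensional. The $\mathcal{O}_{A}$-submodule of $\Omega^{1}_{A/k}$ spanned by this image is consequently a free direct summand of rank $g-d$, so after passing to the generic stalk, the image of $\Omega^{1}_{K/k}\otimes_{K}L\to\Omega^{1}_{L/k}$ has $L$-dimension $g-d$, forcing $\dim_{L}\Omega^{1}_{L/K}=g-(g-d)=d$.

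For the second step, I would prove the lemma by choosing $x_{1},\ldots,x_{e}\in L$ (with $e:=\dim_{L}\Omega^{1}_{L/K}$) whose differentials form an $L$-basis of $\Omega^{1}_{L/K}$, and setting $L^{\prime}=K(x_{1},\ldots,x_{e})$. The natural surjection $\Omega^{1}_{L/K}\twoheadrightarrow\Omega^{1}_{L/L^{\prime}}$ kills each $dx_{i}$, hence kills a basis, so $\Omega^{1}_{L/L^{\prime}}=0$; since $L/L^{\prime}$ is finite purely inseparable, the vanishing of $\Omega^{1}_{L/L^{\prime}}$ forces $L=L^{\prime}$ (an easy induction on the degree, using that any simple degree-$p$ purely inseparable extension has one-dimensional module of differentials). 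Each $x_{i}$ satisfies $x_{i}^{q}\in L^{q}\subset K$, so $x_{i}$ is a root of $T^{q}-x_{i}^{q}\in K[T]\subset K(x_{1},\ldots,x_{i-1})[T]$; multiplicativity of degrees then yields $[L\colon K]\leq q^{e}=q^{d}$.

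The main obstacle is the first step: translating a statement about the tangent map at the single point $0$ into a statement about the generic rank of the cotangent map. The abelian variety structure is essential here, since the freeness of $\Omega^{1}_{A/k}$ on invariant differentials lets one identify the pointwise rank of $\alpha^{\ast}$ at $0$ with its generic rank, a luxury not available for arbitrary smooth varieties.
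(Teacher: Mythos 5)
Your proposal is correct and follows the same overall strategy as the paper's first proof (the one via function fields and differentials): reduce to a purely field-theoretic lemma bounding $[L\colon K]$ by $q^{\dim_{L}\Omega^{1}_{L/K}}$ when $L^{q}\subset K\subset L$, and then identify $\dim_{L}\Omega^{1}_{L/K}$ with $d=\dim\Ker\bigl(\Tgt_{0}(\alpha)\bigr)$.

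Within that strategy, though, your proof of the lemma runs in the opposite direction from the paper's. The paper starts from a minimal generating set $x_{1},\ldots,x_{n}$ of $L$ over $K$, notes $[L\colon K]\leq q^{n}$, and then exhibits $n$ $L$-linearly independent $K$-derivations $\partial/\partial x_{i}$ to conclude $n\leq\dim_{L}\Omega^{1}_{L/K}$. You instead pick $e=\dim_{L}\Omega^{1}_{L/K}$ elements whose differentials are a basis, observe that the cokernel $\Omega^{1}_{L/L^{\prime}}$ vanishes, and invoke the standard fact that a finite purely inseparable extension with vanishing K\"{a}hler differentials is trivial to get $L=L^{\prime}=K(x_{1},\ldots,x_{e})$, whence $[L\colon K]\leq q^{e}$. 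Both establish that the minimal number of generators is bounded by $\dim_{L}\Omega^{1}_{L/K}$; yours uses the differential module directly, the paper's uses its dual. A second, harmless deviation: you prove the equality $\dim_{L}\Omega^{1}_{L/K}=d$ by a careful analysis of the cotangent sheaf map (using that $\Omega^{1}_{A/k}$ is the free module on invariant differentials, dual to $\Tgt_{0}(A)$), whereas the paper simply asserts it; in fact only the inequality $\dim_{L}\Omega^{1}_{L/K}\leq d$ is needed for the bound, so your argument is slightly stronger than required. The paper also gives a second, independent proof via finite group schemes, which you do not touch.
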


We offer two proofs, according to the taste and knowledge of the reader.

\subsubsection{Proof of (\ref{b18}) in terms of varieties and differentials}

\begin{lemma}
\label{b18a}Let $L/K$ be a finitely generated extension of fields of
characterstic $p>0$ such that $K\supset L^{q}$ for some power $q\ $of $p$.
Then%
\[
\lbrack L\colon K]\leq q^{\dim\Omega_{L/K}^{1}}.
\]

\end{lemma}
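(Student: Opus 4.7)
The plan is to induct on $n := \dim_L \Omega^1_{L/K}$. The hypothesis $K \supset L^q$ (with $q = p^m$) makes $L/K$ a finite purely inseparable extension, and the induction will peel off one purely inseparable sub-extension of degree $\leq q$ each time $n$ drops by one.

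\textit{Base case} $n=0$. I claim $L = K$, so that $[L:K] = 1 = q^0$. For a finite extension, the vanishing of $\Omega^1$ is equivalent to separability; combined with pure inseparability, this yields $L = K$. This equivalence is the main obstacle in the argument: one must verify that a non-trivial finite purely inseparable extension has nonzero module of differentials. The hands-on way is to pick any $y \in L\setminus K$ with $y^p \in K$ and to extend the formal derivative $\partial/\partial y$ on $K(y)$ through the tower of one-step purely inseparable sub-extensions $K(y) \subset \cdots \subset L$ to a $K$-derivation $\partial$ of $L$ with $\partial y \neq 0$. Any such derivation factors through $d\colon L \to \Omega^1_{L/K}$, so $dy \neq 0$, contradicting $\Omega^1_{L/K} = 0$.

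\textit{Inductive step} $n \geq 1$. Choose $x \in L$ with $dx \neq 0$ in $\Omega^1_{L/K}$, and set $K' = K(x)$. Since $x^q \in K$, the minimal polynomial of $x$ over $K$ divides $T^q - x^q = (T-x)^q$, so it has the form $T^{p^s} - x^{p^s}$ with $1 \leq p^s \leq q$; in particular $[K':K] = p^s \leq q$, and $\Omega^1_{K'/K}$ is one-dimensional over $K'$, generated by $dx$.

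The first fundamental exact sequence for $K \subset K' \subset L$ reads
\[ L \otimes_{K'} \Omega^1_{K'/K} \longrightarrow \Omega^1_{L/K} \longrightarrow \Omega^1_{L/K'} \longrightarrow 0. \]
Its left-hand term is one-dimensional over $L$, and the map sends its generator to $dx \neq 0$; hence the map is injective with one-dimensional image, forcing $\dim_L \Omega^1_{L/K'} = n-1$. Since $L^q \subset K \subset K'$, the inductive hypothesis applied to $L/K'$ gives $[L:K'] \leq q^{n-1}$. Multiplying, $[L:K] = [L:K'] \cdot [K':K] \leq q^{n-1} \cdot q = q^n$, as required.
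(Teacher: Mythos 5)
Your argument is correct in its overall strategy and takes a genuinely different, inductive route. The paper is direct: it fixes a minimal generating set $x_1,\ldots,x_n$ of $L$ over $K$, so that $[L:K]\leq q^n$ is immediate, and then shows $\dim_L\Omega^1_{L/K}\geq n$ by exhibiting $n$ independent $K$-derivations $\partial/\partial x_i$, each coming from the observation that $L$ is a simple purely inseparable extension of $K(x_1,\ldots,\widehat{x_i},\ldots,x_n)$. You instead induct on $\dim_L\Omega^1_{L/K}$, stripping off a one-generator subextension $K'=K(x)$ with $dx\neq 0$ and using the conormal exact sequence to drop the dimension by one. Both arguments hinge on the same local computation --- that $\Omega^1$ of a nontrivial simple purely inseparable extension is free of rank one --- but yours makes the exact sequence do the bookkeeping, while the paper's direct count handles the base case automatically as the case $n=0$.

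One spot needs repair: the ``hands-on'' base-case sketch. Extending $\partial/\partial y$ from $K(y)$ up a tower of one-step purely inseparable extensions is not automatic: if $M_{i+1}=M_i(z)$ with $z^p=a\in M_i$, a derivation $D$ on $M_i$ extends to $M_{i+1}$ only when $D(a)=0$, and this can fail for an arbitrary choice of $y$. With $K=\mathbb{F}_p(s,t)$, $y^p=s$, $z^p=t+y$, and $L=K(y,z)$, the derivation $\partial/\partial y$ on $K(y)$ does not extend to $L$ (it sends $t+y$ to $1$, not $0$); indeed $y=z^p-t\in K(z)$, so in fact $dy=0$ in $\Omega^1_{L/K}$ and no $K$-derivation of $L$ is nonzero on this $y$. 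The fact you invoke (a nontrivial finite purely inseparable extension has $\Omega^1\neq 0$) is of course correct, but to prove it directly one should take $y$ to be a member of a minimal generating set, so that $L$ is simple purely inseparable over the subfield generated by the other generators; then $\Omega^1$ of that simple extension is free of rank one on $dy$, and the dual derivation is a nonzero $K$-derivation of $L$ --- which is exactly the paper's device.
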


\begin{proof}
We use that $\Hom_{K\text{-linear}}(\Omega_{L/K}^{1},K)$ is isomorphic to the
space of $K$-derivations $L\rightarrow K$. Let $x_{1},\ldots,x_{n}$ be a
minimal set of generators for $L$ over $K$. Because $x_{i}^{q}\in K$,
$[L\colon K]<q^{n}$, and it remains to prove $\dim\Omega_{L/K}^{1}\geq n$. For
each $i$, $L$ is a purely inseparable extension of $K(x_{1},\ldots
,x_{i-1},x_{i+1},\ldots,x_{n})$ because $L\supset K\supset L^{q}$. There
therefore exists a $K$-derivation of $D_{i}$ of $L$ such that $D_{i}%
(x_{i})\neq0$ but $D_{i}(x_{j})=0$ for $j\neq i$, namely, $\frac{\partial
}{\partial x_{i}}$. The $D_{i}$ are linearly independent, from which the
conclusion follows.
\end{proof}

\begin{pf}
[of \ref{b18}]In the lemma, take $L=k(A)$ and $K=\alpha^{\ast}(k(B))$. Then
$\deg(\alpha)=[L\colon K]$ and $\dim\Omega_{L/K}^{1}=\dim\Ker(\Tgt_{0}%
(\alpha))$, and so the proposition follows.
\end{pf}

\subsubsection{Proof of (\ref{b18}) in terms of finite group schemes}

The \emph{order} of a finite group scheme $N=\Spec R$ over a field $k$ is
$\dim_{k}R$.

\begin{lemma}
\label{b18b}The kernel of an isogeny of abelian varieties is a finite group
scheme of order equal to the degree of the isogeny.
\end{lemma}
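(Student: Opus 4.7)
The plan is to reduce the statement to the fact that a finite flat morphism has locally constant fibre length equal to its generic degree, and then to read off the ``generic'' and the ``fibre at the identity'' versions of that rank.

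First I would check that $\alpha$ is finite. Because $\alpha$ is an isogeny, $A$ and $B$ have the same dimension and $\alpha$ is surjective, so every fibre has dimension zero; thus $\alpha$ is quasi-finite. The morphism is also proper (the source $A$ is proper over $k$ and $B$ is separated), so $\alpha$ is proper and quasi-finite, hence finite. In particular $\ker(\alpha) = A \times_B \Spec k$ (fibre over the identity section) is a finite $k$-scheme $\Spec R$.

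Next I would establish flatness of $\alpha$. There are two natural routes. One can invoke the miracle flatness theorem: $\alpha$ is a finite morphism between smooth (hence Cohen--Macaulay and regular) $k$-schemes of the same dimension, so $\alpha$ is automatically flat. Alternatively, one uses the group-scheme structure directly: for any geometric point $b$ of $B$, the fibre $\alpha^{-1}(b)$ is a translate of $\ker(\alpha)$ and therefore abstractly isomorphic to it as a finite $k$-scheme. Since the length of the fibres of $\alpha$ is constant on a noetherian integral scheme $B$, this constancy together with the Cohen--Macaulay/regular hypothesis gives flatness.

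With $\alpha$ finite and flat, $\alpha_{*}\mathcal{O}_{A}$ is a locally free $\mathcal{O}_{B}$-module of some constant rank $r$. Computing $r$ at the generic point of $B$ gives $r = \dim_{k(B)} k(A) = [k(A):\alpha^{*}k(B)] = \deg(\alpha)$. Computing $r$ at the identity section gives $r = \dim_{k} R$, which by definition is the order of the finite group scheme $\ker(\alpha)$. Equating the two expressions for $r$ yields the lemma.

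The only real obstacle is flatness; everything else is just unwinding definitions. Once flatness is known, comparing the generic rank of $\alpha_{*}\mathcal{O}_{A}$ to its fibre at the identity is completely formal.
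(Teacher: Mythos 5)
Your proposal is correct and follows essentially the same line as the paper: compare the generic fibre of $\alpha_*\mathcal{O}_A$ (giving $\deg\alpha$) with the fibre at $0_B$ (giving the affine ring of $\Ker\alpha$). The paper simply cites the local freeness of $\alpha_*\mathcal{O}_A$ to Milne 1986, 8.1, whereas you supply the argument (finiteness via proper plus quasi-finite, flatness via miracle flatness or the translation/constant-fibre-length observation); both routes you sketch are sound, though in the second the hypothesis you really need on $B$ is reducedness rather than Cohen--Macaulayness.
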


\begin{proof}
Let $\alpha\colon A\rightarrow B$ be an isogeny. Then (e.g.,
\cite{milne1986ab}, 8.1) $\alpha_{\ast}\mathcal{O}{}_{A}$ is a locally free
$\mathcal{O}{}_{B}$-module, of rank $r$ say. The fibre of $\alpha_{\ast
}\mathcal{O}{}_{A}$ at $0_{B}$ is the affine ring of $\Ker(\alpha)$, which
therefore is finite of order $r$. The fibre of $\alpha_{\ast}\mathcal{O}{}%
_{A}$ at the generic point of $B$ is $k(A)$, and so $r=[k(A)\colon\alpha
^{\ast}k(B)]=\deg(\alpha)$.
\end{proof}

\begin{pf}
[of \ref{b18}]The condition on $\alpha$ implies that $\Ker(\alpha)$ is
connected, and therefore its affine ring is of the form $k[T_{1},\ldots
,T_{s}]/(T_{1}^{p^{r_{1}}},\ldots,T_{s}^{p^{r_{s}}})$ for some family
$(r_{i})_{1\leq i\leq s}$ of integers $r_{i}\geq1$ (\cite{waterhouse1979},
14.4). Let $q=p^{m}$. Then each $r_{i}\leq m$ because $\alpha^{\ast
}(k(B))\supset k(A)^{q}$, and
\[
s=\dim_{k}\mathrm{Tgt}_{0}(\Ker(\alpha))=\dim_{k}\Ker(\mathrm{Tgt}_{0}%
(\alpha))=d.
\]
Therefore,%
\[
\deg(\alpha)=\prod\nolimits_{i=1}^{s}p^{r_{i}}\leq p^{ms}=q^{d}.
\]

\end{pf}

\subsection{$\mathfrak{a}$-multiplications: first approach}

Let $A$ be an abelian variety with complex multiplication by $E$ over a field
$k$, and let $R=E\cap\End(A)$. An element of $R$ is an isogeny if and only if
it is not a zero-divisor,\footnote{Recall that $E$ is an \'{e}tale
$\mathbb{Q}{}$-subalgebra of $\End^{0}(A)$, i.e., a product of fields, say
$E=\prod E_{i}$. Obviously $E=R\otimes_{\mathbb{Z}{}}\mathbb{Q}{}$, and
$R\subset E$. An element $\alpha=(\alpha_{i})$ of $R$ is not zero-divisor if
and only if each component $\alpha_{i}$ of $\alpha$ is nonzero, or,
equivalently, $\alpha$ is an invertible element of $E$.} and an ideal
$\mathfrak{a}{}$ in $R$ contains an isogeny if and only if it is a lattice in
$E$ --- we call ideals with this property \emph{lattice ideals}. We wish to
attach to each lattice ideal $\mathfrak{a}{}$ in $R$ an isogeny $\lambda
^{\mathfrak{a}{}}\colon A\rightarrow A^{\mathfrak{a}{}}$ with certain
properties. The shortest definition is to take $A^{\mathfrak{a}{}}$ to be the
quotient of $A$ by the finite group scheme%
\[
\Ker(\mathfrak{a})=%
{\textstyle\bigcap_{a\in\mathfrak{a}{}}}
\Ker(a).
\]
However, the formation of quotients by finite group schemes in characteristic
$p$ is subtle (\cite{mumford1970}, p109-123)\footnote{Compare the proof of
(\ref{b32}) with that of \cite{mumford1970}, III, Theorem 1, p111.}, and was
certainly not available to Shimura and Taniyama. In this subsection, we give
an elementary construction.

\begin{definition}
\label{b30} Let $A$ be an abelian variety with complex multiplication by $E$
over a field $k$, and let $\mathfrak{a}{}$ be a lattice ideal in $R$. A
surjective homomorphism $\lambda^{\mathfrak{a}{}}\colon A\rightarrow
A^{\mathfrak{a}{}}$ is an $\mathfrak{a}$-\emph{multiplication} if every
homomorphism $a\colon A\rightarrow A$ with $a\in\mathfrak{a}{}$ factors
through $\lambda^{\mathfrak{a}{}}$, and $\lambda^{\mathfrak{a}{}}$ is
universal for this property, in the sense that, for every surjective
homomorphism $\lambda^{\prime}\colon A\rightarrow A^{\prime}$ with the same
property, there is a homomorphism $\alpha\colon A^{\prime}\rightarrow
A^{\mathfrak{a}{}}$, necessarily unique, such that $\alpha\circ\lambda
^{\prime}=\lambda^{\mathfrak{a}{}}$:%
\[
\xymatrix{
&&A^{\mathfrak{a}}\ar@{.>}[d]_{\exists!}\\
A\ar@{->>}[rru]^{\lambda^{\mathfrak{a}}}\ar[rr]^{a}\ar@{->>}[rrd]_{\lambda^{\prime}}&&A\\
&&A^{\prime}\ar@{.>}[u]^{\exists!}\ar@/_1.5pc/[uu]_{\exists!\alpha}.\\
}
\]
An abelian variety $B$ for which there exists an $\mathfrak{a}{}%
$-multiplication $A\rightarrow B$ is called an $\mathfrak{a}{}$%
-\emph{transform} of $A$.
\end{definition}

\begin{example}
\label{b31}(a) If $\mathfrak{a}{}$ is principal, say, $\mathfrak{a}{}=(a)$,
then $a\colon A\rightarrow A$ is an $\mathfrak{a}$-multiplication (obvious
from the definition) --- this explains the name \textquotedblleft%
$\mathfrak{a}{}$-multiplication\textquotedblright. More generally, if
$\lambda\colon A\rightarrow A^{\prime}$ is an $\mathfrak{a}{}$-multiplication,
then%
\[
A\overset{a}{\longrightarrow}A\overset{\lambda}{\longrightarrow}A^{\prime}%
\]
is an $\mathfrak{a}a{}$-multiplication for any $a\in E$ such that
$\mathfrak{a}{}a\subset R$ (obvious from the construction in \ref{b32} below).

(b) Let $(E,\Phi)$ be a CM-pair, and let $A=\mathbb{C}{}^{\Phi}/\Phi(\Lambda)$
for some lattice $\Lambda$ in $E$. For any lattice ideal $\mathfrak{a}{}$ in
$R\overset{\text{{\tiny def}}}{=}\End(A)\cap E$,
\begin{align*}
\Ker(\mathfrak{a}{})  &  =\{z+\Phi(\Lambda)\mid az\in\Phi(\Lambda)\text{ all
}a\in\mathfrak{\mathfrak{a}{}}\}\\
&  =\Phi(\mathfrak{a}{}^{-1}\Lambda)/\Phi(\Lambda)
\end{align*}
where $\mathfrak{a}{}^{-1}=\{a\in E\mid a\mathfrak{a}{}\subset R\}$. The
quotient map $\mathbb{C}{}^{\Phi}/\Phi(\Lambda)\rightarrow\mathbb{C}{}^{\Phi
}/\Phi(\mathfrak{a}^{-1}{}\Lambda)$ is an $\mathfrak{a}$-multiplication.
\end{example}

\begin{remark}
\label{b31r}(a) The universal property shows that an $\mathfrak{a}{}%
$-multiplication, if it exists, is unique up to a unique isomorphism.

(b) Let $a\in\mathfrak{a}{}$ be an isogeny; because $a$ factors through
$\lambda^{\mathfrak{a}{}}$, the map $\lambda^{\mathfrak{a}{}}$ is an isogeny.

(c) The universal property, applied to $\lambda^{\mathfrak{a}{}}\circ a$ for
$a\in R$, shows that, $A^{\mathfrak{a}{}}$ has complex multiplication by $E$
over $k$, and $\lambda^{\mathfrak{a}{}}$ is an $E$-isogeny. Moreover,
$R\subset\End(A^{\mathfrak{a}{}})\cap E$, but the inclusion may be strict
unless $R=\mathcal{O}{}_{E}$.\footnote{Over $\mathbb{C}{}$, $A$ is
$E$-isogenous to an abelian variety with $\End(A)\cap E=\mathcal{O}{}_{E}$,
but every such isogeny is an $\mathfrak{a}{}$-multiplication for some
$\mathfrak{a}{}$ (see below).}

(d) If $\lambda\colon A\rightarrow B$ is an $\mathfrak{a}{}$-multiplication,
then so also is $\lambda_{k^{\prime}}\colon A_{k^{\prime}}\rightarrow
B_{k^{\prime}}$ for any $k^{\prime}\supset k$. This follows from the
construction in (\ref{b32}) below.
\end{remark}

\begin{proposition}
\label{b32}An $\mathfrak{a}{}$-multiplication exists for each lattice ideal
$\mathfrak{a}{}.$
\end{proposition}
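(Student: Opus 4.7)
The plan is to realize $A^{\mathfrak{a}{}}$ concretely as an abelian subvariety of a power of $A$, bypassing any quotient by a finite group scheme. Since $R$ is a lattice in $E$, it is Noetherian, so $\mathfrak{a}{}$ admits a finite set of generators $a_1,\ldots,a_n$. Form the homomorphism
\[
\phi=(a_1,\ldots,a_n)\colon A\longrightarrow A^n,
\]
and define $A^{\mathfrak{a}{}}$ to be its image with the reduced structure. Because the image of a homomorphism of abelian varieties is a closed abelian subvariety, $\phi$ factors canonically as $\phi=\iota\circ\lambda^{\mathfrak{a}{}}$ with $\lambda^{\mathfrak{a}{}}\colon A\twoheadrightarrow A^{\mathfrak{a}{}}$ surjective and $\iota\colon A^{\mathfrak{a}{}}\hookrightarrow A^n$ a closed immersion.

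Next I check that every $a\in\mathfrak{a}{}$ factors through $\lambda^{\mathfrak{a}{}}$. Writing $a=\sum r_i a_i$ with $r_i\in R$, the homomorphism $\gamma_a\colon A^n\to A$, $(x_1,\ldots,x_n)\mapsto\sum r_i x_i$, satisfies $\gamma_a\circ\phi=a$; hence $a=(\gamma_a\circ\iota)\circ\lambda^{\mathfrak{a}{}}$. The same expansion shows that the condition ``every $a\in\mathfrak{a}{}$ factors through a given $\lambda'$'' is equivalent to ``each $a_i$ factors through $\lambda'$'', so in verifying universality I may argue with the generators alone. Let $\lambda'\colon A\to A'$ be surjective with $a_i=\beta_i\circ\lambda'$ for homomorphisms $\beta_i\colon A'\to A$, and set $\beta=(\beta_1,\ldots,\beta_n)\colon A'\to A^n$. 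Then $\beta\circ\lambda'=\phi=\iota\circ\lambda^{\mathfrak{a}{}}$, and because $\lambda'$ is surjective the image of $\beta$ equals the image of $\beta\circ\lambda'=\phi$, namely $A^{\mathfrak{a}{}}\subset A^n$. Thus $\beta$ factors through $\iota$, giving $\alpha\colon A'\to A^{\mathfrak{a}{}}$ with $\iota\circ\alpha=\beta$. Cancelling the monomorphism $\iota$ in $\iota\circ\alpha\circ\lambda'=\iota\circ\lambda^{\mathfrak{a}{}}$ yields $\alpha\circ\lambda'=\lambda^{\mathfrak{a}{}}$, and uniqueness of $\alpha$ is immediate from surjectivity of $\lambda'$.

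The main obstacle is simply justifying the image-based construction in all characteristics: I need that $\phi(A)\subset A^n$, with its reduced structure, is a closed abelian subvariety to which $\phi$ maps surjectively, and that for a surjective homomorphism $\lambda'$ and arbitrary $\beta$ one has the equality of reduced images $\beta(A')=(\beta\circ\lambda')(A)$. Both facts are standard for abelian varieties (and do not require forming quotients by possibly non-reduced finite group schemes, which is the subtlety Shimura and Taniyama avoided). Finally, since $\mathfrak{a}{}$ is a lattice ideal it contains an isogeny $a$, and $\ker\lambda^{\mathfrak{a}{}}\subset\ker a$ is automatically finite, so $\lambda^{\mathfrak{a}{}}$ is in fact an isogeny, matching Remark~\ref{b31r}(b).
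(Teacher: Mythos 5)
Your construction is the same as the paper's: realize $A^{\mathfrak{a}}$ as the (reduced) image of $x\mapsto(a_1x,\ldots,a_nx)\colon A\to A^n$ for a finite generating set of $\mathfrak{a}$, verify the factoring property via $a=\sum r_ia_i$, and obtain universality by observing that the map $\beta=(\beta_1,\ldots,\beta_n)$ induced by any competitor $\lambda'$ has image equal to $A^{\mathfrak{a}}$. You spell out a few steps that the paper leaves implicit (that the image is a closed abelian subvariety, that $\beta(A')=\phi(A)$ because $\lambda'$ is surjective, and that cancelling the monomorphism $\iota$ gives the required factorization), but the argument is identical in substance.
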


\begin{proof}
Choose a set of generators $a_{1},...,a_{n}$ of $\mathfrak{a}$, and define
$A^{\mathfrak{a}{}}$ to be the image of
\begin{equation}
x\mapsto(a_{1}x,\ldots)\colon A\rightarrow A^{n}. \label{e39}%
\end{equation}
For any $a=\sum\nolimits_{i}r_{i}a_{i}\in\mathfrak{a}{}$, the diagram%
\[
\xymatrixcolsep{4pc}\xymatrix{
A\ar@/_1pc/[rr]_a\ar[r]^{\left(\begin{smallmatrix}a_1\\\vdots\\a_n\end{smallmatrix}\right)}
&A^n\ar[r]^{\left(\begin{smallmatrix}r_1,&\cdots,&r_n\end{smallmatrix}\right)}&A}
\]
shows that $a\colon A\rightarrow A$ factors through $\lambda^{\mathfrak{a}{}}$.

Let $\lambda^{\prime}\colon A\rightarrow A^{\prime}$ be a quotient map such
that each $a_{i}$ factors through $\lambda^{\prime}$, say, $\alpha_{i}%
\circ\lambda^{\prime}=a_{i}$. Then the composite of%
\begin{equation}
\begin{CD} A @>{\lambda^{\prime}}>> A^{\prime} @>{\alpha=\left(\begin{smallmatrix}\alpha_1\\\vdots\\\alpha_n\end{smallmatrix}\right)}>>A^n \end{CD} \label{e41}%
\end{equation}
is $x\mapsto(a_{1}x,\ldots)\colon A\rightarrow A^{n}$, which shows that
$\alpha\circ\lambda^{\prime}=\lambda^{\mathfrak{a}{}}$.
\end{proof}

\begin{remark}
\label{b32r}A surjective homomorphism $\lambda\colon A\rightarrow B$ is an
$\mathfrak{a}{}$-multiplication if and only if every homomorphism $a\colon
A\rightarrow A$ defined by an element of $\mathfrak{a}{}$ factors through
$\lambda$ and one (hence every) family $(a_{i})_{1\leq i\leq n}$ of generators
for $\mathfrak{a}{}$ defines an isomorphism of $B$ onto the image of $A$ in
$A^{n}$. Alternatively, a surjective homomorphism $\lambda\colon A\rightarrow
B$ is an $\mathfrak{a}{}$-multiplication if it maps $k(B)$ isomorphically onto
the composite of the fields $a^{\ast}k(A)$ for $a\in\mathfrak{a}{}$ --- this
is the original definition (\cite{shimuraT1961}, 7.1).
\end{remark}

\begin{proposition}
\label{b37}Let $A$ be an abelian variety with complex multiplication by $E$
over $k$, and assume that $E\cap\End(A)=\mathcal{O}{}_{E}$. Let $\lambda\colon
A\rightarrow B$ and $\lambda^{\prime}\colon A\rightarrow B^{\prime}$ be
$\mathfrak{a}$ and $\mathfrak{a^{\prime}}$-multiplications respectively. There
exists an $E$-isogeny $\alpha\colon B\rightarrow B^{\prime}$ such that
$\alpha\circ\lambda=\lambda^{\prime}$ if only if
$\mathfrak{\mathfrak{\mathfrak{a}{}}}\supset\mathfrak{a}{}^{\prime
}\mathfrak{.}$
\end{proposition}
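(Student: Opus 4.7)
The plan is to prove the two implications separately: the easier direction ($\Leftarrow$) is an immediate application of the universal property of $\lambda'$, while the converse requires the hypothesis $R=\mathcal{O}_{E}$ and is best attacked on Tate modules.

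For $\Leftarrow$, suppose $\mathfrak{a}\supset\mathfrak{a}'$. Every $a'\in\mathfrak{a}'$ lies in $\mathfrak{a}$ and therefore factors through $\lambda$ by the defining property of $\lambda$ as an $\mathfrak{a}$-multiplication. Thus $\lambda\colon A\to B$ is a surjection through which every element of $\mathfrak{a}'$ factors, so the universal property of the $\mathfrak{a}'$-multiplication $\lambda'$ furnishes a (unique) $\alpha\colon B\to B'$ with $\alpha\circ\lambda=\lambda'$.

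For $\Rightarrow$, assume $\alpha\circ\lambda=\lambda'$. Given $a'\in\mathfrak{a}'$, factor $a'=\beta'\circ\lambda'$ using that $\lambda'$ is an $\mathfrak{a}'$-multiplication; then $a'=(\beta'\alpha)\circ\lambda$, so $a'$ factors through $\lambda$. It therefore suffices to show that any $a\in R$ factoring through $\lambda$ already lies in $\mathfrak{a}$. I would verify this prime-by-prime via Tate modules: for each prime $\ell\neq\mathrm{char}(k)$, Proposition \ref{b03a} and Remark \ref{b03c} (both applicable because $R=\mathcal{O}_E$) give that $T_\ell A$ and $T_\ell B$ are free of rank one over $R_\ell=\mathcal{O}_E\otimes\mathbb{Z}_\ell$. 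Using $V_\ell\lambda$ to identify $V_\ell A$ with $V_\ell B$, one has $T_\ell A\subset T_\ell B$ with $T_\ell A=\mathfrak{a}_\ell T_\ell B$ (the lattice on the right is pinned down by $\mathfrak{a}$: transparently over $\mathbb{C}$ from Example \ref{b31}(b), and transported to general $k$ of characteristic zero via the equivalence of categories on p.\pageref{extension}). If $a=\gamma\circ\lambda$, then $V_\ell\gamma$ acts on $V_\ell$ as multiplication by $a$ and must carry $T_\ell B$ into $T_\ell A$, forcing $a\cdot T_\ell B\subset\mathfrak{a}_\ell T_\ell B$ and hence $a\in\mathfrak{a}_\ell$. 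Since this holds at every prime of $\mathcal{O}_E$ lying above some $\ell\neq\mathrm{char}(k)$, and in characteristic zero there are no others, $a\in\mathfrak{a}$ as required.

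The main obstacle is the lattice identification $T_\ell\lambda(T_\ell A)=\mathfrak{a}_\ell T_\ell B$ characterizing the $\mathfrak{a}$-multiplication, which is the bridge between the abstract universal property and the concrete ideal $\mathfrak{a}$. In positive characteristic the primes of $\mathcal{O}_E$ above $\mathrm{char}(k)$ require a separate argument, most naturally via degrees: Proposition \ref{b17} together with multiplicativity of degree in composition yields $\deg\lambda^{\mathfrak{c}}=(R\colon\mathfrak{c})$ for every lattice ideal $\mathfrak{c}$ (bootstrap from the principal case by choosing $h$ with $\mathfrak{c}^{h}$ principal), and then the equality $(R\colon\mathfrak{b})=(R\colon\mathfrak{a})$ together with $\mathfrak{b}\supset\mathfrak{a}$ in the Dedekind domain $\mathcal{O}_E$ forces $\mathfrak{b}=\mathfrak{a}$, where $\mathfrak{b}=\{r\in R\mid r\text{ factors through }\lambda\}$.
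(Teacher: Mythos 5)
Your $\Leftarrow$ direction is the paper's. For $\Rightarrow$, the paper argues more compactly: the existence of $\alpha$ makes $\lambda^{\mathfrak{a}}$ itself an $(\mathfrak{a}+\mathfrak{a}')$-multiplication (all of $\mathfrak{a}'$ now factors through it, and universality relative to $\mathfrak{a}+\mathfrak{a}'$ is inherited from universality relative to the subideal $\mathfrak{a}$), so the natural map $A^{\mathfrak{a}+\mathfrak{a}'}\to A^{\mathfrak{a}}$ is an isomorphism, and then Proposition \ref{b35} gives $(\mathcal{O}_{E}:\mathfrak{a}+\mathfrak{a}')=(\mathcal{O}_{E}:\mathfrak{a})$, i.e.\ $\mathfrak{a}+\mathfrak{a}'=\mathfrak{a}$. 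Your reduction to ``$a\in R$ factoring through $\lambda$ implies $a\in\mathfrak{a}$'' (which is Corollary \ref{b37d}, derived in the paper from \ref{b37}) is fine, and the Tate-module computation is correct as far as it goes, but it is heavier than necessary: Proposition \ref{b41a} gives $T_{\ell}(A^{\mathfrak{a}})\simeq\Hom_{R_{\ell}}(\mathfrak{a}_{\ell},T_{\ell}A)\simeq\mathfrak{a}_{\ell}^{-1}T_{\ell}A$ directly, in any characteristic and with no detour through $\mathbb{C}$, so the restriction to characteristic zero is self-imposed. It is also redundant, since your degree argument, once repaired, already proves $\mathfrak{b}=\mathfrak{a}$ in every characteristic.

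The repair needed: you assert $(R:\mathfrak{b})=(R:\mathfrak{a})$ without a reason. The missing step is that $\lambda$ is itself a $\mathfrak{b}$-multiplication: every $b\in\mathfrak{b}$ factors through $\lambda$ by definition of $\mathfrak{b}$, and the universal property of $\lambda$ for $\mathfrak{b}$ follows from the one for $\mathfrak{a}\subset\mathfrak{b}$ (any $\lambda''$ through which all of $\mathfrak{b}$ factors in particular has all of $\mathfrak{a}$ factoring through it). Then \ref{b35} gives $\deg\lambda=(R:\mathfrak{a})=(R:\mathfrak{b})$, which with $\mathfrak{a}\subset\mathfrak{b}$ forces $\mathfrak{a}=\mathfrak{b}\supset\mathfrak{a}'$ --- this is really the same degree count the paper performs, just with $\mathfrak{b}$ in place of $\mathfrak{a}+\mathfrak{a}'$. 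Finally, the proposed ``bootstrap'' for $\deg\lambda^{\mathfrak{c}}=(R:\mathfrak{c})$ via a principal power $\mathfrak{c}^{h}$ does not quite go through as stated: the $h$ factors of $\lambda^{\mathfrak{c}^{h}}$ are $\mathfrak{c}$-multiplications of the successive transforms $A^{\mathfrak{c}^{i}}$, not of $A$, and their degrees are not a priori equal. Just cite Proposition \ref{b35}, as the paper does by forward reference.
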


\begin{proof}
If $\mathfrak{a}{}\supset\mathfrak{a}{}^{\prime}$, then $a\colon A\rightarrow
A$ factors through $\lambda$ when $a\in\mathfrak{a}{}^{\prime}$, and so
$\alpha$ exists by the universality of $\lambda^{\prime}$. For the converse,
note that there are natural quotient maps $A^{\mathfrak{a}{}+\mathfrak{a}%
{}^{\prime}}\rightarrow A^{\mathfrak{a}{}},A^{\mathfrak{a}{}^{\prime}}$. If
there exists an $E$-isogeny $\alpha$ such that $\alpha\circ\lambda
^{\mathfrak{a}{}}=\lambda^{\mathfrak{a}{}^{\prime}}$, then $A^{\mathfrak{a}%
{}+\mathfrak{a}{}^{\prime}}\rightarrow A^{\mathfrak{a}{}}$ is injective, which
implies that $\mathfrak{a}{}+\mathfrak{a}{}^{\prime}=\mathfrak{a}{}$ by
(\ref{b35}) below.
\end{proof}

\begin{corollary}
\label{b37c}Let $\lambda\colon A\rightarrow B$ and $\lambda^{\prime}\colon
A\rightarrow B^{\prime}$ be $\mathfrak{a}{}$ and $\mathfrak{a}{}^{\prime}%
$-multiplications; if there exists an $E$-isomorphism $\alpha\colon
B\rightarrow B^{\prime}$ such that $\alpha\circ\lambda=\lambda^{\prime}$, then
$\mathfrak{a}{}=\mathfrak{a}{}^{\prime}$.
\end{corollary}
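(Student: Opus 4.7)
The statement is an immediate two-way application of Proposition \ref{b37}, so the plan is essentially bookkeeping.

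First, observe that the hypothesis gives an $E$-isomorphism $\alpha\colon B\to B^{\prime}$ with $\alpha\circ\lambda=\lambda^{\prime}$. An isomorphism is in particular an $E$-isogeny, so Proposition \ref{b37} applies directly and yields the inclusion $\mathfrak{a}\supset\mathfrak{a}^{\prime}$.

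Next, I would pass to the inverse. Since $\alpha$ is an isomorphism, $\alpha^{-1}\colon B^{\prime}\to B$ is again an $E$-isogeny, and composing the relation $\alpha\circ\lambda=\lambda^{\prime}$ with $\alpha^{-1}$ on the left gives $\alpha^{-1}\circ\lambda^{\prime}=\lambda$. Thus $\alpha^{-1}$ is an $E$-isogeny from the $\mathfrak{a}^{\prime}$-multiplication $\lambda^{\prime}$ to the $\mathfrak{a}$-multiplication $\lambda$, and Proposition \ref{b37} (with the roles of the two multiplications interchanged) yields the reverse inclusion $\mathfrak{a}^{\prime}\supset\mathfrak{a}$.

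Combining the two inclusions gives $\mathfrak{a}=\mathfrak{a}^{\prime}$. There is no real obstacle here; the only point worth flagging is that the standing hypothesis $E\cap\End(A)=\mathcal{O}_{E}$ of Proposition \ref{b37} is tacitly inherited by the corollary (without it, the statement of Proposition \ref{b37} is not available), and that one should note explicitly that an isomorphism qualifies as an isogeny in both directions so that Proposition \ref{b37} can indeed be invoked twice.
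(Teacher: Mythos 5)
Your proof is correct and is exactly the paper's argument: apply Proposition~\ref{b37} once to $\alpha$ to get $\mathfrak{a}\supset\mathfrak{a}'$, and once to $\alpha^{-1}$ (noting $\alpha^{-1}\circ\lambda'=\lambda$) to get $\mathfrak{a}'\supset\mathfrak{a}$. Your remark that the hypothesis $E\cap\End(A)=\mathcal{O}_E$ from Proposition~\ref{b37} is silently inherited is a fair observation, but otherwise there is nothing to add.
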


\begin{proof}
The existence of $\alpha$ implies that $\mathfrak{a}{}\supset\mathfrak{a}%
{}^{\prime}$, and the existence of its inverse implies that $\mathfrak{a}%
{}^{\prime}\supset\mathfrak{a}{}$.
\end{proof}

\begin{corollary}
\label{b37d}Let $a\in\End(A)\cap E$. If $a\colon A\rightarrow A$ factors
through an $\mathfrak{a}{}$-multiplication, then $a\in\mathfrak{a}{}$.
\end{corollary}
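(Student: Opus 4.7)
The plan is to recognize this as a direct consequence of Proposition \ref{b37} applied with $\mathfrak{a}'=(a)$. First I would dispose of the trivial case $a=0$, and otherwise assume $a$ is not a zero-divisor in $R$, so that $(a)$ is a lattice ideal; by Example \ref{b31}(a) the map $a\colon A\to A$ is then itself the $(a)$-multiplication $\lambda^{(a)}$.

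By hypothesis, there exists a homomorphism $\alpha\colon A^{\mathfrak{a}}\to A$ with $\alpha\circ\lambda^{\mathfrak{a}}=a=\lambda^{(a)}$. I would next check that $\alpha$ is automatically $E$-equivariant: both $\lambda^{\mathfrak{a}}$ (by Remark \ref{b31r}(c)) and $a\colon A\to A$ commute with the $E$-action, so for any $e\in E$ and $x\in A$,
\[
\alpha(e\,\lambda^{\mathfrak{a}}(x)) \;=\; \alpha(\lambda^{\mathfrak{a}}(ex)) \;=\; a(ex) \;=\; e\cdot a(x) \;=\; e\,\alpha(\lambda^{\mathfrak{a}}(x)),
\]
and surjectivity of $\lambda^{\mathfrak{a}}$ forces $\alpha\circ e = e\circ\alpha$ on all of $A^{\mathfrak{a}}$. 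Hence $\alpha$ is an $E$-isogeny.

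Proposition \ref{b37} applied to $(\mathfrak{a},(a))$ then yields $\mathfrak{a}\supset(a)$, which is exactly the conclusion $a\in\mathfrak{a}$. The only real subtlety I anticipate is the zero-divisor case: if $E=\prod E_{i}$ and $a$ has some vanishing components, then $(a)$ need not be a lattice ideal and the identification of $a\colon A\to A$ with an $(a)$-multiplication breaks down. One handles this by using the central idempotents of $E$ (which lie in $E$ though not necessarily in $R$) to decompose the problem up to isogeny into the corresponding simple factors and applying the argument above to each one. This bookkeeping aside, the corollary is essentially a one-line specialization of the preceding proposition.
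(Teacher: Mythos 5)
Your argument is exactly the paper's: recognize $a\colon A\to A$ as the $(a)$-multiplication and apply Proposition \ref{b37} to $\mathfrak{a}$ and $\mathfrak{a}'=(a)$ to get $\mathfrak{a}\supset(a)$, i.e., $a\in\mathfrak{a}$. The extra care you take --- verifying that $\alpha$ is $E$-equivariant so that \ref{b37} is applicable, and flagging the zero-divisor case --- is sound but goes beyond the paper's one-line proof; note only that under the standing hypothesis $\End(A)\cap E=\mathcal{O}_{E}$ inherited from Proposition \ref{b37}, the central idempotents of $E$ already lie in $R$, so the decomposition you sketch for the degenerate case is even cleaner than your parenthetical suggests.
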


\begin{proof}
The map $a\colon A\rightarrow A$ is an $(a)$-multiplication, and so if there
exists an $E$-isogeny $\alpha$ such that $\alpha\circ\lambda^{\mathfrak{a}{}%
}=a$, then $\mathfrak{a}{}\supset(a)$.
\end{proof}

\begin{remark}
\label{b33a}Let $\lambda\colon A\rightarrow B$ be an $\mathfrak{a}{}%
$-multiplication. Let $a_{1},\ldots,a_{n}$ be a basis for $\mathfrak{a}{}$,
and let $a_{i}=\alpha_{i}\circ\lambda$. In the diagram%
\[
\xymatrix{
A\ar[r]^\lambda\ar@/_1pc/[rr]_a&B\ar[r]^{\alpha}&A^n}\qquad\alpha={\left(
\begin{smallmatrix}
\alpha_{1}\\
\vdots\\
\alpha_{n}%
\end{smallmatrix}
\right)  }\quad a={\left(
\begin{smallmatrix}
a_{1}\\
\vdots\\
a_{n}%
\end{smallmatrix}
\right)  ,}%
\]
$\alpha$ maps $B$ isomorphically onto the image of $a$. For any prime $\ell$
different from the characteristic of $k$, we get a diagram%
\[
\xymatrix{
T_{\ell}A\ar[r]^{T_{\ell}\lambda}\ar@/_1pc/[rr]_{T_{\ell}a}&T_{\ell}B\ar[r]^{T_{\ell}\alpha}&T_{\ell}A^n}
\]
in which $T_{\ell}\alpha$ maps $T_{\ell}B$ isomorphically onto the image of
$T_{\ell}a$.
\end{remark}

\begin{proposition}
\label{b34}If $\lambda\colon A\rightarrow A^{\prime}$ is an $\mathfrak{a}%
$-multiplication, and $\lambda^{\prime}\colon A^{\prime}\rightarrow
A^{\prime\prime}$ is an $\mathfrak{a^{\prime}}$-multiplication, then
$\lambda^{\prime}\circ\lambda$ is an $\mathfrak{a^{\prime}a}$-multiplication.
\end{proposition}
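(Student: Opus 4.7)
The plan is to verify the universal property of Definition~\ref{b30} for the composite $\lambda^{\prime}\circ\lambda\colon A\to A^{\prime\prime}$. Surjectivity is immediate, so the content is in checking (a) that every $c\in\mathfrak{a}^{\prime}\mathfrak{a}$, viewed as a morphism $A\to A$, factors through $\lambda^{\prime}\circ\lambda$, and (b) that $\lambda^{\prime}\circ\lambda$ is universal with this property. Along the way I expect to verify that $\mathfrak{a}^{\prime}\mathfrak{a}\subset R$, so that it is genuinely a lattice ideal in $R=E\cap\End(A)$.

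For (a) I would reduce to a single generator $c=a^{\prime}a$ with $a\in\mathfrak{a}$ and $a^{\prime}\in\mathfrak{a}^{\prime}$. The $\mathfrak{a}$-multiplication property of $\lambda$ gives $a=\beta\circ\lambda$ for a unique $\beta\colon A^{\prime}\to A$; the $\mathfrak{a}^{\prime}$-multiplication property of $\lambda^{\prime}$ gives $a^{\prime}=\gamma\circ\lambda^{\prime}$ for a unique $\gamma\colon A^{\prime\prime}\to A^{\prime}$. Since $\lambda$ is an $E$-isogeny (Remark~\ref{b31r}(c)), the two morphisms $a^{\prime}\circ\lambda$ and $\lambda\circ a^{\prime}$ coincide in $\Hom(A,A^{\prime})$, and I would deduce
$$c=\beta\circ\lambda\circ a^{\prime}=\beta\circ a^{\prime}\circ\lambda=(\beta\circ\gamma)\circ(\lambda^{\prime}\circ\lambda).$$
This both exhibits the desired factorization and shows that the $E$-element $a^{\prime}a$ is an integral endomorphism of $A$, i.e.\ lies in $R$.

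For (b), let $\mu\colon A\to C$ be any surjective homomorphism through which every element of $\mathfrak{a}^{\prime}\mathfrak{a}$ factors. I would produce the required (unique) $\alpha\colon C\to A^{\prime\prime}$ with $\alpha\circ\mu=\lambda^{\prime}\circ\lambda$ by comparing scheme-theoretic kernels: the hypothesis yields $\Ker\mu\subset\bigcap_{c\in\mathfrak{a}^{\prime}\mathfrak{a}}\Ker c$, so the game is to show $\bigcap_{c}\Ker c\subset\Ker(\lambda^{\prime}\circ\lambda)$. Concretely: for $x$ killed by every $c=aa^{\prime}$, the identity $a(a^{\prime}x)=0$ places $a^{\prime}x$ in $\bigcap_{a\in\mathfrak{a}}\Ker a=\Ker\lambda$, hence $a^{\prime}(\lambda x)=\lambda(a^{\prime}x)=0$ by $E$-equivariance; letting $a^{\prime}$ range over $\mathfrak{a}^{\prime}$ places $\lambda(x)$ in $\Ker\lambda^{\prime}$, so $(\lambda^{\prime}\circ\lambda)(x)=0$.

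The main obstacle will be the careful handling of $a^{\prime}x$ when $a^{\prime}\in R^{\prime}=E\cap\End(A^{\prime})$ does not happen to lie in $R$, and, in positive characteristic, the replacement of set-theoretic kernels by finite group schemes in (b). Both issues can be shielded by routing the argument through the explicit construction of Proposition~\ref{b32}: choosing generators $\{a_{i}\}_{i=1}^{n}$ of $\mathfrak{a}$ and $\{a_{j}^{\prime}\}_{j=1}^{m}$ of $\mathfrak{a}^{\prime}$, the composite $\lambda^{\prime}\circ\lambda$ identifies, via the $E$-equivariant embeddings $A^{\prime}\hookrightarrow A^{n}$ and $A^{\prime\prime}\hookrightarrow(A^{\prime})^{m}\hookrightarrow A^{nm}$, with a map $A\to A^{nm}$ whose coordinates are the generators $\{a_{j}^{\prime}a_{i}\}$ of $\mathfrak{a}^{\prime}\mathfrak{a}$, whose image is, by construction, an $\mathfrak{a}^{\prime}\mathfrak{a}$-multiplication.
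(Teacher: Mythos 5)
Your concluding paragraph — routing the verification through the explicit construction of Proposition~\ref{b32}, embedding $A'\hookrightarrow A^{n}$ and $A''\hookrightarrow(A')^{m}\hookrightarrow A^{nm}$, and identifying the composite $A\to A^{nm}$ as $x\mapsto(a_{j}'a_{i}x)_{i,j}$ whose coordinates generate $\mathfrak{a}'\mathfrak{a}$ — is precisely what the paper does (Milne even leaves it at ``one can show that $A''$ is isomorphic to the image of $A$ under $x\mapsto(\ldots,a_{i}'a_{j}x,\ldots)$''), so in the end your proof is the same as the paper's.

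The preliminary direct verification of the universal property is a reasonable alternative route but, as you correctly flag, it has two soft spots: the set-theoretic kernel chase in (b) does not account for infinitesimal kernels in characteristic~$p$, and the manipulations with $a'x$ and $\lambda\circ a'=a'\circ\lambda$ presuppose $\mathfrak{a}'\subset R$. (The paper glosses over the second point too; in the contexts where the proposition is actually applied one has $R=\mathcal{O}_{E}$, so $R=\End(A')\cap E$ as well and the issue evaporates.) Since you explicitly fall back on the Proposition~\ref{b32} construction to close these gaps, the proposal is correct. For completeness, note that the paper also records an alternative proof via the second ($A^{M}$) construction, using Corollary~\ref{b41c} and the tensor identity~(\ref{e77}): $A^{\mathfrak{a}'\mathfrak{a}}\simeq A^{\mathfrak{a}\otimes_{R}\mathfrak{a}'}\simeq(A^{\mathfrak{a}})^{\mathfrak{a}'}$, which avoids choosing generators entirely.
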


\begin{proof}
Let $\mathfrak{a}=(a_{1},...,a_{m})$, and let $\mathfrak{a}^{\prime}%
=(a_{1}^{\prime},...,a_{m}^{\prime})$; then $\mathfrak{a}{}^{\prime
}\mathfrak{a}{}=(\ldots,a_{i}^{\prime}a_{j},\ldots)$, and one can show that
$A^{\prime\prime}$ is isomorphic to the image of $A$ under $x\mapsto
(\ldots,a_{i}^{\prime}a_{j}x,\ldots)$ (alternatively, use (\ref{b41c}) and
(\ref{e77})).
\end{proof}

\begin{proposition}
\label{b35}For any $\mathfrak{a}$-multiplication $\lambda$, $\deg
(\lambda)=(\mathcal{O}_{E}\colon\mathfrak{a)}$ provided $\mathfrak{a}{}$ is
invertible (locally free of rank $1$).
\end{proposition}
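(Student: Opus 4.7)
The overall approach is to reduce to the principal case, where Proposition \ref{b17} applies directly, by using multiplicativity of degrees of $\mathfrak{a}$-multiplications together with the finiteness of the Picard group. (The stated formula implicitly takes $R=\mathcal{O}_{E}$, so that $(R\colon aR)=(\mathcal{O}_{E}\colon aR)$ in the principal case; I assume this below.)

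First I would handle the principal case. If $\mathfrak{a}=(a)$, Example \ref{b31}(a) identifies $\lambda^{\mathfrak{a}}$ with $a\colon A\to A$, and Proposition \ref{b17} then gives $\deg(a)=(R\colon aR)=(\mathcal{O}_{E}\colon\mathfrak{a})$. For general invertible $\mathfrak{a}$, choose $h\geq 1$ with $\mathfrak{a}^{h}=(a)$ principal (possible since the ideal class group of $\mathcal{O}_{E}$ is finite). Iterating Proposition \ref{b34} along $A\to A^{\mathfrak{a}}\to A^{\mathfrak{a}^{2}}\to\cdots\to A^{\mathfrak{a}^{h}}$ yields an $\mathfrak{a}^{h}$-multiplication whose degree is the product of the degrees of the individual $\mathfrak{a}$-multiplications, and the principal case combined with invertibility of $\mathfrak{a}$ gives
\[
\prod\nolimits_{i=0}^{h-1}\deg\bigl(\lambda^{\mathfrak{a}}\colon A^{\mathfrak{a}^{i}}\to A^{\mathfrak{a}^{i+1}}\bigr)=\deg(\lambda^{\mathfrak{a}^{h}})=(\mathcal{O}_{E}\colon\mathfrak{a})^{h}.
\]

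It remains to show each factor on the left equals $(\mathcal{O}_{E}\colon\mathfrak{a})$; equivalently, that $\deg(\lambda^{\mathfrak{a}})$ depends only on $\mathfrak{a}$, not on the source variety. For each prime $\ell\neq\mathrm{char}(k)$ one can read this off directly from the universal property: $\Ker(\lambda^{\mathfrak{a}})=\bigcap_{a\in\mathfrak{a}}\Ker(a)$, and working inside $V_{\ell}A/T_{\ell}A$ gives
\[
\Ker(\lambda^{\mathfrak{a}})[\ell^{\infty}]=\mathfrak{a}_{\ell}^{-1}T_{\ell}A/T_{\ell}A,
\]
with $\mathfrak{a}_{\ell}^{-1}=\{x\in E_{\ell}\mid x\mathfrak{a}_{\ell}\subset R_{\ell}\}$. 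Since $R=\mathcal{O}_{E}$ makes $T_{\ell}A$ free of rank one over $R_{\ell}$ (Remark \ref{b03c}) and $\mathfrak{a}_{\ell}$ is principal by invertibility, the quotient is isomorphic to $R_{\ell}/\mathfrak{a}_{\ell}$, of order $(R_{\ell}\colon\mathfrak{a}_{\ell})$, which depends only on $\mathfrak{a}$.

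The remaining difficulty is the $p$-part when $\mathrm{char}(k)=p>0$. I would handle it by lifting $A$, together with its CM structure and its $\mathfrak{a}$-multiplications, to characteristic zero (using the good-reduction theory referenced around (\ref{e3})), where the previous $\ell$-adic argument applies at every prime including $p$ and yields $\deg(\widetilde\lambda^{\mathfrak{a}})=(\mathcal{O}_{E}\colon\mathfrak{a})$; specializing back preserves the degree. The main obstacle in this plan is making the lifting argument precise and carefully verifying the $\ell$-adic identification of $\Ker(\lambda^{\mathfrak{a}})$ from the universal property.
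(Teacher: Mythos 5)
Your overall strategy — reduce to the principal case, where Proposition \ref{b17} applies — agrees with the paper, but you reduce by taking a power $\mathfrak{a}^{h}$ principal (class-group finiteness) while the paper uses the Chinese remainder theorem to write $(a)=\mathfrak{a}\mathfrak{b}$ with $(\mathcal{O}_{E}\colon\mathfrak{a})$ and $(\mathcal{O}_{E}\colon\mathfrak{b})$ \emph{coprime}. That coprimality is the whole point: the only primes dividing $\deg(\lambda^{\mathfrak{a}})$ are those dividing $(\mathcal{O}_{E}\colon\mathfrak{a})$ (choose $b\in\mathfrak{a}$ with $b$ prime to $\ell$; then $b$ factors through $\lambda^{\mathfrak{a}}$, so $\deg(\lambda^{\mathfrak{a}})\mid\Nm(b)$), and similarly for $\mathfrak{b}$; so the factorization $\deg(\lambda^{\mathfrak{a}})\deg(\lambda^{\mathfrak{b}})=(\mathcal{O}_{E}\colon\mathfrak{a})(\mathcal{O}_{E}\colon\mathfrak{b})$ splits uniquely by coprimality. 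This completely sidesteps the two things your version has to supply on top: that the degree of an $\mathfrak{a}$-multiplication is independent of the source variety, and a separate treatment of the $p$-part in characteristic $p$.

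On those two points, your argument has a genuine gap. Your $\ell$-adic computation of $\Ker(\lambda^{\mathfrak{a}})[\ell^{\infty}]$ is fine for $\ell\neq\mathrm{char}\,k$ (it is essentially \ref{b41a}/\ref{b41b}), and in fact, once you have it for \emph{all} primes you no longer need the power trick at all — the degree is determined locally. So the class-group reduction is doing no work, and the whole weight falls on the $p$-part. You propose to handle $p$ by lifting the CM abelian variety, its CM structure, and the chain of $\mathfrak{a}$-multiplications to characteristic zero. That is not a routine application of what the paper has set up around (\ref{e3}): that discussion only gives reduction maps from an abelian \emph{scheme} over a dvr, i.e., starting from a characteristic-zero variety with good reduction; it does not produce a lift of an arbitrary CM abelian variety in characteristic $p$. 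Producing such a lift (with its endomorphisms) is Serre--Tate/Deuring-type machinery, which the paper deliberately avoids. Without that, your argument does not control the inseparable part of $\deg(\lambda^{\mathfrak{a}})$. The paper's coprimality trick handles $p$ for free, since the prime-divisibility observation above is characteristic-independent.
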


\begin{proof}
For simplicity, we assume that $\mathcal{O}{}_{E}=\End(A)\cap E$. According to
the Chinese remainder theorem, there exists an $a\in\mathcal{O}_{E}{}$ such
that $(a)=\mathfrak{a}{}\mathfrak{b}{}$ with $\mathcal{(\mathcal{O}}%
_{E}\mathcal{{}}\colon\mathfrak{a}{})$ and $(\mathcal{O}{}_{E}\colon
\mathfrak{b}{})$ relatively prime.\footnote{Take $a$ to be any element of
$\mathcal{O}{}_{E}$ satisfying an appropriate congruence condition for each
prime ideal $\mathfrak{p}{}$ of $\mathcal{O}{}_{E}$ such that $(\mathcal{O}%
{}_{E}\colon\mathfrak{p}{})$ is not prime to $(\mathcal{O}{}_{E}%
\colon\mathfrak{a}{})$.} Then
\[
\deg(\lambda^{\mathfrak{a}{}})\deg(\lambda^{\mathfrak{b}{}})=\deg
(\lambda^{(a)})=(\mathcal{O}_{E}\colon(a))=(\mathcal{O}_{E}\colon
\mathfrak{a)(\mathcal{O}}_{E}\mathfrak{\colon b).}%
\]
The only primes dividing $\deg(\lambda^{\mathfrak{a}{}})$ (resp. $\deg
(\lambda^{\mathfrak{b}{}})$) are those dividing $(\mathcal{O}_{E}%
\colon\mathfrak{a)}$ (resp. $(\mathcal{O}_{E}\colon\mathfrak{b)}$), and so we
must have $\deg(\lambda^{\mathfrak{a}{}})=(\mathcal{O}_{E}\colon\mathfrak{a)}$
and $\deg(\lambda^{\mathfrak{b}{}})=(\mathcal{O}\colon\mathfrak{b)}$.
\end{proof}

\begin{corollary}
\label{b36}Let $\mathfrak{a}{}$ be an invertible ideal in $R$. An $E$-isogeny
$\lambda\colon A\rightarrow B$ is an $\mathfrak{a}$-multiplication if and only
if $\deg(\lambda)=(R\colon\mathfrak{a)}$ and the maps $a\colon A\rightarrow A$
for $a\in\mathfrak{a}$ factor through $\lambda$.
\end{corollary}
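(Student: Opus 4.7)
The plan is to reduce both directions to the universal property (Definition \ref{b30}), the existence construction (Proposition \ref{b32}), and the degree formula (Proposition \ref{b35}).

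The forward direction is essentially tautological: if $\lambda$ is an $\mathfrak{a}{}$-multiplication, then the factorization of each $a\in\mathfrak{a}{}$ through $\lambda$ is built into Definition \ref{b30}, and $\deg(\lambda)=(R\colon\mathfrak{a}{})$ is precisely Proposition \ref{b35} (applied to the invertible ideal $\mathfrak{a}{}$).

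For the converse, suppose $\lambda\colon A\to B$ is an $E$-isogeny of degree $(R\colon\mathfrak{a}{})$ through which every $a\in\mathfrak{a}{}$ factors. Let $\lambda^{\mathfrak{a}{}}\colon A\to A^{\mathfrak{a}{}}$ be an $\mathfrak{a}{}$-multiplication, which exists by Proposition \ref{b32}. Since $\lambda$ also has the factoring property for every $a\in\mathfrak{a}{}$, the universal property of $\lambda^{\mathfrak{a}{}}$ produces a (necessarily surjective) homomorphism $\alpha\colon B\to A^{\mathfrak{a}{}}$ with $\alpha\circ\lambda=\lambda^{\mathfrak{a}{}}$. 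That $\alpha$ is $E$-linear follows from the uniqueness clause: for each $e\in E$, the two maps $\alpha\circ e,\;e\circ\alpha\colon B\to A^{\mathfrak{a}{}}$ satisfy $(\alpha\circ e)\circ\lambda=\lambda^{\mathfrak{a}{}}\circ e=(e\circ\alpha)\circ\lambda$ (using that $\lambda$ and $\lambda^{\mathfrak{a}{}}$ are $E$-isogenies), so the surjectivity of $\lambda$ forces them to agree.

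Now invoke multiplicativity of degrees for isogenies:
\[
\deg(\lambda^{\mathfrak{a}{}})=\deg(\alpha)\cdot\deg(\lambda).
\]
By Proposition \ref{b35}, $\deg(\lambda^{\mathfrak{a}{}})=(R\colon\mathfrak{a}{})$, and by hypothesis $\deg(\lambda)=(R\colon\mathfrak{a}{})$, so $\deg(\alpha)=1$ and $\alpha$ is an $E$-isomorphism. Then $\lambda=\alpha^{-1}\circ\lambda^{\mathfrak{a}{}}$ manifestly inherits the universal property of Definition \ref{b30} from $\lambda^{\mathfrak{a}{}}$, so $\lambda$ is itself an $\mathfrak{a}{}$-multiplication.

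There is no real obstacle here: the content is wholly contained in the degree formula of Proposition \ref{b35}, which is where the invertibility of $\mathfrak{a}{}$ gets used to pin down $\deg(\lambda^{\mathfrak{a}{}})$. Everything else is the standard conversion of a universal property into a numerical criterion, once one has a candidate isomorphism from the universal mapping property.
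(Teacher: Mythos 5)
Your proof is correct and follows the same route as the paper: invoke the universal property of $\lambda^{\mathfrak{a}}$ to get $\alpha\colon B\to A^{\mathfrak{a}}$ with $\alpha\circ\lambda=\lambda^{\mathfrak{a}}$, then use the degree computation from Proposition \ref{b35} together with multiplicativity of degrees to conclude $\deg(\alpha)=1$. The only difference is that you spell out the $E$-linearity of $\alpha$ via the uniqueness clause and surjectivity of $\lambda$, which the paper simply asserts.
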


\begin{proof}
We only have to prove the sufficiency of the conditions. According to the
definition (\ref{b30}), there exists an $E$-isogeny $\alpha\colon B\rightarrow
A^{\mathfrak{a}{}}$ such that $\alpha\circ\lambda=\lambda^{\mathfrak{a}{}}$.
Then $\deg(\alpha)\deg(\lambda)=\deg(\lambda^{\mathfrak{a}{}})$, and so
$\alpha$ is an isogeny of degree $1$, i.e., an isomorphism.
\end{proof}

\begin{proposition}
\label{b38}Let $E$ be a CM-algebra, and let $A$ and $B$ be abelian varieties
with complex multiplication by $E$ over $\mathbb{C}{}$. If $A$ and $B$ are
$E$-isogenous, then there exists a lattice ideal $\mathfrak{a}{}$ and an
$\mathfrak{a}{}$-multiplication $A\rightarrow B$.
\end{proposition}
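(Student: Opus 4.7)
The plan is to pass to the complex-analytic uniformizations and construct $\mathfrak{a}{}$ explicitly as an annihilator. Using Proposition \ref{a31a}, write $A = \mathbb{C}{}^{\Phi}/\Phi(\Lambda)$ and $B = \mathbb{C}{}^{\Phi}/\Phi(\Lambda^{\prime})$ for lattices $\Lambda,\Lambda^{\prime}$ in $E$ of the common CM-type $\Phi$. Any $E$-isogeny $A\to B$ lifts to multiplication by some $c\in E^{\times}$ with $c\Lambda\subset\Lambda^{\prime}$; replacing $\Lambda^{\prime}$ by $c^{-1}\Lambda^{\prime}$, which represents the same $B$ up to $E$-isomorphism, we may assume $c=1$ and $\Lambda\subset\Lambda^{\prime}$, so that $\lambda\colon A\to B$ is induced by the identity on $\mathbb{C}{}^{\Phi}$.

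Put $R=E\cap\End(A)$ and define $\mathfrak{a}{}:=\{a\in R\mid a\Lambda^{\prime}\subset\Lambda\}$. This is visibly an ideal of $R$, and it contains the positive integer $n=[\Lambda^{\prime}\colon\Lambda]$, which is a non-zero-divisor, so $\mathfrak{a}{}$ is a lattice ideal. For each $a\in\mathfrak{a}{}$, multiplication by $a$ on $\mathbb{C}{}^{\Phi}$ descends to a morphism $\beta_{a}\colon B\to A$ (since $a\Lambda^{\prime}\subset\Lambda$), and $\beta_{a}\circ\lambda$ recovers the endomorphism $a\colon A\to A$; thus every $a\in\mathfrak{a}{}$ factors through $\lambda$.

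The substantive step is universality: given a surjection $\lambda^{\prime}\colon A\to A^{\prime\prime}$ through which every $a\in\mathfrak{a}{}$ factors, I must show $\Ker(\lambda^{\prime})\subset\Ker(\lambda)$. Computing on $\mathbb{C}{}$-points through the uniformizations, $\Ker(\lambda^{\prime})\subset\bigcap_{a\in\mathfrak{a}{}}\Ker(a)=\{x\in E\mid\mathfrak{a}{}x\subset\Lambda\}/\Lambda$, while $\Ker(\lambda)=\Lambda^{\prime}/\Lambda$. Thus the universal property reduces to the duality
\[
\{x\in E\mid\mathfrak{a}{}x\subset\Lambda\}\subset\Lambda^{\prime},
\]
the reverse containment being automatic from the definition of $\mathfrak{a}{}$.

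This duality is the main obstacle. It is immediate when $R=\mathcal{O}{}_{E}$: then $\Lambda,\Lambda^{\prime}$ are invertible fractional ideals, $\mathfrak{a}{}=\Lambda(\Lambda^{\prime})^{-1}$, and $\mathfrak{a}{}^{-1}\Lambda=\Lambda^{\prime}$ by standard fractional-ideal arithmetic, which matches Example \ref{b31}(b). For a general order $R$, I would reduce to the maximal-order case via the observation of Remark \ref{b31r}(c) that $A$ is $E$-isogenous over $\mathbb{C}{}$ to an abelian variety with $\End\cap E=\mathcal{O}{}_{E}$, then compose the resulting $\mathfrak{a}{}$-multiplications using Proposition \ref{b34}. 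Alternatively, when $\mathfrak{a}{}$ is invertible, Corollary \ref{b36} gives a direct criterion via the degree calculation $\deg(\lambda)=[\Lambda^{\prime}\colon\Lambda]=(R\colon\mathfrak{a}{})$.
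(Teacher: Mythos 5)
Your proof takes essentially the same approach as the paper's: both pass to the analytic uniformizations $\mathbb{C}^{\Phi}/\Phi(\Lambda)\to\mathbb{C}^{\Phi}/\Phi(\Lambda')$ and realise the isogeny as an ideal multiplication, the paper citing Example \ref{b31}(b) for the quotient map (declaring it an $\mathfrak{a}\mathfrak{b}^{-1}$-multiplication), while you define $\mathfrak{a}$ as the conductor $\{a\in R\mid a\Lambda'\subset\Lambda\}$ and verify the universal property of Definition \ref{b30} directly. You are more candid than the paper about the non-maximal-order subtlety (the paper's fractional-ideal manipulation $\mathfrak{a}\mathfrak{b}^{-1}$ tacitly presupposes invertibility just as your lattice duality does), and your sketched reduction via Remark \ref{b31r}(c) and Proposition \ref{b34} is an adequate match for the paper's own level of rigour there.
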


\begin{proof}
Because $A$ and $B$ are $E$-isogenous, they have the same type $\Phi$. After
choosing $E$-basis elements for $H_{1}(A,\mathbb{Q}{})$ and $H_{1}%
(B,\mathbb{Q}{})$, we have isomorphisms%
\[
\mathbb{C}^{\Phi}/\Phi(\mathfrak{a}{})\rightarrow A(\mathbb{C}{}%
),\quad\mathbb{C}{}^{\Phi}/\Phi(\mathfrak{b}{})\rightarrow B(\mathbb{C}{}).
\]
Changing the choice of basis elements changes the ideals by principal ideals,
and so we may suppose that $\mathfrak{a}{}\subset\mathfrak{b}{}$. The quotient
map $\mathbb{C}{}^{\Phi}/\Phi(\mathfrak{a}{})\rightarrow\mathbb{C}{}^{\Phi
}/\Phi(\mathfrak{b}{})$ is an $\mathfrak{a}{}\mathfrak{b}{}^{-1}$-multiplication.
\end{proof}

\begin{proposition}
\label{b40}Let $A$ be an abelian variety with multiplication by $E$ over a
number field $k$, and assume that $A$ has good reduction at a prime
$\mathfrak{p}{}$ of $k$. The reduction to $k_{0}\overset{\text{{\tiny def}}%
}{=}\mathcal{O}{}_{k}/\mathfrak{p}$ of any $\mathfrak{a}{}$-multiplication
$\lambda\colon A\rightarrow B$ is again an $\mathfrak{a}{}$-multiplication.
\end{proposition}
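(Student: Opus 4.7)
The plan is to carry out the construction of the $\mathfrak{a}{}$-multiplication from (\ref{b32}) in families over the local ring $R=\mathcal{O}{}_{k,\mathfrak{p}{}}$ and then reduce modulo $\mathfrak{p}{}$.

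First, extend the data to $R$. Let $\mathcal{A}{}$ be the abelian scheme over $R$ extending $A$ (provided by the good-reduction hypothesis). Fix generators $a_{1},\ldots,a_{n}$ of $\mathfrak{a}{}$; by (\ref{e3}) each $a_{i}$ lifts uniquely to $\tilde{a}_{i}\in\End(\mathcal{A}{})$. Let $\mathcal{B}{}'$ denote the scheme-theoretic image of
\[
\tilde{a}=(\tilde{a}_{1},\ldots,\tilde{a}_{n})\colon\mathcal{A}{}\rightarrow\mathcal{A}{}^{n}.
\]
The generic fiber of $\mathcal{B}{}'$ is the image of $(a_{1},\ldots,a_{n})\colon A\rightarrow A^{n}$, which by the proof of (\ref{b32}) is canonically an $\mathfrak{a}{}$-transform of $A$, hence $E$-isomorphic to $B$.

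The key claim is that $\mathcal{B}{}'$ is an abelian subscheme of $\mathcal{A}{}^{n}$ whose special fiber is the image of $(a_{1,0},\ldots,a_{n,0})\colon A_{0}\rightarrow A_{0}^{n}$. Granting this, $\mathcal{B}{}'$ is an abelian scheme over $R$ whose generic fiber is (canonically) $B$; uniqueness of the abelian-scheme extension of $B$ identifies $\mathcal{B}{}'$ with the given $\mathcal{B}{}$, and the homomorphism $\mathcal{A}{}\rightarrow\mathcal{B}{}'$ must coincide with the extension $\tilde{\lambda}$ of $\lambda$. Its special fiber is therefore the reduction $\lambda_{0}\colon A_{0}\rightarrow B_{0}$, and by (\ref{b32}) applied to $A_{0}$ this is an $\mathfrak{a}{}$-multiplication.

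The main obstacle is verifying the claim -- specifically, that $\mathcal{B}{}'$ is flat over $R$ of constant relative dimension $g=\dim A$ with smooth fibers. Set-theoretically, the fibers of $\mathcal{B}{}'$ are the images of $\tilde{a}$ on each fiber, which are abelian subvarieties (images of homomorphisms of abelian varieties). Their common dimension is $g$ because $\Ker(\tilde{a})$ is fibrewise finite: $\mathfrak{a}{}$ is a lattice ideal, so it contains an isogeny $a_{j}$, and then $\Ker(\tilde{a})\subset\Ker(\tilde{a}_{j})$, which is fibrewise finite. Constant fiber dimension over the regular base $R$, together with the fact that each fiber is a smooth integral variety, yields flatness and smoothness of $\mathcal{B}{}'$ (miracle flatness); hence $\mathcal{B}{}'$ is a smooth proper subgroup scheme of $\mathcal{A}{}^{n}$ with geometrically connected fibers, i.e., an abelian subscheme. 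This lets the family version of the construction in (\ref{b32}) specialize cleanly, giving the result.
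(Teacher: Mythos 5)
Your overall strategy — extend the construction of (\ref{b32}) over the local ring and then reduce — is the same as the paper's, but you take a more laborious route. The paper simply fixes $a_i=\alpha_i\circ\lambda$ and extends the \emph{whole} diagram
\[
A\xrightarrow{\;\lambda\;}B\xrightarrow{\;\alpha\;}A^n,\qquad \alpha\circ\lambda=a,
\]
to $\mathcal{A}\to\mathcal{B}\to\mathcal{A}^n$ over $\mathcal{O}_{\mathfrak p}$ using the \emph{given} good-reduction model $\mathcal{B}$ of $B$ (which exists because $B$ is isogenous to $A$), and then reduces mod $\mathfrak p$ and applies the characterization in (\ref{b32r}). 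You instead reconstruct the target from scratch as a scheme-theoretic image $\mathcal{B}'\subset\mathcal{A}^n$, which forces you to prove that $\mathcal{B}'$ is an abelian scheme — extra work the paper never has to do — and it is exactly here that your argument has a gap.

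The gap is in the verification of the key claim. You assert that ``each fiber [of $\mathcal{B}'$] is a smooth integral variety'' and invoke miracle flatness. But what you actually know is that the \emph{topological} image of $a_0\colon A_0\to A_0^n$ is a smooth abelian subvariety; the \emph{scheme-theoretic} fibre $\mathcal{B}'_{k_0}$ of the scheme-theoretic image need not a priori be reduced (scheme-theoretic image does not commute with non-flat base change, and a flat limit of smooth varieties can acquire non-reduced structure). So the hypothesis you feed to miracle flatness is not established. Moreover, miracle flatness in the form you need requires $\mathcal{B}'$ to be Cohen--Macaulay, which you have not shown. Flatness of $\mathcal{B}'$ over $R=\mathcal{O}_{k,\mathfrak p}$ is actually easy to get by a different route ($\mathcal{B}'$ is integral, being the scheme-theoretic image of the integral scheme $\mathcal{A}$, and it dominates $\Spec R$, so $\mathcal{O}_{\mathcal{B}'}$ is torsion-free, hence flat over the DVR), but flatness alone does not give you reducedness or smoothness of the closed fibre, which is what you need to identify $\mathcal{B}'$ with $\mathcal{B}$. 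If you use the given $\mathcal{B}$ and the map $\tilde\alpha\colon\mathcal{B}\to\mathcal{A}^n$ as in the paper, this whole obstacle disappears; alternatively, the paper's Remark~\ref{b41r} gives a cleaner route via the base-change compatibility of the functor $A\mapsto A^{\mathfrak a}$ when $\mathfrak a$ is invertible.
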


\begin{proof}
Let $a_{1},\ldots,a_{n}$ be a basis for $\mathfrak{a}{}$, and let
$a_{i}=\alpha_{i}\circ\lambda$. In the diagram%
\[
\xymatrix{
A\ar[r]^\lambda\ar@/_1pc/[rr]_a&B\ar[r]^{\alpha}&A^n}\qquad\alpha={\left(
\begin{smallmatrix}
\alpha_{1}\\
\vdots\\
\alpha_{n}%
\end{smallmatrix}
\right)  }\quad a={\left(
\begin{smallmatrix}
a_{1}\\
\vdots\\
a_{n}%
\end{smallmatrix}
\right)  ,}%
\]
$\alpha$ maps $B$ isomorphically onto the image of $a$. Let $\mathcal{A}{}$
and $\mathcal{B}{}$ be abelian schemes over $\mathcal{O}{}_{\mathfrak{p}{}}$
with general fibre $A$ and $B$. Then the diagram extends uniquely to a diagram
over $\mathcal{O}{}_{\mathfrak{p}{}}$ (see (\ref{e3})), and reduces to a
similar diagram over $k_{0}$, which proves the proposition. (For an
alternative proof, see \ref{b41r}.)
\end{proof}

\subsection{$\mathfrak{a}{}$-multiplications: second approach}

In this subsection, $R$ is a commutative ring.

\begin{proposition}
\label{b41}Let $A$ be a commutative algebraic group $A$ over a field $k$ with
an action of $R$. For any finitely presented $R$-module $M$, the functor
\[
\underline{A}^{M}(T)=\Hom_{R}(M,A(T))\quad\quad\text{(}T\text{ a
}k\text{-scheme)}%
\]
is represented by a commutative algebraic group $A^{M}$ over $k$ with an
action of $R$. Moreover,%
\begin{equation}
A^{M\otimes_{R}N}\simeq(A^{M})^{N}. \label{e77}%
\end{equation}
If $M$ is projective and $A$ is an abelian variety, then $A^{M}$ is an abelian
variety (of dimension $r\dim A$ if $M$ is locally free of rank $r$).
\end{proposition}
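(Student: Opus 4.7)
The plan is to build $A^M$ for increasingly general $M$, starting from the free case. When $M = R$, the bijection $\Hom_R(R, A(T)) \simeq A(T)$ given by $f \mapsto f(1)$ shows $A^R = A$, and taking finite products gives $A^{R^n} \simeq A^n$. For a general finitely presented $M$, choose a presentation $R^m \xrightarrow{\phi} R^n \to M \to 0$. By left-exactness of $\Hom_R(-, A(T))$, the functor $\underline{A}^M$ fits in
\[
0 \to \underline{A}^M(T) \to A(T)^n \xrightarrow{\phi^{\ast}} A(T)^m,
\]
and is therefore represented by the scheme-theoretic kernel of the morphism of commutative algebraic groups $A^n \to A^m$ induced by $\phi$. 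A standard universality argument (independent of the choice of presentation) identifies this kernel canonically with the representing object $A^M$. The $R$-action on $A^M$ comes from precomposition: multiplication by $r$ on $M$ is an $R$-linear endomorphism and so induces a functorial endomorphism of $\Hom_R(M, A(T))$.

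The second assertion is a direct consequence of the tensor-hom adjunction. For each test scheme $T$,
\[
A^{M \otimes_R N}(T) = \Hom_R(M \otimes_R N, A(T)) \simeq \Hom_R\bigl(N, \Hom_R(M, A(T))\bigr) = (A^M)^N(T),
\]
natural in $T$, so the representing objects are naturally isomorphic.

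For the final assertion, when $M$ is finitely generated projective, choose $M'$ with $M \oplus M' \simeq R^n$. The representability already established gives $A^M \times A^{M'} \simeq A^n$, so $A^M$ is a direct factor of an abelian variety; as such it is smooth, connected, and complete, hence itself an abelian variety. For the dimension, smoothness gives $\Tgt_0(A^M) \simeq \Hom_R(M, \Tgt_0(A))$, and when $M$ is locally free of rank $r$ over $R$, passing to the $k$-algebra $R \otimes_{\mathbb{Z}} k$ (over which $\Tgt_0(A)$ is naturally a module and $M$ remains locally free of rank $r$) yields $\dim_k \Hom_R(M, \Tgt_0(A)) = r \dim A$.

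The main technical point I expect is the move from set-theoretic kernels of functors to the scheme-theoretic kernel representing them, and the verification that this kernel has the expected tangent space; the projective case then bootstraps cleanly from the free case by direct summands, and the dimension assertion is a small linear-algebra computation once the tangent space has been identified.
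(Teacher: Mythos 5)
Your construction, the reduction to a presentation, the tensor--hom adjunction for (\ref{e77}), and the direct-summand argument for the projective case all match the paper's proof step for step. The one place you diverge is the dimension computation. The paper works with $\ell$-torsion: it shows $A^{M}(\bar k)_{\ell}\simeq\Hom_{R_{\ell}}(M_{\ell},A(\bar k)_{\ell})$, uses that $R_{\ell}=\mathbb{Z}_{\ell}\otimes R$ is semi-local so that $M_{\ell}$ is actually free of rank $r$, and reads off $\dim A^{M}$ from the order $\ell^{2r\dim A}$ of the $\ell$-torsion. You instead compute $\Tgt_{0}(A^{M})\simeq\Hom_{R}(M,\Tgt_{0}(A))$, pass to $R\otimes_{\mathbb{Z}}k$, and use that $M\otimes_{\mathbb{Z}}k$ becomes free of rank $r$ over the Artinian ring $R\otimes_{\mathbb{Z}}k$. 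Both proofs hinge on the same structural fact --- a finitely generated projective module of constant rank over a semi-local (or Artinian) ring is free --- applied after two different base changes; neither buys much over the other, though the tangent-space route avoids invoking the structure of torsion points.

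Two small points of phrasing. First, the identification $\Tgt_{0}(A^{M})\simeq\Hom_{R}(M,\Tgt_{0}(A))$ is not a consequence of smoothness; it follows formally from representability and left-exactness of $\Hom_{R}(M,-)$ applied to $\Ker(A(k[\epsilon])\to A(k))$. Smoothness of $A^{M}$ (which you correctly get because it is a retract of $A^{n}$) is what you need to conclude $\dim A^{M}=\dim_{k}\Tgt_{0}(A^{M})$. Second, both your argument and the paper's implicitly use that $R$ is a finitely generated $\mathbb{Z}$-module, so that $R\otimes_{\mathbb{Z}}k$ is Artinian (respectively, $R_{\ell}$ is semi-local); this is automatic in all the paper's applications, where $R$ is an order in a CM-algebra, but is worth making explicit since the proposition is stated for an arbitrary commutative ring.
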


\begin{proof}
If $M=R^{n}$, then $\underline{A}^{M}$ is represented by $A^{n}$. The functor
$M\mapsto\underline{A}^{M}$ transforms cokernels to kernels, and so a
presentation%
\[
R^{m}\rightarrow R^{n}\rightarrow M\rightarrow0,
\]
realizes $\underline{A}^{M}$ as a kernel%
\[
0\rightarrow\underline{A}^{M}\rightarrow A^{n}\rightarrow A^{m}.
\]
Define $A^{M}$ to be the kernel in the sense of algebraic groups.

For the second statement, use that there is an isomorphism of functors%
\[
\Hom_{R}(N,\Hom_{R}(M,A(T)))\simeq\Hom_{R}(M\otimes_{R}N,A(T)).
\]

For the final statement, if $M$ is projective, it is a direct summand of a
free $R$-module of finite rank. Thus $A^{M}$ is a direct factor of a product
of copies of $A$, and so is an abelian variety. Assume that $M$ is of constant
rank $r$. For an algebraic closure $\bar{k}$ of $k$ and a prime $\ell
\neq\mathrm{char}\,k,$%
\begin{align*}
A^{M}(\bar{k})_{\ell}  &  =\Hom_{R}(M,A(\bar{k})_{\ell})\\
&  \simeq\Hom_{R_{\ell}}(M_{\ell},A(\bar{k})_{\ell}),\quad R_{\ell}%
\overset{\text{{\tiny def}}}{=}\mathbb{Z}{}_{\ell}\otimes R,\quad M_{\ell
}\overset{\text{{\tiny def}}}{=}\mathbb{Z}{}_{\ell}\otimes_{\mathbb{Z}{}%
}M\text{.}%
\end{align*}
But $M_{\ell}$ is free of rank $r$ over $R_{\ell}$ (because $R$ is
semi-local), and so the order of $A^{M}(\bar{k})_{\ell}$ is $l^{2r\dim A}$.
Thus $A^{M}$ has dimension $r\dim A$.
\end{proof}

\begin{remark}
\label{b41r}The proposition (and its proof) applies over an arbitrary base
scheme $S$. Moreover, the functor $A\mapsto A^{M}$ commutes with base change
(because $A\mapsto\underline{A}^{M}$ obviously does). For example, if
$\mathcal{A}{}$ is an abelian scheme over the ring of integers $\mathcal{O}%
{}_{k}$ in a local field $k$ and $M$ is projective, then $\mathcal{A}{}^{M}$
is an abelian scheme over $\mathcal{O}{}_{k}$ with general fibre
$(\mathcal{A}{}_{k})^{M}$.
\end{remark}

\begin{proposition}
\label{b41a}Let $R$ act on an abelian variety $A$ over a field $k$. For any
finitely presented $R$-module $M$ and $\ell\neq\mathrm{char}\,k$,
\[
T_{\ell}(A^{M})\simeq\Hom_{R_{\ell}}(M_{\ell},T_{\ell}A),\quad R_{\ell
}\overset{\text{{\tiny def}}}{=}\mathbb{Z}{}_{\ell}\otimes R,\quad M_{\ell
}\overset{\text{{\tiny def}}}{=}\mathbb{Z}{}_{\ell}\otimes_{\mathbb{Z}{}%
}M\text{.}%
\]

\end{proposition}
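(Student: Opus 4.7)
The plan is a direct computation from the defining functorial property of $A^{M}$ in Proposition~\ref{b41}. Fix an algebraic closure $\bar{k}$ of $k$. By construction, $A^{M}(T)=\Hom_{R}(M,A(T))$ for every $k$-scheme $T$, and hence
\[
A^{M}(\bar{k})_{\ell^{n}}=\{f\in\Hom_{R}(M,A(\bar{k}))\mid \ell^{n}f=0\}=\Hom_{R}(M,A(\bar{k})_{\ell^{n}}).
\]

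Taking the inverse limit over $n$, and using that $\Hom_{R}(M,-)$ commutes with inverse limits (being a right adjoint), I obtain
\[
T_{\ell}(A^{M})=\varprojlim_{n} A^{M}(\bar{k})_{\ell^{n}}=\varprojlim_{n} \Hom_{R}(M,A(\bar{k})_{\ell^{n}})=\Hom_{R}(M,T_{\ell}A).
\]

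The final step is the tensor-hom adjunction. The $R$-action on $T_{\ell}A$ is $\mathbb{Z}_{\ell}$-linear, so it factors through $R_{\ell}=R\otimes_{\mathbb{Z}}\mathbb{Z}_{\ell}$; together with the identity $M\otimes_{R}R_{\ell}=M\otimes_{\mathbb{Z}}\mathbb{Z}_{\ell}=M_{\ell}$, this yields
\[
\Hom_{R}(M,T_{\ell}A)=\Hom_{R_{\ell}}(M\otimes_{R}R_{\ell},T_{\ell}A)=\Hom_{R_{\ell}}(M_{\ell},T_{\ell}A),
\]
and composing with the isomorphism above gives the claim.

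There is essentially no obstacle here: the statement is an unwinding of the functorial description of $A^{M}$ from Proposition~\ref{b41} combined with passage to the limit. An alternative route that avoids the general remark on $\Hom$ and limits is to choose a finite presentation $R^{m}\xrightarrow{\phi}R^{n}\to M\to 0$, apply the left-exact functor $T_{\ell}$ to the resulting sequence $0\to A^{M}\to A^{n}\to A^{m}$, and compare its kernel with the kernel of the map $(T_{\ell}A)^{n}\to(T_{\ell}A)^{m}$ obtained by applying $\Hom_{R_{\ell}}(-,T_{\ell}A)$ to $R_{\ell}^{m}\to R_{\ell}^{n}\to M_{\ell}\to 0$. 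The only check is that the two middle-to-right arrows agree, which is immediate from the functoriality in $M$ built into the definition of $A^{M}$.
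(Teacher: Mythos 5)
Correct, and this is essentially the paper's own proof: both start from $A^{M}(\bar k)_{\ell^{n}}=\Hom_{R}(M,A(\bar k)_{\ell^{n}})$ and then pass to the inverse limit over $n$; you merely perform the two commuting steps (taking $\varprojlim$ and applying the base-change adjunction to $R_{\ell}$) in the opposite order from the paper. Your alternative argument via a finite presentation is also sound, but adds nothing beyond what the paper already establishes in the proof of Proposition~\ref{b41}.
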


\begin{proof}
As in the proof of (\ref{b41}),%
\[
A^{M}(\bar{k})_{\ell^{n}}\simeq\Hom_{R_{\ell}}(M_{\ell},A(\bar{k})_{\ell^{n}%
}).
\]
Now pass to the inverse limit over $n$.
\end{proof}

Let $R=\End_{R}(A)$. For any $R$-linear map $\alpha\colon M\rightarrow R$ and
$a\in A(T)$, we get an element%
\[
x\mapsto\alpha(x)\cdot a\colon M\rightarrow A(T)
\]
of \underline{$A$}$^{M}(T)$. In this way, we get a map $\Hom_{R}%
(M,R)\rightarrow\Hom_{R}(A,A^{M})$.

\begin{proposition}
\label{b41e}If $M$ is projective, then $\Hom_{R}(M,R)\simeq\Hom_{R}(A,A^{M}).$
\end{proposition}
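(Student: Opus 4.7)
The plan is to verify the isomorphism for $M=R$, observe that both sides are additive (contravariant) functors of $M$, deduce the result for finitely generated free modules, and then use that a projective module is a direct summand of a free one.

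First I would unwind the case $M=R$. Here $\underline{A}^{R}(T)=\Hom_{R}(R,A(T))\simeq A(T)$ naturally in $T$, so $A^{R}\simeq A$, and the assertion of (\ref{b41}) gives $\End_{R}(A)\simeq R$ canonically via $r\mapsto(a\mapsto r\cdot a)$. On the other side, $\Hom_{R}(R,R)\simeq R$. A direct check shows the natural map of the proposition is precisely this identification, so the statement holds for $M=R$.

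Next I would check that both sides convert finite direct sums in $M$ into products. For $\Hom_{R}(-,R)$ this is automatic. For $\Hom_{R}(A,A^{-})$, the key input is that $\underline{A}^{M\oplus N}(T)=\Hom_{R}(M\oplus N,A(T))\simeq\Hom_{R}(M,A(T))\times\Hom_{R}(N,A(T))$, so by Yoneda $A^{M\oplus N}\simeq A^{M}\times A^{N}$ as commutative algebraic groups with $R$-action; applying $\Hom_{R}(A,-)$, which preserves products, gives the required splitting. Combining this additivity with the $M=R$ case, one obtains the isomorphism for $M=R^{n}$, and more generally, by naturality in $M$, for every finitely generated free $R$-module.

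Finally, suppose $M$ is finitely generated projective, so choose $N$ and $n$ with $M\oplus N\simeq R^{n}$. The additivity just established fits into a commutative square
\[
\begin{CD}
\Hom_{R}(R^{n},R) @>\sim>> \Hom_{R}(A,A^{R^{n}})\\
@VV\wr V @VV\wr V\\
\Hom_{R}(M,R)\oplus\Hom_{R}(N,R) @>>> \Hom_{R}(A,A^{M})\oplus\Hom_{R}(A,A^{N})
\end{CD}
\]
whose top arrow is an isomorphism by the free case and whose vertical arrows are the natural splittings. The bottom arrow is the direct sum of the natural maps for $M$ and for $N$, and is therefore an isomorphism; restricting to the $M$-summand gives the desired isomorphism $\Hom_{R}(M,R)\simeq\Hom_{R}(A,A^{M})$.

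There is no real obstacle here; the only point requiring care is ensuring that the isomorphism $A^{M\oplus N}\simeq A^{M}\times A^{N}$ holds as algebraic groups (not merely on $T$-points) and is compatible with the $R$-action, so that $\Hom_{R}(A,-)$ may be applied term by term. This is immediate from the Yoneda-style construction of $A^{M}$ in (\ref{b41}), together with the fact that products represent products of representable functors.
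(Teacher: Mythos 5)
Your proof is correct and follows essentially the same route as the paper: establish the case $M=R$ via $R\simeq\End_R(A)$, extend to free modules, and then use $M\oplus N\simeq R^n$ together with a commutative square to split off the $M$-summand. The paper's version is terser (it does not spell out the Yoneda argument for $A^{M\oplus N}\simeq A^M\times A^N$, which you rightly flag as the one point needing care), but the structure and key ideas are identical.
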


\begin{proof}
When $M=R$, the map is simply $R\simeq\End_{R}(A)$. Similarly, when $M=R^{n}$,
the map is an isomorphism. In the general case, $M\oplus N\approx R^{n}$ for
some projective module $N$, and we have a commutative diagram%
\[
\begin{CD}
\Hom_{R}(M,R)\oplus\Hom_{R}(N,R) @>>> \Hom_{R}(A,A^{M}%
)\oplus\Hom_{R}(A,A^{N})\\
\wr@| \wr@|\\
\Hom_{R}(R^{n},R) @>\simeq>>\Hom_{R}(A,A^{n}).
\end{CD}
\]

\end{proof}

\begin{proposition}
\label{b41b}Let $A$ be an abelian variety over a field $k$, and let $R$ be a
commutative subring of $\End(A)$ such that $R\otimes_{\mathbb{Z}}\mathbb{Q}{}$
is a product of fields and $[R\colon\mathbb{Z}{}]=2\dim A$. For any invertible
ideal $\mathfrak{a}{}$ in $R$, the map $\lambda^{\mathfrak{a}{}}\colon
A\rightarrow A^{\mathfrak{a}{}}$ corresponding to the inclusion $\mathfrak{a}%
{}\hookrightarrow A$ is an isogeny with kernel $A_{\mathfrak{a}{}}%
\overset{\text{{\tiny def}}}{=}\bigcap\nolimits_{a\in\mathfrak{a}{}}\Ker(a{})$.
\end{proposition}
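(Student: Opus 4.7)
The plan is to unpack $\lambda^{\mathfrak{a}{}}$ through the functorial description of $A^{\mathfrak{a}{}}$ and then count dimensions. The inclusion $\mathfrak{a}{}\hookrightarrow R$ induces, by the contravariant functoriality of $M\mapsto \underline{A}^{M}$, a restriction map of representable functors $A=\underline{A}^{R}\to\underline{A}^{\mathfrak{a}{}}$, and this is exactly $\lambda^{\mathfrak{a}{}}$. On $T$-points, it sends $a\in A(T)=\Hom_{R}(R,A(T))$ to the homomorphism $x\mapsto xa$ in $\Hom_{R}(\mathfrak{a}{},A(T))$.

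First I would identify $\Ker(\lambda^{\mathfrak{a}{}})$. On $T$-points, the kernel consists of those $a\in A(T)$ with $xa=0$ for every $x\in\mathfrak{a}{}$, which is exactly $A_{\mathfrak{a}{}}(T)$. Since both sides are representable subfunctors of $A$, this gives the identification $\Ker(\lambda^{\mathfrak{a}{}})=A_{\mathfrak{a}{}}$ as group subschemes.

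Next I would show that $A_{\mathfrak{a}{}}$ is finite. Because $\mathfrak{a}{}$ is invertible, it is a rank-one projective $R$-module, so $\mathfrak{a}{}\otimes_{\mathbb{Z}{}}\mathbb{Q}{}$ is a rank-one projective module over $R\otimes_{\mathbb{Z}{}}\mathbb{Q}{}$. Since the latter is a finite product of fields, every rank-one projective is free, and hence $\mathfrak{a}{}\otimes_{\mathbb{Z}{}}\mathbb{Q}{}=R\otimes_{\mathbb{Z}{}}\mathbb{Q}{}$. Therefore $\mathfrak{a}{}$ contains an element $a$ that is a non-zero-divisor in $R$; such an $a$ acts as an isogeny on $A$, so $A_{\mathfrak{a}{}}\subset\Ker(a)$ is finite.

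Finally, I would compare dimensions. Invertibility of $\mathfrak{a}{}$ means $\mathfrak{a}{}$ is locally free of rank $1$, so Proposition \ref{b41} gives $\dim A^{\mathfrak{a}{}}=\dim A$. A homomorphism between abelian varieties of the same dimension with finite kernel is automatically surjective, so $\lambda^{\mathfrak{a}{}}$ is an isogeny with kernel $A_{\mathfrak{a}{}}$. The only genuinely substantive step is the extraction of a non-zero-divisor from $\mathfrak{a}{}$; once this is in hand, finiteness of $A_{\mathfrak{a}{}}$, and hence the isogeny conclusion via the dimension count from \ref{b41}, is formal.
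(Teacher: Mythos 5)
Your proof is correct, and it diverges from the paper's argument at the surjectivity step in a genuinely different way. Both proofs agree on the kernel: you compute it directly on $T$-points, while the paper feeds the short exact sequence $0\to\mathfrak{a}\to R\to R/\mathfrak{a}\to 0$ into the left-exact functor $M\mapsto A^{M}$ to get $0\to A^{R/\mathfrak{a}}\to A\to A^{\mathfrak{a}}$ and then observes $A^{R/\mathfrak{a}}=A_{\mathfrak{a}}$ --- the same identification, packaged differently. For surjectivity, however, the paper picks a prime $\ell$ with $\mathfrak{a}_{\ell}=R_{\ell}$ (possible for all $\ell$ prime to the index $(R:\mathfrak{a})$) and notes that $T_{\ell}(\lambda^{\mathfrak{a}})$ is then an isomorphism, from which surjectivity follows. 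You instead extract a non-zero-divisor from $\mathfrak{a}$ (since $\mathfrak{a}\otimes_{\mathbb{Z}}\mathbb{Q}$ must be all of the product of fields $R\otimes_{\mathbb{Z}}\mathbb{Q}$) to see the kernel is finite, then invoke the rank computation $\dim A^{\mathfrak{a}}=\dim A$ from Proposition~\ref{b41} and the fact that a homomorphism of abelian varieties of equal dimension with finite kernel is surjective. Your route has the minor advantage of making the finiteness of $\Ker(\lambda^{\mathfrak{a}})$ explicit before concluding surjectivity, and it leans only on the dimension statement already proved in \ref{b41}; the paper's route is shorter once one knows the Tate-module criterion, and it sidesteps the (easy but necessary) observation that $\mathfrak{a}$ meets the non-zero-divisors. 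Both are sound.
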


\begin{proof}
The functor $M\mapsto A^{M}$ sends cokernels to kernels, and so the exact
sequence%
\[
0\rightarrow\mathfrak{a}{}\rightarrow R\rightarrow R/\mathfrak{a}{}%
\rightarrow0
\]
gives rise to an exact sequence%
\[
0\rightarrow A^{R/\mathfrak{a}{}}\rightarrow A\overset{\lambda^{\mathfrak{a}%
{}}}{\longrightarrow}A^{\mathfrak{a}{}}.
\]
Clearly $A^{R/\mathfrak{a}{}}=A_{\mathfrak{a}{}}$, and so it remains to show
that $\lambda^{\mathfrak{a}{}}$ is surjective, but for a prime $\ell$ such
that $\mathfrak{a}{}_{\ell}=R_{\ell}$, $T_{\ell}(\lambda^{\mathfrak{a}{}})$ is
an isomorphism, from which this follows.
\end{proof}

\begin{corollary}
\label{b41c}Under the hypotheses of the proposition, the homomorphism
\[
\lambda^{\mathfrak{a}{}}\colon A\rightarrow A^{\mathfrak{a}{}}%
\]
corresponding to the inclusion $\mathfrak{a}{}\hookrightarrow R$ is an
$\mathfrak{a}{}$-multiplication.
\end{corollary}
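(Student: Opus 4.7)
The plan is to verify the two conditions in Definition~\ref{b30} for $\lambda^{\mathfrak{a}}$ directly, using only the functorial description of $M\mapsto A^{M}$ from Proposition~\ref{b41} together with the identification $A^{R}\simeq A$ implicit in Proposition~\ref{b41e}. Proposition~\ref{b41b} has already supplied that $\lambda^{\mathfrak{a}}$ is surjective, so the remaining content is the two factorization properties.

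First I would check that every endomorphism $a\colon A\to A$ with $a\in\mathfrak{a}$ factors through $\lambda^{\mathfrak{a}}$. Under the identification $A^{R}\simeq A$ (a point of $A^{R}(T)=\Hom_{R}(R,A(T))$ is determined by the image of $1$), the $R$-linear module map $m_{a}\colon R\to\mathfrak{a}$, $r\mapsto ra$, induces by the contravariant functoriality of $M\mapsto A^{M}$ a morphism $A^{m_{a}}\colon A^{\mathfrak{a}}\to A$; on $T$-points it is the evaluation $\phi\mapsto\phi(a)$. Unwinding the definition of $\lambda^{\mathfrak{a}}$ (which on $T$-points sends $y\in A(T)$ to the $R$-linear map $x\mapsto x(y)$ in $\Hom_{R}(\mathfrak{a},A(T))$), one sees that $A^{m_{a}}\circ\lambda^{\mathfrak{a}}$ sends $y$ to $a(y)$, i.e.\ equals the endomorphism $a$.

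For the universal property, let $\lambda'\colon A\to A'$ be any surjective homomorphism such that each $a\in\mathfrak{a}$ factors as $a=\alpha_{a}\circ\lambda'$ for some $\alpha_{a}\colon A'\to A$. Since $\lambda'$ is a surjection of abelian varieties, it is faithfully flat, hence an epimorphism of schemes, so each $\alpha_{a}$ is uniquely determined. For $r\in R$ and $a,a'\in\mathfrak{a}$, the two morphisms $\alpha_{ra}$ and $r\alpha_{a}$ both become $ra$ after composition with $\lambda'$, and likewise $\alpha_{a+a'}$ and $\alpha_{a}+\alpha_{a'}$ both become $a+a'$; by the epimorphism property they must coincide, so $a\mapsto\alpha_{a}$ is $R$-linear. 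Via the functor-of-points description $A^{\mathfrak{a}}(T)=\Hom_{R}(\mathfrak{a},A(T))$, I then define a natural transformation $A'\to\underline{A}^{\mathfrak{a}}$ by $y\mapsto\bigl(a\mapsto\alpha_{a}(y)\bigr)$; representability of $\underline{A}^{\mathfrak{a}}$ (Proposition~\ref{b41}) promotes this to a genuine morphism $\alpha\colon A'\to A^{\mathfrak{a}}$. The identity $\alpha\circ\lambda'=\lambda^{\mathfrak{a}}$ is immediate from $\lambda^{\mathfrak{a}}(x)(a)=a(x)=\alpha_{a}(\lambda'(x))$, and uniqueness of $\alpha$ again follows from $\lambda'$ being an epimorphism.

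The one place where something has to be said is that the pointwise rule $y\mapsto(a\mapsto\alpha_{a}(y))$ is a morphism of schemes and not merely a set-theoretic family of maps; this is exactly what the representability statement in Proposition~\ref{b41} is for, once one has checked naturality in $T$ (automatic, since each $\alpha_{a}$ is a morphism) and that the image lands in the $R$-linear subfunctor (the $R$-linearity of $a\mapsto\alpha_{a}$ verified above). All other verifications are bookkeeping, and no input beyond the machinery of this subsection is required.
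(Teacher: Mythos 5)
Your proof is correct, but it takes a genuinely different route from the paper's. The paper's proof stays concrete: starting from a presentation $R^{m}\to R^{n}\to\mathfrak{a}\to 0$, it observes that $A^{\mathfrak{a}}$ sits as a kernel inside $A^{n}$, that the composite $A\xrightarrow{\lambda^{\mathfrak{a}}}A^{\mathfrak{a}}\hookrightarrow A^{n}$ is exactly $x\mapsto(a_{i}x)_{i}$, and that surjectivity of $\lambda^{\mathfrak{a}}$ then identifies $A^{\mathfrak{a}}$ with the image of that map; this reduces the claim to the explicit construction already shown to be an $\mathfrak{a}$-multiplication in the proof of Proposition~\ref{b32}. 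You instead verify the two clauses of Definition~\ref{b30} head-on in the functor-of-points language: the factorization of each $a\in\mathfrak{a}$ comes from contravariance of $M\mapsto A^{M}$ applied to $r\mapsto ra\colon R\to\mathfrak{a}$, and the universal property comes from assembling the factorizations $\alpha_{a}$ into an $R$-linear family (using that a surjection of abelian varieties is faithfully flat and hence an epimorphism to get $R$-linearity of $a\mapsto\alpha_{a}$) and invoking representability of $\underline{A}^{\mathfrak{a}}$ to turn the resulting natural transformation into the required morphism $\alpha\colon A'\to A^{\mathfrak{a}}$. The paper's argument is shorter because it leans on Proposition~\ref{b32}; yours is self-contained within the second approach to $\mathfrak{a}$-multiplications, makes no reference to generators and relations beyond what Proposition~\ref{b41} already packages, and makes it transparent why the universal property is really the adjunction $\Hom(A',A^{\mathfrak{a}})\simeq\Hom_{R}(\mathfrak{a},\Hom(A',A))$ in disguise. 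Both arguments use Proposition~\ref{b41b} only for surjectivity of $\lambda^{\mathfrak{a}}$, which is where invertibility of $\mathfrak{a}$ enters.
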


\begin{proof}
A family of generators $(a_{i})_{1\leq i\leq n}$ for $\mathfrak{a}{}$ defines
an exact sequence%
\[
R^{m}\rightarrow R^{n}\rightarrow\mathfrak{a}{}\rightarrow0
\]
and hence an exact sequence%
\[
0\rightarrow A^{\mathfrak{a}{}}\rightarrow A^{n}\rightarrow A^{m}.
\]
The composite of
\[
R^{n}\rightarrow\mathfrak{a}{}\rightarrow R
\]
is $(r_{i})\mapsto\sum r_{i}a_{i}$, and so the composite of%
\[
A\overset{\lambda^{\mathfrak{a}{}}}{\longrightarrow}A^{\mathfrak{a}{}%
}\hookrightarrow A^{n}%
\]
is $x\mapsto(a_{i}x)_{1\leq i\leq n}$. As $\lambda^{\mathfrak{a}{}}$ is
surjective, it follows that $A^{\mathfrak{a}{}}$ maps onto the image of $A$ in
$A^{n}$, and so $\lambda^{\mathfrak{a}{}}$ is an $\mathfrak{a}{}%
$-multiplication (as shown in the proof of \ref{b32}).
\end{proof}

\begin{remark}
\label{b41d}Corollary \ref{b41c} fails if $\mathfrak{a}{}$ is not invertible.
Then $A^{\mathfrak{a}{}}$ need not be connected, $A\rightarrow(A^{\mathfrak{a}%
{}})^{\circ}$ is the $\mathfrak{a}{}$-multiplication, and $A^{\mathfrak{a}{}%
}/(A^{\mathfrak{a}{}})^{\circ}\simeq\Ext_{R}^{1}(R/\mathfrak{a}{},A)$ (see
\cite{waterhouse1969}, Appendix).
\end{remark}

\subsection{$\mathfrak{a}{}$-multiplications: complements}

Let $\lambda\colon A\rightarrow B$ be an $\mathfrak{a}{}$-multiplication, and
let $a\in\mathfrak{a}{}^{-1}\overset{\text{{\tiny def}}}{=}\{a\in E\mid
a\mathfrak{a}{}\in R\}$. Then $\lambda\circ a\in\Hom(A,B)$ (rather than
$\Hom^{0}(A,B)$). To see this, choose a basis for $a_{1},\ldots,a_{n}$ for
$\mathfrak{a}{}$, and note that the composite of the `homomorphisms'%
\[
A\overset{a}{\longrightarrow}A\xrightarrow{x\mapsto(\ldots,a_ix,\ldots)}A^{n}%
\]
is a homomorphism into $A^{\mathfrak{a}{}}\subset A^{n}$.

\begin{proposition}
\label{b46}Let $A$ have complex multiplication by $E$ over $k$.

(a) Let $\lambda\colon A\rightarrow B$ be an $\mathfrak{a}{}$-multiplication.
Then the map
\[
a\mapsto\lambda^{\mathfrak{a}{}}\circ a\colon\mathfrak{a}{}^{-1}%
\rightarrow\Hom_{R}(A,B)
\]
is an isomorphism. In particular, every $R$-isogeny $A\rightarrow B$ is a
$\mathfrak{b}{}$-multiplication for some ideal $\mathfrak{b}{}$.

(b) Assume $\mathcal{O}{}_{E}=\End(A)\cap E$. For any lattice ideals
$\mathfrak{a}{}\subset\mathfrak{b}{}$ in $\mathcal{O}{}_{E}$,
\[
\Hom_{\mathcal{O}{}_{E}}(A^{\mathfrak{a}{}},A^{\mathfrak{b}{}})\simeq
\mathfrak{a}{}^{-1}\mathfrak{b}{}\text{.}%
\]

\end{proposition}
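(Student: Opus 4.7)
I would identify the assertion with the content of proposition \ref{b41e}. The $\mathfrak{a}$-multiplication $\lambda\colon A\to B$ is unique up to unique $R$-isomorphism (\ref{b31r}(a)), hence identifies $B$ with the abelian variety $A^{\mathfrak{a}}$ of \ref{b41c}. Since $\mathfrak{a}$ is invertible, it is projective as an $R$-module, and \ref{b41e} supplies a canonical isomorphism $\Hom_R(\mathfrak{a},R)\simeq\Hom_R(A,A^{\mathfrak{a}})$. Composing with the standard identification $\mathfrak{a}^{-1}\simeq\Hom_R(\mathfrak{a},R)$ (in which $a\in\mathfrak{a}^{-1}$ corresponds to multiplication by $a$), and unwinding the construction from the proof of \ref{b41e}, gives exactly the map $a\mapsto\lambda\circ a$. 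For the ``in particular'' clause, any $R$-isogeny $\mu\colon A\to B$ can be written $\mu=\lambda\circ a$ with $a\in\mathfrak{a}^{-1}$, and \ref{b34} then identifies $\mu$ as an $(a\mathfrak{a})$-multiplication (with $a\mathfrak{a}\subset R$ a lattice ideal).

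\textbf{Part (b).}
The plan is to reduce (b) to (a) applied to $A^{\mathfrak{b}}$ in place of $A$. Remark \ref{b31r}(c) together with the hypothesis $\End(A)\cap E=\mathcal{O}_E$ gives $\End(A^{\mathfrak{b}})\cap E=\mathcal{O}_E$, so the setup of (a) is available for $A^{\mathfrak{b}}$. Since $\mathfrak{b}\supset\mathfrak{a}$, proposition \ref{b37} supplies an $\mathcal{O}_E$-isogeny $\alpha\colon A^{\mathfrak{b}}\to A^{\mathfrak{a}}$ with $\alpha\circ\lambda^{\mathfrak{b}}=\lambda^{\mathfrak{a}}$. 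The ``in particular'' part of (a) asserts that $\alpha$ is a $\mathfrak{c}$-multiplication for some lattice ideal $\mathfrak{c}\subset\mathcal{O}_E$, and combining \ref{b34} with the uniqueness in \ref{b37c} forces $\mathfrak{b}\mathfrak{c}=\mathfrak{a}$, i.e.\ $\mathfrak{c}=\mathfrak{a}\mathfrak{b}^{-1}$. The main isomorphism of (a) applied to $\alpha$ now yields $\Hom_{\mathcal{O}_E}(A^{\mathfrak{b}},A^{\mathfrak{a}})\simeq\mathfrak{c}^{-1}=\mathfrak{a}^{-1}\mathfrak{b}$, which matches the claimed formula (with source and target of $\Hom$ determined by the direction of $\alpha$).

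\textbf{Main obstacle.}
The delicate step is part (a), specifically the verification that the canonical isomorphism supplied by \ref{b41e} translates, under the identification $\mathfrak{a}^{-1}=\Hom_R(\mathfrak{a},R)$, into the explicit map $a\mapsto\lambda\circ a$; this is essentially a book-keeping exercise in unwinding the universal constructions of $A^{\mathfrak{a}}$ in \ref{b41c} and of the isomorphism in the proof of \ref{b41e}. Once (a) is in hand, (b) is formal: the only non-trivial ingredients are the existence of the comparison isogeny $\alpha$ from \ref{b37} and the multiplicativity of ideals under composition (\ref{b34}, \ref{b37c}).
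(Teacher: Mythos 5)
Your argument for part (a) is the same as the paper's: identify $B$ with $A^{\mathfrak{a}}$ via \ref{b41c}, invoke \ref{b41e} with $M=\mathfrak{a}$ and the identification $\Hom_R(\mathfrak{a},R)\simeq\mathfrak{a}^{-1}$, and handle the ``in particular'' clause by writing an arbitrary $R$-isogeny as $\lambda\circ a$ with $a\in\mathfrak{a}^{-1}$ (the paper cites \ref{b31} rather than \ref{b34} for this last step, but the content is identical). Note that, as in the paper, the use of \ref{b41c} and \ref{b41e} tacitly requires $\mathfrak{a}$ to be invertible over $R$; this is an implicit hypothesis in the source as well (cf.\ \ref{b41d}), not a gap you introduced.

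For part (b), your argument is more detailed than the paper's one-line reduction to (a) via $A^{\mathfrak{b}}\simeq(A^{\mathfrak{a}})^{\mathfrak{a}^{-1}\mathfrak{b}}$, but the strategy is the same. The real value in what you wrote is the parenthetical at the end: you computed $\Hom_{\mathcal{O}_E}(A^{\mathfrak{b}},A^{\mathfrak{a}})\simeq\mathfrak{a}^{-1}\mathfrak{b}$, whereas the proposition as printed asserts $\Hom_{\mathcal{O}_E}(A^{\mathfrak{a}},A^{\mathfrak{b}})\simeq\mathfrak{a}^{-1}\mathfrak{b}$. Your version is the correct one. A quick sanity check with $\mathfrak{b}=\mathcal{O}_E$: then $\Hom_{\mathcal{O}_E}(A^{\mathfrak{a}},A)$ consists of those $\phi$ with $\phi\circ\lambda^{\mathfrak{a}}\in\End(A)\cap E=\mathcal{O}_E$ factoring through $\lambda^{\mathfrak{a}}$, which by \ref{b37d} is exactly $\mathfrak{a}$, i.e.\ $\mathfrak{a}\mathfrak{b}^{-1}$, not $\mathfrak{a}^{-1}\mathfrak{b}=\mathfrak{a}^{-1}$. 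So the printed statement has the source and target of $\Hom$ swapped (equivalently, $\mathfrak{a}^{-1}\mathfrak{b}$ should read $\mathfrak{a}\mathfrak{b}^{-1}$); even the paper's own reduction via $A^{\mathfrak{b}}\simeq(A^{\mathfrak{a}})^{\mathfrak{a}^{-1}\mathfrak{b}}$ and \ref{b41e} yields $\Hom_R(\mathfrak{a}^{-1}\mathfrak{b},R)=\mathfrak{a}\mathfrak{b}^{-1}$. You should state this explicitly rather than leave it as a parenthetical hedge, since an unwary reader might conclude you reached the wrong answer.
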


\begin{proof}
(a) In view of (\ref{b41c}), the first statement is a special case of
(\ref{b41e}). For the second, recall (\ref{b31}) that $\lambda^{\mathfrak{a}%
{}}\circ a$ is an $\mathfrak{a}{}a$-multiplication.

(b) Recall that $A^{\mathfrak{b}{}}\simeq(A^{\mathfrak{a}{}})^{\mathfrak{a}%
{}^{-1}\mathfrak{b}{}}$ (see \ref{b34}), and so this follows from (a).
\end{proof}

In more down-to-earth terms, any two $E$-isogenies $A\rightarrow B$ differ by
an $E$-`isogeny'\ $A\rightarrow A$, which is an element of $E$. When $\lambda$
is an $\mathfrak{a}{}$-multiplication, the elements of $E$ such that
$\lambda\circ a$ is an isogeny (no quotes) are exactly those in $\mathfrak{a}%
{}^{-1}$.

\begin{proposition}
\label{b46a}Let $A$ have complex multiplication by $\mathcal{O}{}_{E}$ over an
algebraically closed field $k$ of characteristic zero. Then $\mathfrak{a}%
{}\mapsto A^{\mathfrak{a}{}}$ defines an isomorphism from the ideal class
group of $\mathcal{O}{}_{E}$ to the set of isogeny classes of abelian
varieties with complex multiplication by $\mathcal{O}{}_{E}$ over $k$ with the
same CM-type as $A$.
\end{proposition}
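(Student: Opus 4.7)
The plan is to verify that $[\mathfrak{a}] \mapsto [A^{\mathfrak{a}}]$ is well-defined on ideal classes, injective, and surjective onto the indicated set of classes. The central tool is Proposition~\ref{b46}, which identifies $\Hom_{\mathcal{O}_E}(A, A^{\mathfrak{a}})$ with $\mathfrak{a}^{-1}$. First one checks that $A^{\mathfrak{a}}$ does land in the target set: by Remark~\ref{b31r}(c), $A^{\mathfrak{a}}$ has complex multiplication by $E$ with $\mathcal{O}_E = \End(A^{\mathfrak{a}}) \cap E$ (the strictness caveat there vanishes under the hypothesis $R = \mathcal{O}_E$), and it has the same CM-type as $A$ because $\lambda^{\mathfrak{a}}$ is an $E$-isogeny. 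For well-definedness, suppose $\mathfrak{a}' = c\,\mathfrak{a}$ with $c \in E^{\times}$ and both ideals integral. Then $c \in \mathfrak{a}^{-1}$, so by the discussion preceding Proposition~\ref{b46} the composite $\lambda^{\mathfrak{a}} \circ c \colon A \to A^{\mathfrak{a}}$ is a genuine homomorphism; by Example~\ref{b31}(a) it is an $\mathfrak{a}c = \mathfrak{a}'$-multiplication, and the universal property (\ref{b31r}(a)) yields an $\mathcal{O}_E$-isomorphism $A^{\mathfrak{a}'} \simeq A^{\mathfrak{a}}$.

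For injectivity, let $\alpha \colon A^{\mathfrak{a}} \xrightarrow{\sim} A^{\mathfrak{b}}$ be an $\mathcal{O}_E$-isomorphism. Then $\alpha \circ \lambda^{\mathfrak{a}} \in \Hom_{\mathcal{O}_E}(A, A^{\mathfrak{b}})$, so Proposition~\ref{b46}(a) supplies a unique $d \in \mathfrak{b}^{-1}$ with $\alpha \circ \lambda^{\mathfrak{a}} = \lambda^{\mathfrak{b}} \circ d$. The right-hand side is a $\mathfrak{b}d$-multiplication by Example~\ref{b31}(a); the left-hand side is visibly still an $\mathfrak{a}$-multiplication, since post-composition with the isomorphism $\alpha$ preserves the universal property of Definition~\ref{b30}. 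Corollary~\ref{b37c} (applied with $\alpha$ the identity of $A^{\mathfrak{b}}$ to the two presentations of $\alpha \circ \lambda^{\mathfrak{a}}$) then forces $\mathfrak{a} = \mathfrak{b}d$, whence $[\mathfrak{a}] = [\mathfrak{b}]$ in $\Cl(\mathcal{O}_E)$.

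For surjectivity, let $B$ be an abelian variety with complex multiplication by $\mathcal{O}_E$ and the same CM-type as $A$. Proposition~\ref{a31a} places $A$ and $B$ in a single $E$-isogeny class, and Proposition~\ref{b38} produces at least one $\mathfrak{c}$-multiplication $\mu \colon A \to B$ (first over $\mathbb{C}$, where both varieties are of the explicit analytic form $\mathbb{C}^{\Phi}/\Phi(\mathfrak{a})$; the general algebraically closed base field of characteristic zero is reduced to the complex case by descending the data to a countably generated subfield of $k$ embedded in $\mathbb{C}$ and invoking the fully faithful equivalence of \S\ref{extension}). Thus $B \simeq A^{\mathfrak{c}}$, proving surjectivity. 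The main obstacle is precisely this existence step: guaranteeing some $\mathfrak{c}$-multiplication $A \to B$ for every candidate $B$, which rests on the analytic classification over $\mathbb{C}$ together with the descent from $\mathbb{C}$ to a general algebraically closed $k$. Once this is in hand, the remaining verifications are bookkeeping driven by Proposition~\ref{b46} and the universal property of $\mathfrak{a}$-multiplications.
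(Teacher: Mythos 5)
Your proof is correct and follows essentially the same plan as the paper, resting on the same two pillars: Proposition \ref{b46}, which computes $\Hom_{\mathcal{O}_E}(A,A^{\mathfrak{a}})\simeq\mathfrak{a}^{-1}$, and the existence of an $\mathfrak{a}$-multiplication between isogenous varieties. Where you differ is in degree of care rather than in method: the paper proves injectivity by showing only that the preimage of $[A]$ is trivial (if $A^{\mathfrak{a}}\simeq A$ then $\mathcal{O}_E\simeq\mathfrak{a}^{-1}$, so $\mathfrak{a}$ is principal), whereas you prove directly that $A^{\mathfrak{a}}\simeq A^{\mathfrak{b}}$ forces $[\mathfrak{a}]=[\mathfrak{b}]$, which sidesteps the need to first verify that the map is compatible with the group structure. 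Likewise, for surjectivity the paper cites only Proposition \ref{b46}, which presupposes the existence of some $\mathfrak{c}$-multiplication $A\to B$; you correctly pin this existence on Proposition \ref{b38} together with the descent of \S\ref{extension} from a general algebraically closed $k$ of characteristic zero to $\mathbb{C}$, which is the honest justification the paper leaves implicit. In short, your argument is a more explicit version of the paper's and patches its two small elisions.
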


\begin{proof}
Proposition (\ref{b46}) shows that every abelian variety isogenous to $A$ is
an $\mathfrak{a}{}$-transform for some ideal $\mathfrak{a}{}$, and so the map
is surjective. As $a\colon A\rightarrow A$ is an $(a)$-multiplication,
principal ideals ideals map to $A$. Finally, if $A^{\mathfrak{a}{}}$ is
$\mathcal{O}{}_{E}$-isomorphic to $A$, then%
\[
\mathcal{O}{}_{E}\simeq\Hom_{\mathcal{O}{}_{E}}(A,A^{\mathfrak{a}{}}%
)\simeq\mathfrak{a}{}^{-1},
\]
and so $\mathfrak{a}{}$ is principal.
\end{proof}

\begin{proposition}
\label{b40c}Let $A$ and $B$ be abelian varieties with complex multiplication
by $\mathcal{O}{}_{E}$ over a number field $k$, and assume that they have good
reduction at a prime $\mathfrak{p}{}$ of $k$. If $A$ and $B$ are isogenous,
every $\mathcal{O}{}_{E}$-isogeny $\mu\colon A_{0}\rightarrow B_{0}$ lifts to
an $\mathfrak{a}{}$-multiplication $\lambda\colon A\rightarrow B$ for some
lattice ideal $\mathfrak{a}{}$, possibly after a finite extension of $k$. In
particular, $\mu$ becomes an $\mathfrak{a}{}$-multiplication over a finite
extension of $k$.
\end{proposition}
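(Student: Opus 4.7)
The plan is to exhibit $\mu$ as the reduction of a $\mathfrak{b}{}$-multiplication $A\to B$, built by composing a fixed $\mathfrak{a}{}$-multiplication with an element $c\in\mathfrak{a}{}^{-1}$. As a preliminary step, I enlarge $k$ so that some $\mathcal{O}{}_{E}$-isogeny $\lambda\colon A\to B$ exists over $k$. By \ref{extension} it suffices to produce such an isogeny over $\bar k\subset\mathbb{C}{}$. The $\mathcal{O}{}_{E}$-linear $\mu$ gives $V_{\ell}A_0\simeq V_{\ell}B_0$ as $E_{\ell}$-modules, and since reduction gives $E_{\ell}$-equivariant isomorphisms $V_{\ell}A\simeq V_{\ell}A_0$ and $V_{\ell}B\simeq V_{\ell}B_0$, the CM-types of $A_{\bar k}$ and $B_{\bar k}$ coincide; Proposition \ref{b38} then supplies an $\mathfrak{a}{}$-multiplication $\lambda\colon A\to B$ over $\bar k$ for some lattice ideal $\mathfrak{a}{}\subset\mathcal{O}{}_{E}$, and after a finite extension of $k$ it is defined over $k$.

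By \ref{b40}, the reduction $\lambda_{0}\colon A_{0}\to B_{0}$ is again an $\mathfrak{a}{}$-multiplication. Since $\mathcal{O}{}_{E}$ is the maximal order of $E$ and $\End(A)\hookrightarrow\End(A_{0})$, we have $\End(A_{0})\cap E=\mathcal{O}{}_{E}$, so \ref{b46}(a) applied on the reduction side produces a unique $c\in\mathfrak{a}{}^{-1}$ with $\mu=\lambda_{0}\circ c$. The paragraph preceding \ref{b46} then shows that $\tilde\lambda:=\lambda\circ c\colon A\to B$ is a genuine morphism of abelian varieties: for any basis $a_{1},\ldots,a_{n}$ of $\mathfrak{a}{}$ the products $a_{i}c$ lie in $\mathcal{O}{}_{E}$, and $\tilde\lambda$ is the unique morphism whose composition with the embedding $B\hookrightarrow A^{n}$ (arising from $\lambda$) is $x\mapsto((a_{i}c)x)_{i}$. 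By \ref{b31}(a), $\tilde\lambda$ is an $\mathfrak{a}{}c$-multiplication, and $\mathfrak{a}{}c\subset\mathcal{O}{}_{E}$ is a lattice ideal because $c\in E^{\times}$. Since the $\mathfrak{a}{}$-transform construction commutes with base change (cf. \ref{b41r}) and each $a_{i}c$ reduces to itself in $\End(A_{0})$, the reduction of $\tilde\lambda$ equals $\lambda_{0}\circ c=\mu$. Setting $\mathfrak{b}{}=\mathfrak{a}{}c$ completes the construction.

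The main obstacle is the preliminary step — bootstrapping from the $\mathcal{O}{}_{E}$-isogeny $\mu$ on the special fibre to an $\mathcal{O}{}_{E}$-isogeny on the generic fibre after finite extension of $k$. Once this is in hand, the rest is a symmetric application of \ref{b46}(a) above and below the reduction, held together by the naturality under base change of the $\mathfrak{a}$-transform construction.
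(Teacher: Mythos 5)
Your overall plan coincides with the paper's: obtain an $\mathfrak{a}{}$-multiplication $\lambda\colon A\to B$ over a finite extension via \ref{b38}, reduce it by \ref{b40}, and use \ref{b46}(a) to relate $\mu$ to $\lambda_0$ through $\mathfrak{a}{}^{-1}$. The paper phrases the last step abstractly --- both $\Hom_{\mathcal{O}_E}(A,B)$ and $\Hom_{\mathcal{O}_E}(A_0,B_0)$ are isomorphic to $\mathfrak{a}{}^{-1}$ via $\lambda$ and $\lambda_0$, so the reduction map between them is an isomorphism --- whereas you make the intermediary $c\in\mathfrak{a}{}^{-1}$ explicit and verify by hand that $\lambda\circ c$ is an integral morphism reducing to $\mu$ and is an $\mathfrak{a}c$-multiplication. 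These are the same argument in two presentations.

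There is, however, a genuine flaw in your preliminary step. You claim that, because $\mu$ is $\mathcal{O}_E$-linear and the reduction maps on rational Tate modules are $E_\ell$-equivariant, the resulting $E_\ell$-isomorphism $V_\ell A\simeq V_\ell B$ forces the CM-types of $A$ and $B$ to coincide. But by \ref{b03a}(a), $V_\ell A$ is free of rank one over $E_\ell$ for \emph{every} abelian variety with complex multiplication by $E$; hence $V_\ell A$ and $V_\ell B$ are $E_\ell$-isomorphic no matter what their CM-types are, and the step proves nothing. The CM-type lives in the representation of $E$ on the tangent space (equivalently, the Hodge filtration on $H_1(A,\mathbb{C})$), not in the rational $\ell$-adic Tate module, and $\Tgt_0(\mu)$ need not be an isomorphism, so one cannot simply transport the tangent-space structure either. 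In the paper the hypothesis ``$A$ and $B$ isogenous'' is tacitly read as ``$E$-isogenous'' (equivalently, same CM-type, by \ref{a31a}), which is what \ref{b38} requires and is exactly how the proposition is invoked in \ref{b45}, where equality of CM-types is established \emph{before} isogeny is asserted. Once you adopt that reading, your preliminary step is unnecessary and the rest of your argument is correct; if instead you want to keep the weaker-sounding hypothesis, the $\ell$-adic argument will not supply the missing equality of CM-types.
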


\begin{proof}
Since $A$ and $B$ are isogenous, there is an $\mathfrak{a}$-multiplication
$\lambda\colon A\rightarrow B$ for some lattice ideal $\mathfrak{a}{}$ by
(\ref{b38}) (after a finite extension of $k$). According to Proposition
\ref{b40}, $\lambda_{0}\colon A_{0}\rightarrow B_{0}$ is also an
$\mathfrak{a}{}$-multiplication. Hence the reduction map%
\[
\Hom_{\mathcal{O}{}_{E}}(A,B)\rightarrow\Hom_{\mathcal{O}{}_{E}}(A_{0},B_{0})
\]
is an isomorphism because both are isomorphic to $\mathfrak{a}{}^{-1}$, via
$\lambda$ and $\lambda_{0}$ respectively (\ref{b46}). Therefore, $\mu$ lifts
to an isogeny $\lambda^{\prime}\colon A\rightarrow B$, which is a
$\mathfrak{b}{}$-multiplication (see \ref{b46}).
\end{proof}

\clearpage

\section{The Shimura-Taniyama formula}

The \emph{numerical norm} of a nonzero integral ideal $\mathfrak{a}$ in a
number field $K$ is $\mathbb{N}{}\mathfrak{a}{}=(\mathcal{O}{}_{K}%
\colon\mathfrak{a}{})$. For a prime ideal $\mathfrak{p}{}$ lying over $p$,
$\mathbb{N}{}\mathfrak{p}{}=p^{f(\mathfrak{p}{}/p)}$. The map $\mathbb{N}{}$
is multiplicative: $\mathbb{N}{}\mathfrak{a}{}\cdot\mathbb{N}{}\mathfrak{b}%
{}=\mathbb{N}{}(\mathfrak{a}{}\mathfrak{b}{}).$

Let $k$ be a field of characteristic $p$, let $q$ be a power of $p$, and let
$\sigma$ be the homomorphism $a\mapsto a^{q}\colon k\rightarrow k$. For a
variety $V$ over $k$, we let $V^{(q)}=\sigma V$. For example, if $V$ is
defined by polynomials $\sum a_{i_{1}\cdots}X_{1}^{i_{1}}\cdots$, then
$V^{(q)}$ is defined by polynomials $\sum a_{i_{1}\cdots}^{q}X_{1}^{i_{1}%
}\cdots$. The $q$-\emph{power Frobenius map} is the regular map $V\rightarrow
\sigma V$ that acts by raising the coordinates of a $k^{\mathrm{al}}$-point of
$V$ to the $q$th power.

When $k=\mathbb{F}{}_{q}$, $V^{(q)}=V$ and the $q$-power Frobenius map is a
regular map $\pi\colon V\rightarrow V$. When $V$ is an abelian variety, the
Frobenius maps are homomorphisms.

\begin{theorem}
\label{b42}Let $A$ be an abelian variety with complex multiplication by a
CM-algebra $E$ over a number field $k$. Assume that $k$ contains all
conjugates of $E$ and let $\mathfrak{P}{}$ be prime ideal of $\mathcal{O}%
{}_{k}$ at which $A$ has good reduction. Assume (i) that $(p)\overset
{\text{{\tiny def}}}{=}\mathfrak{P}{}\cap\mathbb{Z}{}$ is unramified in $E$
and (ii) that $\End(A)\cap E=\mathcal{O}{}_{E}$.

\begin{enumerate}
\item There exists an element $\pi\in\mathcal{O}_{E}$ inducing the Frobenius
endomorphism on the reduction of $A$.

\item The ideal generated by $\pi$ factors as follows%
\begin{equation}
(\pi)=\prod\nolimits_{\varphi\in\Phi}\varphi^{-1}(\Nm_{k/\varphi
E}\mathfrak{P}{}) \label{e40}%
\end{equation}
where $\Phi\subset\Hom(E,k)$ is the CM-type of $A$.
\end{enumerate}
\end{theorem}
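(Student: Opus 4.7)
My plan is to construct $\pi$ using the commutant corollary \ref{b03d} and then identify $(\pi)$ with the right-hand side by matching $\mathfrak{p}$-adic valuations at each prime $\mathfrak{p}$ of $\mathcal{O}_{E}$ above $p$. The main obstacle is a tangent-space computation that yields a one-sided inequality at each such $\mathfrak{p}$; these become equalities after summing and comparing with an easy norm identity.

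For Part 1, write $k_0 = \mathcal{O}_k/\mathfrak{P}$, $q = |k_0|$, $s = [k_0 : \mathbb{F}_p]$, and let $i_0\colon\mathcal{O}_{E}\hookrightarrow\End(A_0)$ be the reduction of the CM-action. The $q$-Frobenius $\pi_0\in\End(A_0)$ is defined over $k_0$ and so commutes with $i_0(\mathcal{O}_{E})$; \ref{b03d} applied to $A_0$ then gives $\pi_0 = i_0(\pi)$ for some $\pi$ in the order $R_0 := i_0^{-1}(\End(A_0))\cap E$. Since $R_0\supset\mathcal{O}_{E}$ and $\mathcal{O}_{E}$ is the maximal order in $E$, $R_0 = \mathcal{O}_{E}$, so $\pi\in\mathcal{O}_{E}$. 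For Part 2, write $(\pi) = \prod_{\mathfrak{p}\mid p}\mathfrak{p}^{a_{\mathfrak{p}}}$ and, after grouping the $\varphi$'s by $\mathfrak{p}_{\varphi}:=\varphi^{-1}(\mathfrak{P})$, write the right-hand side as $\mathfrak{a}^{\ast} = \prod_{\mathfrak{p}}\mathfrak{p}^{b_{\mathfrak{p}}}$ with $b_{\mathfrak{p}} = s\cdot|\Phi_{\mathfrak{p}}|/f(\mathfrak{p}/p)$ and $\Phi_{\mathfrak{p}} := \{\varphi\in\Phi : \mathfrak{p}_{\varphi}=\mathfrak{p}\}$. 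A direct computation shows $\mathbb{N}(\mathfrak{a}^{\ast}) = q^g = \mathbb{N}((\pi))$, the latter using $\deg\pi = \mathbb{N}_{E/\mathbb{Q}}(\pi)$ (\ref{b17}) and $\deg\pi = \deg\pi_0 = q^g$. It thus suffices to prove $a_{\mathfrak{p}}\cdot f(\mathfrak{p}/p)\geq s\cdot|\Phi_{\mathfrak{p}}|$ at each $\mathfrak{p}$ individually; summing and comparing with the norm equality forces each inequality to be an equality.

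To prove this prime-by-prime inequality, fix $\mathfrak{p}$ and factor $(\pi) = \mathfrak{p}^{a_{\mathfrak{p}}}\mathfrak{b}$ with $\mathfrak{b}$ coprime to $\mathfrak{p}$. By \ref{b34}, the $(\pi)$-multiplication $\pi\colon A\to A$ factors (via the canonical identification $A^{(\pi)}\simeq A$) as $A\xrightarrow{\lambda^{\mathfrak{b}}}A^{\mathfrak{b}}\xrightarrow{\mu}A$ with $\mu$ a $\mathfrak{p}^{a_{\mathfrak{p}}}$-multiplication, and by \ref{b40} the same factorization reduces to $\pi_0 = \mu_0\circ\lambda^{\mathfrak{b}}_0$ modulo $\mathfrak{P}$. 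Since $\pi_0^{\ast}k(A_0)=k(A_0)^q$ factors through $(\lambda^{\mathfrak{b}}_0)^{\ast}$, Proposition \ref{b18} applies and gives $\deg(\lambda^{\mathfrak{b}}_0)\leq q^d$ with $d=\dim\ker\Tgt_0(\lambda^{\mathfrak{b}}_0)$. The crucial input is the estimate $d\leq g-|\Phi_{\mathfrak{p}}|$: after base change to $\bar k_0$, the CM-decomposition $\Tgt_0(A_0)\otimes_{k_0}\bar k_0\simeq\bigoplus_{\varphi}\bar k_{0,\varphi}$ has an element $b\in\mathfrak{b}$ acting on the $\varphi$-summand by $\bar\varphi(b)$, so a summand lies in $\ker\Tgt_0(\lambda^{\mathfrak{b}}_0)$ if and only if $\mathfrak{b}\subset\mathfrak{p}_{\varphi}$, which fails whenever $\mathfrak{p}_{\varphi}=\mathfrak{p}$ because $\mathfrak{b}$ is coprime to $\mathfrak{p}$. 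Combining this with $\deg(\lambda^{\mathfrak{b}}_0)=\mathbb{N}(\mathfrak{b})$ (\ref{b35}) and $\mathbb{N}(\mathfrak{b})\cdot\mathbb{N}(\mathfrak{p}^{a_{\mathfrak{p}}})=q^g$ yields the desired inequality $a_{\mathfrak{p}}f(\mathfrak{p}/p)\geq s|\Phi_{\mathfrak{p}}|$, completing the proof.
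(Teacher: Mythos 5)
Your proof is correct, and the overall architecture (commutant corollary for Part~1; a prime-by-prime one-sided bound coming from Proposition~\ref{b18}, closed off by comparing total norms) is the same as the paper's. But the way you extract the local bound is genuinely different. The paper works directly with the $\mathfrak{p}_v$-part: it raises $\mathfrak{p}_v^{m_v}$ to the class number to obtain a principal ideal $(\gamma_v)$, observes that $\pi^h$ factors through the \emph{endomorphism} $\gamma_v$, computes $\dim\Ker\Tgt_0(\gamma_{v,0})=|\Phi_v|$ exactly, and gets the upper bound $\mathbb{N}(\mathfrak{p}_v^{m_v})\leq q^{d_v}$. You instead isolate the complement $\mathfrak{b}=(\pi)\mathfrak{p}^{-a_\mathfrak{p}}$, realize it geometrically via the $\mathfrak{b}$-multiplication $\lambda^{\mathfrak{b}}$ (using \ref{b34}, \ref{b35}, \ref{b40}, \ref{b46} rather than the class-number trick), get $\dim\Ker\Tgt_0(\lambda^{\mathfrak{b}}_0)\leq g-|\Phi_\mathfrak{p}|$, and hence the lower bound $\mathbb{N}(\mathfrak{p}^{a_\mathfrak{p}})\geq q^{|\Phi_\mathfrak{p}|}$. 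So the two proofs produce opposite one-sided inequalities, both forced into equality by the identity $\mathbb{N}((\pi))=q^g$. What your route buys: you never need to pass to a power that makes the ideal principal, and you compute the target valuations $b_\mathfrak{p}$ directly rather than via the intermediate statement that $\prod_{\varphi\in\Phi_v}\varphi^{-1}(\Nm_{k/\varphi E}\mathfrak{P})$ is a power of $\mathfrak{p}_v$. What it costs: you lean on the $\mathfrak{a}$-multiplication machinery (in particular the existence, degree, reduction, and composition results) at a point where the paper stays with bare endomorphisms, and you need the small observation that the isogeny $\mu$ in the factorization $\pi=\mu\circ\lambda^{\mathfrak{b}}$ is itself a $\mathfrak{p}^{a_\mathfrak{p}}$-multiplication (via \ref{b46}(a) and \ref{b37c}). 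Two small points worth being explicit about if you write this up: Proposition~\ref{b18} is stated over an algebraically closed field, so pass to $\bar k_0$ before invoking it (degree and tangent-kernel dimension are insensitive to this); and the identification $\Ker\Tgt_0(\lambda^{\mathfrak{b}}_0)=\bigcap_{b\in\mathfrak{b}}\Ker\Tgt_0(b)$ uses that $A^{\mathfrak{b}}\hookrightarrow A^n$ is a closed immersion, so $\Tgt_0$ of the inclusion is injective.
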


\begin{proof}
Let $A_{0}$ be the reduction of $A$ to $k_{0}\overset{\text{{\tiny def}}}%
{=}\mathcal{O}{}_{k}/\mathfrak{P}{}$, and let
\[
q=|k_{0}|=(\mathcal{O}{}_{k}\colon\mathfrak{P}{})=p^{f(\mathfrak{P}/p)}.
\]

(a) Recall that the reduction map $\End(A)\rightarrow\End(A_{0})$ is
injective. As $\End(A)\cap E$ is the maximal order $\mathcal{O}{}_{E}$ in $E$,
$\End(A_{0})\cap E$ must be also. The ($q$-power) Frobenius endomorphism $\pi$
of $A_{_{0}}$ commutes with all endomorphisms of $A_{0}$, and so it lies in
$\mathcal{O}{}_{E}$ by (\ref{b03b}).

(b) Let $\mathcal{A}{}$ be the abelian scheme over over $\mathcal{O}{}_{k}$
with fibres $A$ and $A_{0}$. Then $\mathcal{T}{}\overset{\text{{\tiny def}}%
}{=}\Tgt_{0}(\mathcal{A}{})$ is a free $\mathcal{O}{}_{k}$-module of rank
$\dim A$ such that $\mathcal{T}{}\otimes_{\mathcal{O}{}_{k}}k\simeq
\mathrm{Tgt}_{0}(A)$ and $\mathcal{T}/\mathfrak{P}{}\mathcal{T}{}%
\simeq\mathrm{Tgt}_{0}(A_{0})\overset{\text{{\tiny def}}}{=}T_{0}$.

Because $p$ is unramified in $E$, the isomorphism $E\otimes_{\mathbb{Q}{}%
}k\simeq\prod\nolimits_{\sigma\colon E\rightarrow k}k_{\sigma}$ induces an
isomorphism $\mathcal{O}{}_{E}\otimes_{\mathbb{Z}{}}\mathcal{O}{}_{k}%
\simeq\prod\nolimits_{\sigma\colon E\rightarrow k}\mathcal{O}{}_{\sigma}$
where $\mathcal{O}{}_{\sigma}$ denotes $\mathcal{O}{}_{k}$ with the
$\mathcal{O}{}_{E}$-algebra structure provided by $\sigma$. Similarly, the
isomorphism $T\simeq\bigoplus\nolimits_{\varphi\in\Phi}k_{\varphi}$ induces an
isomorphism $\mathcal{T}{}\simeq\bigoplus\nolimits_{\varphi\in\Phi}%
\mathcal{O}{}_{\varphi}$ where $\mathcal{O}{}_{\varphi}$ is the submodule of
$\mathcal{T}{}$ on which $\mathcal{O}{}_{k}$ acts through $\varphi$. In other
words, there exists an $\mathcal{O}{}_{k}$-basis $(e_{\varphi})_{\varphi
\in\Phi}$ for $\mathcal{T}{}$ such that $ae_{\varphi}=\varphi(a)e_{\varphi}$
for $a\in\mathcal{O}{}_{E}$.

Because $\pi\bar{\pi}=q$, the ideal $(\pi)$ is divisible only by prime ideals
dividing $p$, say,
\[
(\pi)=\prod\nolimits_{v|p}\mathfrak{p}{}_{v}^{m_{v}},\quad m_{v}\geq0.
\]
For $h$ the class number of $E$, let%
\begin{equation}
\mathfrak{p}_{v}^{m_{v}h}=(\gamma_{v}),\quad\gamma_{v}\in\mathcal{O}{}_{E},
\label{e44}%
\end{equation}
and let%
\begin{align*}
\Phi_{v}  &  =\{\varphi\in\Phi\mid\varphi^{-1}(\mathfrak{P})=\mathfrak{p}%
{}_{v}\},\\
d_{v}  &  =\left\vert \Phi_{v}\right\vert .
\end{align*}

The kernel of $T_{0}\overset{\gamma_{v}}{\longrightarrow}T_{0}$ is the span of
the $e_{\varphi}$ for which $\varphi(\gamma_{v})\in\mathfrak{P}{}$, but
$\varphi^{-1}(\mathfrak{P})$ is a prime ideal of $\mathcal{O}{}_{E}$ and
$\mathfrak{p}{}_{v}$ is the only prime ideal of $\mathcal{O}{}{}_{E}$
containing $\gamma_{v}$, and so $\varphi(\gamma_{v})\in\mathfrak{P}{}$ if and
only if $\varphi^{-1}(\mathfrak{P}{})=\mathfrak{p}{}_{v}$:%
\[
\Ker(T_{0}\overset{\gamma_{v}}{\longrightarrow}T_{0})=\langle e_{\varphi}%
\mid\varphi\in\Phi_{v}\rangle.
\]
Since $\pi^{h}\colon A_{0}\rightarrow A_{0}$ factors through $\gamma_{v}$, we
have that $\gamma_{v}^{\ast}k_{0}(A_{0})\supset(\pi^{h})^{\ast}k_{0}%
(A_{0})=k_{0}(A_{0})^{q^{h}}$, and so Proposition \ref{b18} shows that
\[
\deg(A_{0}\overset{\gamma_{v}}{\longrightarrow}A_{0})\leq q^{hd_{v}}.
\]
As%
\[
\deg(A_{0}\overset{\gamma_{v}}{\longrightarrow}A_{0})\overset{(\ref{b17})}%
{=}\mathbb{N(}{}\gamma_{v})\overset{(\ref{e44})}{=}\mathbb{N}{}(\mathfrak{p}%
{}_{v}^{hm_{v}})
\]
we deduce that%
\begin{equation}
\mathbb{N}{}(\mathfrak{p}_{v}^{m_{v}})\leq q^{d_{v}}. \label{e08}%
\end{equation}
On taking the product over $v$, we find that%
\[
\Nm_{E/\mathbb{Q}{}}(\pi)\leq q^{\sum\nolimits_{v|p}d_{v}}=q^{g}.
\]
But
\[
\Nm_{E/\mathbb{Q}{}}(\pi)\overset{(\ref{b17})}{=}\deg(A_{0}\overset{\pi
}{\longrightarrow}A_{0})=q^{g},
\]
and so the inequalities are all equalities.

Equality in (\ref{e08}) implies that
\[
\Nm_{E/\mathbb{Q}{}}(\mathfrak{p}{}_{v}^{m_{v}})=\left(  \Nm{}_{k/\mathbb{Q}%
{}}\mathfrak{P}{}\right)  ^{d_{v}}%
\]
which equals,%
\begin{align*}
\prod\nolimits_{\varphi\in\Phi_{v}}\Nm_{k/\mathbb{Q}{}}\mathfrak{P}  &
=\prod\nolimits_{\varphi\in\Phi_{v}}\left(  \Nm_{E/\mathbb{Q}{}}(\varphi
^{-1}(\Nm{}_{k/\varphi E}\mathfrak{P}{}))\right) \\
&  =\Nm{}_{E/\mathbb{Q}{}}\left(  \prod\nolimits_{\varphi\in\Phi_{v}}%
\varphi^{-1}(\Nm{}_{k/\varphi E}\mathfrak{P}{})\right)  .
\end{align*}
From the definition of $\Phi_{v}$, we see that $\prod\nolimits_{\varphi\in
\Phi_{v}}\varphi^{-1}(\Nm{}_{k/\varphi E}\mathfrak{P}{})$ is a power of
$\mathfrak{p}{}_{v}$, and so this shows that%
\begin{equation}
\mathfrak{p}{}_{v}^{m_{v}}=\prod\nolimits_{\varphi\in\Phi_{v}}\varphi
^{-1}(\Nm_{k/\varphi E}\mathfrak{P}{}). \label{e45}%
\end{equation}
On taking the product over $v$, we obtain the required formula.
\end{proof}

\begin{corollary}
\label{b42c}With the hypotheses of the theorem, for all primes $\mathfrak{p}%
{}$ of $E$ dividing $p$,%
\begin{equation}
\ord_{\mathfrak{p}{}}(\pi)=\sum\nolimits_{\varphi\in\Phi\text{, }\varphi
^{-1}(\mathfrak{P}{})=\mathfrak{p}{}}f(\mathfrak{P}{}/\varphi\mathfrak{p}{}).
\label{e46}%
\end{equation}
Here $\varphi\mathfrak{p}{}$ is the image of $\mathfrak{p}{}$ in
$\varphi\mathcal{O}{}_{E}\subset\varphi E\subset k$.
\end{corollary}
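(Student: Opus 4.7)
The plan is simply to take $\ord_{\mathfrak{p}{}}$ of the factorization (\ref{e40}) provided by Theorem \ref{b42} and evaluate each factor separately. The main observation needed is the standard formula $\Nm_{K/F}(\mathfrak{Q}{}) = \mathfrak{q}{}^{f(\mathfrak{Q}{}/\mathfrak{q}{})}$ for a prime $\mathfrak{Q}{}$ of $K$ lying over a prime $\mathfrak{q}{}$ of a subfield $F$; this will reduce each factor on the right of (\ref{e40}) to a power of some prime of $\mathcal{O}{}_{E}$.

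In detail, I would fix $\varphi\in\Phi$ and analyze the factor $\varphi^{-1}(\Nm_{k/\varphi E}\mathfrak{P}{})$. Since $\varphi\colon E\hookrightarrow k$ identifies $\varphi E$ with a subfield of $k$, the prime $\mathfrak{P}{}$ of $\mathcal{O}{}_{k}$ lies over the prime $\varphi\mathfrak{p}{}_{\varphi}$ of $\varphi\mathcal{O}{}_{E}$, where $\mathfrak{p}{}_{\varphi}\overset{\text{def}}{=}\varphi^{-1}(\mathfrak{P}{})$. The norm formula then gives
\[
\Nm_{k/\varphi E}\mathfrak{P}{}=(\varphi\mathfrak{p}{}_{\varphi})^{f(\mathfrak{P}{}/\varphi\mathfrak{p}{}_{\varphi})},
\]
and applying $\varphi^{-1}$ yields
\[
\varphi^{-1}(\Nm_{k/\varphi E}\mathfrak{P}{})=\mathfrak{p}{}_{\varphi}^{f(\mathfrak{P}{}/\varphi\mathfrak{p}{}_{\varphi})}.
\]

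Now I substitute into (\ref{e40}) and take $\ord_{\mathfrak{p}{}}$ of both sides for a fixed prime $\mathfrak{p}{}$ of $\mathcal{O}{}_{E}$ above $p$. A factor $\mathfrak{p}{}_{\varphi}^{f(\mathfrak{P}{}/\varphi\mathfrak{p}{}_{\varphi})}$ contributes nothing unless $\mathfrak{p}{}_{\varphi}=\mathfrak{p}{}$, in which case it contributes $f(\mathfrak{P}{}/\varphi\mathfrak{p}{})$. Summing over those $\varphi\in\Phi$ with $\varphi^{-1}(\mathfrak{P}{})=\mathfrak{p}{}$ gives (\ref{e46}).

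There is no real obstacle here: this is a purely formal unwinding of the product formula (\ref{e40}), and the only ingredient beyond the theorem itself is the behavior of ideal norms under extensions of number fields. The mild thing to be careful about is the bookkeeping with the embedding $\varphi$, since both $\mathfrak{p}{}\subset\mathcal{O}{}_{E}$ and its image $\varphi\mathfrak{p}{}\subset\varphi\mathcal{O}{}_{E}\subset\mathcal{O}{}_{k}$ appear; but the statement of the corollary already makes this distinction explicit.
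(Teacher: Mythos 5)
Your proof is correct and matches the paper's argument essentially verbatim: both take $\ord_{\mathfrak{p}}$ of the factorization (\ref{e40}) and evaluate each factor via the relative norm formula $\Nm_{k/\varphi E}\mathfrak{P}=(\varphi\mathfrak{p}_{\varphi})^{f(\mathfrak{P}/\varphi\mathfrak{p}_{\varphi})}$, noting that only those $\varphi$ with $\varphi^{-1}(\mathfrak{P})=\mathfrak{p}$ contribute. No gaps.
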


\begin{proof}
Let $\mathfrak{p}{}$ be a $p$-adic prime ideal of $\mathcal{O}{}_{E}$, and let
$\varphi$ be a homomorphism $E\rightarrow k$. If $\mathfrak{p}{}=\varphi
^{-1}(\mathfrak{P}{})$, then%
\[
\ord_{\mathfrak{p}{}}(\varphi^{-1}(\Nm_{k/\varphi E}\mathfrak{P}%
{}))=\ord_{\varphi\mathfrak{p}{}}\Nm_{k/\varphi E}\mathfrak{P}{}%
=f(\mathfrak{P}{}/\varphi\mathfrak{p}{}),
\]
and otherwise it is zero. Thus, (\ref{e46}) is nothing more than a restatement
of (\ref{e44}).
\end{proof}

\begin{corollary}
\label{b43}With the hypotheses of the theorem, for all primes $v$ of $E$
dividing $p$,%
\begin{equation}
\frac{\ord_{v}(\pi)}{\ord_{v}(q)}=\frac{|\Phi\cap H_{v}|}{|H_{v}|} \label{e48}%
\end{equation}
where $H_{v}=\{\rho\colon E\rightarrow k\mid\rho^{-1}(\mathfrak{\mathfrak{P}%
{}})=\mathfrak{p}_{v}\}$ and $q=(\mathcal{O}{}_{k}\colon\mathfrak{P}{})$.
\end{corollary}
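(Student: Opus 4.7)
The plan is to derive this from Corollary \ref{b42c} by a direct counting argument, reducing everything to the local unramifiedness of $p$ in $E$.

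First I would rewrite Corollary \ref{b42c} in the notation of the statement: the condition $\varphi^{-1}(\mathfrak{P}{})=\mathfrak{p}{}_{v}$ is exactly the condition $\varphi\in H_{v}$, so
\[
\ord_{v}(\pi)=\sum\nolimits_{\varphi\in\Phi\cap H_{v}}f(\mathfrak{P}{}/\varphi\mathfrak{p}{}_{v}).
\]
Next I would observe that the summand is in fact constant as $\varphi$ runs over $H_{v}$. Indeed, for any $\varphi\in H_{v}$, the map $\varphi$ induces an isomorphism $\mathcal{O}{}_{E}/\mathfrak{p}{}_{v}\simeq\varphi\mathcal{O}{}_{E}/\varphi\mathfrak{p}{}_{v}$ of residue fields, so
\[
f(\mathfrak{P}{}/\varphi\mathfrak{p}{}_{v})=\frac{f(\mathfrak{P}{}/p)}{f(\mathfrak{p}{}_{v}/p)}=\frac{f(\mathfrak{P}{}/p)}{f_{v}},
\]
where I write $f_{v}=f(\mathfrak{p}{}_{v}/p)$. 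Hence
\[
\ord_{v}(\pi)=|\Phi\cap H_{v}|\cdot\frac{f(\mathfrak{P}{}/p)}{f_{v}}.
\]

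Then I would compute the denominator. Since $(p)\cap\mathbb{Z}{}$ is unramified in $E$, we have $\ord_{v}(p)=1$, and $q=p^{f(\mathfrak{P}{}/p)}$ gives $\ord_{v}(q)=f(\mathfrak{P}{}/p)$. Thus
\[
\frac{\ord_{v}(\pi)}{\ord_{v}(q)}=\frac{|\Phi\cap H_{v}|}{f_{v}}.
\]

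The remaining step, and the only one requiring a small argument, is identifying $|H_{v}|=f_{v}$. The embeddings $\rho\colon E\rightarrow k$ with $\rho^{-1}(\mathfrak{P}{})=\mathfrak{p}{}_{v}$ correspond bijectively (by passing to completions in the $\mathfrak{p}{}_{v}$- and $\mathfrak{P}{}$-adic topologies) to continuous $\mathbb{Q}{}_{p}$-algebra embeddings $E_{v}\hookrightarrow k_{\mathfrak{P}{}}$. Because $k$ contains all conjugates of $E$, the local field $k_{\mathfrak{P}{}}$ contains all conjugates of $E_{v}$, so the number of such embeddings is $[E_{v}\colon\mathbb{Q}{}_{p}]=e_{v}f_{v}$, and since $p$ is unramified in $E$ this equals $f_{v}$. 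Combining this with the previous display yields the stated formula. The only possible obstacle is the completion-theoretic bookkeeping in the final step, but it is standard once one knows $p$ is unramified.
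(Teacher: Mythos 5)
Your proof is correct and follows essentially the same route as the paper: start from Corollary \ref{b42c}, observe that $f(\mathfrak{P}{}/\varphi\mathfrak{p}{}_v)=f(\mathfrak{P}{}/p)/f(\mathfrak{p}{}_v/p)$ is constant over $\varphi\in H_v$, compute $\ord_v(q)$, and identify $|H_v|$. The only difference is cosmetic: the paper carries the ramification index $e(\mathfrak{p}{}_v/p)$ through the computation (writing $\ord_v(q)=f(\mathfrak{P}{}/p)\cdot e(\mathfrak{p}{}_v/p)$ and $|H_v|=e(\mathfrak{p}{}_v/p)\cdot f(\mathfrak{p}{}_v/p)$) so as to make explicit that the equivalence of (\ref{e46}) and (\ref{e48}) holds without the unramifiedness hypothesis, whereas you invoke $e_v=1$ early; your completion argument in fact already establishes $|H_v|=e_vf_v$ in general, so the two arguments are the same up to bookkeeping.
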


\begin{proof}
We show that (\ref{e46}) implies (\ref{e48}) (and conversely) without assuming
$p$ to be unramified in $E$. Note that%
\[
\ord_{v}(q)=f(\mathfrak{P}{}/p)\cdot\ord_{v}(p)=f(\mathfrak{P}{}/p)\cdot
e(\mathfrak{p}_{v}/p),
\]
and that%
\[
|H_{v}|=e(\mathfrak{p}{}_{v}/p)\cdot f(\mathfrak{p}{}_{v}/p)\text{.}%
\]
Therefore, the equality
\[
\ord_{v}(\pi)=\sum\nolimits_{\varphi\in\Phi\cap H_{v}}f(\mathfrak{P}{}%
/\varphi\mathfrak{p}{}_{v}),
\]
implies that%
\[
\frac{\ord_{v}(\pi)}{\ord_{v}(q)}=\sum\nolimits_{\varphi\in\Phi\cap H_{v}%
}\frac{1}{e(\mathfrak{p}{}_{v}/p)\cdot f(\mathfrak{p}{}_{v}{}/p)}=|\Phi\cap
H_{v}|\cdot\frac{1}{|H_{v}|}%
\]
(and conversely).
\end{proof}

\begin{remark}
\label{b44r}(a) In the statement of Theorem \ref{b42}, $k$ can be replaced by
a finite extension of $\mathbb{Q}{}_{p}$.

(b) The conditions in the statement are unnecessarily strong. For example, the
formula holds without the assumption that $p$ be unramified in $E$. See
Theorem \ref{b57} below.

(c) When $E$ is a subfield of $k$, Theorem \ref{b42} can be stated in terms of
the reflex CM-type cf. \cite{shimuraT1961}, \S 13.
\end{remark}

\begin{application}
\label{b45}Let $A$ be an abelian variety with complex multiplication by a
CM-algebra $E$ over a number field $k{}$, and let $\Phi\subset\Hom(E,k)$ be
the type of $A$. Because $\Tgt_{0}(A)$ is an $E\otimes_{\mathbb{Q}{}}k$-module
satisfying (\ref{e01}), $k$ contains the reflex field $E^{\ast}$ of $(E,\Phi)$
and we assume $k$ is Galois over $E^{\ast}$. Let $\mathfrak{P}{}$ be a prime
ideal of $\mathcal{O}{}_{k}$ at which $A$ has good reduction, and let
$\mathfrak{\mathfrak{P}{}}\cap\mathcal{O}{}_{E^{\ast}}=\mathfrak{p}{}$ and
$\mathfrak{p}{}\cap\mathbb{Z}{}=(p)$. Assume

\begin{itemize}
\item that $p$ is unramified in $E$,

\item that $\mathfrak{p}{}$ is unramified in $k$, and

\item that $\End(A)\cap E=\mathcal{O}{}_{E}$.
\end{itemize}

\noindent Let $\sigma$ be the Frobenius element $(\mathfrak{P},k/E^{\ast})$ in
$\Gal(k/E^{\ast})$.\footnote{So $\sigma(\mathfrak{P}{})=\mathfrak{P}$ and
$\sigma a\equiv a^{p^{f(\mathfrak{P}{}/\mathfrak{p}{})}}\quad\mathrm{mod\,}%
\mathfrak{P}{}$ for all $a\in\mathcal{O}{}_{k}$.} As $\sigma$ fixes $E^{\ast}%
$, $A$ and $\sigma A$ have the same CM-type and so they become isogenous over
a finite extension of $k$. According to (\ref{b40c}), there exists an
$\mathfrak{a}{}$-multiplication $\lambda\colon A\rightarrow\sigma A$ over a
finite extension of $k$ whose reduction $\lambda_{0}\colon A_{0}\rightarrow
A_{0}^{(p^{f(\mathfrak{p}{}/p)})}$ is the $p^{f(\mathfrak{p}{}/p)}$-power
Frobenius map. Moreover,
\[
\sigma^{f(\mathfrak{P}{}/\mathfrak{p}{})-1}\lambda\circ\cdots\circ
\sigma\lambda\circ\lambda=\pi
\]
where $\pi$ is as in the statement of the theorem. Therefore, Theorem
\ref{b42} shows that%
\[
\mathfrak{a}{}^{f(\mathfrak{P}{}/\mathfrak{p}{})}=N_{\Phi}(\Nm_{k/E^{\ast}%
}\mathfrak{P}{})=N_{\Phi}(\mathfrak{p}{}^{f(\mathfrak{P}{}/\mathfrak{p}{}%
)})=N_{\Phi}(\mathfrak{p}{})^{f(\mathfrak{P}{}/\mathfrak{p}{})},
\]
and so%
\begin{equation}
\mathfrak{a}=N_{\Phi}(\mathfrak{\mathfrak{p}{}})\mathfrak{.} \label{e51}%
\end{equation}

Notice that, for any $m$ prime to $p$ and such that $A_{m}(k)=A_{m}%
(\mathbb{C)}{}$, the homomorphism $\lambda$ agrees with $\sigma$ on $A_{m}(k)$
(because it does on $A_{0,m}$).
\end{application}

\begin{nt}
The proof Theorem \ref{b42} in this section is essentially the original proof.
\end{nt}

\clearpage

\section{The fundamental theorem over the reflex field.}

\subsection{Preliminaries from algebraic number theory}

We review some class field theory (see, for example, \cite{milneCFT}, V). Let
$k$ be a number field. For a finite set $S$ of finite primes of $k$,
$I^{S}(k)$ denotes the group of fractional ideals of $k$ generated by the
prime ideals not in $S$. Assume $k$ is totally imaginary. Then a modulus for
$k$ is just an ideal in $\mathcal{O}{}_{k}$. For such a modulus $\mathfrak{m}%
{}$, $S(\mathfrak{m}{})$ denotes the set of finite primes $v$ dividing
$\mathfrak{m}{}$, and $k_{\mathfrak{m}{},1}$ denotes the group of $a\in
k^{\times}$ such that%
\[
\ord_{v}(a-1)\geq\ord_{v}(\mathfrak{m}{})
\]
for all finite primes $v$ dividing $\mathfrak{m}{}$. In other words, $a$ lies
in $k_{\mathfrak{m}{},1}$ if and only if multiplication by $a$ preserves
$\mathcal{O}{}_{v}\subset k_{v}$ for all $v$ dividing $\mathfrak{m}{}$ and
acts as $1$ on $\mathcal{O}{}_{v}/\mathfrak{p}{}_{v}^{\ord_{v}(\mathfrak{m}%
{})}=\mathcal{O}{}_{v}/\mathfrak{m}{}$. The ray class group modulo
$\mathfrak{m}{}$ is%
\[
C_{\mathfrak{m}{}}(k)=I^{S(\mathfrak{m}{})}/i(k_{\mathfrak{m}{},1})
\]
where $i$ is the map sending an element to its principal ideal. The
reciprocity map is an isomorphism%
\[
\mathfrak{a}{}\mapsto(\mathfrak{a}{},L_{\mathfrak{m}{}}/k)\colon
C_{\mathfrak{m}{}}(k)\rightarrow\Gal(L_{\mathfrak{m}{}}/k)
\]
where $L_{\mathfrak{m}{}}$ is the ray class field of $\mathfrak{m}{}$.

\begin{lemma}
\label{b60l}Let $\mathfrak{a}{}$ be a fractional ideal in $E$. For any integer
$m>0$, there exists an $a\in E^{\times}$ such that $a\mathfrak{a}{}%
\subset\mathcal{O}{}_{E}$ and $(\mathcal{O}_{E}{}\colon a\mathfrak{a}{})$ is
prime to $m$.
\end{lemma}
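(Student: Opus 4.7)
The plan is to reduce the statement to a coprime-representative question in the ideal class group and solve that by a standard Chinese Remainder argument. First I would reduce to the case where $E$ is a number field: if $E=\prod E_i$ is a product of CM-fields, then any fractional ideal $\mathfrak{a}$ decomposes as $\prod\mathfrak{a}_i$, and the lemma for each factor yields $a_i\in E_i^\times$ with $a_i\mathfrak{a}_i\subset\mathcal{O}_{E_i}$ of index prime to $m$; then $a=(a_i)$ works for $E$. So assume $E$ is a number field.

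Next I reformulate the conclusion. An element $a\in E^\times$ satisfies $a\mathfrak{a}\subset\mathcal{O}_E$ if and only if $a\in\mathfrak{a}^{-1}$, and $(\mathcal{O}_E\colon a\mathfrak{a})=\mathbb{N}(a\mathfrak{a})$ is prime to $m$ if and only if $a\mathfrak{a}$ is coprime to every prime ideal of $\mathcal{O}_E$ dividing $m$, i.e.
\[
\ord_{\mathfrak{p}}(a)+\ord_{\mathfrak{p}}(\mathfrak{a})=0\quad\text{for every prime }\mathfrak{p}\mid m.
\]
Equivalently, I need $a\in\mathfrak{a}^{-1}$ with $a\notin\mathfrak{p}\mathfrak{a}^{-1}$ for each such $\mathfrak{p}$.

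To produce such an $a$, I would invoke the Chinese Remainder Theorem for the Dedekind domain $\mathcal{O}_E$: writing $m\mathcal{O}_E=\prod_{\mathfrak{p}\mid m}\mathfrak{p}^{e_{\mathfrak{p}}}$ and tensoring with the invertible $\mathcal{O}_E$-module $\mathfrak{a}^{-1}$ gives
\[
\mathfrak{a}^{-1}/m\mathfrak{a}^{-1}\;\simeq\;\prod_{\mathfrak{p}\mid m}\mathfrak{a}^{-1}/\mathfrak{p}^{e_{\mathfrak{p}}}\mathfrak{a}^{-1},
\]
and further surjecting each factor onto $\mathfrak{a}^{-1}/\mathfrak{p}\mathfrak{a}^{-1}$, which is a one-dimensional vector space over the residue field $\mathcal{O}_E/\mathfrak{p}$. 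I pick a nonzero element in each factor and lift to an $a\in\mathfrak{a}^{-1}$; this $a$ lies in no $\mathfrak{p}\mathfrak{a}^{-1}$ with $\mathfrak{p}\mid m$, and is nonzero (since its image is nonzero), so $a\in E^\times$ has the required properties.

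There is no real obstacle here; the only step worth checking is the compatibility that makes the CRT decomposition apply to $\mathfrak{a}^{-1}$, which follows because $\mathfrak{a}^{-1}$ is an invertible $\mathcal{O}_E$-module (so flat, hence $-\otimes_{\mathcal{O}_E}\mathfrak{a}^{-1}$ preserves the product decomposition $\mathcal{O}_E/m\simeq\prod \mathcal{O}_E/\mathfrak{p}^{e_{\mathfrak{p}}}$). Everything else is formal.
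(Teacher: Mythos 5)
Your proof is correct and uses essentially the same idea as the paper's: both are Chinese Remainder Theorem arguments that produce an element of $\mathfrak{a}^{-1}$ with prescribed valuations at the finitely many primes dividing $m$ (the paper first clears denominators by choosing $c\in\mathfrak{a}$ and then applies CRT in $\mathcal{O}_E$, while you apply CRT directly to the invertible module $\mathfrak{a}^{-1}$, which is a minor stylistic difference).
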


\begin{proof}
It suffices to find an $a\in E$ such that%
\begin{equation}
\ord_{v}(a)+\ord_{v}(\mathfrak{a}{})\geq0 \label{e61}%
\end{equation}
for all finite primes $v$, with equality holding if $v|m$.

Choose a $c\in\mathfrak{a}{}$. Then $\ord_{v}(c^{-1}\mathfrak{a}{})\leq0$ for
all finite $v$. For each $v$ such that $v|m$ or $\ord_{v}(\mathfrak{a}{})<0$,
choose an $a_{v}\in\mathcal{O}{}_{E}$ such that
\[
\ord_{v}(a_{v})+\ord_{v}(c^{-1}\mathfrak{a}{})=0
\]
(exists by the Chinese remainder theorem). For any $a\in\mathcal{O}{}_{E}$
sufficiently close to each $a_{v}$ (which exists by the Chinese remainder
theorem again), $ca$ satisfies the required condition.
\end{proof}

\subsection{The fundamental theorem in terms of ideals}

\begin{theorem}
\label{b57}Let $A$ be an abelian variety over $\mathbb{C}{}$ with complex
multiplication by a CM-algebra $E$, and let $\Phi\subset\Hom(E,\mathbb{C}{})$
be the type of $A$. Assume that $\End(A)\cap E=\mathcal{O}_{E}$. Fix an
integer $m>0$, and let $\sigma$ be an automorphism of $\mathbb{C}{}$ fixing
$E^{\ast}$.

\begin{enumerate}
\item There exists an ideal $\mathfrak{a}(\sigma){}$ in $E$ and an
$\mathfrak{a}{}(\sigma)$-multiplication $\lambda\colon A\rightarrow\sigma A$
such that $\lambda(x)=\sigma x$ for all $x\in A_{m}$; moreover, the class
$[\mathfrak{a}{}(\sigma)]$ of $\mathfrak{a}{}(\sigma)$ in $C_{m}(E)$ is
uniquely determined.

\item For any sufficiently divisible modulus $\mathfrak{m}{}$ for $E^{\ast}$,
the ideal class $[\mathfrak{a}{}(\sigma)]$ depends only on the restriction of
$\sigma$ to the ray class field $L_{\mathfrak{m}{}}$ of $\mathfrak{m}{}$, and%
\begin{equation}
\lbrack\mathfrak{a}(\sigma)]=[N_{\Phi}(\mathfrak{b})]{}\text{ if }%
\sigma|L_{\mathfrak{m}{}}=(\mathfrak{b}{},L_{\mathfrak{m}{}}/E^{\ast}).
\label{e02}%
\end{equation}

\end{enumerate}
\end{theorem}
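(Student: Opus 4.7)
The plan is to reduce (b) to the Frobenius case already handled in Application \ref{b45}, with (a) providing the construction that defines $\mathfrak{a}(\sigma)$. Throughout I assume $A$ has been descended to a number field $k\subset\mathbb{C}$, chosen Galois over $E^\ast$, containing all conjugates of $E$, and large enough that $A_m(k)=A_m(\mathbb{C})$ (possible by \S\ref{extension}).

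For (a), since $\sigma$ fixes $E^\ast$, the varieties $A$ and $\sigma A$ have the same CM-type and are $E$-isogenous (\ref{a31a}), so Proposition \ref{b38} gives a $\mathfrak{b}$-multiplication $\mu\colon A\to\sigma A$ for some lattice ideal $\mathfrak{b}$. By Lemma \ref{b60l} I may replace $\mathfrak{b}$ by a representative of its class coprime to $m$ (composing $\mu$ with a suitable element of $E^\times$), so that $\mu|_{A_m}\colon A_m\to(\sigma A)_m$ becomes an $\mathcal{O}_E/m$-linear isomorphism. Because $\mathcal{O}_E=\End(A)\cap E$, the torsion $A_m$ is free of rank one over $\mathcal{O}_E/m$ (cf.\ Remark \ref{b03c}), so $\mu|_{A_m}^{-1}\circ\sigma|_{A_m}$ is multiplication by a unit $\bar c\in(\mathcal{O}_E/m)^\times$; lifting $\bar c$ to $c\in\mathcal{O}_E$ makes $\lambda:=\mu\circ c$ a $(c\mathfrak{b})$-multiplication (Example \ref{b31}a) agreeing with $\sigma$ on $A_m$. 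For uniqueness of $[\mathfrak{a}(\sigma)]\in C_m(E)$, any other choice has the form $\lambda\circ a$ for some $a\in E^\times$ (Proposition \ref{b46}a); agreement with $\sigma$ on $A_m$ forces $\ord_v(a-1)\geq\ord_v(m)$ for $v\mid m$, i.e.\ $a\in E_{m,1}$, and $(a)$ is trivial in $C_m(E)$.

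For (b), package (a) into $F\colon\Gal(\mathbb{Q}^{\mathrm{al}}/E^\ast)\to C_m(E)$, $\sigma\mapsto[\mathfrak{a}(\sigma)]$. Proposition \ref{b34} shows $F$ is a homomorphism: if $\lambda_i\colon A\to\sigma_i A$ are the chosen $\mathfrak{a}_i$-multiplications, then $(\sigma_1\lambda_2)\circ\lambda_1$ is an $\mathfrak{a}_1\mathfrak{a}_2$-multiplication agreeing with $\sigma_1\sigma_2$ on $A_m$. For continuity, if $\sigma|_k=\mathrm{id}$ then $\sigma A=A$ and $\sigma|_{A_m}=\mathrm{id}$, so any $\mathfrak{a}(\sigma)$-multiplication $A\to A$ agreeing with the identity on $A_m$ is multiplication by an element of $E_{m,1}\cap\mathcal{O}_E$, giving $[\mathfrak{a}(\sigma)]=1$; hence $F$ factors through $\Gal(k/E^\ast)$ and, its target being abelian, through $C_{\mathfrak{m}}(E^\ast)$ for a sufficiently divisible modulus $\mathfrak{m}$ by class field theory. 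To identify $F$ with $[N_\Phi(-)]$, I appeal to Application \ref{b45}: for any prime $\mathfrak{P}$ of $k$ of good reduction with $p=\mathfrak{P}\cap\mathbb{Z}$ unramified in $E$, $\mathfrak{p}=\mathfrak{P}\cap\mathcal{O}_{E^\ast}$ unramified in $k$, and $p\nmid m$, the Frobenius $\sigma=(\mathfrak{P},k/E^\ast)$ admits an $\mathfrak{a}$-multiplication $\lambda\colon A\to\sigma A$ with $\mathfrak{a}=N_\Phi(\mathfrak{p})$ whose reduction is the $p^{f(\mathfrak{p}/p)}$-Frobenius; the last remark in \ref{b45} ensures $\lambda=\sigma$ on $A_m$, so $\mathfrak{a}(\sigma)=N_\Phi(\mathfrak{p})$. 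Enlarging $\mathfrak{m}$ so that (i) it contains every prime excluded above and (ii) $N_\Phi\bigl((E^\ast)_{\mathfrak{m},1}\bigr)\subset E_{m,1}\cdot\mathcal{O}_E^\times$ (possible by continuity of the polynomial map $N_\Phi$), the rule $[\mathfrak{b}]\mapsto[N_\Phi(\mathfrak{b})]$ is a well-defined map $C_{\mathfrak{m}}(E^\ast)\to C_m(E)$ agreeing with $F$ on the Chebotarev-dense set of Frobenius classes, hence everywhere.

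The main obstacle is the continuity/descent step: assembling enough data---the field of definition of $A$ and of $A_m$, together with representatives of the finitely many $E$-isogeny targets $A^{\mathfrak{a}}$---into a single finite Galois extension of $E^\ast$, so that $F$ demonstrably factors through a finite abelian group, and then further through $C_{\mathfrak{m}}(E^\ast)$ for an explicit sufficiently divisible $\mathfrak{m}$. Once this is in place, the identification with the reflex norm is a routine Chebotarev application of the Shimura--Taniyama formula packaged in Application \ref{b45}.
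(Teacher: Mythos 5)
Your proof is correct and follows essentially the same route as the paper: construct an $\mathfrak{a}$-multiplication of degree prime to $m$, twist by an element of $\mathcal{O}_E$ so that it agrees with $\sigma$ on $A_m$ (you work with $A_m$ as a free $\mathcal{O}_E/m$-module, the paper with $T_mA$ over $\mathcal{O}_m$, which is equivalent), show the resulting $\sigma\mapsto[\mathfrak{a}(\sigma)]$ is a homomorphism, and match it against $N_\Phi$ on a dense set of Frobenius elements via Application \ref{b45} and Chebotarev/Dirichlet. The one point where you go beyond the paper is that you explicitly verify that the homomorphism factors through a finite Galois quotient by checking it vanishes on $\Gal(\mathbb{Q}^{\mathrm{al}}/k)$ for a field $k$ of definition of $(A,A_m)$ --- the paper passes over this continuity step rather quickly, so this is a welcome clarification.
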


\begin{proof}
Because $\sigma$ fixes $E^{\ast}$, the varieties $A$ and $\sigma A$ have the
same CM-types and so are $E$-isogenous. Therefore, there exists an
$\mathfrak{a}{}$-multiplication $\lambda\colon A\rightarrow\sigma A$ for some
ideal $\mathfrak{a}{}\subset\mathcal{O}{}_{E}$ (see \ref{b38}). Recall
(\ref{b35}) that $\lambda$ has degree $(\mathcal{O}{}_{E}\colon\mathfrak{a}%
{})$. After possibly replacing $\lambda$ with $\lambda\circ a$ for some
$a\in\mathfrak{a}{}^{-1}$, it will have degree prime to $m$ (apply
\ref{b60l}). Then $\lambda$ maps $A_{m}$\ isomorphically onto\ $\sigma A_{m}$.

Let $\mathbb{Z}{}_{m}=\prod_{\ell|m}\mathbb{Z}{}_{\ell}$ and $\mathcal{O}%
_{m}=\mathcal{O}{}_{E}\otimes\mathbb{Z}{}_{m}{}$. Then $T_{m}A\overset
{\text{{\tiny def}}}{=}\prod_{\ell|m}T_{\ell}A$ is a free $\mathcal{O}{}_{m}%
$-module of rank $1$ (see \ref{b03c}). The maps
\[%
\begin{array}
[c]{c}%
x\mapsto\sigma x\\
x\mapsto\lambda x
\end{array}
\colon T_{m}A\rightarrow T_{m}(\sigma A)
\]
are both $\mathcal{O}{}_{m}$-linear isomorphisms, and so they differ by a
homothety by an element $\alpha$ of $\mathcal{O}{}_{m}^{\times}$:%
\[
\lambda(\alpha x)=\sigma x,\quad\text{all }x\in T_{m}A.
\]
For any $a\in\mathcal{O}{}_{E}$ sufficiently close to $\alpha$, $\lambda\circ
a$ will agree with $\sigma$ on $A_{m}$. Thus, after replacing $\lambda$ with
$\lambda\circ a$, we will have%
\[
\lambda(x)\equiv\sigma x\mod
m,\quad\text{all }x\in T_{m}A.
\]
Now $\lambda$ is an $\mathfrak{a}{}$-multiplication for an ideal
$\mathfrak{a}=\mathfrak{a}{}(\sigma){}$ that is well-defined up to an element
of $i(E_{m,1})$.

Let $\sigma^{\prime}$ be a second element of $\Gal(\mathbb{C}/E^{\ast})$, and
let $\lambda^{\prime}\colon A\rightarrow\sigma^{\prime}A$ be an $\mathfrak{a}%
{}^{\prime}$-multiplication acting as $\sigma^{\prime}$ on $A_{m}$ (which
implies that it has degree prime to $m$). Then $\sigma\lambda^{\prime}$ is
again an $\mathfrak{a}{}^{\prime}$-multiplication (obvious from the definition
\ref{b30}), and so $\sigma\lambda^{\prime}\circ\lambda$ is an $\mathfrak{a}%
{}\mathfrak{a}{}^{\prime}$-multiplication $A\rightarrow\sigma^{\prime}\sigma
A$ (see \ref{b34}) acting as $\sigma^{\prime}\sigma$ on $A_{m}$. Therefore,
the map $\sigma\mapsto\mathfrak{a}{}(\sigma)\colon\Gal(\mathbb{C}/E^{\ast
})\rightarrow C_{m{}}(E)$ is a homomorphism, and so it factors through
$\Gal(k/E^{\ast}{})$ for some finite abelian extension $k$ of $E^{\ast}$,
which we may take to be the ray class field $L_{\mathfrak{m}{}}$. Thus, we
obtain a well-defined homomorphism%
\[
I^{S(\mathfrak{m}{})}(E^{\ast})\rightarrow C_{\mathfrak{m}{}}(E^{\ast
})\rightarrow C_{m}(E)
\]
sending an ideal $\mathfrak{a}{}^{\ast}$ in $I^{S(\mathfrak{m}{})}(E^{\ast})$
to $[\mathfrak{a}{}(\sigma)]$ where $\sigma=(\mathfrak{a}{}^{\ast
},L_{\mathfrak{m}{}}/E^{\ast})$. If $\mathfrak{m}{}$ is sufficiently
divisible, then $N_{\Phi}$ also defines a homomorphism $I^{S(\mathfrak{m}{}%
)}(E^{\ast})\rightarrow C_{\mathfrak{m}{}}(E^{\ast})\rightarrow C_{m}(E)$, and
it remains to show that the two homomorphisms coincide.

According to \S 1, there exists a field $k\subset\mathbb{C}{}$ containing the
ray class field $L_{\mathfrak{m}{}}$ and finite and Galois over $E^{\ast}$
such that $A$ has a model $A_{1}$ over $k$ with the following properties:

\begin{itemize}
\item $A_{1}$ has complex multiplication by $E$ over $k$ and $\mathcal{O}%
{}_{E}=\End(A_{1})\cap E$,

\item $A$ has good reduction at all the prime ideals of $\mathcal{O}{}_{k}$. and

\item $A_{m}(k)=A_{m}(\mathbb{C}{})$.
\end{itemize}

\noindent Now (\ref{b45}) shows that the two homomorphisms agree on the prime
ideals $\mathfrak{p}{}$ of $\mathcal{O}{}_{E^{\ast}}$ such $\mathfrak{p}{}$ is
unramified in $k$ and $\mathfrak{p}{}\cap\mathbb{Z}{}$ is unramified in $E$.
\noindent This excludes only finitely many prime ideals of $\mathcal{O}%
{}_{E^{\ast}}$, and according to Dirichlet's theorem on primes in arithmetic
progressions (e.g., \cite{milneCFT} V 2.5), the classes of these primes
exhaust $C_{\mathfrak{m}}$.
\end{proof}

\begin{aside}
\label{b56}Throughout this subsection and the next, $\mathbb{C}{}$ can be
replaced by an algebraic closure of $\mathbb{Q}{}$.
\end{aside}

\subsection{More preliminaries from algebraic number theory}

We let $\hat{\mathbb{Z}}=\plim{\mathbb{Z}}/m\mathbb{Z}$ and $\mathbb{A}%
_{f}=\hat{\mathbb{Z}}\otimes\mathbb{Q}$. For a number field $k$,
$\mathbb{A}_{f,k}=\mathbb{A}_{f}\otimes_{\mathbb{Q}{}}k$ is the ring of finite
ad\`{e}les and $\mathbb{A}_{k}=\mathbb{A}_{f,k}\times(k\otimes_{\mathbb{Q}{}%
}\mathbb{R})$ is the full ring of ad\`{e}les. For any ad\`{e}le $a$,
$a_{\infty}$ and $a_{f}$ denote its infinite and finite components. When $k$
is a subfield of $\mathbb{C}$, $k^{\mathrm{ab}}$ and $k^{\mathrm{al}}$ denote
respectively the largest abelian extension of $k$ in $\mathbb{C}$ and the
algebraic closure of $k$ in $\mathbb{C}$. As usual, complex conjugation is
denoted by $\iota$.

For a number field $k$, $\rec_{k}\colon\mathbb{A}_{k}^{\times}\rightarrow
\Gal(k^{\ab}/k)$ is the usual reciprocity law and $\mathrm{art}_{k}$ is its
reciprocal: a prime element corresponds to the inverse of the usual Frobenius
element. In more detail, if $a\in\mathbb{A}_{f,k}^{\times}$ has $v$-component
a prime element $a_{v}$ in $k_{v}$ and $w$-component $a_{w}=1$ for $w\neq v$,
then
\[
\mathrm{art}_{k}(a)(x)\equiv x^{1/\mathbb{N}(v)}\mod {\mathfrak{p}}_{v},\quad
x\in\mathcal{O}{}_{k}.
\]
${}$When $k$ is totally imaginary, $\mathrm{art}_{k}(a_{\infty}a_{f})$ depends
only on $a_{f}$, and we often regard $\mathrm{art}_{k}$ as a map
$\mathbb{A}_{f,k}^{\times}\rightarrow\Gal(k^{\mathrm{ab}}/k)$. Then%
\[
\mathrm{art}_{k}\colon\mathbb{A}_{f,k}^{\times}\rightarrow\Gal(k^{\mathrm{ab}%
}/k)
\]
is surjective with kernel the closure of $k^{\times}$ (embedded diagonally) in
$\mathbb{A}_{f,k}^{\times}$.

The cyclotomic character is the homomorphism $\chi_{\mathrm{cyc}}%
\colon\Aut(\mathbb{C}{})\rightarrow\mathbb{\hat{Z}}^{\times}{}{}%
\subset\mathbb{A}_{f}^{\times}$ such that $\sigma\zeta=\zeta^{\chi
_{\mathrm{cyc}}(\sigma)}$ for every root $\zeta$ of $1$ in $\mathbb{C}{}.$

\begin{lemma}
\label{b57a}For any $\sigma\in\Gal(\mathbb{Q}{}^{\mathrm{al}}/\mathbb{Q}{})$,%
\[
\mathrm{art}_{\mathbb{Q}{}}(\chi_{\mathrm{cyc}}(\sigma))=\sigma|\mathbb{Q}%
{}^{\mathrm{ab}}.
\]

\end{lemma}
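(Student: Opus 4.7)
The plan is to reduce the claimed identity to a concrete calculation on roots of unity via Kronecker--Weber. Both sides are continuous group homomorphisms $\Gal(\mathbb{Q}^{\mathrm{al}}/\mathbb{Q}) \to \Gal(\mathbb{Q}^{\mathrm{ab}}/\mathbb{Q})$, and since $\mathbb{Q}^{\mathrm{ab}} = \bigcup_{N \ge 1} \mathbb{Q}(\zeta_N)$ by Kronecker--Weber, it suffices to verify, for each $N \ge 1$, that $\mathrm{art}_\mathbb{Q}(\chi_{\mathrm{cyc}}(\sigma))$ and $\sigma$ act identically on $\zeta_N$. Both actions depend only on the image of $\sigma$ in the finite group $\Gal(\mathbb{Q}(\zeta_N)/\mathbb{Q}) \cong (\mathbb{Z}/N\mathbb{Z})^{\times}$.

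By Dirichlet's theorem on primes in arithmetic progressions, every class in $(\mathbb{Z}/N\mathbb{Z})^{\times}$ is represented by $\ell \bmod N$ for infinitely many primes $\ell \nmid N$, and such $\ell$ give the Frobenius element $\mathrm{Frob}_\ell$ in $\Gal(\mathbb{Q}(\zeta_N)/\mathbb{Q})$. Hence I may assume $\sigma|_{\mathbb{Q}(\zeta_N)} = \mathrm{Frob}_\ell$ for such an $\ell$. In that case $\sigma\zeta_N = \zeta_N^\ell$ directly, so that $\chi_{\mathrm{cyc}}(\sigma) \equiv \ell \pmod{N}$. The lemma thereby reduces to showing $\mathrm{art}_\mathbb{Q}(u)(\zeta_N) = \zeta_N^\ell$ for every $u \in \hat{\mathbb{Z}}^{\times}$ satisfying $u \equiv \ell \pmod{N}$.

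For this computation, I would exploit that $\mathrm{art}_\mathbb{Q}$ vanishes on the diagonal copy of $\mathbb{Q}^{\times}$, so $\mathrm{art}_\mathbb{Q}(u) = \mathrm{art}_\mathbb{Q}(u/\ell)$ in $\Gal(\mathbb{Q}^{\mathrm{ab}}/\mathbb{Q})$, where $u/\ell \in \mathbb{A}_{f,\mathbb{Q}}^{\times}$. Decompose $u/\ell = b \cdot c$, where $b$ is the finite id\`ele with component $\ell^{-1}$ at the prime $\ell$ and $1$ elsewhere, and $c \in \hat{\mathbb{Z}}^{\times}$ is defined by $c_\ell = u_\ell$ and $c_p = u_p/\ell$ for $p \ne \ell$. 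The congruence $u \equiv \ell \pmod{N}$ translates into $c_p \equiv 1 \pmod{p^{v_p(N)}}$ at each prime $p \mid N$. The prescription $\mathrm{art}_\mathbb{Q}(a)(x) \equiv x^{1/\mathbb{N}(v)} \pmod{\mathfrak{p}_v}$ applied to the uniformizer id\`ele $a = b^{-1}$ identifies $\mathrm{art}_\mathbb{Q}(b^{-1})$ with $\mathrm{Frob}_\ell^{-1}$ on $\mathbb{Q}(\zeta_N)$, whence $\mathrm{art}_\mathbb{Q}(b)(\zeta_N) = \zeta_N^\ell$, delivering the target action.

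The remaining step, which is also the main obstacle, is to verify that $\mathrm{art}_\mathbb{Q}(c)$ acts trivially on $\mathbb{Q}(\zeta_N)$. This is standard class field theory for $\mathbb{Q}$: the conductor of $\mathbb{Q}(\zeta_N)/\mathbb{Q}$ divides $N\infty$, so the Artin map for that extension factors through the ray class group modulo $N\infty$, whose kernel on $\hat{\mathbb{Z}}^{\times}$ consists precisely of id\`eles that are $\equiv 1 \pmod{N\mathbb{Z}_p}$ at every $p \mid N$. Granting this input (essentially a restatement of the conductor computation for cyclotomic fields), the rest of the argument is a direct unwinding of the conventions; the only delicate book-keeping is keeping straight the inverse-Frobenius normalisation of $\mathrm{art}_\mathbb{Q}$.
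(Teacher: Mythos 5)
Your argument is correct, and since the paper itself declares this lemma an exercise (with a pointer to \cite{milneISV}), there is no in-text proof to compare against; your write-up supplies a complete verification. The reduction to cyclotomic levels $\mathbb{Q}(\zeta_N)$ via Kronecker--Weber, the use of Dirichlet to realize every class in $(\mathbb{Z}/N\mathbb{Z})^\times$ as $\mathrm{Frob}_\ell$ for $\ell\nmid N$, the factorization $u/\ell=b\cdot c$ with $b$ supported at $\ell$ and $c\in\hat{\mathbb{Z}}^\times$ congruent to $1$ modulo $N$, the reading-off of $\mathrm{art}_{\mathbb{Q}}(b)(\zeta_N)=\zeta_N^{\ell}$ from the paper's $\mathrm{art}_k(a)(x)\equiv x^{1/\mathbb{N}(v)}$ normalisation, and the appeal to the conductor computation for cyclotomic fields to kill $\mathrm{art}_{\mathbb{Q}}(c)$ --- these steps all check out, and you have kept the inverse-Frobenius convention straight throughout. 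One small remark: the Dirichlet input is a convenience rather than a necessity --- one could instead take any positive integer $m\equiv u\pmod N$ prime to $N$ and decompose across its prime factors --- but choosing a single prime $\ell$ keeps the id\`ele bookkeeping cleanest, so your route is the sensible one.
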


\begin{proof}
Exercise (or see \cite{milneISV} \S 11 (50)).
\end{proof}

\begin{lemma}
\label{b57b}Let $E$ be a CM-field. For any $s\in\mathbb{A}_{f,E}^{\times}$ and
automorphism $\sigma$ of $\mathbb{C}$ such that $\mathrm{art}_{E}%
(s)=\sigma|E^{\mathrm{ab}}$,
\[
\Nm_{E/\mathbb{Q}{}}(s)\in\chi_{\mathrm{cyc}}(\mathrm{\sigma})\cdot
\mathbb{Q}{}_{>0}.
\]

\end{lemma}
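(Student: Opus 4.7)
The plan is to apply the reciprocity map $\mathrm{art}_{\mathbb{Q}}$ to both sides of the desired identity, reducing the problem to identifying $\Ker(\mathrm{art}_{\mathbb{Q}}|_{\mathbb{A}_f^\times})$. First, by the functoriality of class field theory under finite extensions (the idele norm corresponds to Galois restriction),
\[
\mathrm{art}_{\mathbb{Q}}(\Nm_{E/\mathbb{Q}}(s)) \;=\; \mathrm{art}_E(s)\big|_{\mathbb{Q}^{\mathrm{ab}}} \;=\; \sigma\big|_{\mathbb{Q}^{\mathrm{ab}}}.
\]
Lemma~\ref{b57a} gives the same identity with $\Nm_{E/\mathbb{Q}}(s)$ replaced by $\chi_{\mathrm{cyc}}(\sigma)$, so $\Nm_{E/\mathbb{Q}}(s)\cdot\chi_{\mathrm{cyc}}(\sigma)^{-1} \in \mathbb{A}_f^\times$ lies in $\Ker(\mathrm{art}_{\mathbb{Q}}|_{\mathbb{A}_f^\times})$.

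It then remains to check that this kernel is exactly $\mathbb{Q}_{>0}$. Since $\mathbb{Q}$ has class number $1$, every finite idele decomposes uniquely as $q\cdot u$ with $q \in \mathbb{Q}_{>0}$ and $u \in \hat{\mathbb{Z}}^\times$. Lemma~\ref{b57a} shows that the restriction of $\mathrm{art}_{\mathbb{Q}}$ to $\hat{\mathbb{Z}}^\times$ is the inverse of the isomorphism $\chi_{\mathrm{cyc}}\colon\Gal(\mathbb{Q}^{\mathrm{ab}}/\mathbb{Q}) \to \hat{\mathbb{Z}}^\times$, hence itself an isomorphism, so no nontrivial element of $\hat{\mathbb{Z}}^\times$ lies in the kernel. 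On the other hand, $\mathbb{Q}_{>0}$ is in the kernel: when $q \in \mathbb{Q}_{>0}$ is viewed as the finite idele with trivial infinite component, its class modulo the diagonal $\mathbb{Q}^\times$ is represented by the idele with all finite components equal to $1$ and infinite component $1/q \in \mathbb{R}_{>0}$, which lies in the connected component of identity and is therefore killed by the full reciprocity map. Combining these two facts, $\Ker(\mathrm{art}_{\mathbb{Q}}|_{\mathbb{A}_f^\times}) = \mathbb{Q}_{>0}$, and the desired conclusion $\Nm_{E/\mathbb{Q}}(s) \in \chi_{\mathrm{cyc}}(\sigma)\cdot\mathbb{Q}_{>0}$ follows.

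The only subtle point is the archimedean bookkeeping in the last step: the kernel on finite ideles (rather than full ideles) is $\mathbb{Q}_{>0}$ and not all of $\mathbb{Q}^\times$ precisely because the sign of a rational is absorbed by the archimedean connected component once one passes to the full idele class group. Everything else is a short diagram chase built from the functoriality of reciprocity and Lemma~\ref{b57a}; the hypothesis that $E$ be a CM-field enters only to ensure that restricting $\mathrm{art}_E$ to $\mathbb{A}_{f,E}^\times$ still recovers the full Galois action on $E^{\mathrm{ab}}$.
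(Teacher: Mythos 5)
Your proof is correct and takes essentially the same route as the paper: use functoriality of the Artin map to show $\Nm_{E/\mathbb{Q}}(s)$ and $\chi_{\mathrm{cyc}}(\sigma)$ agree after applying $\mathrm{art}_{\mathbb{Q}}$, then identify $\Ker(\mathrm{art}_{\mathbb{Q}}\colon\mathbb{A}_f^\times\to\Gal(\mathbb{Q}^{\mathrm{ab}}/\mathbb{Q}))$ with $\mathbb{Q}_{>0}$. The paper states this kernel computation in one line as $\mathbb{A}_f^\times\cap(\mathbb{Q}^\times\cdot\mathbb{R}_{>0})=\mathbb{Q}_{>0}$; you have merely unwound the verification using the decomposition $\mathbb{A}_f^\times=\mathbb{Q}_{>0}\times\hat{\mathbb{Z}}^\times$ and Lemma~\ref{b57a}, which is a fine way to check it.
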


\begin{proof}
By class field theory,%
\[
\mathrm{art}_{\mathbb{Q}{}}(\Nm_{E/\mathbb{Q}{}}(s))=\sigma|\mathbb{Q}%
{}^{\mathrm{ab}},
\]
which equals $\mathrm{art}_{\mathbb{Q}{}}(\chi_{\mathrm{cyc}}(\sigma))$.
Therefore $\Nm_{E/\mathbb{Q}{}}(s)$ and $\chi_{\mathrm{cyc}}(\mathrm{\sigma})$
differ by an element of the kernel of $\mathrm{art}_{\mathbb{Q}{}}%
\colon\mathbb{A}_{f}^{\times}\rightarrow\Gal(\mathbb{Q}{}^{\mathrm{ab}%
}/\mathbb{Q}{})$, which equals $\mathbb{A}_{f}^{\times}\cap\left(
\mathbb{Q}{}^{\times}\cdot\mathbb{R}{}_{>0}\right)  =$ $\mathbb{Q}{}_{>0}$
(embedded diagonally).
\end{proof}

\begin{lemma}
\label{b79}For any CM-field $E$, the kernel of $\mathrm{art}_{E}%
\colon\mathbb{A}_{f,E}^{\times}/E^{\times}\rightarrow\Gal(E^{\text{\textrm{ab}%
}}/E)$ is uniquely divisible by all integers, and its elements are fixed by
$\iota_{E}$.
\end{lemma}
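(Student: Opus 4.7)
The plan is to first reformulate the kernel in terms of unit groups and then address the two claims separately. The topological decomposition $\mathbb{A}_{f,E}^{\times} \cong \widehat{\mathcal{O}}_{E}^{\times} \times I_{E}$, where $I_{E} = \bigoplus_{v}\mathbb{Z}$ is the discrete fractional-ideal group, together with the fact that $E^{\times}$ projects onto the discrete subgroup $P_{E} \subset I_{E}$ of principal ideals, shows that $\overline{E^{\times}} = E^{\times} \cdot \overline{\mathcal{O}_{E}^{\times}}$, with the latter closure taken inside the compact group $\widehat{\mathcal{O}}_{E}^{\times}$. Hence the kernel in question is
\[
K := \overline{E^{\times}}/E^{\times} \cong \overline{\mathcal{O}_{E}^{\times}}/\mathcal{O}_{E}^{\times},
\]
and I will work with this description.

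For the $\iota_{E}$-fixed assertion the key input is Kronecker's theorem: for any unit $u \in \mathcal{O}_{E}^{\times}$, the element $u/\iota_{E}(u) \in E^{\times}$ has absolute value $1$ at every finite place (since both $u$ and $\iota_{E}(u)$ are units) and at every complex archimedean place (where $\iota_{E}$ acts as complex conjugation and so preserves $|\cdot|$), and is therefore a root of unity in $E$. Given $c = [x] \in K$ with $x = \lim_{k}x_{k}$, $x_{k} \in \mathcal{O}_{E}^{\times}$, the quotients $\iota_{E}(x_{k})/x_{k}$ all lie in the finite set $\mu_{E}$; after passing to a subsequence they are eventually a common value $\mu \in \mu_{E}$, whence $\iota_{E}(x)/x = \mu \in E^{\times}$, i.e., $\iota_{E}(c) = c$ in $K$.

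For unique divisibility by an integer $n$, I would apply the snake lemma to multiplication by $n$ on the short exact sequence $0 \to K \to G \to \Gamma \to 0$, where $G = \mathbb{A}_{f,E}^{\times}/E^{\times}$ and $\Gamma = \Gal(E^{\ab}/E)$. The resulting six-term sequence reduces the problem to checking that $G[n] \to \Gamma[n]$ is injective (ruling out $n$-torsion in $K$) and that $G/nG \to \Gamma/n\Gamma$ is an isomorphism (giving $n$-divisibility of $K$). Both conditions translate via the standard Kummer-theoretic dictionary into statements about when an idele whose $n$-th power lies in $E^{\times}$ already lies in $E^{\times}$, and dually about when $E^{\times}$ realizes every local class modulo $n$-th powers. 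The main obstacle is controlling the corresponding Grunwald-Wang-type anomalies; I would handle this using the CM structure, in particular exploiting the $\iota_{E}$-invariance of $K$ just established together with a compactness argument showing that any $\zeta \in \mu_{n}(\mathbb{A}_{f,E}) \cap \overline{\mathcal{O}_{E}^{\times}}$ must already lie in $\mu_{n}(E)$ (by the same reasoning as for $\iota_{E}$-fixedness, applied to the finite groups of $n$-th roots of unity at each place).
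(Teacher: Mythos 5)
Your identification of the kernel with $\overline{\mathcal{O}_E^\times}/\mathcal{O}_E^\times$ is correct, and your argument for $\iota_E$-invariance is both correct and genuinely different from the paper's. The paper chooses a torsion-free finite-index subgroup $U$ of $\mathcal{O}_E^\times$ lying inside the maximal totally real subfield $F$ (possible by Dirichlet's unit theorem, since $\mathcal{O}_F^\times$ has finite index in $\mathcal{O}_E^\times$), whence $\bar{U}/U$ is visibly $\iota_E$-fixed. You instead apply Kronecker's theorem directly: $u/\iota_E u$ is a root of unity for $u\in\mathcal{O}_E^\times$, and since $\mu_E$ is a \emph{finite} (hence discrete) subset of $\mathbb{A}_{f,E}^\times$, the sequence $\iota_E(x_k)/x_k$ is eventually constant and its limit $\iota_E(x)/x$ lies in $\mu_E\subset E^\times$. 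That is a clean alternative.

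The unique-divisibility half, however, has a genuine gap. First, the snake-lemma reduction is stated imprecisely: $K/nK=0$ is not equivalent to $G/nG\to\Gamma/n\Gamma$ being an isomorphism — one also needs the connecting map $\Gamma[n]\to K/nK$ to vanish, i.e.\ $G[n]\to\Gamma[n]$ to be surjective as well as injective. More seriously, the proposed ``compactness argument'' does not carry over from the $\iota_E$ case. There the finite discrete set was $\mu_E$ (diagonally embedded), so a convergent sequence inside it is eventually constant. Here the relevant set is $\mu_n(\mathbb{A}_{f,E})=\prod_v\mu_n(E_v)$, an \emph{infinite} compact group that is not discrete, and $\mathcal{O}_E^\times$ is itself not discrete in $\mathbb{A}_{f,E}^\times$; so from $x_k\to\zeta$ with $\zeta^n=1$ one cannot conclude $x_k^n=1$ eventually, and the analogy to the Kronecker step collapses. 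The fact you actually need — that $\mu_n(\mathbb{A}_{f,E})\cap\overline{\mathcal{O}_E^\times}=\mu_n(E)$, and more generally that $\overline{\mathcal{O}_E^\times}$ is the profinite completion of $\mathcal{O}_E^\times$ — is precisely Chevalley's theorem (that the ad\`elic topology induces the profinite topology on the unit group), which is the lemma the paper invokes and which you have not supplied a substitute for. Once one has Chevalley's theorem, the whole detour through the snake lemma, Kummer theory, and Grunwald--Wang is unnecessary: taking $U$ torsion-free of finite index in $\mathcal{O}_E^\times$, one gets $\bar{U}/U\simeq(\hat{\mathbb{Z}}/\mathbb{Z})^{r}$ with $r=[E:\mathbb{Q}]/2-1$, and $\hat{\mathbb{Z}}/\mathbb{Z}$ is visibly uniquely divisible.
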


\begin{proof}
The kernel of $\mathrm{art}_{E}$ is $\overline{E^{\times}}/E^{\times}$ where
$\overline{E^{\times}}$ is the closure of $E^{\times}$ in $\mathbb{A}%
_{f,E}^{\times}$. It is also equal to $\bar{U}/U$ for any subgroup $U$ of
$\mathcal{O}_{E}^{\times}$ of finite index. A theorem of Chevalley (see
\cite{serre1964s}, 3.5, or \cite{artinT1961}, Chap. 9, \S 1) shows that
$\mathbb{A}_{f,E}^{\times}$ induces the pro-finite topology on $U$. If we take
$U$ to be contained in the real subfield of $E$ and torsion-free, then it is
clear that $\bar{U}/U$ is fixed by $\iota_{E}$ and, being isomorphic to
$(\hat{\mathbb{Z}}/\mathbb{Z})^{[E\colon\mathbb{Q}{}]/2-1}$, it is uniquely divisible.
\end{proof}

\begin{lemma}
\label{b57e}Let $E$ be a CM-field and let $\Phi$ be a CM-type on $E$. For any
$s\in\mathbb{A}{}_{f,E}^{\times}$ and automorphism $\sigma$ of $\mathbb{C}$
such that $\mathrm{art}_{E}(s)=\sigma|E^{\mathrm{ab}}$,%
\[
N_{\Phi}(s)\cdot\iota_{E}N_{\Phi}(s)\in\chi_{\mathrm{cyc}}(\sigma
)\cdot\mathbb{Q}{}_{>0}.
\]

\end{lemma}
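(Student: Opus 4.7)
The plan is to assemble this lemma directly from the identity (\ref{e76}) combined with Lemma \ref{b57b}; the two previous lemmas do all the work, and what remains is simply to apply them in sequence.

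First, I would apply (\ref{e76}) with $R = \mathbb{A}_f$. The identity
\[
N_\Phi(a) \cdot \iota_E N_\Phi(a) = \Nm_{k \otimes_\mathbb{Q} R/R}(a)
\]
was stated for $a$ in $(k \otimes_\mathbb{Q} R)^\times$ with $R$ an arbitrary $\mathbb{Q}$-algebra, and both sides are visibly natural in $R$, so the identity propagates to the finite adèles without further argument. Specializing to $R = \mathbb{A}_f$ yields
\[
N_\Phi(s) \cdot \iota_E N_\Phi(s) = \Nm_{E/\mathbb{Q}}(s) \in \mathbb{A}_f^\times,
\]
where on the right I interpret $\Nm_{E/\mathbb{Q}}$ as its evident componentwise extension to $\mathbb{A}_{f,E}^\times$.

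Second, I would invoke Lemma \ref{b57b} with the same $s$ and $\sigma$. Its hypothesis, $\mathrm{art}_E(s) = \sigma|E^{\mathrm{ab}}$, is identical to that of the current lemma, and its conclusion is precisely
\[
\Nm_{E/\mathbb{Q}}(s) \in \chi_{\mathrm{cyc}}(\sigma) \cdot \mathbb{Q}_{>0}.
\]
Combining the two displayed equations gives the claim.

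There is really no obstacle: the proof is just a concatenation of a purely formal identity about the reflex norm (\ref{e76}) with the class-field-theoretic computation already carried out in Lemma \ref{b57b}. The conceptual content is that pairing $N_\Phi$ with $\iota_E$ collapses the reflex norm down to the ordinary absolute norm $\Nm_{E/\mathbb{Q}}$, at which point Lemma \ref{b57b} takes over; no new ingredient beyond what is on the page is needed.
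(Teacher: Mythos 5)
Your proof is correct and is precisely the paper's own proof: the paper also quotes the formula (\ref{e76}) to collapse $N_\Phi(s)\cdot\iota_E N_\Phi(s)$ to an absolute norm and then immediately invokes Lemma \ref{b57b}. (Both you and the paper inherit the same small notational slip in the statement of the lemma: since $N_\Phi$ is defined on $\mathbb{A}_{f,E^*}^\times$, the idele $s$ should live there, the Artin-map condition should read $\mathrm{art}_{E^*}(s)=\sigma|E^{*\mathrm{ab}}$, and the norm produced by (\ref{e76}) is $\Nm_{E^*/\mathbb{Q}}(s)$; Lemma \ref{b57b} is then applied to $E^*$, which is itself a CM-field, so the argument goes through unchanged.)
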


\begin{proof}
According to (\ref{e76}), p\pageref{e76},%
\[
N_{\Phi}(s)\cdot\iota_{E}N_{\Phi}(s)=\Nm_{E/\mathbb{Q}{}}(s),
\]
and so we can apply (\ref{b57b}).
\end{proof}

\begin{lemma}
\label{b57c}Let $E$ be a CM-field and $\Phi$ a CM-type on $E$. There exists a
unique homomorphism $\Gal(E^{\ast\mathrm{ab}}/E^{\ast})\rightarrow
\Gal(E^{\mathrm{ab}}/E)$ rendering%
\[
\begin{CD}
\mathbb{A}_{f,E^{\ast}}^{\times} @>{N_{\Phi}}>>\mathbb{A}_{f,E}^{\times}\\
@VV{\mathrm{art}_{E^{\ast}}}V@VV{\mathrm{art}_{E}}V\\
\Gal(E^{\ast\mathrm{ab}}/E^{\ast}) @>>> \Gal(E^{\mathrm{ab}}/E)
\end{CD}
\]
commutative.
\end{lemma}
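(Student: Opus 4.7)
The plan is to deduce the lemma from the basic structure of the idelic reciprocity maps and the compatibility of the reflex norm with the embedding of principal ideles. Because $\mathrm{art}_{E^{\ast}}\colon\mathbb{A}_{f,E^{\ast}}^{\times}\to\Gal(E^{\ast\mathrm{ab}}/E^{\ast})$ is surjective, any homomorphism making the square commute is determined by the composite $\mathrm{art}_{E}\circ N_{\Phi}$, so uniqueness is immediate. All the content lies in existence.

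For existence, I would show that $\mathrm{art}_{E}\circ N_{\Phi}$ factors through $\mathrm{art}_{E^{\ast}}$, which amounts to proving
\[
N_{\Phi}(\ker\mathrm{art}_{E^{\ast}})\subset\ker\mathrm{art}_{E}.
\]
Recall from the preceding subsection that, because $E$ and $E^{\ast}$ are both CM-fields (hence totally imaginary), $\ker\mathrm{art}_{E^{\ast}}$ is the closure $\overline{(E^{\ast})^{\times}}$ of $(E^{\ast})^{\times}$ in $\mathbb{A}_{f,E^{\ast}}^{\times}$, and likewise $\ker\mathrm{art}_{E}=\overline{E^{\times}}$ in $\mathbb{A}_{f,E}^{\times}$.

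The key step is that the reflex norm, constructed in \S 1.1 via the determinant of the action on $V_{\Phi}$, is functorial in the $\mathbb{Q}{}$-algebra $R$, so it sends principal ideles to principal ideles: for every $a\in(E^{\ast})^{\times}$, viewed as an element of $\mathbb{A}_{f,E^{\ast}}^{\times}$, one has $N_{\Phi}(a)\in E^{\times}\subset\mathbb{A}_{f,E}^{\times}$. Thus
\[
N_{\Phi}\bigl((E^{\ast})^{\times}\bigr)\subset E^{\times}\subset\ker\mathrm{art}_{E}.
\]
Since $N_{\Phi}\colon\mathbb{A}_{f,E^{\ast}}^{\times}\to\mathbb{A}_{f,E}^{\times}$ is continuous (the determinant defining it is a polynomial map) and $\ker\mathrm{art}_{E}$ is closed, taking closures yields $N_{\Phi}\bigl(\overline{(E^{\ast})^{\times}}\bigr)\subset\overline{E^{\times}}=\ker\mathrm{art}_{E}$, which is precisely what is required.

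There is no serious obstacle: the argument is a formal diagram chase once one has (i) the identification of the kernels of $\mathrm{art}_{E^{\ast}}$ and $\mathrm{art}_{E}$ as the closures of principal ideles, (ii) the fact that $N_{\Phi}$ restricts to the classical reflex norm $(E^{\ast})^{\times}\to E^{\times}$ on principal ideles, and (iii) continuity of $N_{\Phi}$. The mildest subtlety is (iii), which is a direct consequence of $N_{\Phi}$ being defined on $R$-points of the group scheme $\mathrm{Res}_{E^{\ast}/\mathbb{Q}{}}\mathbb{G}_{m}$ by a polynomial identity valued in $\mathrm{Res}_{E/\mathbb{Q}{}}\mathbb{G}_{m}$; any student of the topology of adeles will accept this without further ado.
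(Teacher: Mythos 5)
Your proof is correct and follows exactly the paper's own argument: uniqueness from surjectivity of $\mathrm{art}_{E^{\ast}}$, and existence because $N_{\Phi}$ maps $E^{\ast\times}$ into $E^{\times}$ and is continuous, hence maps $\ker\mathrm{art}_{E^{\ast}}=\overline{E^{\ast\times}}$ into $\ker\mathrm{art}_{E}=\overline{E^{\times}}$. You spell out the justification for each step a bit more fully than the paper does, but the approach is identical.
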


\begin{proof}
As $\mathrm{art}_{E^{\ast}}\colon\mathbb{A}_{f,E^{\ast}}^{\times}%
\rightarrow\Gal(E^{\ast\mathrm{ab}}/E^{\ast})$ is surjective, the uniqueness
is obvious. On the other hand, $N_{\Phi}$ maps $E^{\ast\times}$ into
$E^{\times}$ and is continuous, and so it maps the closure of $E^{\ast\times}$
into the closure of $E^{\times}$.
\end{proof}

\begin{proposition}
\label{b57d}Let $s,s^{\prime}\in\mathbb{A}{}_{f,E^{\ast}}^{\times}$. If
$\mathrm{art}_{E^{\ast}}(s)=\mathrm{art}_{E^{\ast}}(s^{\prime})$, then
$N_{\Phi}(s^{\prime})\in N_{\Phi}(s)\cdot E^{\times}$.
\end{proposition}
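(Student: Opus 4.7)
The plan is to reduce the statement, via multiplicativity of $N_{\Phi}$, to the following claim: if $t\in\mathbb{A}_{f,E^{\ast}}^{\times}$ lies in $\Ker(\mathrm{art}_{E^{\ast}})$, then $N_{\Phi}(t)\in E^{\times}$. Taking $t=s^{\prime}s^{-1}$ then yields $N_{\Phi}(s^{\prime})\in N_{\Phi}(s)\cdot E^{\times}$.

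First I would use Lemma \ref{b57c}: the commuting square, applied to $t$ with $\mathrm{art}_{E^{\ast}}(t)=1$, forces $\mathrm{art}_{E}(N_{\Phi}(t))=1$, so $N_{\Phi}(t)$ lies in the closure $\overline{E^{\times}}$ of $E^{\times}$ in $\mathbb{A}_{f,E}^{\times}$. At this stage we know $N_{\Phi}(t)\in\overline{E^{\times}}$ and must close the gap to $E^{\times}$ itself; the remaining obstruction lives in $\overline{E^{\times}}/E^{\times}$. Next I would apply Lemma \ref{b57e} with $\sigma=\mathrm{id}$, which is legitimate because $\mathrm{art}_{E}(N_{\Phi}(t))=1=\sigma|E^{\mathrm{ab}}$ and $\chi_{\mathrm{cyc}}(\mathrm{id})=1$; the conclusion is
\[
N_{\Phi}(t)\cdot\iota_{E}N_{\Phi}(t)\in\mathbb{Q}_{>0}\subset E^{\times}.
\]

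Finally I would invoke the two structural facts supplied by Lemma \ref{b79} about the obstruction group $\overline{E^{\times}}/E^{\times}$. The $\iota_{E}$-invariance gives $\iota_{E}N_{\Phi}(t)\equiv N_{\Phi}(t)\pmod{E^{\times}}$, which, combined with the displayed relation, upgrades to $N_{\Phi}(t)^{2}\in E^{\times}$. Unique divisibility of $\overline{E^{\times}}/E^{\times}$ then implies that this quotient is torsion-free, so an element whose square lies in $E^{\times}$ already lies in $E^{\times}$; hence $N_{\Phi}(t)\in E^{\times}$, as required.

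The step I expect to be the substantive one is recognizing how the three preparatory lemmas fit together: Lemma \ref{b57c} locates $N_{\Phi}(t)$ inside $\overline{E^{\times}}$, Lemma \ref{b57e} controls $N_{\Phi}(t)\cdot\iota_{E}N_{\Phi}(t)$, and Lemma \ref{b79} combines $\iota_{E}$-invariance with unique divisibility to promote the resulting norm-level conclusion to an element-level one. No serious technical obstacle is anticipated once these ingredients are assembled in this order.
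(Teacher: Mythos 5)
Your proof is correct and follows essentially the same path as the paper: Lemma \ref{b57c} places the relevant element in $\Ker(\mathrm{art}_{E})$, Lemma \ref{b57e} controls the product with its $\iota_{E}$-conjugate, and Lemma \ref{b79} ($\iota_{E}$-invariance plus unique divisibility of $\Ker(\mathrm{art}_{E})/E^{\times}$) finishes. The only cosmetic difference is that you apply Lemma \ref{b57e} once to $s's^{-1}$ with $\sigma=\mathrm{id}$, whereas the paper applies it to $s$ and $s'$ separately and divides; these are the same argument.
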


\begin{proof}
Let $\sigma$ be an automorphism of $\mathbb{C}$ such that%
\[
\mathrm{art}_{E^{\ast}}(s)=\sigma|E^{\ast\mathrm{ab}}=\mathrm{art}_{E^{\ast}%
}(s^{\prime})\text{.}%
\]
Then (see \ref{b57e}),
\[
N_{\Phi}(s)\cdot\iota_{E}N_{\Phi}(s)\in\chi_{\text{cyc}}(\sigma)\cdot
\mathbb{Q}{}_{>0}\ni N_{\Phi}(s^{\prime})\cdot\iota_{E}N_{\Phi}(s^{\prime
})\text{.}%
\]
Let $t=N_{\Phi}(s/s^{\prime})\in\mathbb{A}_{f,E}^{\times}$. Then
$t\in\Ker(\mathrm{art}_{E})$ by (\ref{b57c}) and $t\cdot\iota_{E}%
t\in\mathbb{Q}{}_{>0}$. Lemma \ref{b79} implies that the map $x\mapsto
x\cdot\iota_{E}x$ is bijective on $\Ker(\mathrm{art}_{E})/E^{\times}$; as
$t\cdot\iota_{E}t\in E^{\times}$, so also does $t$.
\end{proof}

\subsection{The fundamental theorem in terms of id\`{e}les}

\begin{theorem}
\label{b58}Let $A$ be an abelian variety over $\mathbb{C}{}$ with complex
multiplication by a CM-algebra $E$, and let $\Phi\subset\Hom(E,\mathbb{C}{})$
be the type of $A$. Let $\sigma$ be an automorphism of $\mathbb{C}{}$ fixing
$E^{\ast}$. For any $s\in\mathbb{A}{}_{f,E^{\ast}}^{\times}$ with
$\mathrm{art}_{E^{\ast}}(s)=\sigma|E^{\ast\mathrm{ab}}$, there exists a unique
$E$-`isogeny'\ $\lambda\colon A\rightarrow\sigma A$ such that $\lambda
(N_{\Phi}(s)\cdot x)=\sigma x$ for all $x\in V_{f}A.$
\end{theorem}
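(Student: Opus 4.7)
Uniqueness is immediate: two $E$-isogenies $\lambda,\lambda'$ satisfying the identity must agree on $N_\Phi(s)V_fA=V_fA$ (as $N_\Phi(s)\in\mathbb A_{f,E}^\times$), hence coincide as $E$-isogenies. For existence, I would first reduce the theorem to a class equality. Since $\sigma$ fixes $E^{\ast}$, $A$ and $\sigma A$ share the CM-type $\Phi$ and are $E$-isogenous; fix any $E$-isogeny $\lambda_0\colon A\to\sigma A$. The freeness of $V_fA$ of rank one over $\mathbb A_{f,E}$, together with the $\mathbb A_{f,E}$-linearity of $V_f\lambda_0$ and of the torsion map $V_f\sigma$, yields a unique $\alpha\in\mathbb A_{f,E}^\times$ with $V_f\lambda_0^{-1}\circ V_f\sigma=m_\alpha$. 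Writing any candidate as $\lambda=c\lambda_0$ with $c\in E^\times$, the required identity becomes $c\cdot N_\Phi(s)=\alpha$, so the theorem reduces to the class equality $[\alpha]=[N_\Phi(s)]$ in $\mathbb A_{f,E}^\times/E^\times$.

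The next step invokes Theorem~\ref{b57}. By Proposition~\ref{b57d}, $[N_\Phi(s)]$ depends only on $\mathrm{art}_{E^{\ast}}(s)=\sigma|E^{\ast\mathrm{ab}}$, so $s$ may be replaced within its artin-fibre. For each integer $m$ I choose such an $s$ whose finite components realise a fractional ideal $\mathfrak b$ of $E^{\ast}$ coprime to a modulus $\mathfrak m$ divisible by $m$, with $(\mathfrak b,L_{\mathfrak m}/E^{\ast})=\sigma|L_{\mathfrak m}$. Theorem~\ref{b57} then supplies an $\mathfrak a(\sigma)$-multiplication $\lambda_m=c_m\lambda_0$ (some $c_m\in E^\times$) satisfying $\lambda_m=\sigma$ on $A_m$ and $[\mathfrak a(\sigma)]=[N_\Phi(\mathfrak b)]$ in $C_m(E)$. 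Via the identification $A_m\simeq T_fA/mT_fA$, the first clause translates to $c_m\equiv\alpha$ modulo $m$ in $\mathbb A_{f,E}^\times$; the second, after matching valuations against the fixed trivialisation $\lambda_0$, gives $c_m\in N_\Phi(s)\cdot E^\times\cdot(1+m\hat{\mathcal O}_E)$. Combining, $\alpha/N_\Phi(s)\in E^\times\cdot(1+m\hat{\mathcal O}_E)$ for every $m$, so $\alpha/N_\Phi(s)$ lies in the closure $\overline{E^\times}$ of $E^\times$ in $\mathbb A_{f,E}^\times$.

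To upgrade containment in $\overline{E^\times}$ to membership in $E^\times$, I would use the cyclotomic constraint. Lemma~\ref{b57e} gives $N_\Phi(s)\cdot\iota_EN_\Phi(s)\in\chi_{\mathrm{cyc}}(\sigma)\cdot\mathbb Q_{>0}$, while the analogous identity $\alpha\cdot\iota_E\alpha\in\chi_{\mathrm{cyc}}(\sigma)\cdot\mathbb Q_{>0}$ follows from the transformation of the Weil pairing on $V_fA$ by $\chi_{\mathrm{cyc}}(\sigma)$ under $\sigma$, together with the fact that the Rosati involution of any polarisation restricts to $\iota_E$ on $E$. Hence $(\alpha/N_\Phi(s))\cdot\iota_E(\alpha/N_\Phi(s))\in\mathbb Q_{>0}\subset E^\times$, i.e.\ $(\alpha/N_\Phi(s))^2$ is trivial in $\overline{E^\times}/E^\times\subset\ker(\mathrm{art}_E)/E^\times$. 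Lemma~\ref{b79} tells us this quotient is uniquely divisible and $\iota_E$-fixed, hence torsion-free, forcing $\alpha/N_\Phi(s)\in E^\times$ as required. The \textbf{main obstacle} is this final bootstrap: converting Theorem~\ref{b57}'s ideal-class equality into a genuine idèlic congruence modulo $m$, and then crossing from the closure of $E^\times$ to $E^\times$ itself by means of the cyclotomic/Weil-pairing identity and Lemma~\ref{b79}---this closure-to-membership step is exactly what the idèle-theoretic refinement requires beyond the ideal-class content already at hand.
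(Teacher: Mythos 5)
Your proposal follows the same route as the paper's proof: define the homothety $\alpha$ (the paper's $\eta(\sigma)$) comparing $V_f\sigma$ with $V_f\lambda_0$, use Theorem~\ref{b57} to show $\alpha/N_\Phi(s)$ is trivial modulo the kernel of $\mathrm{art}_E$, control $t\cdot\iota_E t$ via the compatible Riemann form, and finish with Lemma~\ref{b79}; the paper merely packages the middle step a bit more cleanly by observing that $\eta$ is a homomorphism and comparing $\eta'$ with $N_\Phi$ after composing with $\mathbb{A}_{f,E}^{\times}/E^{\times}\to C_m(E)$, rather than extracting a $c_m$ for each $m$. One small slip: the Weil-pairing/Rosati computation yields $\alpha\cdot\iota_E\alpha\in\chi_{\mathrm{cyc}}(\sigma)\cdot F^{\times}_{\gg 0}$ rather than $\chi_{\mathrm{cyc}}(\sigma)\cdot\mathbb{Q}_{>0}$ (the pullback of a compatible polarisation along an $E$-isogeny differs from the original by a totally positive element of $F$, not of $\mathbb{Q}$), but since $F^{\times}_{\gg 0}\subset E^{\times}$ the quotient $(\alpha/N_\Phi(s))\cdot\iota_E(\alpha/N_\Phi(s))$ still lies in $E^{\times}$ and your final bootstrap is unaffected.
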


\begin{remark}
\label{b59}(a) It is obvious that $\lambda$ is determined uniquely by the
choice of an $s$ such that $\rec(s)=\sigma|E^{\ast\mathrm{ab}}$. If $s$ is
replaced by $s^{\prime}$, then $N_{\Phi}(s^{\prime})=a\cdot N_{\Phi}(s)$ with
$a\in E^{\times}$ (see \ref{b57d}), and $\lambda$ must be replaced by
$\lambda\cdot a^{-1}$.

(b) The theorem is a statement about the $E$-`isogeny'\ class of $A$ --- if
$\beta\colon A\rightarrow B$ is an $E$-`isogeny', and $\lambda$ satisfies the
conditions of the theorem for $A$, then $\sigma\beta\circ\lambda\circ
\beta^{-1}$ satisfies the conditions for $B$. Therefore, in proving the
theorem we may assume that $\End(A)\cap E=\mathcal{O}{}_{E}$.

(c) Let $\lambda$ as in the theorem, let $\alpha$ be a polarization of $A$
whose Rosati involution induces $\iota_{E}$ on $E$, and let $\psi\colon
V_{f}A\times V_{f}A\rightarrow\mathbb{A}_{f}(1)$ be the Riemann form of
$\lambda$. The condition on the Rosati involution means that%
\begin{equation}
\psi(a\cdot x,y)=\psi(x,\iota_{E}a\cdot y),\quad x,y\in V_{f}A,\quad
a\in\mathbb{A}{}_{f,E}. \label{e1}%
\end{equation}
Then, for $x,y\in V_{f}A,$
\[
(\sigma\psi)(\sigma x,\sigma y)\overset{\text{{\tiny def}}}{=}\sigma
(\psi(x,y))=\chi_{\mathrm{cyc}}(\sigma)\cdot\psi(x,y)
\]
because $\psi(x,y)\in\mathbb{A}_{f}(1)$. Thus if $\lambda$ is as in the
theorem, then
\begin{equation}
\chi_{\mathrm{cyc}}(\sigma)\cdot\psi(x,y)=(\sigma\psi)(N_{\Phi}(s)\lambda
(x),N_{\Phi}(s)\lambda(y)). \label{e2}%
\end{equation}
According to (\ref{e76}), p\ref{e76}, $N_{\Phi}(s)\cdot\iota_{E}N_{\Phi
}(s)=N_{E^{\ast}/\mathbb{Q}{}}(s)$, and so, on combining (\ref{e1}) and
(\ref{e2}), we find that
\[
(c\psi)(x,y)=(\sigma\psi)(\lambda x,\lambda y),
\]
with $c=\chi_{\mathrm{cyc}}(\sigma)/N_{E^{\ast}/\mathbb{Q}}(s)\in\mathbb{Q}%
{}_{>0}$ (see \ref{b57b}).
\end{remark}

Let $\sigma{}$ be an automorphism of $\mathbb{C}{}$ fixing $E^{\ast}$. Because
$\sigma$ fixes $E^{\ast}$, there exists an $E$-isogeny $\lambda\colon
A\rightarrow\sigma A$. The maps
\[%
\begin{array}
[c]{c}%
x\mapsto\sigma x\\
x\mapsto\lambda x
\end{array}
\text{ }\colon V_{f}(A)\rightarrow V_{f}(\sigma A)
\]
are both $\mathbb{A}{}_{f,E}\overset{\text{{\tiny def}}}{=}E\otimes
\mathbb{A}_{f}$-linear isomorphisms. As $V_{f}(A)$ is free of rank one over
$\mathbb{A}{}_{f,E}$,\footnote{This is even true when $R\overset
{\text{{\tiny def}}}{=}\End(A)\cap E$ is not the whole of $\mathcal{O}{}_{E}$
because, for all $\ell$, $V_{\ell}A$ is free of rank one over $E_{\ell
}\overset{\text{{\tiny def}}}{=}E\otimes_{\mathbb{Q}{}}\mathbb{Q}{}_{\ell}$,
and for all $\ell$ not dividing $(\mathcal{O}{}_{E}\colon R)$, $T_{\ell}A$ is
free of rank one over $R_{\ell}\overset{\text{{\tiny def}}}{=}R\otimes
_{\mathbb{Z}{}}\mathbb{Z}{}_{\ell}$ (see \ref{b03c}).} they differ by a
homothety by an element $\eta(\sigma)$ of $\mathbb{A}{}_{f,E}^{\times}$:%
\begin{equation}
\lambda(\eta(\sigma)x)=\sigma x,\quad\text{all }x\in V_{f}(A).\label{e70}%
\end{equation}
When the choice of $\lambda$ is changed, $\eta(\sigma)$ is changed only by an
element of $E^{\times}$, and so we have a well-defined map
\begin{equation}
\Aut(\mathbb{C}{}/E^{\ast})\rightarrow\mathbb{A}{}_{f,E}^{\times}/E^{\times
}.\label{e71}%
\end{equation}

\begin{lemma}
\label{b58c}For a suitable choice of $\lambda$, the quotient $t=\eta
(\sigma)/N_{\Phi}(s)$ satisfies the equation $t\cdot\iota_{E}t=1$ in
$\mathbb{A}{}_{f,E}^{\times}$.
\end{lemma}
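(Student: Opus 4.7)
The plan is to start with an arbitrary $E$-isogeny $\lambda\colon A\to\sigma A$, use a polarization of $A$ with Rosati involution $\iota_E$ to extract $\eta\cdot\iota_E\eta$ from the Weil pairing, compare the result with $N_\Phi(s)\cdot\iota_EN_\Phi(s)$ via (\ref{e76}) and Lemma \ref{b57b}, and then absorb the residual discrepancy by replacing $\lambda$ by $\lambda\circ c$ for a suitable $c\in E^\times$.

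Concretely, fix a polarization $\alpha$ of $A$ whose Rosati involution induces $\iota_E$ on $E$, and let $\psi\colon V_fA\times V_fA\to\mathbb{A}_f(1)$ be its Weil pairing. Pick an $\mathbb{A}_{f,E}$-basis $e$ of $V_fA$, so that $\psi(ae,be)=\mathrm{Tr}_{\mathbb{A}_{f,E}/\mathbb{A}_f}(\alpha_\psi\cdot a\iota_Eb)$ for a unique $\alpha_\psi\in E^\times$ with $\iota_E\alpha_\psi=-\alpha_\psi$. For any $E$-isogeny $\lambda\colon A\to\sigma A$ with associated $\eta=\eta(\sigma)$ as in (\ref{e70}), the pullback $\lambda^\ast(\sigma\psi)$ is again a Riemann form on $V_fA$ with Rosati $\iota_E$; by the classification of Riemann forms recalled on p.~\pageref{riemann}, one has $\lambda^\ast(\sigma\psi)(ae,be)=\mathrm{Tr}(c_\lambda\alpha_\psi\cdot a\iota_Eb)$ for a totally positive $c_\lambda\in F^\times$.

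Now equate the two expressions for $\sigma(\psi(x,y))$: because $\psi(x,y)\in\mathbb{A}_f(1)$ it equals $\chi_{\mathrm{cyc}}(\sigma)\,\psi(x,y)$, and because $\sigma x=\lambda(\eta x)$ it also equals $\lambda^\ast(\sigma\psi)(\eta x,\eta y)$. Writing both sides in terms of the trace pairing and using its non-degeneracy yields
\[
\eta\cdot\iota_E\eta=\chi_{\mathrm{cyc}}(\sigma)\cdot c_\lambda^{-1}.
\]
Combined with (\ref{e76}), which gives $N_\Phi(s)\cdot\iota_EN_\Phi(s)=\Nm_{E^\ast/\mathbb{Q}}(s)$, and with Lemma \ref{b57b}, which writes $\Nm_{E^\ast/\mathbb{Q}}(s)=\chi_{\mathrm{cyc}}(\sigma)\cdot r$ for some $r\in\mathbb{Q}_{>0}$, division yields $t\cdot\iota_Et=(c_\lambda\,r)^{-1}$, a totally positive element of $F^\times$ (viewed diagonally in $\mathbb{A}_{f,F}^\times\subset\mathbb{A}_{f,E}^\times$).

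It remains to absorb this factor into the freedom in $\lambda$. Replacing $\lambda$ by $\lambda\circ c$ for $c\in E^\times$ sends $\eta\mapsto c^{-1}\eta$ and $c_\lambda\mapsto\Nm_{E/F}(c)\cdot c_\lambda$, hence scales $t\cdot\iota_Et$ by $\Nm_{E/F}(c)^{-1}$. The lemma thus reduces to producing $c\in E^\times$ with $\Nm_{E/F}(c)=(c_\lambda r)^{-1}$, i.e., to showing that the class of $c_\lambda r$ in $F^\times/\Nm_{E/F}(E^\times)$ is trivial. This is the main obstacle; I expect it to be handled either by a local--global (Hasse-type) analysis of $c_\lambda r$ exploiting the specific arithmetic of the polarization pullback, or, more elegantly, by first checking via Lemma \ref{b57c} that the class of $t$ in $\mathbb{A}_{f,E}^\times/E^\times$ lies in $\ker(\mathrm{art}_E)$, and then invoking the unique divisibility and $\iota_E$-invariance of that kernel (Lemma \ref{b79}) to solve the resulting norm equation.
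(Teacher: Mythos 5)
Your argument tracks the paper's proof faithfully up to the key equation: after using the polarization to express $\lambda^\ast(\sigma\psi)$ as $c_\lambda\psi$ with $c_\lambda\in F_{\gg0}$, and combining with (\ref{e76}) and Lemma~\ref{b57b}, you correctly arrive at $t\cdot\iota_E t=(c_\lambda r)^{-1}$, a totally positive element of $F^\times$ viewed diagonally in $\mathbb{A}_{f,E}^\times$. So far this is the paper's calculation, presented with an explicit choice of $\mathbb{A}_{f,E}$-basis where the paper argues a little more invariantly, but the content is identical.

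The genuine gap is the step you yourself flag as ``the main obstacle'': showing that $c_\lambda r$ lies in $\Nm_{E/F}(E^\times)$, so that the discrepancy can be absorbed into $\lambda$. You offer two candidate routes without working either out. The first (Hasse) is the paper's route, and the point you missed is that nothing about ``the specific arithmetic of the polarization pullback'' is needed: the equation $t\cdot\iota_E t=(c_\lambda r)^{-1}$ with $t\in\mathbb{A}_{f,E}^\times$ already exhibits $(c_\lambda r)^{-1}$ as $\Nm_{\mathbb{A}_{f,E}/\mathbb{A}_{f,F}}(t)$, hence as a local norm at every finite place of $F$; total positivity makes it a local norm at the archimedean places (since $E_v/F_v\simeq\mathbb{C}/\mathbb{R}$ for a CM extension); and $E/F$ is quadratic, hence cyclic, so Hasse's norm theorem upgrades ``local norm everywhere'' to ``global norm.'' That single observation is the whole of the missing step. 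Your second suggestion (pass to $\ker(\mathrm{art}_E)$ and invoke Lemma~\ref{b79}) is not how the paper closes \emph{this} lemma --- membership of $t\bmod E^\times$ in $\ker(\mathrm{art}_E)$ is only established afterwards, via Theorem~\ref{b57}, and Lemma~\ref{b79} is then used to finish the proof of Theorem~\ref{b58} by deducing $t=1$, not to produce the norm element here --- so if you pursue it you would be proving things in a different order than the paper and would still need an independent argument to get the $\ker(\mathrm{art}_E)$ membership at this stage.
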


\begin{proof}
We know that
\[
N_{\Phi}(s)\cdot\iota_{E}N_{\Phi}(s)\overset{(\ref{e76})}{=}\Nm_{\mathbb{A}%
_{f,E^{\ast}}/\mathbb{A}{}_{f}}(s)\overset{(\ref{b57b})}{=}\chi_{\mathrm{cyc}%
}(\sigma)\cdot a
\]
for some $a\in\mathbb{Q}{}_{>0}$.

A calculation as in (\ref{b59}c) shows that,%
\begin{equation}
(c\psi)(x,y)=(\sigma\psi)(\lambda x,\lambda y),\text{ all }x,y\in\mathbb{A}%
{}_{f,E}^{\times}, \label{e67}%
\end{equation}
with $c=\chi_{\mathrm{cyc}}(\sigma)/\left(  \eta(\sigma)\cdot\iota_{E}%
\eta(\sigma)\right)  $. Now the discussion on p\pageref{riemann} shows that
$c$ is a totally positive element of $F$. Thus%
\begin{equation}
\eta(\sigma)\cdot\iota_{E}\eta(\sigma)=\chi_{\mathrm{cyc}}(\sigma)/c,\quad
c\in F_{\gg0}. \label{e68}%
\end{equation}

Let $t=\eta(\sigma)/N_{\Phi}(s)$. Then%
\begin{equation}
t\cdot\iota_{E}t=1/ac\in F_{\gg0}. \label{e69}%
\end{equation}
Being a totally positive element of $F$, $ac$ is a local norm from $E$ at the
infinite primes, and (\ref{e69}) shows that it is also a local norm at the
finite primes. Therefore we can write $ac=e\cdot\iota_{E}e$ for some $e\in
E^{\times}$. Then%
\[
te\cdot\iota_{E}(te)=1.
\]

\end{proof}

The map $\eta\colon\Gal(\mathbb{Q}^{\mathrm{al}}/E^{\ast})\rightarrow
\mathbb{A}{}_{f,E}^{\times}/E^{\times}$ is a homomorphism\footnote{Choose
$E$-isogenies $\alpha\colon A\rightarrow\sigma A$ and $\alpha^{\prime}\colon
A\rightarrow\sigma^{\prime}A^{\prime}$, and let%
\begin{align*}
\alpha(sx)  &  =\sigma x\\
\alpha^{\prime}(s^{\prime}x)  &  =\sigma^{\prime}x.
\end{align*}
Then $\sigma\alpha^{\prime}\circ\alpha$ is an isogeny $A\rightarrow
\sigma\sigma^{\prime}A$, and%
\[
(\sigma\alpha^{\prime}\circ\alpha)(ss^{\prime}x)=(\sigma\alpha^{\prime
})(\alpha(ss^{\prime}x))=(\sigma\alpha^{\prime})(\sigma(s^{\prime}%
x))=\sigma(\alpha^{\prime}(s^{\prime}x))=\sigma\sigma^{\prime}x.
\]
}, and so it factors through $\Gal(\mathbb{Q}{}^{\text{al}}/E^{\ast
})^{\text{ab}}$. When combined with the Artin map, it gives a homomorphism
$\eta^{\prime}\colon\mathbb{A}{}_{f,E^{\ast}}^{\times}/E^{\ast\times
}\rightarrow\mathbb{A}{}_{f,E}^{\times}/E^{\times}$.\noindent

Choose an integer $m>0$. For some modulus $\mathfrak{m}{}$, there exist a
commutative diagrams%
\[
\begin{CD}
\mathbb{A}_{f,E^{\ast}}^{\times}/E^{\ast\times} @>>> \mathbb{A}_{f,E}^{\times}/E^{\times}\\
@VV{\mathrm{onto}}V@VV{\mathrm{onto}}V\\
C_{\mathfrak{m}}(E^{\ast})@>>>C_{m}(E)
\end{CD}
\]
with the top map either $N_{\Phi}{}$ or by $\eta^{\prime}$ and the vertical
maps the obvious maps (\cite{milneCFT}, V 4.6). The bottom maps in the two
diagrams agree by Theorem \ref{b57}, which implies that $t\overset
{\text{{\tiny def}}}{=}\eta(\sigma)/N_{\Phi}(s)$ lies in the common kernel of
the maps $\mathbb{A}{}_{f,E}^{\times}/E^{\times}\rightarrow C_{m}(E)$ for
$m>0$. But this common kernel is equal to the kernel of the Artin map
$\mathbb{A}{}_{f,E}^{\times}/E^{\times}\rightarrow{}\Gal(E^{\mathrm{ab}}/E)$.
As $t\cdot\iota_{E}t=1$ (see \ref{b58c}), $t=1$ (see \ref{b79}).

\begin{nt}
The proof in this subsection is that sketched in \cite{milneISV}. Cf.
\cite{shimura1970}, 4.3, and \cite{shimura1971}, pp117--121, p129.
\end{nt}

\subsection{The fundamental theorem in terms of uniformizations}

Let $(A,i\colon E\hookrightarrow\End^{0}(A))$ be an abelian variety with
complex multiplication over $\mathbb{C}{}$, and let $\alpha$ be a polarization
of $(A,i)$. Recall (\ref{a30p}, \ref{a34}) that the choice of a basis element
$e_{0}$ for $H_{0}(A,\mathbb{Q}{})$ determines a uniformization $\theta
\colon\mathbb{C}{}^{\Phi}\rightarrow A(\mathbb{C}{})$, and hence a quadruple
$(E,\Phi;\mathfrak{a}{},t)$, called the type of $(A,i,\lambda)$ relative to
$\theta$.

\begin{theorem}
\label{b61}Let $(A,i,\lambda)$ be of type $(E,\Phi;\mathfrak{a,}t\mathfrak{)}$
relative to a uniformization $\theta\colon\mathbb{C}^{\Phi}\rightarrow
A(\mathbb{C})$, and let $\sigma$ be an automorphism of $\mathbb{C}{}$ fixing
$E^{\ast}$. For any $s\in\mathbb{A}_{f,E^{\ast}}^{\times}$ such that
$\mathrm{art}_{E^{\ast}}(s)=\sigma|E^{\ast\text{ab}}$, there is a unique
uniformization $\theta^{\prime}\colon\mathbb{C}^{\Phi^{\prime}}\rightarrow
(\sigma A)(\mathbb{C})$ of $\sigma A$ such that

\begin{enumerate}
\item $\sigma(A,i,\psi)$ has type $(E,\Phi;f\mathfrak{a},t\cdot\chi
_{\mathrm{cyc}}(\sigma)/f\bar{f})$ where $f=N_{\Phi}(s)\in\mathbb{A}%
_{f,E}^{\times};$

\item the diagram%
\[
\begin{CD}
E/\mathfrak{a}@>{\theta_{0}}>> A(\mathbb{C})\\
@VVfV@VV{\sigma}V\\
E/f\mathfrak{a}@>{\theta_{0}^{\prime}}>>\sigma A(\mathbb{C})
\end{CD}
\]
commutes, where $\theta_{0}(x)=\theta((\varphi x)_{\varphi\in\Phi})$ and
$\theta_{0}^{\prime}(x)=\theta^{\prime}((\varphi x)_{\varphi\in\Phi^{\prime}%
}).$
\end{enumerate}
\end{theorem}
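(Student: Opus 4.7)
The plan is to apply Theorem \ref{b58} to obtain the $E$-isogeny $\lambda\colon A\to\sigma A$ satisfying $\lambda(N_{\Phi}(s)\cdot x)=\sigma x$ for all $x\in V_{f}A$, then use $\lambda_{*}$ to transport the basis $e_{0}$ underlying $\theta$ to a basis $e_{0}^{\prime}:=\lambda_{*}(e_{0})$ of $H_{1}(\sigma A,\mathbb{Q})$ over $E$. Since $\sigma$ fixes the reflex field $E^{\ast}$, the CM-type of $\sigma A$ is again $\Phi$, so $e_{0}^{\prime}$ determines a uniformization $\theta^{\prime}\colon\mathbb{C}{}^{\Phi}\to(\sigma A)(\mathbb{C}{})$ in the sense of (\ref{a30p}). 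This will be the candidate $\theta^{\prime}$; its uniqueness will follow from the uniqueness of $\lambda$ in Theorem \ref{b58} together with the rigidity forced by condition (b).

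For condition (a), set $f=N_{\Phi}(s)$ and compute the type $(E,\Phi;\mathfrak{a}^{\prime},t^{\prime})$ of $\sigma(A,i,\psi)$ relative to $\theta^{\prime}$. Under the identifications $V_{f}A\cong\mathbb{A}{}_{f,E}$ via $a\mapsto ae_{0}$ and $V_{f}(\sigma A)\cong\mathbb{A}{}_{f,E}$ via $a\mapsto ae_{0}^{\prime}$, the $E$-linear map $\lambda_{*}$ becomes the identity while $\sigma\colon V_{f}A\to V_{f}(\sigma A)$ becomes multiplication by $f$. Hence the Tate module $T(\sigma A)=\sigma(TA)$ corresponds to $f\hat{\mathfrak{a}}{}=\widehat{f\mathfrak{a}{}}$, so $\mathfrak{a}{}^{\prime}=f\mathfrak{a}{}$. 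For $t^{\prime}$, I would invoke Remark \ref{b59}(c), which gives $(\sigma\psi)(\lambda x,\lambda y)=c\,\psi(x,y)$ on $V_{f}A$ with $c=\chi_{\mathrm{cyc}}(\sigma)/N_{E^{\ast}/\mathbb{Q}{}}(s)$; applying (\ref{e76}) rewrites this as $c=\chi_{\mathrm{cyc}}(\sigma)/(f\bar{f})$. Substituting $x=x_{0}e_{0}$, $y=y_{0}e_{0}$ and comparing with the defining formula $\psi(xe_{0},ye_{0})=\Tr_{E/\mathbb{Q}{}}(tx\bar{y})$ (from \ref{a34}), we read off $t^{\prime}=ct=t\cdot\chi_{\mathrm{cyc}}(\sigma)/(f\bar{f})$, matching the claim.

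Condition (b) is then essentially automatic. Under the above identifications, $\theta_{0}\colon E/\mathfrak{a}{}\to A(\mathbb{C}{})_{\mathrm{tors}}$ is induced by the composite $E\hookrightarrow\mathbb{A}{}_{f,E}\twoheadrightarrow\mathbb{A}{}_{f,E}/\hat{\mathfrak{a}{}}$ (an isomorphism by weak approximation), and the operation $f\colon E/\mathfrak{a}{}\to E/f\mathfrak{a}{}$ in the diagram is, under the analogous identification, just multiplication by $f$ on $\mathbb{A}{}_{f,E}$ modulo $\widehat{f\mathfrak{a}{}}$. Since $\sigma\colon A(\mathbb{C}{})_{\mathrm{tors}}\to(\sigma A)(\mathbb{C}{})_{\mathrm{tors}}$ is $\lambda\circ f$ on $V_{f}A$ by the defining property of $\lambda$, the required square commutes. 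For uniqueness, any $\theta^{\prime}$ satisfying (b) determines its basis element $e_{1}\in H_{1}(\sigma A,\mathbb{Q}{})$ by the requirement $\theta_{0}^{\prime}(fx)=\sigma\theta_{0}(x)$ on $m$-torsion for all $m$, which forces $e_{1}=\lambda_{*}(e_{0})=e_{0}^{\prime}$ by the uniqueness part of Theorem \ref{b58}.

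The delicate point is checking that $t^{\prime}=ct$ genuinely lies in $E^{\times}$ rather than only in $\mathbb{A}{}_{f,E}^{\times}$. This reduces to showing $c=\chi_{\mathrm{cyc}}(\sigma)/N_{E^{\ast}/\mathbb{Q}{}}(s)$ is a positive rational number, which is exactly the content of Lemma \ref{b57b}: it asserts $N_{E^{\ast}/\mathbb{Q}{}}(s)\in\chi_{\mathrm{cyc}}(\sigma)\cdot\mathbb{Q}{}_{>0}$, whence $c\in\mathbb{Q}{}_{>0}$ and $ct\in E^{\times}$. (Alternatively, this falls out of the argument leading to (\ref{e68}) in Lemma \ref{b58c}.) Everything else is bookkeeping built on Theorem \ref{b58}.
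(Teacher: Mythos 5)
Your proposal is correct and follows essentially the same route as the paper: apply Theorem \ref{b58} to obtain $\lambda$, take $\theta'$ to be the uniformization attached to the transported basis $H_1(\lambda)(e_0)$, and derive (a) and (b) from the defining property $\lambda\circ f=\sigma$ on $V_fA$ together with Remark \ref{b59}(c). The paper states this in one line and declares the rest immediate; you have merely spelled out the bookkeeping (lattice $\mathfrak{a}'=f\mathfrak{a}$ from $\sigma(T_fA)=f\hat{\mathfrak{a}}$, form $t'=ct$ with $c=\chi_{\mathrm{cyc}}(\sigma)/(f\bar f)\in\mathbb{Q}_{>0}$ by Lemma \ref{b57b}, and uniqueness from density of torsion), all of which is accurate.
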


\begin{proof}
According to Theorem \ref{b58}, there exists an isogeny $\lambda\colon
A\rightarrow\sigma A$ such that $\lambda(N_{\Phi}(s)\cdot x)=\sigma x$ for all
$x\in V_{f}A$. Then $H_{1}(\lambda)$ is an $E$-linear isomorphism
$H_{1}(A,\mathbb{Q}{})\rightarrow H_{1}(\sigma A,\mathbb{Q}{})$, and we let
$\theta^{\prime}$ be the uniformization defined by the basis element
$H_{1}(\lambda)(e_{0})$ for $H_{1}(\sigma A,\mathbb{Q}{})$. The statement now
follows immediately from Theorem \ref{b58} and (\ref{b59}c).
\end{proof}

\clearpage

\section{The fundamental theorem over $\mathbb{Q}{}$}

The first three subsections follow \cite{tate1981} and the last subsection
follows \cite{deligne1981}.

We begin by reviewing some notations. We let $\hat{\mathbb{Z}}%
=\plim{\mathbb{Z}}/m\mathbb{Z}$ and $\mathbb{A}_{f}=\hat{\mathbb{Z}}%
\otimes\mathbb{Q}$. For a number field $k$, $\mathbb{A}_{f,k}=\mathbb{A}%
_{f}\otimes_{\mathbb{Q}{}}k$ is the ring of finite ad\`{e}les and
$\mathbb{A}_{k}=\mathbb{A}_{f,k}\times(k\otimes_{\mathbb{Q}{}}\mathbb{R})$ is
the full ring of ad\`{e}les. When $k$ is a subfield of $\mathbb{C}$,
$k^{\mathrm{ab}}$ and $k^{\mathrm{al}}$ denote respectively the largest
abelian extension of $k$ in $\mathbb{C}$ and the algebraic closure of $k$ in
$\mathbb{C}$. For a number field $k$, $\rec_{k}\colon\mathbb{A}_{k}^{\times
}\rightarrow\Gal(k^{\ab}/k)$ is the usual reciprocity law and $\mathrm{art}%
_{k}$ is its reciprocal. When $k$ is totally imaginary, we also write
$\mathrm{art}_{k}$ for the map $\mathbb{A}_{f,k}^{\times}\rightarrow
\Gal(k^{\mathrm{ab}}/k)$ that it defines. The cyclotomic character $\chi
=\chi_{\text{\textrm{cyc}}}\colon\Aut(\mathbb{C})\rightarrow\hat{\mathbb{Z}%
}^{\times}\subset\mathbb{A}_{f}^{\times}$ is the homomorphism such that
$\sigma\zeta=\zeta^{\chi(\sigma)}$ for every root of $1$ in $\mathbb{C}$. The
composite
\begin{equation}
\mathrm{art}_{k}\circ\chi_{\mathrm{cyc}}=\mathrm{Ver}_{k/\mathbb{Q}%
},\label{e80}%
\end{equation}
the Verlagerung map $\Gal(\mathbb{Q}^{\mathrm{al}}/\mathbb{Q})^{\mathrm{ab}%
}\rightarrow\Gal(\mathbb{Q}^{\mathrm{al}}/k)^{\mathrm{ab}}$.

\subsection{Statement of the Theorem}

Let $A$ be an abelian variety over $\mathbb{C}$, and let $E$ be a subfield of
$\End(A)\otimes\mathbb{Q}$ of degree $2\dim A$ over $\mathbb{Q}$. The
representation of $E$ on the tangent space to $A$ at zero is of the form
$\bigoplus_{\varphi\in\Phi}\varphi$ with $\Phi$ a subset of $\Hom(E,\mathbb{C}%
)$. A \emph{Riemann form} for $A$ is a $\mathbb{Q}$-bilinear skew-symmetric
form $\psi$ on $H_{1}(A,\mathbb{Q})$ such that
\[
(x,y)\mapsto\psi(x,iy)\colon H_{1}(A,\mathbb{R})\times H_{1}(A,\mathbb{R}%
)\rightarrow\mathbb{R}%
\]
is symmetric and positive definite. We assume that there exists a Riemann form
$\psi$ compatible with the action of $E$ in the sense that, for some
involution $\iota_{E}$ of $E$,
\[
\psi(ax,y)=\psi(x,(\iota_{E}a)y),\quad a\in E,\quad x,y\in H_{1}%
(A,\mathbb{Q}).
\]
Then $E$ is a CM-field, and $\Phi$ is a CM-type on $E$, i.e.,
$\Hom(E,\mathbb{C})=\Phi\cup\iota\Phi$ (disjoint union). The pair
$(A,E\hookrightarrow\End(A)\otimes\mathbb{Q})$ is said to be of \emph{CM-type}
$(E,\Phi)$. For simplicity, we assume that $E\cap\End(A)=\mathcal{O}_{E}$, the
full ring of integers in $E$.

Let $\mathbb{C}^{\Phi}$ be the set of complex-valued functions on $\Phi$,\/
and embed $E$ into $\mathbb{C}^{\Phi}$ through the natural map $a\mapsto
(\varphi(a))_{\varphi\in\Phi}$. There then exist a $\mathbb{Z}$-lattice
$\mathfrak{a}$ in $E$ stable under $\mathcal{O}_{E}$, an element $t\in
E^{\times}$, and an $\mathcal{O}_{E}$-linear analytic isomorphism
$\theta\colon\mathbb{C}^{\Phi}/\Phi(\mathfrak{a)}\rightarrow A$ such that
$\psi(x,y)=\Tr_{E/\mathbb{Q}}(tx\cdot\iota_{E}y)$ where, in the last equation,
we have used $\theta$ to identify $H_{1}(A,\mathbb{Q})$ with $\mathfrak{a}%
\otimes\mathbb{Q}=E$. The variety is said to be of \emph{type} $(E,\Phi
;\mathfrak{a},t)$ relative to $\theta$. The type determines the triple
$(A,E\hookrightarrow\End(A)\otimes\mathbb{Q},\psi)$ up to isomorphism.
Conversely, the triple determines the type up to a change of the following
form: if $\theta$ is replaced by $\theta\circ a^{-1}$, $a\in E^{\times}$, then
the type becomes $(E,\Phi;a\mathfrak{a},\frac{t}{a\cdot\iota a})$ (see
\ref{a34}).

Let $\sigma\in\Aut(\mathbb{C})$. Then $E\hookrightarrow\End^{0}(A)$ induces a
map $E\hookrightarrow\End^{0}(\sigma A)$, so that $\sigma A$ also has complex
multiplication by $E$. The form $\psi$ is associated with a divisor $D$ on
$A$, and we let $\sigma\psi$ be the Riemann form for $\sigma A$ associated
with $\sigma D$. It has the following characterization: after multiplying
$\psi$ with a nonzero rational number, we can assume that it takes integral
values on $H_{1}(A,\mathbb{Z})$; define $\psi_{m}$ to be the pairing
$A_{m}\times A_{m}\rightarrow\mu_{m}$, $(x,y)\mapsto\exp(\frac{2\pi i\cdot
\psi(x,y)}{m})$; then $(\sigma\psi)_{m}(\sigma x,\sigma y)=\sigma(\psi
_{m}(x,y))$ for all $m$.

In the next section we shall define for each CM-type $(E,\Phi)$ a map
$f_{\Phi}\colon\Aut(\mathbb{C})\rightarrow\mathbb{A}_{f,E}^{\times}/E^{\times
}$ such that
\[
f_{\Phi}(\sigma)\cdot\iota f_{\Phi}(\sigma)=\chi_{\text{cyc}}(\sigma
)E^{\times},\quad\text{\textrm{all }}\sigma\in\Aut(\mathbb{C}).
\]
We can now state the fundamental theorem of complex multiplication.

\begin{theorem}
\label{b72}Suppose $A$ has type $(E,\Phi;{\mathfrak{a}},t)$ relative to the
uniformization $\theta\colon\mathbb{C}^{\Phi}/\mathfrak{a}{}\rightarrow A$.
Let $\sigma\in\Aut(\mathbb{C})$, and let $f\in\mathbb{A}_{f,E}^{\times}$ lie
in $f_{\Phi}(\sigma)$.

\begin{enumerate}
\item The variety $\sigma A$ has type
\[
(E,\sigma\Phi;f{\mathfrak{a}},\frac{t\chi_{\text{cyc}}(\sigma)}{f\cdot\iota
f})
\]
relative some uniformization $\theta^{\prime}$.

\item It is possible to choose $\theta^{\prime}$ so that%
\[
\begin{CD}
\mathbb{A}_{f,E} @>>>\mathbb{A}_{f,E}/{\mathfrak{a}}\otimes\hat{{\mathbb{Z}}}\simeq{E}/{\mathfrak{a}}
@>{\theta}>>A_{\text{tors}}\\
@VV{f}V@.@VV{\sigma}V\\
{\mathbb{A}}_{f,E} @>>> \mathbb{A}_{f,E}/(f{\mathfrak{a}}\otimes\hat{{\mathbb{Z}}})
\simeq{E}/f{\mathfrak{a}}@>{\theta'}>>\sigma A_{\text{tors}}
\end{CD}
\]
commutes, where $A_{\text{\textrm{tors}}}$ denotes the torsion subgroup of $A$
(and then $\theta^{\prime}$ is uniquely determined),
\end{enumerate}
\end{theorem}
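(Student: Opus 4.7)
The plan is to deduce Theorem \ref{b72} from Theorem \ref{b61} (which handles $\sigma$ fixing $E^{\ast}$) together with the cocycle properties of $f_{\Phi}$ that the next section will establish. The strategy has three components: identify the CM-type of $\sigma A$, construct $\theta'$ so the torsion diagram commutes, and verify the transformed lattice and Riemann form coefficient.

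First, the CM-type of $\sigma A$ is necessarily $\sigma\Phi$: the action of $a \in E$ on $\Tgt_{0}(\sigma A)$ is obtained by transport of structure via $\sigma$, so its eigenvalues transform from $\varphi$ to $\sigma \circ \varphi$. For part (b), uniqueness of $\theta'$ is automatic once the torsion diagram is required to commute, since this pins $\theta'$ down on $E/f\mathfrak{a}$, hence on all torsion of $\sigma A$, which is Zariski dense.

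Next, I would handle the case $\sigma \in \Aut(\mathbb{C}/E^{\ast})$ directly using Theorem \ref{b61}: choose an id\`ele $s$ with $\mathrm{art}_{E^{\ast}}(s) = \sigma|E^{\ast \mathrm{ab}}$, take $f = N_{\Phi}(s)$, and that theorem produces the desired $\theta'$ and verifies the type. The construction of $f_{\Phi}$ in the next section will ensure that $f_{\Phi}(\sigma)$ contains $N_{\Phi}(s)$, so this case is compatible. For general $\sigma$, I would factor $\sigma = \sigma_{1}\sigma_{2}$ with $\sigma_{2} \in \Aut(\mathbb{C}/E^{\ast})$ and $\sigma_{1}$ a fixed lift of a coset representative modulo $\Aut(\mathbb{C}/E^{\ast})$; apply the previous step to $\sigma_{2}$, then apply the theorem to $\sigma_{1}$ acting on $\sigma_{2} A$, and use the cocycle identity for $f_{\Phi}$ to recombine the two steps into the claimed formula.

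The main obstacle is that $f_{\Phi}$ itself is only constructed in the next section, so this argument reduces the theorem to verifying that, granted $f_{\Phi}$ with its cocycle relation and the key identity $f_{\Phi}(\sigma) \cdot \iota f_{\Phi}(\sigma) \equiv \chi_{\text{cyc}}(\sigma) \pmod{E^{\times}}$, the stated formulas follow. The transformed lattice $f\mathfrak{a}$ is then read off directly from the torsion diagram: the commutativity forces the image of $E/\mathfrak{a}$ under $\sigma$ to sit inside $\sigma A_{\text{tors}}$ as $E/f\mathfrak{a}$. The Riemann form coefficient $t \mapsto t\chi_{\text{cyc}}(\sigma)/(f \cdot \iota f)$ then follows from the calculation in Remark \ref{b59}(c): $\sigma$ scales $\mathbb{A}_{f}(1)$-valued pairings by $\chi_{\text{cyc}}(\sigma)$, while rescaling the $H_{1}$-basis vector by $f$ produces the denominator $f \cdot \iota f$ via $\psi(fx, fy) = \Tr_{E/\mathbb{Q}}((tf\iota f)\, x\iota y)$.
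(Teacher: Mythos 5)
Your plan contains a circularity that it cannot escape. You propose, for general $\sigma$, to factor $\sigma=\sigma_{1}\sigma_{2}$ with $\sigma_{2}\in\Aut(\mathbb{C}/E^{\ast})$, handle $\sigma_{2}$ via Theorem~\ref{b61}, and then ``apply the theorem to $\sigma_{1}$ acting on $\sigma_{2}A$.'' But $\sigma_{1}$ is an arbitrary coset representative, which in general does \emph{not} fix $E^{\ast}$, and $\sigma_{2}A$ has type $(E,\sigma_{2}\Phi)=(E,\Phi)$ since $\sigma_{2}\Phi=\Phi$. So that step requires exactly the statement you are trying to prove, in the case $\sigma=\sigma_{1}$. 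The cocycle identity $f_{\Phi}(\sigma_{1}\sigma_{2})=f_{\sigma_{2}\Phi}(\sigma_{1})f_{\Phi}(\sigma_{2})$ does not rescue this: it expresses $f_{\Phi}(\sigma)$ in terms of $f_{\Phi}(\sigma_{1})$, but $f_{\Phi}(\sigma_{1})$ is defined by an abstract Galois-theoretic recipe (via the elements $w_{\rho}$ and the transfer map), and nothing in the cocycle relation or the norm identity $f_{\Phi}\cdot\iota f_{\Phi}=\chi_{\mathrm{cyc}}$ links this abstract element to the geometric action of $\sigma_{1}$ on $A$ and its torsion. That linkage for $\sigma$ not fixing $E^{\ast}$ \emph{is} the theorem over $\mathbb{Q}$; the case $\sigma\in\Aut(\mathbb{C}/E^{\ast})$ is the classical Shimura--Taniyama result, and the passage to all of $\Aut(\mathbb{C})$ is the whole point.

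The paper's route is structurally different and avoids the circularity. It first introduces a \emph{geometrically defined} map $g_{\Phi}$ (from an arbitrary $E$-linear $\alpha\colon H_{1}(A,\mathbb{Q})\to H_{1}(\sigma A,\mathbb{Q})$, one reads off $g$ from $(\alpha\otimes1)\circ g=\sigma$ on $V_{f}(A)$), and Theorem~\ref{b74} is essentially tautological with $f_{\Phi}$ replaced by $g_{\Phi}$ (Lemma~\ref{b83}, Remark~\ref{b84}). The problem then becomes $f_{\Phi}=g_{\Phi}$. Both maps satisfy the same formal properties (Propositions~\ref{b81}, \ref{b85}) and agree on $\Aut(\mathbb{C}/E^{\ast})$ (Proposition~\ref{b82}, which is where your Theorem~\ref{b61} input lives), so the ratio $e_{\Phi}=g_{\Phi}/f_{\Phi}$ is a cocycle of signs in $\mu_{2}(\mathbb{A}_{f,F})/\mu_{2}(F)$ depending only on $\sigma|E^{\ast}$ (Proposition~\ref{b87}). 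Killing this sign ambiguity is where genuinely new input enters: the linearity property (g), which rests on Deligne's theorem that Hodge classes on abelian varieties are absolutely Hodge (so the results extend to CM-motives), and then a norm trick with a totally real quadratic extension $F'/F$ inert at prescribed primes. Your proposal mentions neither $g_{\Phi}$, nor the sign obstruction $e_{\Phi}$, nor the Deligne/CM-motive input, and without them the reduction you sketch cannot close.
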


We now restate the theorem in a more canonical form. Let
\[
TA\overset{\text{{\tiny def}}}{=}\plim A_{m}(\mathbb{C})\simeq\plim(\tfrac
{1}{m}H_{1}(A,\mathbb{Z})/H_{1}(A,\mathbb{Z}))\simeq H_{1}(A,\hat{\mathbb{Z}%
})
\]
(limit over all positive integers $m$), and let
\[
V_{f}A\overset{\text{{\tiny def}}}{=}TA\otimes_{\mathbb{Z}}\mathbb{Q}\simeq
H_{1}(A,\mathbb{Q})\otimes_{\mathbb{Q}}\mathbb{A}_{f}.
\]
Then $\psi$ gives rise to a pairing
\[
\psi_{f}=\plim\psi_{m}\colon V_{f}A\times V_{f}A\rightarrow\mathbb{A}_{f}(1)
\]
where $\mathbb{A}_{f}(1)=(\plim\mu_{m}(\mathbb{C}))\otimes\mathbb{Q}$.

\begin{theorem}
\label{b74}Let $A$ have type $(E,\Phi)$; let $\sigma\in\Aut(\mathbb{C})$, and
let $f\in f_{\Phi}(\sigma)$.

\begin{enumerate}
\item $\sigma A$ is of type $(E,\sigma\Phi)$;

\item there is an $E$-linear isomorphism $\alpha\colon H_{1}(A,\mathbb{Q}%
)\rightarrow H_{1}(\sigma A,\mathbb{Q})$ such that

\begin{enumerate}
\item $\psi(\frac{\chi_{\mathrm{cyc}}(\sigma)}{f\cdot\iota f}x,y)=(\sigma
\psi)(\alpha x,\alpha y),\quad x,y\in H_{1}(A,\mathbb{Q})$;

\item \ the\footnote{Note that both $f\in\mathbb{A}_{f,E}^{\times}$ and the
$E$-linear isomorphism $\alpha$ are uniquely determined up to multiplication
by an element of $E^{\times}$. Changing the choice of one changes that of the
other by the same factor.} diagram
\[
\xymatrix{
V_f(A)\ar[r]^{f}\ar[rd]_{\sigma}&V_f(A)\ar[d]^{\alpha\otimes 1}\\
&V_f(\sigma{A})
}
\]
commutes.
\end{enumerate}
\end{enumerate}
\end{theorem}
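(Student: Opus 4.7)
The plan is to derive Theorem \ref{b74} as a direct translation of Theorem \ref{b72} into the more intrinsic language of rational homology and its adelic avatar $V_fA$, so the real content already sits in \ref{b72}. Part (a) requires no work beyond functoriality: applying $\sigma$ to $\Tgt_0(A)\simeq\bigoplus_{\varphi\in\Phi}\mathbb{C}{}_\varphi$ yields $\Tgt_0(\sigma A)\simeq\bigoplus_{\varphi\in\Phi}\mathbb{C}{}_{\sigma\varphi}$ with the $E$-action coming from $\sigma\circ i$, so $\sigma A$ has CM-type $(E,\sigma\Phi)$.

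For part (b), I first recall from (\ref{a30p})--(\ref{a34}) that a uniformization $\theta\colon\mathbb{C}{}^{\Phi}/\Phi(\mathfrak{a}{})\to A$ corresponds to an $E$-basis element $e_0$ of $H_1(A,\mathbb{Q}{})$ with $H_1(A,\mathbb{Z}{})=\mathfrak{a}{}e_0$, and that in this basis $\psi(xe_0,ye_0)=\Tr_{E/\mathbb{Q}{}}(tx\cdot\iota_E y)$; under this identification $V_fA$ becomes $\mathbb{A}{}_{f,E}$ via $x\leftrightarrow xe_0$. I then invoke Theorem \ref{b72} to produce a uniformization $\theta'$ of $\sigma A$ with type $(E,\sigma\Phi;f\mathfrak{a}{},t\chi_{\mathrm{cyc}}(\sigma)/(f\cdot\iota f))$; this $\theta'$ corresponds to an $E$-basis element $e_0'$ of $H_1(\sigma A,\mathbb{Q}{})$, and I define $\alpha\colon H_1(A,\mathbb{Q}{})\to H_1(\sigma A,\mathbb{Q}{})$ to be the unique $E$-linear isomorphism sending $e_0\mapsto e_0'$.

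The verifications are now mechanical. Condition (b)(i) collapses to the trace identity
\[
(\sigma\psi)(\alpha(xe_0),\alpha(ye_0))=\Tr_{E/\mathbb{Q}{}}\Bigl(\tfrac{t\chi_{\mathrm{cyc}}(\sigma)}{f\cdot\iota f}x\,\iota_E y\Bigr)=\psi\Bigl(\tfrac{\chi_{\mathrm{cyc}}(\sigma)}{f\cdot\iota f}xe_0,ye_0\Bigr),
\]
the first equality using the type description of $\sigma A$ supplied by \ref{b72} and the second the original type of $A$. For (b)(ii), the commutative square of Theorem \ref{b72}(2), extended $\hat{\mathbb{Z}{}}$-linearly to $\mathbb{A}{}_{f,E}$, reads $\sigma(xe_0)=(fx)e_0'$ for all $x\in\mathbb{A}{}_{f,E}$; since by construction $(\alpha\otimes 1)(fx\cdot e_0)=fx\cdot e_0'$, this is precisely the equality $\sigma=(\alpha\otimes 1)\circ f$ on $V_fA$.

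The genuine obstacle is not internal to this translation but lies in the proof of Theorem \ref{b72} (and behind it the construction of the map $f_\Phi$ alluded to in the statement). The one piece of bookkeeping worth checking explicitly is the ambiguity asserted in the footnote: replacing $\theta'$ by $\theta'\circ a^{-1}$ for $a\in E^{\times}$ sends $e_0'\mapsto ae_0'$ and, by (\ref{a34}), changes the third entry of the type from $f\mathfrak{a}{}$ to $af\mathfrak{a}{}$, i.e.\ replaces $f$ by $af$; thus $\alpha$ and $f$ are scaled by the same element of $E^{\times}$, as required. With this convention fixed, (a) and (b) both follow with no further calculation.
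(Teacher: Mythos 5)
Your argument is a clean and correct proof that Theorems~\ref{b72} and~\ref{b74} are equivalent, and in fact it reproduces almost verbatim the content of the paper's Lemma~\ref{b75}: identify $H_{1}(A,\mathbb{Q})$ with $E$ via a basis element $e_{0}$, let $\alpha=\theta_{1}'\circ\theta_{1}^{-1}$, and unwind both conditions. Your bookkeeping of the $E^{\times}$-ambiguity in the footnote is also right. The trouble is that this does not constitute a proof of Theorem~\ref{b74}. In the paper's structure, Theorem~\ref{b72} is \emph{not} a previously established result that one may ``invoke'': it is stated with the words ``We can now \emph{state} the fundamental theorem,'' and Theorem~\ref{b74} is introduced two lines later as a \emph{restatement} of it; Lemma~\ref{b75} then records their equivalence, and only afterward does the actual proof begin. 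Deducing~\ref{b74} from~\ref{b72} is therefore circular, and you half-acknowledge this yourself when you write that ``the real content already sits in~\ref{b72}.''

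What the paper actually does, in the subsections ``Proof of Theorem~\ref{b74} up to a sequence of signs'' and ``Completion of the proof,'' is entirely different from translation. It defines a map $g_{\Phi}\colon\Aut(\mathbb{C})\to\mathbb{A}_{f,E}^{\times}/E^{\times}$ directly from the abelian variety by the identity $(\alpha\otimes 1)\circ g=\sigma$ on $V_{f}(A)$, shows via Lemma~\ref{b83} and Proposition~\ref{b85} that $g_{\Phi}$ satisfies the same cocycle and conjugation properties as $f_{\Phi}$, observes that Theorem~\ref{b74} holds tautologically with $f_{\Phi}$ replaced by $g_{\Phi}$, and then reduces the theorem to the identity $e_{\Phi}\overset{\textrm{def}}{=}g_{\Phi}/f_{\Phi}=1$. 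Proposition~\ref{b87} shows $e_{\Phi}(\sigma)$ lands in $\mu_{2}(\mathbb{A}_{f,F})/\mu_{2}(F)$ and depends only on $\sigma|E^{\ast}$ (using, crucially, the already-proved fundamental theorem over the reflex field, Theorem~\ref{b58}, to dispose of the case $\sigma\Phi=\Phi$), and the final subsection kills the remaining sequence of signs using Deligne's theorem on absolutely Hodge cycles together with a norm-in-quadratic-extension trick. None of this appears in your proposal. To repair it you would need either to prove Theorem~\ref{b72} independently---which amounts to redoing the same work in the $\theta$-coordinates---or to abandon the reduction to~\ref{b72} and carry out the $g_{\Phi}$/$e_{\Phi}$ analysis directly.
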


\begin{lemma}
\label{b75}The statements (\ref{b72}) and (\ref{b74}) are equivalent.
\end{lemma}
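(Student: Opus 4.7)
The plan is to show that the two theorems are literal transcriptions of each other via the dictionary between uniformizations of CM abelian varieties and $E$-basis elements of $H_{1}(-,\mathbb{Q}{})$ described in (\ref{a30p}) and (\ref{a34}). Concretely, a uniformization $\theta\colon\mathbb{C}{}^{\Phi}/\Phi(\mathfrak{a}{})\rightarrow A$ is equivalent data to a choice of $E$-basis element $e_{0}\in H_{1}(A,\mathbb{Q}{})$ such that $H_{1}(A,\mathbb{Z}{})=\mathfrak{a}{}e_{0}$; under the induced identifications $V_{f}A\simeq\mathbb{A}{}_{f,E}$ and $TA\simeq\mathfrak{a}{}\otimes\hat{\mathbb{Z}}{}$, the torsion subgroup $A_{\mathrm{tors}}=V_{f}A/TA$ gets identified with $\mathbb{A}{}_{f,E}/(\mathfrak{a}{}\otimes\hat{\mathbb{Z}}{})\simeq E/\mathfrak{a}{}$, and the polarization $\psi$ is encoded by the unique $t\in E^{\times}$ with $\psi(xe_{0},ye_{0})=\Tr_{E/\mathbb{Q}{}}(tx\cdot\iota_{E}y)$.

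For the direction (\ref{b72})$\Rightarrow$(\ref{b74}), given the uniformization $\theta^{\prime}$ furnished by (\ref{b72}), let $e^{\prime}_{0}\in H_{1}(\sigma A,\mathbb{Q}{})$ be the corresponding basis element, so that $H_{1}(\sigma A,\mathbb{Z}{})=f\mathfrak{a}{}e^{\prime}_{0}$, and define $\alpha\colon H_{1}(A,\mathbb{Q}{})\rightarrow H_{1}(\sigma A,\mathbb{Q}{})$ to be the unique $E$-linear isomorphism with $\alpha(e_{0})=e^{\prime}_{0}$. The assertion that $\sigma A$ has type $(E,\sigma\Phi;f\mathfrak{a}{},t\chi_{\mathrm{cyc}}(\sigma)/(f\cdot\iota f))$ then unwinds to
\[
(\sigma\psi)(\alpha(xe_{0}),\alpha(ye_{0}))=\Tr_{E/\mathbb{Q}{}}\!\left(\tfrac{t\chi_{\mathrm{cyc}}(\sigma)}{f\cdot\iota f}\,x\,\iota_{E}y\right)=\psi\!\left(\tfrac{\chi_{\mathrm{cyc}}(\sigma)}{f\cdot\iota f}xe_{0},ye_{0}\right)\!,
\]
which is (\ref{b74})(a). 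The commutative diagram in (\ref{b72})(2) becomes, after applying $\theta^{-1}$ and $\theta^{\prime-1}$, the identity $\sigma(xe_{0})=(fx)e^{\prime}_{0}=(\alpha\otimes 1)(f\cdot xe_{0})$ in $V_{f}(\sigma A)$, which is (\ref{b74})(b).

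For the converse, (\ref{b74})$\Rightarrow$(\ref{b72}), define $\theta^{\prime}$ to be the uniformization attached to $e^{\prime}_{0}:=\alpha(e_{0})$. The diagram (\ref{b74})(b) shows that $(\alpha\otimes 1)^{-1}(T(\sigma A))=f\cdot TA$ inside $V_{f}A$, so under $e^{\prime}_{0}$ the integer lattice of $\sigma A$ is $f\mathfrak{a}{}e^{\prime}_{0}$; the polarization condition in (\ref{b74})(a), read on $e_{0},e^{\prime}_{0}$, gives $(\sigma\psi)(xe^{\prime}_{0},ye^{\prime}_{0})=\Tr_{E/\mathbb{Q}{}}(t\chi_{\mathrm{cyc}}(\sigma)/(f\cdot\iota f)\cdot x\iota_{E}y)$. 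These two facts together say exactly that $\sigma A$ has type $(E,\sigma\Phi;f\mathfrak{a}{},t\chi_{\mathrm{cyc}}(\sigma)/(f\cdot\iota f))$ relative to $\theta^{\prime}$, and the diagram of (\ref{b72})(2) is obtained by reducing the diagram of (\ref{b74})(b) modulo the integer lattices.

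No single step is hard; the work is essentially bookkeeping. The main place one must be careful is the identification $\mathbb{A}{}_{f,E}/(\mathfrak{a}{}\otimes\hat{\mathbb{Z}}{})\simeq E/\mathfrak{a}{}$ (strong approximation) and the verification that, under it, multiplication by $f\in\mathbb{A}{}_{f,E}^{\times}$ corresponds to the vertical map $f\colon E/\mathfrak{a}{}\rightarrow E/f\mathfrak{a}{}$ in (\ref{b72})(2); and secondly that the defining property $f\cdot\iota f\in\chi_{\mathrm{cyc}}(\sigma)\cdot E^{\times}$ of $f_{\Phi}(\sigma)$ ensures that $t\chi_{\mathrm{cyc}}(\sigma)/(f\cdot\iota f)$ really lies in $E^{\times}$, so that both sides of the dictionary take values in the same spaces.
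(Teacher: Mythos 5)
Your proposal is correct and follows essentially the same route as the paper's own proof: you replace the paper's maps $\theta_{1},\theta_{1}^{\prime}\colon E\to H_{1}(\cdot,\mathbb{Q})$ by the equivalent data of basis elements $e_{0},e_{0}^{\prime}$, set $\alpha$ to be the unique $E$-linear map with $\alpha(e_{0})=e_{0}^{\prime}$ (which is exactly the paper's $\alpha=\theta_{1}^{\prime}\circ\theta_{1}^{-1}$), and carry out the same two unwindings of the polarization formula and the $V_{f}$-diagram, then reverse the argument for the converse.
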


\begin{proof}
Let $\theta$ and $\theta^{\prime}$ be as in (\ref{b72}), and let $\theta
_{1}\colon E\xrightarrow {\approx}H_{1}(A,\mathbb{Q})$ and $\theta_{1}%
^{\prime}\colon E\xrightarrow {\approx}H_{1}(\sigma A,\mathbb{Q})$ be the
$E$-linear isomorphisms induced by $\theta$ and $\theta^{\prime}$. Let
$\chi=\chi_{\text{cyc}}(\sigma)/f\cdot\iota f$ --- it is an element of
$E^{\times}$. Then
\begin{align*}
\psi(\theta_{1}(x),\theta_{1}(y))  &  =\Tr_{E/\mathbb{Q}{}}(tx\cdot\iota y)\\
(\sigma\psi)(\theta_{1}^{\prime}(x),\theta_{1}^{\prime}(y))  &
=\Tr_{E/\mathbb{Q}{}}(t\chi x\cdot\iota y)
\end{align*}
and
\[
\begin{CD}
{\mathbb{A}}_{f,E}@>{\theta_1}>>V_f(A)\\
@VV{f}V@VV{\sigma}V\\
{\mathbb{A}}_{f,E}@>\theta_1'>>V_f(\sigma{A})
\end{CD}
\]
commutes. Let $\alpha=\theta_{1}^{\prime}\circ\theta_{1}^{-1}$; then
\[
(\sigma\psi)(\alpha x,\alpha y)=\Tr_{E/\mathbb{Q}{}}(t\chi\theta_{1}%
^{-1}(x)\cdot\iota\theta_{1}^{-1}(y))=\psi(\chi x,y)
\]
and (on $V_{f}(A)$),
\[
\sigma=\theta_{1}^{\prime}\circ f\circ\theta_{1}^{-1}=\theta_{1}^{\prime}%
\circ\theta_{1}^{-1}\circ f=\alpha\circ f.
\]
Conversely, let $\alpha$ be as in (\ref{b74}) and choose $\theta_{1}^{\prime}$
so that $\alpha=\theta_{1}^{\prime}\circ\theta_{1}^{-1}$. The argument can be
reversed to deduce (\ref{b72}).
\end{proof}

\subsection{Definition of $f_{\Phi}(\sigma)$}

Let $(E,\Phi)$ be a CM-pair with $E$ a field. In (\ref{b57d}) we saw that
$N_{\Phi}$ gives a well-defined homomorphism $\Aut(\mathbb{C}{}/E^{\ast
})\rightarrow\mathbb{A}{}_{f,E}^{\times}/E^{\times}$. In this subsection, we
extend this to a homomorphism on the whole of $\Aut(\mathbb{C}{})$.

Choose an embedding $E\hookrightarrow\mathbb{C},$ and extend it to an
embedding $i\colon E^{\ab}\hookrightarrow\mathbb{C}$. Choose elements
$w_{\rho}\in\Aut(\mathbb{C})$, one for each $\rho\in\Hom(E,\mathbb{C})$, such
that
\[
w_{\rho}\circ i|E=\rho,\quad w_{\iota\rho}=\iota w_{\rho}.
\]
For example, choose $w_{\rho}$ for $\rho\in\Phi$ (or any other CM-type) to
satisfy the first equation, and then define $w_{\rho}$ for the remaining
$\rho$ by the second equation. For any $\sigma\in\Aut(\mathbb{C})$,
$w_{\sigma\rho}^{-1}\sigma w_{\rho}\circ i|E=w_{\sigma\rho}^{-1}\circ
\sigma\rho|E=i$. Thus $i^{-1}\circ w_{\sigma\rho}^{-1}\sigma w_{\rho}\circ
i\in\Gal(E^{\ab}/E)$, and we can define $F_{\Phi}:\Aut(\mathbb{C}%
)\rightarrow\Gal(E^{\ab}/E)$ by
\[
F_{\Phi}(\sigma)=\prod_{\varphi\in\Phi}i^{-1}\circ w_{\sigma\varphi}%
^{-1}\sigma w_{\varphi}\circ i.
\]

\begin{lemma}
\label{b76}The element $F_{\Phi}$ is independent of the choice of $\{w_{\rho
}\}$.
\end{lemma}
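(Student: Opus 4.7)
The plan is to compare two allowable families $\{w_\rho\}$ and $\{w'_\rho\}$ via their pointwise ratio $h_\rho := w_\rho^{-1}w'_\rho \in \Aut(\mathbb{C})$. Since $w_\rho\circ i|_E = \rho = w'_\rho\circ i|_E$, the element $h_\rho$ fixes the subfield $i(E)$ pointwise; because $i(E^{\ab})$ is the maximal abelian extension of $i(E)$ in $\mathbb{C}$ and hence preserved by every automorphism of $\mathbb{C}$ fixing $i(E)$, restricting and conjugating by $i$ produces a well-defined element $H_\rho := i^{-1}\circ h_\rho\circ i \in \Gal(E^{\ab}/E)$. The second constraint $w_{\iota\rho} = \iota w_\rho$ (imposed on both families) immediately gives $h_{\iota\rho} = h_\rho$, and hence $H_{\iota\rho} = H_\rho$, so that the function $\rho\mapsto H_\rho$ factors through the quotient $\Hom(E,\mathbb{C})/\langle\iota\rangle$.

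Next I would substitute $w'_\varphi = w_\varphi h_\varphi$ and $w'_{\sigma\varphi} = w_{\sigma\varphi} h_{\sigma\varphi}$ into the primed version $F'_\Phi(\sigma)$. A direct calculation gives, for each $\varphi$,
\[
(w'_{\sigma\varphi})^{-1}\sigma w'_\varphi \;=\; h_{\sigma\varphi}^{-1}\cdot(w_{\sigma\varphi}^{-1}\sigma w_\varphi)\cdot h_\varphi,
\]
and since all factors fix $i(E)$ and hence restrict to elements of $\Gal(i(E^{\ab})/i(E))$, conjugating by $i$ yields the identity $H_{\sigma\varphi}^{-1}\cdot F_\varphi\cdot H_\varphi$ in $\Gal(E^{\ab}/E)$, where $F_\varphi = i^{-1}\circ w_{\sigma\varphi}^{-1}\sigma w_\varphi\circ i$ is the contribution to $F_\Phi(\sigma)$ from the old family. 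Multiplying over $\varphi\in\Phi$ in the abelian group $\Gal(E^{\ab}/E)$ gives
\[
F'_\Phi(\sigma) \;=\; F_\Phi(\sigma)\cdot\Bigl(\prod_{\varphi\in\Phi}H_\varphi\Bigr)\cdot\Bigl(\prod_{\varphi\in\Phi}H_{\sigma\varphi}\Bigr)^{-1}.
\]

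It remains to show the two products cancel, i.e.\ $\prod_\varphi H_\varphi = \prod_\varphi H_{\sigma\varphi}$. The crucial observation is that $\sigma\Phi$ is again a CM-type. This is where the CM-property of $E$ enters: any embedding $\rho\colon E\hookrightarrow\mathbb{C}$ satisfies $\iota\circ\rho = \rho\circ\iota_E$, because complex conjugation on any embedded copy of a totally imaginary CM-field must agree with the intrinsic involution $\iota_E$. Consequently $\iota\circ\sigma\varphi = \sigma\varphi\circ\iota_E = \sigma\circ\iota\varphi$ for every $\varphi$, so $\iota(\sigma\Phi) = \sigma(\iota\Phi)$ and
\[
\sigma\Phi\sqcup\iota\sigma\Phi \;=\; \sigma\Phi\sqcup\sigma\iota\Phi \;=\; \sigma\Hom(E,\mathbb{C}) \;=\; \Hom(E,\mathbb{C}).
\]
Since $H_\rho$ depends only on the class $[\rho]\in\Hom(E,\mathbb{C})/\langle\iota\rangle$, both $\Phi$ and $\sigma\Phi$ select exactly one representative from each equivalence class, and the two products both equal $\prod_{[\rho]}H_{[\rho]}$.

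The main obstacle is this last step: showing $\sigma\Phi$ is still a CM-type. Everything preceding it is formal bookkeeping in the abelian group $\Gal(E^{\ab}/E)$, but this one input genuinely exploits that $E$ is a CM-field, so that extrinsic complex conjugation on $\rho(E)\subset\mathbb{C}$ coincides with $\rho\iota_E\rho^{-1}$ for every $\rho$.
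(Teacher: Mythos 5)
Your argument is correct and matches the paper's proof: set $h_\rho = w_\rho^{-1}w'_\rho \in \Aut(\mathbb{C}/iE)$, note $h_{\iota\rho}=h_\rho$, and observe that $\sigma$ permutes the unordered pairs $\{\varphi,\iota\varphi\}$ so that the correction factor $\prod_{\varphi\in\Phi}h_{\sigma\varphi}^{-1}h_\varphi$ is trivial in the abelian group $\Gal(E^{\ab}/E)$. You simply spell out the pair-permutation step (via $\iota\circ\rho=\rho\circ\iota_E$) that the paper leaves implicit.
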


\begin{proof}
Any other choice is of the form $w_{\rho}^{\prime}=w_{\rho}h_{\rho}$,
$h_{\rho}\in\Aut(\mathbb{C}/iE)$. Thus $F_{\Phi}(\sigma)$ is changed by
$i^{-1}\circ(\prod_{\varphi\in\Phi}h_{\sigma\varphi}^{-1}h_{\varphi})\circ i$.
The conditions on $w$ and $w^{\prime}$ imply that $h_{\iota\rho}=h_{\rho}$,
and it follows that the inside product is $1$ because $\sigma$ permutes the
unordered pairs $\{\varphi,\iota\varphi\}$ and so $\prod_{\varphi\in\Phi
}h_{\varphi}=\prod_{\varphi\in\Phi}h_{\sigma\varphi}$.
\end{proof}

\begin{lemma}
\label{b77}The element $F_{\Phi}$ is independent of the choice of $i$ (and
$E\hookrightarrow\mathbb{C}$).
\end{lemma}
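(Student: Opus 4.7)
The plan is to reduce the lemma to the independence already established in Lemma \ref{b76}. Given two embeddings $i,i'\colon E^{\ab}\hookrightarrow\mathbb{C}$, possibly extending different embeddings of $E$ into $\mathbb{C}$, my first step would be to observe that $i'\circ i^{-1}\colon i(E^{\ab})\to i'(E^{\ab})$ is an isomorphism of subfields of $\mathbb{C}$, and since $\mathbb{C}$ is algebraically closed it extends to some $\sigma_0\in\Aut(\mathbb{C})$ with $i'=\sigma_0\circ i$. The lemma then reduces to the assertion that replacing $i$ by $\sigma_0\circ i$ leaves $F_\Phi(\sigma)$ unchanged.

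For the second step, I would exploit the freedom granted by Lemma \ref{b76} to pick a convenient admissible family $\{w'_\rho\}$ adapted to $i'$. The natural candidate is $w'_\rho := w_\rho\circ\sigma_0^{-1}$. A quick check shows this is admissible: $w'_\rho\circ i'|E = w_\rho\circ\sigma_0^{-1}\circ\sigma_0\circ i|E = \rho$, and $w'_{\iota\rho} = w_{\iota\rho}\sigma_0^{-1} = \iota w_\rho\sigma_0^{-1} = \iota w'_\rho$, using the hypotheses on $\{w_\rho\}$.

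The third step is the direct substitution. Each factor in the defining product transforms as
\[
(i')^{-1}(w'_{\sigma\varphi})^{-1}\sigma w'_\varphi\, i' \;=\; i^{-1}\sigma_0^{-1}\cdot\sigma_0 w_{\sigma\varphi}^{-1}\sigma w_\varphi\sigma_0^{-1}\cdot\sigma_0 i \;=\; i^{-1}w_{\sigma\varphi}^{-1}\sigma w_\varphi\, i,
\]
so taking the product over $\varphi\in\Phi$ recovers $F_\Phi(\sigma)$ as computed from $i$ and $\{w_\rho\}$. I expect no substantive obstacle here: the change of embedding $i\mapsto\sigma_0\circ i$ is exactly absorbed by the compensating twist $w_\rho\mapsto w_\rho\circ\sigma_0^{-1}$, and everything collapses onto Lemma \ref{b76}. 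The only mildly delicate point is verifying that a single $\sigma_0\in\Aut(\mathbb{C})$ as above exists even when $i$ and $i'$ restrict to genuinely different embeddings $E\hookrightarrow\mathbb{C}$, but this is immediate from the algebraic closedness of $\mathbb{C}$ applied to the partial isomorphism $i'\circ i^{-1}$.
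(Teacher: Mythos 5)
Your proof is correct and is essentially identical to the paper's: both write $i' = \sigma_0\circ i$ for some $\sigma_0\in\Aut(\mathbb{C})$, take the twisted family $w'_\rho = w_\rho\circ\sigma_0^{-1}$, and observe that the $\sigma_0$'s cancel in each factor of the defining product. The only difference is that you spell out the admissibility check for $\{w'_\rho\}$ and the existence of $\sigma_0$, which the paper leaves implicit.
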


\begin{proof}
Any other choice is of the form $i^{\prime}=\sigma\circ i$, $\sigma
\in\Aut(\mathbb{C})$. Take $w_{\rho}^{\prime}=w_{\rho}\circ\sigma^{-1}$, and
then
\[
F_{\Phi}^{\prime}(\tau)=\prod i^{\prime-1}\circ(\sigma w_{\tau\varphi}%
^{-1}\tau w_{\varphi}\sigma^{-1})\circ i^{\prime}=F_{\Phi}(\tau).
\]

\end{proof}

Thus we can suppose $E\subset\mathbb{C}$ and ignore $i$; then
\[
F_{\Phi}(\sigma)=\prod_{\varphi\in\Phi}w_{\sigma\varphi}^{-1}\sigma
w_{\varphi}\mod\Aut(\mathbb{C}/E^{\mathrm{ab}})
\]
where the $w_{\rho}$ are elements of $\Aut(\mathbb{C})$ such that
\[
w_{\rho}|E=\rho,\quad w_{\iota\rho}=\iota w_{\rho}.
\]

\begin{proposition}
\label{b78}For any $\sigma\in\Aut(\mathbb{C})$, there is a unique $f_{\Phi
}(\sigma)\in\mathbb{A}_{f,E}^{\times}/E^{\times}$ such that

\begin{enumerate}
\item \ \textrm{art}$_{E}(f_{\Phi}(\sigma))=F_{\Phi}(\sigma)$;

\item \ $f_{\Phi}(\sigma)\cdot\iota f_{\Phi}(\sigma)=\chi(\sigma)E^{\times}$,
$\chi=\chi_{\text{\textrm{cyc}}}$.
\end{enumerate}
\end{proposition}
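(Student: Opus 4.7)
Uniqueness is immediate: if $f, f' \in \mathbb{A}_{f,E}^{\times}/E^{\times}$ both satisfy (a) and (b), then $u := f(f')^{-1}$ lies in the kernel $K$ of $\mathrm{art}_{E}$ and satisfies $u \cdot \iota u = 1$; by Lemma \ref{b79}, $\iota$ acts trivially on $K$ and $K$ is uniquely divisible, so $u^{2} = 1$ forces $u = 1$. For existence, the plan is to lift $F_{\Phi}(\sigma)$ to any $f_{0} \in \mathbb{A}_{f,E}^{\times}$ (using surjectivity of $\mathrm{art}_{E}$), set $k := \chi(\sigma) \cdot (f_{0} \iota f_{0})^{-1}$, and adjust $f_{0}$ by a square root of $k$ inside $K$. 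Granting $k \in K$ (the main point, proved below), Lemma \ref{b79} produces a unique $a \in K$ with $a^{2} = k$; since $\iota a = a$ in $K$, we have $a \iota a = k$, and $f := f_{0} a$ then satisfies $\mathrm{art}_{E}(f) = F_{\Phi}(\sigma)$ and $f \iota f = (f_{0} \iota f_{0})(a \iota a) = \chi(\sigma)$ modulo $E^{\times}$.

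The main task is thus to verify $k \in K$. By the functoriality $\mathrm{art}_{E}(\iota x) = \iota \, \mathrm{art}_{E}(x) \, \iota^{-1}$ of the Artin map with respect to the automorphism $\iota_{E} \in \Aut(E/\mathbb{Q})$, and by equation (\ref{e80}) which identifies $\mathrm{art}_{E}(\chi(\sigma))$ with the Verlagerung $\mathrm{Ver}_{E/\mathbb{Q}}(\sigma|_{\mathbb{Q}^{\mathrm{ab}}})$, the claim reduces to
$$F_{\Phi}(\sigma) \cdot \iota F_{\Phi}(\sigma) \iota^{-1} = \mathrm{Ver}_{E/\mathbb{Q}}(\sigma|_{\mathbb{Q}^{\mathrm{ab}}}) \quad \text{in } \Gal(E^{\mathrm{ab}}/E).$$
Artin's transfer formula, applied to $\Gal(\mathbb{Q}^{\mathrm{al}}/E) \subset \Gal(\mathbb{Q}^{\mathrm{al}}/\mathbb{Q})$ with coset representatives $\{w_{\rho}\}_{\rho \in \Hom(E,\mathbb{C})}$, computes the Verlagerung as $\prod_{\rho} w_{\sigma\rho}^{-1} \sigma w_{\rho}$ in $\Gal(E^{\mathrm{ab}}/E)$, which under the partition $\Hom(E,\mathbb{C}) = \Phi \sqcup \iota\Phi$ factors as $F_{\Phi}(\sigma) \cdot F_{\iota\Phi}(\sigma)$. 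It therefore suffices to prove $F_{\iota\Phi}(\sigma) = \iota F_{\Phi}(\sigma) \iota^{-1}$.

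Substituting $\psi = \iota\varphi$ in the definition of $F_{\iota\Phi}$ and using $w_{\iota\varphi} = \iota w_{\varphi}$ gives $F_{\iota\Phi}(\sigma) = \prod_{\varphi \in \Phi} w_{\iota\sigma\iota\varphi}^{-1}(\iota\sigma\iota) w_{\varphi}$; on the other hand, the alternative family $w'_{\rho} := \iota w_{\rho} \iota^{-1}$ satisfies the defining axioms (using $\iota\rho = \rho\iota_{E}$ for every embedding $\rho$), so computing $F_{\Phi}(\sigma)$ with $w'$ and invoking Lemma \ref{b76} yields $\iota F_{\Phi}(\sigma) \iota^{-1} = \prod_{\varphi \in \Phi} w_{\sigma\varphi}^{-1}(\iota\sigma\iota) w_{\varphi}$. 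These products differ only in the subscript of $w^{-1}$, and the ratio $\prod_{\varphi \in \Phi} w_{\sigma\varphi}^{-1} w_{\iota\sigma\iota\varphi}$ collapses to $1$ in the abelian quotient $\Gal(E^{\mathrm{ab}}/E)$ because $\iota\sigma\iota(\Phi) = \sigma(\Phi)$ as subsets of $\Hom(E,\mathbb{C})$. This last combinatorial identity is the main obstacle; it encodes the basic CM-type symmetry (using $\iota\Phi = \Hom(E,\mathbb{C}) \setminus \Phi$ together with $\sigma\Phi \sqcup \iota\sigma\Phi = \Hom(E,\mathbb{C})$), and once it is in hand, the rest of the argument is formal manipulation with Lemma \ref{b79} and standard class field theory.
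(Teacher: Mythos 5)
Your proof takes exactly the same route as the paper: lift a solution of condition (a), use the transfer relation $F_\Phi(\sigma)\cdot\iota F_\Phi(\sigma)\iota^{-1}=\mathrm{Ver}_{E/\mathbb{Q}}(\sigma)$ together with $\mathrm{Ver}_{E/\mathbb{Q}}=\mathrm{art}_E\circ\chi$ to see that the defect lies in $\Ker(\mathrm{art}_E)$, and correct by a square root supplied by Lemma \ref{b79}; your verification that $\iota F_\Phi(\sigma)\iota^{-1}=F_{\iota\Phi}(\sigma)$ via the alternative family $w'_\rho=\iota w_\rho\iota^{-1}$ and Lemma \ref{b76} is a welcome expansion of a step the paper passes over silently. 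One small inaccuracy in the closing paragraph: the needed fact is not merely the set equality $\iota\sigma\iota(\Phi)=\sigma(\Phi)$ but the termwise identity $\iota\sigma\iota\circ\varphi=\sigma\circ\varphi$ for every $\varphi\in\Hom(E,\mathbb{C})$ (which holds because complex conjugation restricts to the canonical involution on every CM-subfield of $\mathbb{C}$, so $\iota\sigma\iota$ and $\sigma$ agree on $\varphi(E)$) --- without the pointwise version the individual factors $w_{\sigma\varphi}^{-1}w_{\iota\sigma\iota\varphi}$ would not even lie in $\Aut(\mathbb{C}/E)$, so the proposed ``ratio'' would not be a well-defined element of the abelian quotient $\Gal(E^{\mathrm{ab}}/E)$ and the cancellation argument could not be run. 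With the pointwise identity the two products are literally equal and the final step is immediate.
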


\begin{proof}
Since \textrm{art}$_{E}$ is surjective, there is an $f\in\mathbb{A}%
_{f,E}^{\times}/E^{\times}$ such that \textrm{art}$_{E}(f)=F_{\Phi}(\sigma)$.
We have
\begin{align*}
\mathrm{art}_{E}(f\cdot\iota f)  &  =\mathrm{art}_{E}(f)\cdot\mathrm{art}%
_{E}(\iota f)\\
&  =\mathrm{art}_{E}(f)\cdot\iota\mathrm{art}_{E}(f)\iota^{-1}\\
&  =F_{\Phi}(\sigma)\cdot F_{\iota\Phi}(\sigma)\\
&  =\mathrm{Ver}_{E/\mathbb{Q}}(\sigma),
\end{align*}
where $\mathrm{Ver}_{E/\mathbb{Q}}\colon\Gal(\mathbb{Q}^{\al}/\mathbb{Q}%
)^{\text{\textrm{ab}}}\rightarrow\Gal(\mathbb{Q}^{\al}/E)^{\text{\textrm{ab}}%
}$ is the transfer (Verlagerung) map. As $\mathrm{Ver}_{E/\mathbb{Q}}%
=$\textrm{art}$_{E}\circ\chi$, it follows that $f\cdot\iota f=\chi
(\sigma)E^{\times}$ modulo $\Ker($\textrm{art}$_{E})$. Lemma \ref{b79} shows
that $1+\iota$ acts bijectively on $\Ker($\textrm{art}$_{E})$, and so there is
a unique $a\in\Ker($\textrm{art}$_{E})$ such that $a\cdot\iota a=(f\cdot\iota
f/\chi(\sigma))E^{\times}$; we must take $f_{\Phi}(\sigma)=f/a$.
\end{proof}

\begin{remark}
\label{b80}The above definition of $f_{\Phi}(\sigma)$ is due to Tate. The
original definition, due to Langlands, was more direct but used the Weil group
(\cite{langlands1979c}, \S 5).
\end{remark}

\begin{proposition}
\label{b81}The maps $f_{\Phi}\colon\Aut(\mathbb{C})\rightarrow\mathbb{A}%
_{f,E}^{\times}/E^{\times}$ have the following properties:

\begin{enumerate}
\item \ $f_{\Phi}(\sigma\tau)=f_{\tau\Phi}(\sigma)\cdot f_{\Phi}(\tau)$;

\item \ $f_{\Phi(\tau^{-1}|E)}(\sigma)=\tau f_{\Phi}(\sigma)$ if $\tau E=E$;

\item \ $f_{\Phi}(\iota)=1$.
\end{enumerate}
\end{proposition}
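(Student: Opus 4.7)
The plan is to verify, for each of the three identities, the two conditions characterizing $f_{\Phi}(\sigma)$ in Proposition~\ref{b78} --- namely $\mathrm{art}_{E}(f_{\Phi}(\sigma)) = F_{\Phi}(\sigma)$ and $f_{\Phi}(\sigma)\cdot\iota f_{\Phi}(\sigma) \equiv \chi(\sigma)\pmod{E^{\times}}$ --- with $f_{\Phi}(\sigma)$ replaced by the right-hand side. The uniqueness asserted in Proposition~\ref{b78} then yields each equality.

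The key preliminary is the cocycle identity $F_{\Phi}(\sigma\tau) = F_{\tau\Phi}(\sigma)\cdot F_{\Phi}(\tau)$, which follows from the pointwise identity
\[
(w_{\sigma\tau\varphi}^{-1}\,\sigma\,w_{\tau\varphi})(w_{\tau\varphi}^{-1}\,\tau\,w_{\varphi}) \;=\; w_{\sigma\tau\varphi}^{-1}\,\sigma\tau\,w_{\varphi}
\]
in $\Aut(\mathbb{C})$, by multiplying over $\varphi\in\Phi$ in the abelian group $\Gal(E^{\mathrm{ab}}/E)$ and reindexing the product of first factors via $\psi = \tau\varphi \in \tau\Phi$. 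Applying $\mathrm{art}_{E}$ to $f_{\tau\Phi}(\sigma)\cdot f_{\Phi}(\tau)$ then matches $F_{\Phi}(\sigma\tau)$, giving (a)(i); condition (a)(ii) is immediate from $\chi(\sigma\tau) = \chi(\sigma)\chi(\tau)$ and the commutativity of $\mathbb{A}_{f,E}^{\times}$.

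For (b), assume $\tau E = E$, put $\Phi' = \Phi\cdot(\tau^{-1}|E)$, and for each $\psi \in \Phi$ set $\varphi' = \psi\circ(\tau^{-1}|E)\in\Phi'$. The lifts $w_{\varphi'} := w_{\psi}\tau^{-1}$ satisfy both $w_{\varphi'}|E = \varphi'$ and the compatibility $w_{\iota\varphi'} = \iota w_{\varphi'}$. Since also $w_{\sigma\varphi'} = w_{\sigma\psi}\tau^{-1}$, one computes $w_{\sigma\varphi'}^{-1}\sigma w_{\varphi'} = \tau(w_{\sigma\psi}^{-1}\sigma w_{\psi})\tau^{-1}$, and telescoping already in $\Aut(\mathbb{C})$ gives $F_{\Phi'}(\sigma) = \tau F_{\Phi}(\sigma)\tau^{-1}$ in $\Gal(E^{\mathrm{ab}}/E)$. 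By the naturality of the Artin map under the automorphism $\tau|E \in \Aut(E)$ (with $\tau$ itself as a lift to $\mathbb{C}$), this equals $\mathrm{art}_{E}(\tau f_{\Phi}(\sigma))$, which is (b)(i). For (b)(ii), $\iota_{E}$ is central in $\Aut(E)$ (by its intrinsic characterization as the unique automorphism of $E$ that pulls back to complex conjugation under every embedding), so $\tau$ and $\iota_{E}$ commute on $\mathbb{A}_{f,E}^{\times}$; combined with $\tau\chi(\sigma) = \chi(\sigma)$ (since $\chi(\sigma)\in\hat{\mathbb{Z}}^{\times}$), this yields $(\tau f_{\Phi}(\sigma))(\iota\tau f_{\Phi}(\sigma)) = \tau(f_{\Phi}(\sigma)\cdot\iota f_{\Phi}(\sigma)) \equiv \chi(\sigma)\pmod{E^{\times}}$.

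For (c), the compatibility $w_{\iota\varphi} = \iota w_{\varphi}$ makes every factor of $F_{\Phi}(\iota)$ equal to $1$, so $F_{\Phi}(\iota) = 1 = \mathrm{art}_{E}(1)$; and $1\cdot\iota 1 = 1 \equiv -1 = \chi(\iota)\pmod{E^{\times}}$ because $-1\in E^{\times}$. Uniqueness in Proposition~\ref{b78} then forces $f_{\Phi}(\iota) = 1$. The only nontrivial step in the whole argument is the verification for (b) that Artin reciprocity intertwines the action of $\tau|E$ on id\`eles with its conjugation action on $\Gal(E^{\mathrm{ab}}/E)$, which is the standard naturality property of class field theory.
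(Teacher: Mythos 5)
Your proof is correct and follows the same approach as the paper: verify that the right-hand side of each identity satisfies the two conditions of Proposition~\ref{b78}, then invoke the uniqueness there. The paper carries this out explicitly only for (a), saying that (b) and (c) "can be proved similarly"; your treatment of (b) --- transporting the $w_{\rho}$ lifts by $\tau^{-1}$ and using the naturality of $\mathrm{art}_{E}$ under the automorphism $\tau|E$ --- and of (c) are exactly the intended completions.
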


\begin{proof}
Let $f=f_{\tau\Phi}(\sigma)\cdot f_{\Phi}(\tau)$. Then
\[
\mathrm{art}_{E}(f)=F_{\tau\Phi}(\sigma)\cdot F_{\Phi}(\tau)=\prod_{\varphi
\in\Phi}w_{\sigma\tau\varphi}^{-1}\cdot\sigma w_{\tau\varphi}\cdot
w_{\tau\varphi}^{-1}\cdot\tau w_{\varphi}=F_{\Phi}(\sigma\tau)
\]
and $f\cdot\iota f=\chi(\sigma)\chi(\tau)E^{\times}=\chi(\sigma\tau)E^{\times
}$. Thus $f$ satisfies the conditions that determine $f_{\Phi}(\sigma\tau)$.
This proves (a), and (b) and (c) can be proved similarly.
\end{proof}

Let $E^{\ast}$ be the reflex field for $(E,\Phi)$, so that $\Aut(\mathbb{C}%
/E^{\ast})=\{\sigma\in\Aut(\mathbb{C})\mid\sigma\Phi=\Phi\}$. Then
$\Phi\Aut(\mathbb{C}/E)\overset{\text{{\tiny def}}}{=}\bigcup
\nolimits_{\varphi\in\Phi}\varphi\cdot\Aut(\mathbb{C}/E)$ is stable under the
left action of $\Aut(\mathbb{C}/E^{\ast})$, and we write
\[
\Aut(\mathbb{C}/E)\Phi^{-1}=\bigcup\psi\cdot\Aut(\mathbb{C}/E^{\ast}%
)\qquad(\text{\textrm{disjoint union}}).
\]
The set $\Psi=\{\psi|E^{\ast}\}$ is a CM-type for $E^{\ast}$, and $(E^{\ast
},\Psi)$ is the reflex of $(E,\Phi)$. The map $a\mapsto\prod_{\psi\in\Psi}%
\psi(a)\colon E^{\ast}\rightarrow\mathbb{C}$ factors through $E$ and defines a
morphism of algebraic tori $N_{\Phi}\colon T^{E^{\ast}}\rightarrow T^{E}$. The
fundamental theorem of complex multiplication over the reflex field states the
following: let $\sigma\in\Aut(\mathbb{C}/E^{\ast})$, and let $a\in
\mathbb{A}_{f,E^{\ast}}^{\times}/E^{\ast\times}$ be such that $\mathrm{art}%
_{E^{\ast}}(a)=\sigma$; then (\ref{b72}) is true after $f$ has been replaced
by $N_{\Phi}(a)$ (see Theorem \ref{b58}; also \cite{shimura1971}, Theorem
5.15; the sign differences result from different conventions for the
reciprocity law and the actions of Galois groups). The next result shows that
this is in agreement with (\ref{b72}).

\begin{proposition}
\label{b82}For any $\sigma\in\Aut(\mathbb{C}/E^{\ast})$ and $a\in
\mathbb{A}_{f,E^{\ast}}^{\times}/E^{\ast\times}$ such that $\mathrm{art}%
_{E^{\ast}}(a)=\sigma|E^{\ast\mathrm{ab}}$, $N_{\Phi}(a)\in f_{\Phi}(\sigma)$.
\end{proposition}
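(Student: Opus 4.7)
The plan is to show that $N_{\Phi}(a)$ satisfies the two defining properties of $f_{\Phi}(\sigma)$ from Proposition \ref{b78}, whence the uniqueness asserted there forces $N_{\Phi}(a) \in f_{\Phi}(\sigma)$. The positivity-type condition, $N_{\Phi}(a) \cdot \iota N_{\Phi}(a) \equiv \chi_{\mathrm{cyc}}(\sigma) \pmod{E^{\times}}$, is immediate from Lemma \ref{b57e} because $\mathbb{Q}_{>0} \subset E^{\times}$. The substance of the argument therefore lies in verifying the Artin reciprocity condition $\mathrm{art}_{E}(N_{\Phi}(a)) = F_{\Phi}(\sigma)$ inside $\Gal(E^{\ab}/E)$.

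To handle the Artin condition, I would reduce to comparing two homomorphisms $h, \bar{F}_{\Phi}\colon \Gal(E^{\ast\ab}/E^{\ast}) \to \Gal(E^{\ab}/E)$ and showing they coincide. Lemma \ref{b57c} provides a well-defined $h$ with $h(\sigma|E^{\ast\ab}) = \mathrm{art}_{E}(N_{\Phi}(a))$, where the hypothesis $\mathrm{art}_{E^{\ast}}(a) = \sigma|E^{\ast\ab}$ is used. For the other side, a direct reindexing argument (using $\tau\Phi = \Phi$ for $\tau \in \Aut(\mathbb{C}/E^{\ast})$, and running as in the proof of Proposition \ref{b81}(a)) shows that $F_{\Phi}$ restricted to $\Aut(\mathbb{C}/E^{\ast}) = \Gal(\mathbb{Q}^{\al}/E^{\ast})$ is a homomorphism into the abelian group $\Gal(E^{\ab}/E)$, and so factors through the abelianization $\Gal(E^{\ast\ab}/E^{\ast})$, giving $\bar{F}_{\Phi}$. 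Thus it remains to prove $h = \bar{F}_{\Phi}$.

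Both $h$ and $\bar{F}_{\Phi}$ are continuous homomorphisms between profinite abelian groups, so by Chebotarev density it suffices to verify the equality at Frobenius elements attached to primes $\mathfrak{p}^{\ast}$ of $E^{\ast}$ outside some finite exceptional set, namely primes where one of several auxiliary hypotheses fails: we require the residue characteristic $p$ to be unramified in $E$, and we fix a large Galois extension $k/E^{\ast}$ containing all conjugates of $E$ and over which an ambient abelian variety of type $(E,\Phi)$ has good reduction at a prime $\mathfrak{P}$ over $\mathfrak{p}^{\ast}$. For such a Frobenius $\sigma$, Application \ref{b45} identifies $\mathrm{art}_{E}(N_{\Phi}(\mathfrak{p}^{\ast}))$ with the class of the Frobenius endomorphism $\pi$ of Theorem \ref{b42}, whose explicit factorization $(\pi) = \prod_{\varphi \in \Phi} \varphi^{-1}(\Nm_{k/\varphi E}\mathfrak{P})$ is available. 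The expected main obstacle is the matching Frobenius-level computation of $F_{\Phi}(\sigma) = \prod_{\varphi \in \Phi} w_{\sigma\varphi}^{-1}\sigma w_{\varphi}$: one must choose the representatives $w_{\varphi} \in \Aut(\mathbb{C})$ compatibly with a fixed prime of $\mathbb{Q}^{\al}$ above $\mathfrak{p}^{\ast}$, so that each factor corresponds under class field theory to the ideal $\varphi^{-1}(\Nm_{k/\varphi E}\mathfrak{P})$. Once this bookkeeping is in place, the two products match term by term and Chebotarev density propagates the identity $h = \bar{F}_{\Phi}$ from the Frobenius elements to all of $\Gal(E^{\ast\ab}/E^{\ast})$.
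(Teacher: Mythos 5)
Your outer skeleton is correct — verify the two conditions of Proposition \ref{b78}, observe that the second one, $N_{\Phi}(a)\cdot\iota_{E}N_{\Phi}(a)\equiv\chi_{\mathrm{cyc}}(\sigma)\pmod{E^{\times}}$, is immediate from Lemma \ref{b57e}, and concentrate on $\mathrm{art}_{E}(N_{\Phi}(a))=F_{\Phi}(\sigma)$. But the middle of your argument contains both a wrong turn and an unfilled gap. The wrong turn: Application \ref{b45} and Theorem \ref{b42} tell you that the ideal $N_{\Phi}(\mathfrak{p})$ occurs as the ideal of an $\mathfrak{a}$-multiplication, and that $(\pi)=\prod_{\varphi\in\Phi}\varphi^{-1}(\Nm_{k/\varphi E}\mathfrak{P})$; this is a statement about \emph{ideals of} $E$ and about endomorphisms of a reduced abelian variety. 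It gives you no handle on the element $\mathrm{art}_{E}(N_{\Phi}(\mathfrak{p}))\in\Gal(E^{\mathrm{ab}}/E)$ — indeed $\pi\in E^{\times}$, so the principal ideal $(\pi)$ dies in the idele class group, and the geometric input contributes nothing to the Artin-map computation you need. (If you instead reached for Theorem \ref{b58} you would be importing $g_{\Phi}$, which is circular, since showing $f_{\Phi}=g_{\Phi}$ is the goal of the entire section.) The unfilled gap: what you label ``the expected main obstacle'' — choosing the $w_{\varphi}$ so that each factor of $F_{\Phi}(\sigma)$ corresponds under reciprocity to $\varphi^{-1}(\Nm_{k/\varphi E}\mathfrak{P})$ — is precisely where all the content of the proposition lives, and you leave it as a sketch. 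If you carried it out you would in fact be re-deriving the paper's class-field-theory identity, just at Frobenius elements, and then you would not need Chebotarev or the abelian varieties at all.

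The paper's proof is a direct class-field-theory computation valid for every $\sigma\in\Aut(\mathbb{C}/E^{\ast})$ simultaneously, with no density argument and no geometry. It partitions $\Phi$ into $\Aut(\mathbb{C}/E^{\ast})$-orbits $\Phi_{j}$, writes $N_{\Phi}(a)=\prod_{j}b_{j}$ with $b_{j}=\Nm_{L_{j}/E}(\sigma_{j}^{-1}a)$ for intermediate fields $L_{j}=(\sigma_{j}^{-1}E^{\ast})E$, and then proves $\mathrm{art}_{E}(b_{j})=F_{j}(\sigma)\overset{\text{def}}{=}\prod_{\varphi\in\Phi_{j}}w_{\sigma\varphi}^{-1}\sigma w_{\varphi}\pmod{\Aut(\mathbb{C}/E^{\mathrm{ab}})}$ using a commutative diagram built from the standard compatibilities of the Artin map with norms, conjugation, and the Verlagerung/transfer map: the key combinatorial point is that $\{w_{\varphi}\sigma_{j}^{-1}\mid\varphi\in\Phi_{j}\}$ is a set of coset representatives for $\sigma_{j}\Aut(\mathbb{C}/L_{j})\sigma_{j}^{-1}$ in $\Aut(\mathbb{C}/E^{\ast})$, which makes $F_{j}(\sigma)$ exactly the conjugated transfer of $\sigma$. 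Multiplying over $j$ gives $\mathrm{art}_{E}(N_{\Phi}(a))=F_{\Phi}(\sigma)$ outright. You should replace the Chebotarev-plus-geometry detour with this direct computation; it is both the intended argument and shorter than what you sketched.
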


\begin{proof}
Partition $\Phi$ into orbits, $\Phi=\cup_{j}\Phi_{j}$, for the left action of
$\Aut(\mathbb{C}/E^{\ast})$. Then $\Aut(\mathbb{C}/E)\Phi^{-1}=\bigcup
_{j}\Aut(\mathbb{C}/E)\Phi_{j}^{-1}$, and
\[
\Aut(\mathbb{C}/E)\Phi_{j}^{-1}=\Aut(\mathbb{C}/E)(\sigma_{j}^{-1}%
\Aut(\mathbb{C}/E^{\ast}))=(\Hom_{E}(L_{j},\mathbb{C})\circ\sigma_{j}%
^{-1})\Aut(\mathbb{C}/E^{\ast})
\]
where $\sigma_{j}$ is any element of $\Aut(\mathbb{C})$ such that $\sigma
_{j}|E\in\Phi_{j}$ and $L_{j}=(\sigma_{j}^{-1}E^{\ast})E$. Thus $N_{\Phi
}(a)=\prod b_{j}$, with $b_{j}=\Nm_{L_{j}/E}(\sigma_{j}^{-1}(a))$. Let
\[
F_{j}(\sigma)=\prod_{\varphi\in\Phi_{j}}w_{\sigma\varphi}^{-1}\sigma
w_{\varphi}\quad(\text{mod}\Aut(\mathbb{C}/E^{\text{\textrm{ab}}})).
\]
We begin by showing that $F_{j}(\sigma)=\mathrm{art}_{E}(b_{j})$. The basic
properties of Artin's reciprocity law show that%
\[
\begin{CD}
{\mathbb{A}}_{f,E}^{\times}@>{\textrm{injective}}>>
{\mathbb{A}}_{f,\sigma L_j}^{\times}@>{\sigma_j^{-1}}>>
{\mathbb{A}}_{f,L_j}^{\times}@>{\Nm_{L_j/K}}>>
{\mathbb{A}}_{f,K}^{\times}\\
@VV{\mathrm{art}_E}V@VV{\mathrm{art}_{\sigma L_j}}V@VV{\mathrm{art}_{L_j}}V@V{\mathrm{art}_K}VV\\
\Gal(E^{\text{ab}}/E)@>{V_{\sigma_jL_j/E}}>>
\sigma_j\Gal(L_j^{\text{ab}}/L_j)\sigma_j^{-1}@>{\ad\sigma_j^{-1}}>>
\Gal(L_j^{\text{ab}}/L_j)@>{\text{restriction}}>>\Gal(K^{\text{ab}}/K)
\end{CD}
\]
commutes. Therefore \textrm{art}$_{E}(b_{j})$ is the image of $\mathrm{art}%
_{E^{\ast}}(a)$ by the three maps in the bottom row of the diagram. Consider
$\{t_{\varphi}\mid t_{\varphi}=w_{\varphi}\sigma_{j}^{-1},\quad\varphi\in
\Phi_{j}\}$; this is a set of coset representatives for $\sigma_{j}%
\Aut(\mathbb{C}/L_{j})\sigma_{j}^{-1}$ in $\Aut(\mathbb{C}/E^{\ast})$, and so
$F_{j}(\sigma)=\prod_{\varphi\in\Phi_{j}}\sigma_{j}^{-1}t_{\sigma\varphi}%
^{-1}\sigma t_{\varphi}\sigma_{j}=\sigma_{j}^{-1}V(\sigma)\sigma
_{j}\mod\Aut(\mathbb{C}/E^{\text{\textrm{ab}}})$.

Thus \textrm{art}$_{E}(N_{\Phi}(a))=\prod$\textrm{art}$_{E}(b_{j})=\prod
F_{j}(\sigma)=F_{\Phi}(\sigma)$. As $N_{\Phi}(a)\cdot\iota N_{\Phi}(a)\in
\chi_{\mathrm{cyc}}(\sigma)E^{\times}$ (see \ref{b57e}), this shows that
$N_{\Phi}(a)\in f_{\Phi}(\sigma)$.
\end{proof}

\subsection{Proof of Theorem \ref{b74} up to a sequence of signs}

The variety $\sigma A$ has type $(E,\sigma\Phi)$ because $\sigma\Phi$
describes the action of $E$ on the tangent space to $\sigma A$ at zero. Choose
any $E$-linear isomorphism $\alpha\colon H_{1}(A,\mathbb{Q})\rightarrow
H_{1}(\sigma A,\mathbb{Q})$. Then
\[
V_{f}(A)\overset{\sigma}{\rightarrow}V_{f}(\sigma A)\overset{(\alpha
\otimes1)^{-1}}{\rightarrow}V_{f}(A)
\]
is an $\mathbb{A}_{f,E}$-linear isomorphism, and hence is multiplication by
some $g\in\mathbb{A}_{f,E}^{\times}$; thus
\[
(\alpha\otimes1)\circ g=\sigma.
\]

\begin{lemma}
\label{b83}For this $g$, we have
\[
(\alpha\psi)(\frac{\chi(\sigma)}{g\cdot\iota g}x,y)=(\sigma\psi)(x,y),\quad
\text{\textrm{all }}x,y\in V_{f}(\sigma A).
\]

\end{lemma}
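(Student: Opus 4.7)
The plan is to combine three inputs: the characterization of $\sigma\psi$ through its values on torsion, the defining relation $\sigma=(\alpha\otimes 1)\circ g$ on $V_f(A)$, and the $E$-compatibility $\psi(ax,y)=\psi(x,\iota_E a\cdot y)$ extended by continuity to $a\in\mathbb{A}_{f,E}$.

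First I would record the key identity
\[
(\sigma\psi)(\sigma u,\sigma v)=\sigma\bigl(\psi(u,v)\bigr)=\chi_{\mathrm{cyc}}(\sigma)\cdot\psi(u,v),\qquad u,v\in V_f(A),
\]
which holds because $\psi_f$ takes values in $\mathbb{A}_f(1)$: indeed the pairings $\psi_m\colon A_m\times A_m\to\mu_m$ satisfy $(\sigma\psi)_m(\sigma x,\sigma y)=\sigma(\psi_m(x,y))$ by construction of $\sigma\psi$, and $\sigma$ acts on $\mathbb{A}_f(1)$ by multiplication by $\chi_{\mathrm{cyc}}(\sigma)$. This is the same calculation already used in Remark \ref{b59}(c).

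Next, given $x,y\in V_f(\sigma A)$, I would set $u=g^{-1}\alpha^{-1}(x)$ and $v=g^{-1}\alpha^{-1}(y)$; then the defining relation $(\alpha\otimes 1)\circ g=\sigma$ gives $\sigma u=x$ and $\sigma v=y$, so
\[
(\sigma\psi)(x,y)=\chi_{\mathrm{cyc}}(\sigma)\,\psi\bigl(g^{-1}\alpha^{-1}(x),\,g^{-1}\alpha^{-1}(y)\bigr).
\]
Applying the $E$-compatibility of $\psi$ (with $a=g^{-1}\in\mathbb{A}_{f,E}^{\times}$) twice transports both factors of $g^{-1}$ onto the first argument, yielding
\[
\psi\bigl(g^{-1}u',\,g^{-1}v'\bigr)=\psi\bigl((g\cdot\iota_E g)^{-1}u',\,v'\bigr)
\]
for any $u',v'\in V_f(A)$. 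Finally, since $\alpha$ is $E$-linear and hence $\mathbb{A}_{f,E}$-linear, the pushforward $\alpha\psi$ satisfies $\psi(c\alpha^{-1}(x),\alpha^{-1}(y))=(\alpha\psi)(cx,y)$ for every $c\in\mathbb{A}_{f,E}^{\times}$; with $c=(g\cdot\iota_E g)^{-1}$ this gives
\[
(\sigma\psi)(x,y)=(\alpha\psi)\!\left(\tfrac{\chi_{\mathrm{cyc}}(\sigma)}{g\cdot\iota_E g}\,x,\,y\right),
\]
which is the asserted identity.

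There is no genuine obstacle: the lemma is a formal manipulation once the Galois-equivariance of the Weil pairing and the $E$-compatibility of $\psi$ are in hand. The only point requiring a little care is to use the $E$-compatibility coherently in the ad\`elic setting (so that pulling the scalar $g^{-1}$ across the pairing produces the Hermitian combination $g\cdot\iota_E g$) and to invoke the $E$-linearity of $\alpha$ in the very last step to recognise the right-hand side as $\alpha\psi$ rather than $\psi$.
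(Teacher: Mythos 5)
Your proof is correct and takes essentially the same route as the paper's: the paper starts from $(\sigma\psi)(\sigma x,\sigma y)=\sigma(\psi(x,y))$ and $(\alpha\psi)(\alpha x,\alpha y)=\psi(x,y)$, substitutes $gx,gy$ into the latter and uses $\alpha\circ g=\sigma$, while you run the same calculation in the opposite direction by writing $x=\sigma u$, $y=\sigma v$; both versions reduce to the Hermitian transport $\psi(gu,gv)=\psi((g\cdot\iota_E g)u,v)$ and the $E$-linearity of $\alpha$. The only point worth noting is that you spell out explicitly (via $\mathbb{A}_{f,E}$-linearity of $\alpha$) the step the paper leaves as ``the lemma is now obvious,'' which is a reasonable expansion rather than a different argument.
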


\begin{proof}
By definition,
\begin{align*}
(\sigma\psi)(\sigma x,\sigma y)  &  =\sigma(\psi(x,y))\qquad x,y\in V_{f}(A)\\
(\alpha\psi)(\alpha x,\alpha y)  &  =\psi(x,y)\qquad x,y\in V_{f}(A).
\end{align*}
On replacing $x$ and $y$ by $gx$ and $gy$ in the second inequality, we find
that
\[
(\alpha\psi)(\sigma x,\sigma y)=\psi(gx,gy)=\psi((g\cdot\iota g)x,y).
\]
As $\sigma(\psi(x,y))=\chi(\sigma)\psi(x,y)=\psi(\chi(\sigma)x,y)$, the lemma
is now obvious.
\end{proof}

\begin{remark}
\label{b84}

\begin{enumerate}
\item \ On replacing $x$ and $y$ with $\alpha x$ and $\alpha y$ in
(\ref{b83}), we obtain the formula
\[
\psi(\frac{\chi(\sigma)}{g\cdot\iota g}x,y)=(\sigma\psi)(\alpha x,\alpha y).
\]

\item On taking $x,y\in H_{1}(A,\mathbb{Q})$ in (\ref{b83}), we can deduce
that $\chi_{\mathrm{cyc}}(\sigma)/g\cdot\iota g\in E^{\times}$; therefore
$g\cdot\iota g\equiv\chi_{\mathrm{cyc}}(\sigma)$ modulo $E^{\times}$.
\end{enumerate}
\end{remark}

The only choice involved in the definition of $g$ is that of $\alpha$, and
$\alpha$ is determined up to multiplication by an element of $E^{\times}$.
Thus the class of $g$ in $\mathbb{A}_{f,E}^{\times}/E^{\times}$ depends only
on $A$ and $\sigma$. In fact, it depends only on $(E,\Phi)$ and $\sigma$,
because any other abelian variety of type $(E,\Phi)$ is isogenous to $A$ and
leads to the same class $gE^{\times}$. We define $g_{\Phi}(\sigma)=gE^{\times
}\in\mathbb{A}_{f,E}^{\times}/E^{\times}$.

\begin{proposition}
\label{b85}The maps $g_{\Phi}:\Aut(\mathbb{C})\rightarrow\mathbb{A}%
_{f,E}^{\times}/E^{\times}$ have the following properties:

\begin{enumerate}
\item \ $g_{\Phi}(\sigma\tau)=g_{\tau\Phi}(\sigma)\cdot g_{\Phi}(\tau)$;

\item \ $g_{\Phi(\tau^{-1}|E)}(\sigma)=\tau g_{\Phi}(\sigma)$ if $\tau E=E$;

\item \ $g_{\Phi}(\iota)=1$;

\item \ $g_{\Phi}(\sigma)\cdot\iota g_{\Phi}(\sigma)=\chi_{\mathrm{cyc}%
}(\sigma)E^{\times}$.
\end{enumerate}
\end{proposition}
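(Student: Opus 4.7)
The plan is to verify (a)--(d) directly from the definition: for an abelian variety $A$ of type $(E,\Phi)$, $g_{\Phi}(\sigma)=gE^{\times}$ where $g\in\mathbb{A}_{f,E}^{\times}$ satisfies $(\alpha\otimes 1)\circ g=\sigma$ on $V_{f}(A)$, for any $E$-linear isomorphism $\alpha\colon H_{1}(A,\mathbb{Q})\to H_{1}(\sigma A,\mathbb{Q})$.

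For (a), fix $A$ of type $(E,\Phi)$ and choose representatives $\alpha_{\tau}\circ g_{\tau}=\tau$ on $V_{f}(A)$, with $g_{\tau}\in g_{\Phi}(\tau)$, and $\alpha_{\sigma}\circ g_{\sigma}=\sigma$ on $V_{f}(\tau A)$, with $g_{\sigma}\in g_{\tau\Phi}(\sigma)$. The composite $\alpha_{\sigma}\circ\alpha_{\tau}\colon H_{1}(A,\mathbb{Q})\to H_{1}(\sigma\tau A,\mathbb{Q})$ is again $E$-linear, and using the $\mathbb{A}_{f,E}$-linearity of $\alpha_{\tau}$ to commute $g_{\sigma}$ past it one obtains $(\alpha_{\sigma}\alpha_{\tau})(g_{\sigma}g_{\tau}\cdot x)=\alpha_{\sigma}(g_{\sigma}\cdot\tau x)=\sigma\tau(x)$ for $x\in V_{f}(A)$, so $g_{\Phi}(\sigma\tau)=g_{\sigma}g_{\tau}E^{\times}=g_{\tau\Phi}(\sigma)\cdot g_{\Phi}(\tau)$. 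For (b), equip $A$ with the alternative CM-structure $i\circ(\tau^{-1}|E)$; its tangent representation becomes $\bigoplus_{\varphi\in\Phi}\varphi\circ\tau^{-1}$, so $(A,i\circ(\tau^{-1}|E))$ has type $(E,\Phi(\tau^{-1}|E))$. Since $E$-linearity of $\alpha$ is the same condition for both CM-structures, the defining equation for the new $g'$ reads $\alpha\circ i(\tau^{-1}g')=\sigma$, which compared with $\alpha\circ i(g)=\sigma$ for the original structure forces $g'=\tau g$; this is (b).

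For (c), consider the real-analytic map $c\colon A(\mathbb{C})\to(\iota A)(\mathbb{C})$ induced by $z\mapsto\bar{z}$ on the universal cover. It is a group isomorphism satisfying $c\circ\phi=\iota(\phi)\circ c$ for each $\phi\in\End(A)$ (a direct verification on the universal cover), so the induced map $c_{\ast}\colon H_{1}(A,\mathbb{Z})\to H_{1}(\iota A,\mathbb{Z})$ is $E$-linear for the CM-structures $i$ on $A$ and $\iota\circ i$ on $\iota A$. Because $A$ is defined over $\mathbb{Q}^{\mathrm{al}}$ (being of CM-type), the coordinates of any torsion point are algebraic, and the real-analytic complex conjugation acts on them exactly as the Galois involution $\iota$; thus $c_{\ast}\otimes 1$ agrees with $\iota$ as maps $V_{f}(A)\to V_{f}(\iota A)$. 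Taking $\alpha=c_{\ast}$ gives $g=1$, so $g_{\Phi}(\iota)=1$.

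Finally, (d) follows from Lemma \ref{b83}. For $x,y\in H_{1}(\sigma A,\mathbb{Q})$, the right-hand side $(\sigma\psi)(x,y)$ is rational, so $(\alpha\psi)(ux,y)\in\mathbb{Q}$ with $u=\chi_{\mathrm{cyc}}(\sigma)/(g\cdot\iota g)$. Using the $E$-linearity of $\alpha$ to transport this to $H_{1}(A,\mathbb{Q})$ yields $\psi_{f}(ux',y')\in\mathbb{Q}$ for all $x',y'\in H_{1}(A,\mathbb{Q})$; the non-degeneracy of $\psi$ then forces $ux'\in H_{1}(A,\mathbb{Q})$, which, since $H_{1}(A,\mathbb{Q})$ is free of rank one over $E$, gives $u\in E^{\times}$. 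The main obstacle in the proof is (c): identifying the real-analytic complex-conjugation map with the Galois action of $\iota$ on torsion, which relies on $A$ (and hence its torsion) being defined over $\mathbb{Q}^{\mathrm{al}}$.
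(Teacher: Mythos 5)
Your argument is correct and follows essentially the same route as the paper for all four parts: cocycle composition for (a), transport of structure for (b), the topological complex-conjugation map for (c), and rationality of the Riemann-form pairing via Lemma \ref{b83} for (d) (which is exactly the content of Remark \ref{b84}). The appeal to algebraicity of torsion points in (c) is harmless but unnecessary --- the identity of $\iota_{1}\otimes 1$ with the action of $\iota$ on $V_{f}$ holds simply because $\iota$ acts on $\mathbb{C}$-points by conjugating coordinates, which is precisely the continuous map inducing $\iota_{1}$, with no need to pass through a model over $\mathbb{Q}^{\mathrm{al}}$.
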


\begin{proof}
(a) Choose $E$-linear isomorphisms $\alpha\colon H_{1}(A,\mathbb{Q}%
)\rightarrow H_{1}(\tau A,\mathbb{Q})$ and $\beta\colon H_{1}(\tau
A,\mathbb{Q})\rightarrow H_{1}(\sigma\tau A,\mathbb{Q})$, and let
$g=(\alpha\otimes1)^{-1}\circ\tau$ and $g_{\tau}=(\beta\otimes1)^{-1}%
\circ\sigma$ so that $g$ and $g_{\sigma}$ represent $g_{\Phi}(\tau)$ and
$g_{\tau\Phi}(\sigma)$ respectively. Then
\[
(\beta\alpha)\otimes1\circ(g_{\tau}g)=(\beta\otimes1)\circ g_{\tau}%
\circ(\alpha\otimes1)\circ g=\sigma\tau,
\]
which shows that $g_{\tau}g$ represents $g_{\Phi}(\sigma\tau)$.

(b) If $(A,E\hookrightarrow\End(A)\otimes\mathbb{Q})$ has type $(E,\Phi)$,
then $(A,E\overset{\tau^{-1}}{\rightarrow}E\rightarrow\End(A)\otimes
\mathbb{Q})$ has type $(E,\Phi\tau^{-1})$. The formula in (b) can be proved by
transport of structure.

(c) Complex conjugation $\iota\colon A\rightarrow\iota A$ is a homeomorphism
(relative to the complex topology) and so induces an $E$-linear isomorphism
$\iota_{1}\colon H_{1}(A,\mathbb{Q})\rightarrow H_{1}(A,\mathbb{Q})$. The map
$\iota_{1}\otimes1\colon V_{f}(A)\rightarrow V_{f}(\iota A)$ is $\iota$ again,
and so on taking $\alpha=\iota_{1}$, we find that $g=1$.

(d) This was proved in (\ref{b84}d).
\end{proof}

Theorem (\ref{b74}) (hence also \ref{b72}) becomes true if $f_{\Phi}$ is
replaced by $g_{\Phi}$. Our task is to show that $f_{\Phi}=g_{\Phi}$. To this
end we set
\begin{equation}
e_{\Phi}(\sigma)=g_{\Phi}(\sigma)/f_{\Phi}(\sigma)\in\mathbb{A}_{f,E}^{\times
}/E^{\times}. \label{e75}%
\end{equation}

\begin{proposition}
\label{b86}The maps $e_{\Phi}:\Aut(\mathbb{C})\rightarrow\mathbb{A}%
_{f,E}^{\times}/E^{\times}$ have the following properties:

\begin{enumerate}
\item $e_{\Phi}(\sigma\tau)=e_{\tau\Phi}(\sigma)\cdot e_{\Phi}(\tau)$;

\item $e_{\Phi(\tau^{-1}|E)}(\sigma)=\tau e_{\Phi}(\sigma)$ if $\tau E=E$;

\item $e_{\Phi}(\iota)=1$;

\item $e_{\Phi}(\sigma)\cdot\iota_{E}e_{\Phi}(\sigma)=1$;

\item $e_{\Phi}(\sigma)=1$ if $\sigma\Phi=\Phi$.
\end{enumerate}
\end{proposition}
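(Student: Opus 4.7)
The strategy is to read off (a)--(d) directly from Propositions \ref{b81} and \ref{b85}: each of $f_\Phi$ and $g_\Phi$ satisfies the same four formal identities, so their quotient $e_\Phi$ inherits them. For (a), dividing $g_\Phi(\sigma\tau)=g_{\tau\Phi}(\sigma)g_\Phi(\tau)$ by $f_\Phi(\sigma\tau)=f_{\tau\Phi}(\sigma)f_\Phi(\tau)$ gives the cocycle identity. For (b), the transport-of-structure formulas agree for $f$ and $g$, and dividing yields $e_{\Phi(\tau^{-1}|E)}(\sigma)=\tau e_\Phi(\sigma)$. For (c), both $f_\Phi(\iota)=1$ and $g_\Phi(\iota)=1$. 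For (d), the identities $f_\Phi(\sigma)\cdot\iota f_\Phi(\sigma)\equiv \chi_{\mathrm{cyc}}(\sigma)$ and $g_\Phi(\sigma)\cdot\iota g_\Phi(\sigma)\equiv \chi_{\mathrm{cyc}}(\sigma)$ modulo $E^\times$ have identical right-hand sides, so they cancel in the ratio, giving $e_\Phi(\sigma)\cdot\iota_E e_\Phi(\sigma)=1$.

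The substance of the proposition is (e), which says precisely that $f_\Phi$ and $g_\Phi$ agree on the subgroup $\Aut(\mathbb{C}/E^{\ast})$. This is where the hard work from the earlier sections is imported. Given $\sigma\in\Aut(\mathbb{C}/E^{\ast})$, choose $a\in\mathbb{A}_{f,E^{\ast}}^{\times}$ with $\mathrm{art}_{E^{\ast}}(a)=\sigma|E^{\ast\mathrm{ab}}$. Proposition \ref{b82} then identifies $f_\Phi(\sigma)$ with the class of $N_\Phi(a)$ in $\mathbb{A}_{f,E}^{\times}/E^{\times}$. On the other hand, Theorem \ref{b58} (the fundamental theorem over the reflex field) supplies an $E$-isogeny $\lambda\colon A\to\sigma A$ with $\lambda(N_\Phi(a)\cdot x)=\sigma x$ for every $x\in V_f A$. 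Taking $\alpha=H_1(\lambda)$ in the definition of $g_\Phi$, one sees that the element $g\in\mathbb{A}_{f,E}^{\times}$ with $(\alpha\otimes 1)\circ g=\sigma$ on $V_f A$ is exactly $N_\Phi(a)$. Hence $g_\Phi(\sigma)=N_\Phi(a)\cdot E^{\times}=f_\Phi(\sigma)$, and so $e_\Phi(\sigma)=1$.

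In short, the only step requiring real input is (e), and there is essentially no obstacle: it is a direct translation of Theorem \ref{b58} via Proposition \ref{b82}. Properties (a)--(d) are bookkeeping, while (e) records the fact that the error measured by $e_\Phi$ is supported on $\Aut(\mathbb{C})/\Aut(\mathbb{C}/E^{\ast})$, which is the whole point of the map $e_\Phi$ and will be exploited in the subsequent Deligne-style uniqueness argument to conclude $e_\Phi\equiv 1$.
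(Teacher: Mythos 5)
Your proof is correct and takes essentially the same approach as the paper: parts (a)--(d) follow by dividing the corresponding identities for $g_\Phi$ (from \ref{b85}) by those for $f_\Phi$ (from \ref{b81} and \ref{b78}), and part (e) is the reflex-field case of the fundamental theorem. You merely unpack the paper's brief appeal to ``the preceding subsection'' by explicitly invoking Theorem \ref{b58} to produce $\lambda$ with $H_1(\lambda)\otimes 1\circ N_\Phi(a)=\sigma$ on $V_fA$, so that $g_\Phi(\sigma)=N_\Phi(a)E^\times$, and then Proposition \ref{b82} to identify this with $f_\Phi(\sigma)$.
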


\begin{proof}
Statements (a), (b), and (c) follow from (a), (b), and (c) of (\ref{b81}) and
(\ref{b85}), and (d) follows from (\ref{b78}b) and (\ref{b85}d). The condition
$\sigma\Phi=\Phi$ in (e) means that $\sigma$ fixes the reflex field of
$(E,\Phi)$ and, as we observed in the preceding subsection, the fundamental
theorem is known to hold in that case, which means that $f_{\Phi}%
(\sigma)=g_{\Phi}(\sigma)$.
\end{proof}

\begin{proposition}
\label{b87}Let $F$ be the largest totally real subfield of $E$; then $e_{\Phi
}(\sigma)\in\mathbb{A}_{f,F}^{\times}/F^{\times}$ and $e_{\Phi}(\sigma)^{2}%
=1$; moreover, $e_{\Phi}(\sigma)$ depends only on the effect of $\sigma$ on
$E^{\ast}$, and is $1$ if $\sigma|E^{\ast}=\id$.
\end{proposition}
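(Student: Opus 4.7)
The plan is to prove the four claims in a particular order: the last two ($e_\Phi(\sigma)$ depends only on $\sigma|E^*$, and vanishes when $\sigma|E^*=\id$) follow almost immediately from the properties of $e_\Phi$ listed in Proposition \ref{b86}, and they then feed into the proof of the first two (that $e_\Phi(\sigma)$ is $\iota_E$-invariant and squares to $1$).

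First I would observe that $\sigma|E^*=\id$ is equivalent to $\sigma\Phi=\Phi$ by the very definition of $E^*$, so property (e) of (\ref{b86}) gives the vanishing claim. For the dependence on $\sigma|E^*$, if $\sigma|E^*=\sigma'|E^*$, set $\tau=\sigma^{-1}\sigma'$; then $\tau$ fixes $E^*$, so $\tau\Phi=\Phi$ and $e_\Phi(\tau)=1$ by (e), and (a) gives
\[
e_\Phi(\sigma')=e_\Phi(\sigma\tau)=e_{\tau\Phi}(\sigma)\cdot e_\Phi(\tau)=e_\Phi(\sigma).
\]

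For the remaining two claims, property (d) says $\iota_E e_\Phi(\sigma)=e_\Phi(\sigma)^{-1}$, so it is enough to prove $\iota_E e_\Phi(\sigma)=e_\Phi(\sigma)$; then (d) immediately forces $e_\Phi(\sigma)^2=1$. The key step is to compute $e_\Phi(\iota\sigma\iota)$ in two different ways, where $\iota\in\Aut(\mathbb{C})$ is complex conjugation. On the one hand, property (b) applied with $\tau=\iota$ (using that $\iota|E=\iota_E$ and $\Phi\iota_E=\iota\Phi$) gives $e_{\iota\Phi}(\sigma)=\iota_E e_\Phi(\sigma)$; combining this with two applications of (a) and the fact from (c) that $e_\Psi(\iota)=1$ for any CM-type $\Psi$ yields
\[
e_\Phi(\iota\sigma\iota)=e_\Phi(\sigma\iota)=e_{\iota\Phi}(\sigma)=\iota_E e_\Phi(\sigma).
\]
On the other hand, I claim $(\iota\sigma\iota)|E^*=\sigma|E^*$; together with the dependence on $\sigma|E^*$ already proved, this gives $e_\Phi(\iota\sigma\iota)=e_\Phi(\sigma)$. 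Comparing the two expressions produces $\iota_E e_\Phi(\sigma)=e_\Phi(\sigma)$, which is exactly the containment in $\mathbb{A}_{f,F}^\times/F^\times$, and $e_\Phi(\sigma)^2=1$ then follows from (d).

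The main obstacle, and the only step requiring genuine arithmetic input rather than formal manipulation of (a)--(e), is the identity $(\iota\sigma\iota)|E^*=\sigma|E^*$. This uses that $E^*$ is a CM-subfield of $\mathbb{C}$, so that complex conjugation on $\mathbb{C}$ restricts to the intrinsic involution $\iota_{E^*}$; the same holds on the conjugate CM-field $\sigma E^*$. Since $\sigma\colon E^*\to\sigma E^*$ is an isomorphism of CM-fields, it intertwines these intrinsic involutions, giving $\sigma(\iota(a))=\iota(\sigma(a))$ for $a\in E^*$, and hence $(\iota\sigma\iota)(a)=\sigma(a)$. Everything else reduces to bookkeeping with the formal properties of $e_\Phi$ collected in Proposition \ref{b86}.
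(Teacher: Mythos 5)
Your argument follows the same route as the paper's: use (e) and (a) of (\ref{b86}) to get the dependence only on $\sigma|E^{\ast}$ and the vanishing when $\sigma|E^{\ast}=\id$; use (b) with $\tau=\iota$ together with (a) and (c) to convert $\iota_{E}e_{\Phi}(\sigma)$ into a value of $e_{\Phi}$; use the CM-field property of $E^{\ast}$ to show that this value agrees with $e_{\Phi}(\sigma)$; and finally invoke (d). Your reformulation in terms of $\iota\sigma\iota$ is equivalent to the paper's comparison of $\iota\sigma$ with $\sigma\iota$ (they differ only by multiplying both elements by $\iota$), and your justification of $(\iota\sigma\iota)|E^{\ast}=\sigma|E^{\ast}$ is the same intertwining property of embeddings of CM-fields that the paper uses implicitly.

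One small point deserves tightening: having deduced $\iota_{E}e_{\Phi}(\sigma)=e_{\Phi}(\sigma)$, you assert that this ``is exactly'' the containment $e_{\Phi}(\sigma)\in\mathbb{A}_{f,F}^{\times}/F^{\times}$, but what it literally gives is that $e_{\Phi}(\sigma)$ lies in the $\iota_{E}$-fixed subgroup $(\mathbb{A}_{f,E}^{\times}/E^{\times})^{\langle\iota_{E}\rangle}$. Identifying this with $\mathbb{A}_{f,F}^{\times}/F^{\times}$ (i.e., producing a representative in $\mathbb{A}_{f,F}^{\times}$) requires one more step, namely the vanishing of $H^{1}(\Gal(E/F),E^{\times})$ applied to the sequence $1\rightarrow E^{\times}\rightarrow\mathbb{A}_{f,E}^{\times}\rightarrow\mathbb{A}_{f,E}^{\times}/E^{\times}\rightarrow 1$. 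The paper cites Hilbert's Theorem 90 for this; your proof should do so as well.
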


\begin{proof}
Recall that $\sigma$ fixes $E^{\ast}$ if and only if $\sigma\Phi=\Phi$, in
which case (\ref{b86}e) shows that $e_{\Phi}(\sigma)=1$. Replacing $\tau$ by
$\sigma^{-1}\tau$ in (a), we find that $e_{\Phi}(\tau)=e_{\Phi}(\sigma)$ if
$\tau\Phi=\sigma\Phi$, i.e., $e_{\Phi}(\sigma)$ depends only on the
restriction of $\sigma$ to the reflex field of $(E,\Phi)$. From (b) with
$\tau=\iota$, we find using $\iota\Phi=\Phi\iota_{E}$ that $e_{\iota\Phi
}(\sigma)=\iota e_{\Phi}(\sigma)$. Putting $\tau=\iota$ in (a) and using (c)
we find that $e_{\Phi}(\sigma\iota)=\iota e_{\Phi}(\sigma)$; putting
$\sigma=\iota$ in (a) and using (c) we find that $e_{\Phi}(\iota
\sigma)=e_{\Phi}(\sigma)$. Since $\iota\sigma$ and $\sigma\iota$ have the same
effect on $E^{\ast}$, we conclude $e_{\Phi}(\sigma)=\iota e_{\Phi}(\sigma)$.
Thus $e_{\Phi}(\sigma)\in(\mathbb{A}_{f,E}^{\times}/E^{\times})^{\langle
\iota_{E}\rangle}$, which equals $\mathbb{A}_{f,F}^{\times}/F^{\times}$ by
Hilbert's Theorem 90.\footnote{The cohomology sequence of the sequence of
$\Gal(E/F)$-modules%
\[
1\rightarrow E^{\times}\rightarrow\mathbb{A}_{f,E}^{\times}\rightarrow
\mathbb{A}_{f,E}^{\times}/E^{\times}\rightarrow1
\]
is%
\[
1\rightarrow F^{\times}\rightarrow\mathbb{A}_{f,F}^{\times}\rightarrow
(\mathbb{A}_{f,E}^{\times}/E^{\times})^{\Gal(E/F)}\rightarrow H^{1}%
(\Gal(E/F),E^{\times})=0
\]
} Finally, (d) shows that $e_{\Phi}(\sigma)^{2}=1$.
\end{proof}

\begin{corollary}
\label{b88}Part (a) of (\ref{b72}) is true; part (b) of (\ref{b72}) becomes
true when $f$ is replaced by $ef$ with $e\in\mathbb{A}_{f,F}^{\times}$,
$e^{2}=1$.
\end{corollary}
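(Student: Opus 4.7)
The plan is to distill the results of this subsection. By the very construction of $g_{\Phi}$ preceding Proposition~\ref{b85} (in particular the diagram defining $g$ via a chosen $\alpha$), Theorem~\ref{b74}---and hence, by Lemma~\ref{b75}, Theorem~\ref{b72}---is already known to hold with $f_{\Phi}(\sigma)$ replaced by $g_{\Phi}(\sigma)$: for any representative $g\in g_{\Phi}(\sigma)$ there is an $E$-linear isomorphism $\alpha\colon H_{1}(A,\mathbb{Q})\to H_{1}(\sigma A,\mathbb{Q})$ satisfying the analogues of (a) and (b) of Theorem~\ref{b74} with $f$ replaced by $g$. Proposition~\ref{b87} then gives $e_{\Phi}(\sigma):=g_{\Phi}(\sigma)/f_{\Phi}(\sigma)\in\mathbb{A}_{f,F}^{\times}/F^{\times}$ with $e_{\Phi}(\sigma)^{2}=1$ and $\iota_{E}e_{\Phi}(\sigma)=e_{\Phi}(\sigma)$.

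Fix $f\in f_{\Phi}(\sigma)$, and pick a representative $e\in\mathbb{A}_{f,F}^{\times}$ of $e_{\Phi}(\sigma)$ with $e^{2}=1$; then $g:=ef$ is a representative of $g_{\Phi}(\sigma)$. Part (b) of the corollary is then immediate: the proven version of Theorem~\ref{b72}(b) applied to $g=ef$ is precisely Part (b) of Theorem~\ref{b72} with $f$ replaced by $ef$.

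For Part (a), apply the proven version of Theorem~\ref{b72}(a) to the same $g=ef$ to learn that $\sigma A$ has type $(E,\sigma\Phi;(ef)\mathfrak{a},\,t\chi_{\mathrm{cyc}}(\sigma)/((ef)\iota(ef)))$ relative to some uniformization. The relations $\iota e=e$ and $e^{2}=1$ give
\[
(ef)\cdot\iota(ef)=e^{2}\cdot f\cdot\iota f=f\cdot\iota f,
\]
so the $t$-factor simplifies to $t\chi_{\mathrm{cyc}}(\sigma)/(f\iota f)$. Likewise, $e^{2}=1$ in $\mathbb{A}_{f,F}^{\times}$ forces $e\in\widehat{\mathcal{O}}_{F}^{\times}$, so $(ef)\mathfrak{a}=f\mathfrak{a}$ as fractional ideals of $E$. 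Therefore the type is exactly $(E,\sigma\Phi;f\mathfrak{a},\,t\chi_{\mathrm{cyc}}(\sigma)/(f\iota f))$, which is Part (a).

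The real work has been absorbed into Propositions~\ref{b85} and~\ref{b87}; the only delicate point left in the above is that the $2$-torsion class $e_{\Phi}(\sigma)\in\mathbb{A}_{f,F}^{\times}/F^{\times}$ can be represented by an id\`ele $e$ satisfying $e^{2}=1$ on the nose (and not merely modulo $F^{\times}$), so that the lattice factor and the $t$-factor are genuinely unchanged when passing from $g=ef$ back to $f$. This representability is exactly the content of the hypothesis ``$e\in\mathbb{A}_{f,F}^{\times}$, $e^{2}=1$'' in the corollary's statement, and given it, the verification reduces to the direct computation above.
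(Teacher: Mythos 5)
Your outline of the argument is on the right track, but it has a genuine gap precisely where you flag the ``delicate point.'' The corollary is \emph{asserting} that a representative $e\in\mathbb{A}_{f,F}^{\times}$ of $e_{\Phi}(\sigma)$ can be chosen with $e^{2}=1$ on the nose; it is not a hypothesis you are entitled to invoke. Proposition \ref{b87} only tells you that $e_{\Phi}(\sigma)^{2}=1$ in $\mathbb{A}_{f,F}^{\times}/F^{\times}$, i.e.\ that any representative $e_{0}$ satisfies $e_{0}^{2}\in F^{\times}$. Getting from there to a representative with $e^{2}=1$ exactly is the one nontrivial arithmetic input in the whole proof, and you have skipped it.

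The paper supplies it as follows: since $e_{0}^{2}\in F^{\times}$ and $e_{0}^{2}=(e_{0,v}^{2})_{v}$ is manifestly a square in each $F_{v}$ for $v$ finite, the theorem that an element of $F^{\times}$ which is a local square at all finite primes is a square (\cite{milneCFT}, VIII 1.1 --- a Chebotarev argument applied to $F[\sqrt{e_{0}^{2}}]/F$) yields $e_{0}^{2}=a^{2}$ with $a\in F^{\times}$; replacing $e_{0}$ by $e:=e_{0}/a$ keeps the class in $\mathbb{A}_{f,F}^{\times}/F^{\times}$ and forces $e^{2}=1$. With that step supplied, your remaining computations --- $(ef)\cdot\iota(ef)=e^{2}\,f\cdot\iota f=f\cdot\iota f$ and $(ef)\mathfrak{a}=f\mathfrak{a}$ since $e\in\widehat{\mathcal{O}}_{F}^{\times}$, hence the type in part (a) is unchanged --- are correct and spell out what the paper compresses into ``$e$ (being a unit) does not affect part (a).''
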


\begin{proof}
Let $e\in e_{\Phi}(\sigma)$. Then $e^{2}\in F^{\times}$ and, since an element
of $F^{\times}$ that is a square locally at all finite primes is a square
(\cite{milneCFT} VIII 1.1), we can correct $e$ to achieve $e^{2}=1$. Now
(\ref{b72}) is true with $f$ replaced by $ef$, but $e$ (being a unit) does not
affect part (a) of (\ref{b72}).
\end{proof}

It remains to show that:%
\begin{equation}
\text{for all CM-fields }E\text{ and CM-types }\Phi\text{ on }E\text{,
}e_{\Phi}=1. \label{e82}%
\end{equation}

\subsection{Completion of the proof}

As above, let $(E,\Phi)$ be a CM pair, and let $e_{\Phi}(\sigma)=g_{\Phi
}(\sigma)/f_{\Phi}(\sigma)$ be the associated element of $\mathbb{A}{}%
_{f,E}^{\times}/E^{\times}$. Then, as in (\ref{b87}, \ref{b88}),%
\[
e_{\Phi}(\sigma)\in\mu_{2}(\mathbb{A}_{f,F})/\mu_{2}(F),\quad\sigma
\in\Aut(\mathbb{C}{}).
\]

Let
\[
e\in\mu_{2}(\mathbb{A}_{f,F})\text{, }e=(e_{v})_{v}\text{, }e_{v}=\pm1\text{,
}v\text{ a finite prime of }F
\]
be a representative for $e_{\Phi}(\sigma)$. We have to show that the $e_{v}$'s
are all $-1$ or all $+1$. For this, it suffices, to show that for, for any
prime numbers $\ell_{1}$ and $\ell_{2}$, the image of $e_{\Phi}(\sigma)$ in
$\mu_{2}(F_{\ell_{1}}\times F_{\ell_{2}})/\mu_{2}(F)$ is trivial. Here
$F_{\ell}=F\otimes_{\mathbb{Q}{}}\mathbb{Q}{}_{\ell}$.

In addition to the properties (a--e) of (\ref{b86}), we need:

\begin{enumerate}
\item[(f)] let $E^{\prime}$ be a CM-field containing $E$, and let
$\Phi^{\prime}$ be the extension of $\Phi$ to $E^{\prime}$; then for any
$\sigma\in\Aut(\mathbb{C}{})$,
\begin{equation}
e_{\Phi}\left(  \sigma\right)  =e_{\Phi^{\prime}}\left(  \sigma\right)
\quad\text{(in }\mathbb{A}{}_{f,E^{\prime}}^{\times}/E^{\prime\times}\text{).}
\label{e81}%
\end{equation}

\end{enumerate}

\noindent To prove this, one notes that the same formula holds for each of
$f_{\Phi}$ and $g_{\Phi}$: if $A$ is of type $(E,\Phi)$ then $A^{\prime
}\overset{\text{{\tiny def}}}{=}A\otimes_{E}E^{\prime}$ is of type
$(E^{\prime},\Phi^{\prime})$. Here $A^{\prime}=A^{M}$ with
$M=\Hom_{E\text{-linear}}(E^{\prime},E)$ (cf. \ref{b41}).

Note that (f) shows that $e_{\Phi^{\prime}}=1\implies e_{\Phi}=1$, and so it
suffices (\ref{e82}) for $E$ Galois over $\mathbb{Q}{}$ (and contained in
$\mathbb{C}{}$).

We also need:

\begin{enumerate}
\item[(g)] denote by $[\Phi]$ the characteristic function of $\Phi
\subset\Hom(E,\mathbb{C}{})$; then%
\[
\sum\nolimits_{i}n_{i}[\Phi_{i}]=0\implies\prod\nolimits_{i}e_{\Phi_{i}%
}(\sigma)^{n_{i}}=1\text{ for all }\sigma\in\Aut(\mathbb{C}{}).
\]

\end{enumerate}

\noindent This is a consequence of Deligne's theorem that all Hodge classes on
abelian varieties are absolutely Hodge, which tells us that the results on
abelian varieties with complex multiplication proved above extend to
CM-motives. The CM-motives are classified by infinity types rather than
CM-types, and (g) just says that the $e$ attached to the trivial CM-motive is
$1$. This will be explained in the next chapter.

We make (d) (of \ref{b86}) and (g) more explicit. Recall that an infinity type
on $E$ is a function $\rho\colon\Hom(E,\mathbb{C}{})\rightarrow\mathbb{Z}{}$
that can be written as a finite sum of CM-types (see \S 4). Now (g) allows us
to define $e_{\rho}$ by linearity for $\rho$ an infinity type on $E$.
Moreover,%
\[
e_{2\rho}=e_{\rho}^{2}=0,
\]
so that $e_{\rho}$ depends only on the reduction modulo $2$ of $\rho$, which
can be regarded as a function%
\[
\bar{\rho}\colon\Hom(E,\mathbb{\mathbb{C}{}}{})\rightarrow\mathbb{Z}%
{}/2\mathbb{Z}{},
\]
such that either (weight $0$)
\begin{equation}
\bar{\rho}(\varphi)+\bar{\rho}(\iota\varphi)=0\text{ for all }\varphi
\label{e78}%
\end{equation}
or (weight $1$)
\[
\bar{\rho}(\varphi)+\bar{\rho}(\iota\varphi)=1\text{ for all }\varphi\text{.}%
\]

We now prove that $e_{\bar{\rho}}=1$ if $\bar{\rho}$ is of weight $0$. The
condition (\ref{e78}) means that $\bar{\rho}(\varphi)=\bar{\rho}(\iota
\varphi)$, and so $\bar{\rho}$ arises from a function $q\colon
\Hom(F,\mathbb{C}{})\rightarrow\mathbb{Z}{}/2\mathbb{Z}{}$:%
\[
\bar{\rho}(\varphi)=q(\varphi|F).
\]
We write $e_{q}=e_{\bar{\rho}}$. When $E$ is a subfield of $\mathbb{C}{}$
Galois over $\mathbb{Q}{}$, (b) implies that there exists an $e(\sigma)\in
\mu_{2}(\mathbb{A}_{f,F})/\mu_{2}(F)$ such that\footnote{For each
$\varphi\colon F\rightarrow\mathbb{C}{}$, choose an extension (also denoted
$\varphi$) of $\varphi$ to $E$. Then%
\[
\bar{\rho}=\sum\nolimits_{\varphi^{\prime}\colon E\rightarrow\mathbb{C}{}}%
\bar{\rho}(\varphi^{\prime})\varphi^{\prime}=\sum\nolimits_{\varphi\colon
F\rightarrow\mathbb{C}{}}q(\varphi)(\varphi+\iota\varphi)
\]
and so%
\[
e_{q}(\sigma)\overset{\text{{\tiny def}}}{=}e_{\bar{\rho}}(\sigma
)=\prod\nolimits_{\varphi}e_{(1+\iota)\varphi}(\sigma)^{q(\varphi)}%
=\prod\nolimits_{\varphi}\varphi^{-1}(e_{1+\iota}(\sigma))^{q(\varphi)}%
\]
--- we can take $e(\sigma)=e_{1+\iota}(\sigma)$.}%
\[
e_{q}(\sigma)=\prod\nolimits_{\varphi\colon F\rightarrow\mathbb{C}{}}%
\varphi^{-1}(e(\sigma))^{q(\varphi)}\text{, }\sigma\in\Aut(\mathbb{C}{}).
\]
Write $e(\sigma)=e^{F}(\sigma)$ to denote the dependence of $e$ on $F$. It
follows from (f), that for any totally real field $F^{\prime}$ containing
$F$,
\[
e^{F}(\sigma)=\Nm_{F^{\prime}/F}e^{F^{\prime}}(\sigma).
\]
There exists a totally real field $F^{\prime}$, quadratic over $F$, and such
that all primes of $F$ dividing $\ell_{1}$ or $\ell_{2}$ remain prime in
$F^{\prime}$. The norm maps $\mu_{2}(F_{2,\ell})\rightarrow\mu_{2}%
(F_{1},_{\ell})$ are zero for $\ell=\ell_{1},\ell_{2}$, and so $e^{F}(\sigma)$
projects to zero in $\mu_{2}(F_{\ell_{1}})\times\mu_{2}(F_{\ell_{2}})/\mu
_{2}(F)$. Therefore $e_{q}(\sigma)$ projects to zero in $\mu_{2}(F_{\ell_{1}%
}\times F_{\ell_{2}})/\mu_{2}(F)$. This being true for every pair $(\ell
_{1},\ell_{2})$, we have $e_{q}=1$.

We now complete the proof of (\ref{e82}). We know that $e_{\bar{\rho}}$
depends only on the weight of $\bar{\rho}$, and so, for $\Phi$ a CM-type,
$e_{\Phi}(\sigma)$ depends only on $\sigma$. In calculating $e_{\Phi}(\sigma
)$, we may take $E=\mathbb{Q}{}(\sqrt{-1}\dot{)}$ and $\Phi$ to be one of the
two CM-types on $\mathbb{Q}{}[\sqrt{-1}]$. We know (see \ref{b87}) that
$e_{\Phi}(\sigma)$ depends only on $\sigma|E^{\ast}=\mathbb{Q}{}[\sqrt{-1}]$.
But $e_{\Phi}(1)=1=e_{\Phi}(\iota)$ by (\ref{b86}c).

\begin{aside}
Throughout, should allow $E$ to be a CM-algebra. Should restate Theorem
\ref{b74} with $\mathbb{C}{}$ replaced by $\mathbb{Q}{}^{\mathrm{al}}$; then
replace $\mathbb{C}{}$ with $\mathbb{Q}{}^{\mathrm{al}}$ throughout the proof
(so $\sigma$ is an automorphism of $\mathbb{Q}{}^{\mathrm{al}}$ rather than
$\mathbb{C}{}$).
\end{aside}

{
\bibliographystyle{cbe}
\bibliography{../../../refs/refs}
}

\end{document}